\providecommand\@dotsep{5}
\newcommand{\B}{\mathbb B}
\newcommand{\bbC}{\mathbb C}
\newcommand{\bbH}{\mathbb H}
\newcommand{\bbR}{\mathbb R}
\newcommand{\bbW}{\mathbb W}
\newcommand{\bbZ}{\mathbb Z}
\newcommand{\bbN}{\mathbb N}
\newcommand{\cA}{\mathcal A}
\newcommand{\cC}{\mathcal C}
\newcommand{\cD}{\mathcal D}
\newcommand{\cF}{\mathcal F}
\newcommand{\cG}{\mathcal G}
\newcommand{\cH}{\mathcal H}
\newcommand{\cP}{\mathcal P}
\newcommand{\cU}{\mathcal U}
\newcommand{\cZ}{\mathcal Z}
\newcommand{\catname}[1]{{\mathsf{#1}}}
\newcommand{\TLJ}{\catname{TLJ}}
\renewcommand{\mod}{\catname{mod}}
\renewcommand{\vec}{\catname{vec}}
\newcommand{\rep}{\catname{rep}}
\newcommand{\Fib}{\catname{Fib}}
\mathchardef\mhyphen="2D 
\newcommand{\lmod}{\mhyphen \mathsf{mod}}
\newcommand{\rmod}{\mathsf{mod} \mhyphen}
\newcommand{\bimod}{\mhyphen \mathsf{mod} \mhyphen}
\newcommand{\lprmod}{\mhyphen \mathsf{prmod}}
\newcommand{\rprmod}{\mathsf{prmod} \mhyphen}
\newcommand{\<}{\langle}
\renewcommand{\>}{\rangle}
\DeclareMathOperator{\id}{id}
\DeclareMathOperator{\Z}{\mathbb Z}
\DeclareMathOperator{\R}{\mathbb R}
\newcommand{\1}{\mathds{1}}
\DeclareMathOperator{\ra}{\rightarrow}
\newcommand{\xra}{\xrightarrow}
\DeclareMathOperator{\Aut}{Aut}
\DeclareMathOperator{\cone}{cone}
\DeclareMathOperator{\Hom}{Hom}
\DeclareMathOperator{\End}{End}
\DeclareMathOperator{\cEnd}{\mathcal{E}nd}
\DeclareMathOperator{\im}{\mathfrak {Im}}
\DeclareMathOperator{\img}{im}
\DeclareMathOperator{\Stab}{Stab}
\DeclareMathOperator{\Kom}{Kom}
\DeclareMathOperator{\Br}{Br}
\DeclareMathOperator{\PBr}{PBr}
\DeclareMathOperator{\Irr}{Irr}
\DeclareMathOperator{\reg}{reg}
\DeclareMathOperator{\ST}{ST}
\DeclareMathOperator{\FPdim}{FPdim}
\DeclareMathOperator{\re}{\mathfrak{Re}}
\declaretheorem[numberwithin=section]{theorem}
\declaretheorem[sibling=theorem]{lemma}
\declaretheorem[sibling=theorem]{corollary}
\declaretheorem[sibling=theorem]{conjecture}
\declaretheorem[sibling=theorem]{proposition}
\declaretheorem[sibling=theorem, style=remark]{remark}
\declaretheorem[sibling=theorem, style=definition]{definition}
\declaretheorem[sibling=theorem, style=definition]{example}
\newcommand{\nPi}[1][n]{{ \overset{\scriptscriptstyle #1}{\Pi}}}
\newcommand{\mPi}{{ \overset{\scriptscriptstyle m}{\Pi}}}
\newcommand{\bigboxtimes} {
    {\mathrel{\raisebox{-.6ex}
    	{$\mathlarger{\mathlarger{\mathlarger{\boxtimes}}} $}
    	}
    }
}
\newcommand{\bigboxtimesp}[2] {
    \underset{#2}{\overset{#1}{
    	{\mathrel{
    		\raisebox{-.6ex} 
    		{$\mathlarger{\mathlarger{\mathlarger{\mathlarger{\boxtimes}}}} $}
    		}
    	}
    }
    }
}
\newcommand{\zig}{\mathfrak{z}\mathfrak{i}\mathfrak{g}}
\renewcommand{\comment}[1]{}
\title[Stability conditions and Artin--Tits groups]{Stability conditions and Artin--Tits groups}
\author[]{Edmund Heng}
\address{Institut des Hautes \'{E}tudes Scientifiques (IH\'{E}S). Le Bois-Marie, 35, route de Chartres, 91440 Bures-sur-Yvette (France)}
\email{heng@ihes.fr}
\author[]{Anthony M. Licata}
\address{Mathematical Sciences Institute, Australian National University (ANU), Canberra (Australia)}
\email{anthony.licata@anu.edu.au}
\begin{document}

\begin{abstract}
We describe spaces of Bridgeland stability conditions on certain triangulated categories associated to Coxeter systems.  These categories are defined algebraically using the category of modules for zigzag algebras associated to Coxeter systems, which we construct as distinguished (quadratic, graded) algebra objects in fusion categories.  The resulting stability spaces are closely related to conjectural $K(\pi,1)$ spaces for Artin--Tits groups.
\end{abstract}

\maketitle

\setcounter{tocdepth}{1}

\tableofcontents

\section{Introduction}\label{sec:intro}
Group actions on triangulated categories appear prominently in higher representation theory.  A fundamental example is the action of a Kac--Moody braid group on the 2-Calabi--Yau category of a quiver, a category which can be constructed using representations of preprojective or zigzag algebras.  This example is important in part because the braid generators act by Seidel--Thomas spherical twists, which are categorical analogs of reflections.  As a result, there is a sense in which the 2-Calabi--Yau category of a quiver plays a similar role in the higher representation theory of a Kac--Moody braid group as the role played by the root lattice in the representation theory of a Kac--Moody Weyl group.  
The primary goal of this paper is to define a triangulated category which extends this example from Kac-Moody type to that of an arbitrary Coxeter system, and then to study the resulting space of Bridgeland stability conditions.

Let $\Gamma$ be a Coxeter graph with Coxeter group $\bbW := \bbW(\Gamma)$ and Artin--Tits group $\B:= \B(\Gamma)$. 
We first construct a complex manifold $\Upsilon_{\reg} := \Upsilon_{\reg}(\Gamma)$ with a free and proper action of $\bbW$ such that the fundamental group $\pi_1(\Upsilon_{\reg}/\bbW)$ is given by
\[
\pi_1(\Upsilon_{\reg}/\bbW) \cong 
\begin{cases}
\B, &\text{ if } \bbW \text{ is finite;}  \\
\bbZ \times \B, &\text{ if } \bbW \text{ is infinite.}
\end{cases}
\]
The manifold $\Upsilon_{\reg}$ is a mild variant of the complexified hyperplane complement $\Omega_{\reg}$ considered by Van der Lek \cite{VdL_thesis} and studied classically in group theory.  Roughly speaking, the space $\Upsilon_{\reg}$ contains the $\bbW$-orbits of both positive and negative fundamental chambers, whereas $\Omega_{\reg}$ contains only the orbit of the positive chamber.
In particular, when $\bbW$ is finite,  the negative chamber lies in the orbit of the positive chamber, and $\Upsilon_{\reg} = \Omega_{\reg}$ .
On the other hand, when $\bbW$ is infinite, there is a $\bbW$-equivariant homotopy equivalence $\Upsilon_{\reg} \simeq S^1 \times \Omega_{\reg}$, where the $S^1$ component represents a loop around hyperplanes associated to the imaginary cone of $\Gamma$ (see Section \ref{sec:titsimaginarycone} for the definition of the imaginary cone in the generality of arbitrary Coxeter systems).  

When the off-diagonal entries of the Coxeter matrix associated to $\Gamma$ satisfy $m_{s,t}\in \{2,3,\infty\}$, the Coxeter group $\bbW$ is the Weyl group of a symmetric Kac--Moody algebra.  In these cases, foundational work (in increasing generality) of Thomas \cite{thomas_2006}, Bridgeland \cite{bridgeland_2009}, and Ikeda \cite{ikeda2014stability} explains how the space $\Upsilon_{\reg}$ arises in representation theory via the study of stability conditions on an associated 2-Calabi--Yau category $\cD_\Gamma$.  


\subsection*{The main results}
We generalise the construction of $\cD_\Gamma$ to arbitrary Coxeter systems.
An important ingredient in this construction, which should be of independent interest, is the definition of zigzag algebras and preprojective algebras associated to such systems.
These algebras are constructed as objects in a certain fusion category $\cC(\Gamma)$ attached to $\Gamma$; 
the construction is motivated by the earlier works of the first author with Elias \cite{heng2023coxeter, EH_fusionquiver}, where representations of quivers in fusion categories were used to give a Coxeter-theoretic generalisation of Gabriel's theorem.
The zigzag algebra we construct is non-negatively graded and Frobenius, with the trace of degree 2.  Just as in symmetric Kac--Moody type, dg modules over our zigzag algebra form a triangulated, 2-Calabi--Yau category $\cD(\Gamma)$, which we show carries an action of the associated Artin--Tits group $\B$ (whose generators act by a fusion-enhanced version of spherical twists).  
Notably, this category also carries a commuting action of the fusion category $\cC(\Gamma)$, where the action is non-trivial outside of symmetric Kac--Moody type.

The dimension of the entire space of stability conditions $\Stab(\cD(\Gamma))$ is typically larger than the rank of the underlying Coxeter system, which is the dimension of the space $\Upsilon_{\reg}$.  However, there is a closed, complex submanifold $\Stab_{\cC(\Gamma)}(\cD(\Gamma))$ of $\cC(\Gamma)$-equivariant stability conditions (see \cref{defn: C equivariant stab}), whose dimension is equal to the rank of the underlying Coxeter system.
Echoing the situation in symmetric Kac--Moody type, we prove the following.
\begin{theorem}[= \protect{\Cref{thm:maintheorem}}]\label{thm:intromain}
The moduli space of $\cC(\Gamma)$-equivariant stability conditions on $\cD(\Gamma)$ has a distinguished connected component $\Stab^\dagger_{\cC(\Gamma)}(\cD(\Gamma))$ which is a covering space of $\Upsilon_{\reg}$.
Moreover, the deck transformation action of $\pi_1(\Upsilon_{\reg}/\bbW)$ on the composite covering 
\[
\underline{\pi}: \Stab^\dagger_{\cC(\Gamma)}(\cD(\Gamma)) \twoheadrightarrow \Upsilon_{\reg} \twoheadrightarrow \Upsilon_{\reg}/\bbW
\] 
agrees with the actions of $\B$ and $[2\mathbb{Z}]$ by autoequivalences of $\cD(\Gamma)$.
\end{theorem}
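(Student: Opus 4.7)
The plan is to follow the framework developed by Bridgeland \cite{bridgeland_2009} and extended by Ikeda \cite{ikeda2014stability} in the symmetric Kac--Moody setting, adapting it to general Coxeter systems, to the fusion category action, and to the additional $S^1$ factor appearing in infinite type. First, I would construct a distinguished ``standard'' stability condition $\sigma_0 \in \Stab_{\cC(\Gamma)}(\cD(\Gamma))$ whose heart is the natural abelian category of finite-dimensional graded modules over the zigzag algebra, transported along the realisation inside $\cD(\Gamma)$. The simple objects of this heart are the vertex simples $S_i$, and a stability condition with this heart is determined by a free choice of central charges $Z(S_i)$ in the strict upper half-plane; choosing these so that $(Z(S_i))$ lies over a point of the positive fundamental chamber realises $\sigma_0$, and I would define $\Stab^\dagger_{\cC(\Gamma)}(\cD(\Gamma))$ as the connected component containing it. The $\cC(\Gamma)$-equivariance condition cuts the central charge of any object down to a function on a rank-$|\ob(\Gamma)|$ sublattice of the Grothendieck group; the resulting central charge map supplies the projection to $\Upsilon_{\reg}$, and I would verify that its image avoids the hyperplane arrangement by checking that any stability condition in the component assigns a non-zero central charge to every spherical object.

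Next, I would establish that $\underline{\pi}$ is a covering. Local homeomorphism follows from Bridgeland's deformation theorem, once one checks that $\cC(\Gamma)$-equivariance is preserved under small perturbations. For the covering property the main mechanism is the wall-and-chamber structure: crossing a codimension-one wall of $\Stab^\dagger$ corresponds to tilting the heart at a simple spherical object $S_i$, which intertwines with the spherical twist at $S_i$ realising a standard generator of $\B$ as an autoequivalence of $\cD(\Gamma)$. Because $\B$ acts on $\Stab^\dagger$ by autoequivalences covering the $\bbW$-action on $\Upsilon_{\reg}$, one obtains a path-lifting procedure: any path in $\Upsilon_{\reg}$ lifts by successively applying the spherical twists associated to walls crossed, and combined with the local homeomorphism this yields the covering property.

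The deck transformation identification then reduces to tracking the wall-crossing dictionary. A standard generator $\sigma_i \in \B$ corresponds to a small loop in $\Upsilon_{\reg}/\bbW$ around the hyperplane fixed by $s_i$ and acts on $\Stab^\dagger$ by the spherical twist at $S_i$; matching these is essentially a fibre calculation over the positive chamber. In the infinite-type case, the extra $\bbZ$-factor in $\pi_1(\Upsilon_{\reg}/\bbW)$ is generated by a loop around hyperplanes attached to the imaginary cone, which I would match with the shift $[2]$ on $\cD(\Gamma)$ by noting that $[2]$ rotates all central charges by $2\pi$ and commutes with every spherical twist.

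The hardest step is the global covering statement: showing that $\underline{\pi}$ is a genuine covering rather than merely a local homeomorphism onto its image. Already in the symmetric Kac--Moody setting this requires a delicate wall-and-chamber analysis, and in general Coxeter type one must contend with the imaginary cone, whose walls are not locally finite and whose contribution is captured precisely by passing from $\Omega_{\reg}$ to the variant $\Upsilon_{\reg}$. The main technical task is to enumerate the hearts appearing in $\Stab^\dagger$ (including tilts over imaginary-cone walls) and to verify that the resulting chamber decomposition of $\Stab^\dagger$ is exactly the pullback of the chamber decomposition of $\Upsilon_{\reg}$.
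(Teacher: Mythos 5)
Your outline is in the right spirit -- define a distinguished component via a standard heart, use the forgetful central-charge map, and match $\pi_1$ with the braid group via wall-crossing -- but it diverges from the paper's actual strategy in a way that hides the two hardest steps. The paper deliberately does \emph{not} follow the Bridgeland/Bayer path-lifting route that you sketch; instead (cf.\ the remark following the proof outline in Section~\ref{sec:stability}) it establishes the covering property indirectly, by proving that $\widehat{\Br}^{\ST}$ acts freely and properly discontinuously on $\Stab^\dagger_{\cC(\Gamma)}(\cD(\Gamma))$ and that the quotient is homeomorphic to $\Upsilon_{\reg}/\bbW$. This ``quotient first'' approach is chosen precisely because it forces one to confront the faithfulness question head-on. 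Your proposal never addresses this: you assert that the standard generator $\sigma_i$ ``acts on $\Stab^\dagger$ by the spherical twist at $S_i$'' and declares the identification of deck transformations ``essentially a fibre calculation,'' but without knowing that $\Br^{\ST}$ acts \emph{freely} on the component (equivalently, that no nontrivial element of $\Br^{\ST}$ fixes the linear heart), one cannot conclude that the group generated by spherical twists is actually the full deck group rather than a proper quotient. The paper's Proposition \ref{prop:faithfulactiononStab} -- a careful analysis of linear complexes of bimodules using a bimodule-level $t$-structure -- is exactly what fills this gap, and it is in fact a gap also present in Ikeda's original proof (see Remark \ref{rem:Ikeda}).

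The second gap is in your treatment of the imaginary cone. You propose to show the image avoids the hyperplane arrangement by ``checking that any stability condition in the component assigns a non-zero central charge to every spherical object,'' but the hyperplanes $H_v$ for $v$ in the closed imaginary cone $I \setminus \{0\}$ are not dual to classes of spherical objects; the paper handles them by a separate argument combining the support property, compactness, and the fact that $I$ is the convex hull of accumulation rays of real roots (Proposition \ref{prop:imagcone}). Relatedly, your suggestion to enumerate ``tilts over imaginary-cone walls'' is conceptually off: the hyperplanes attached to the imaginary cone are deleted from $\Upsilon_{\reg}$ and play the role of a boundary at infinity which one cannot tilt across, not a wall-and-chamber structure internal to the stability manifold. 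The paper instead shows a closed-and-open argument that $(\cZ^\dagger_\cC)^{-1}(\Upsilon_{\reg})$ is all of $\Stab^\dagger_\cC(\cD)$ (Proposition \ref{prop:imageofZ}), which requires both the real-root argument (a semistable-object construction transported from \cite{BDL_root}) and the imaginary-cone argument above.
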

In the above theorem, $[2\mathbb{Z}]$ is the subgroup of autoequivalences generated by the Serre functor $[2]$.
When $\Gamma$ corresponds to a symmetric Kac--Moody algebra, all stability conditions are automatically $\cC(\Gamma)$-equivariant, in which case our theorem recovers the main theorems of \cite{thomas_2006, bridgeland_2009, ikeda2014stability} (see \cref{cor:symKMthm}).
The action of $\cC(\Gamma)$ is crucial in all other cases, where $\Stab^\dagger_{\cC(\Gamma)}(\cD(\Gamma))$ is a positive codimension submanifold of the non-equivariant stability manifold.

One consequence of our theorem is that the faithfulness of the action of $\B$ on $\cD(\Gamma)$ is equivalent to the simply connectivity of the component $\Stab^\dagger_{\cC(\Gamma)}(\cD(\Gamma))$.  Faithfulness is known when $\bbW$ is a finite Coxeter group (see Section \ref{sec:conjandconsequence}).
It then follows from the $K(\pi,1)$ conjecture -- which is a theorem for finite $\bbW$ -- that the manifold $\Stab^\dagger_{\cC(\Gamma)}(\cD(\Gamma))$ is contractible when $\bbW$ is finite.  
In fact, we conjecture that $\Stab^\dagger_{\cC(\Gamma)}(\cD(\Gamma))$ is always contractible, though the current paper does not attempt to make any progress on that conjecture.  
Outside of finite Coxeter groups, the natural appearance of $\mathbb{Z} \times \B$ is also potentially of interest in light of the relationships between such groups and Garside groups developed in recent work of Haettel--Huang \cite{HH_garside}.


In fact the distinguished connected component $\Stab^\dagger(\cD(\Gamma)) \supseteq \Stab^\dagger_{\cC(\Gamma)}(\cD(\Gamma))$ of the entire (non-equivariant)
stability manifold also admits a Coxeter-theoretic description in terms of an unfolding of the Coxeter diagram $\Gamma$ (see \cref{defn:unfolding}).  We prove the following:


\begin{theorem}[= \protect{\Cref{cor:unfolding}}] \label{thm:introunfolding}
Let $\Gamma$ be a Coxeter graph and $\check{\Gamma}$ be its unfolding.
Then there is a commutative diagram, with vertical maps covering maps and horizontal maps closed embeddings:
\[
\begin{tikzcd}
\Stab_{\cC(\Gamma)}^\dagger(\cD(\Gamma)) \ar[r, hook, "\subseteq"] \ar[d, two heads]
	& \Stab^\dagger(\cD(\Gamma)) \ar[d, two heads]
	\\
\Upsilon_{\reg}(\Gamma)/\bbW(\Gamma) \ar[r, hook, "\underline{\cU}"]
	& \Upsilon_{\reg}(\check{\Gamma})/\bbW(\check{\Gamma}).
\end{tikzcd}
\]
\end{theorem}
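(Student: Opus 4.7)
The plan is to reduce the theorem to \Cref{thm:intromain} applied to the unfolded graph $\check{\Gamma}$. Since $\check{\Gamma}$ is simply-laced, the fusion category $\cC(\check{\Gamma})$ is trivial, every stability condition on $\cD(\check{\Gamma})$ is automatically $\cC(\check{\Gamma})$-equivariant, and \Cref{thm:intromain} produces a covering map $\Stab^\dagger(\cD(\check{\Gamma})) \twoheadrightarrow \Upsilon_{\reg}(\check{\Gamma})/\bbW(\check{\Gamma})$. The task then reduces to identifying $\Stab^\dagger(\cD(\Gamma))$ with $\Stab^\dagger(\cD(\check{\Gamma}))$ in a manner compatible with the diagram.

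The core of the argument is an equivalence of triangulated categories $\cD(\Gamma) \simeq \cD(\check{\Gamma})$, realised by unfolding the zigzag algebra $Z(\Gamma) \in \cC(\Gamma)$ to the ordinary zigzag algebra $Z(\check{\Gamma})$ once its $\cC(\Gamma)$-enrichment is forgotten. Under such an equivalence, each simple $Z(\Gamma)$-module indexed by a vertex $s \in \Gamma$ decomposes as a direct sum of simple $Z(\check{\Gamma})$-modules indexed by the unfolded vertices lying above $s$, so $K_0(\cD(\check{\Gamma}))$ has rank equal to that of $\check{\Gamma}$, matching $\dim \Upsilon_{\reg}(\check{\Gamma})$. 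Moreover, the generating spherical twist for $s$ factors as a product of spherical twists on $\cD(\check{\Gamma})$ at the corresponding unfolded vertices, inducing a canonical embedding $\B(\Gamma) \hookrightarrow \B(\check{\Gamma})$ consistent with the inclusion $\bbW(\Gamma) \hookrightarrow \bbW(\check{\Gamma})$.

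The bottom horizontal map $\underline{\cU}$ is induced by a natural closed embedding $\Upsilon_{\reg}(\Gamma) \hookrightarrow \Upsilon_{\reg}(\check{\Gamma})$ realising $\Upsilon_{\reg}(\Gamma)$ as the fixed locus of the folding symmetry, compatibly with the group inclusion. Commutativity of the square then amounts to the statement that a stability condition on $\cD(\Gamma)$ is $\cC(\Gamma)$-equivariant precisely when the corresponding stability condition on $\cD(\check{\Gamma})$ has central charge and heart invariant under the folding symmetry; and by \Cref{thm:intromain} applied to $\Gamma$, such stability conditions project through the right-hand covering onto the image $\underline{\cU}(\Upsilon_{\reg}(\Gamma)/\bbW(\Gamma))$.

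The main obstacle is the equivalence $\cD(\Gamma) \simeq \cD(\check{\Gamma})$ together with the matching of distinguished components. While the unfolding at the level of the zigzag algebra is explicit, one must verify that the $\B(\Gamma)$-orbit of the canonical heart on $\cD(\Gamma)$ corresponds bijectively, under this equivalence, to the $\B(\check{\Gamma})$-orbit of the canonical heart on $\cD(\check{\Gamma})$, and that the dg enhancement and $2$-Calabi--Yau structures transport correctly. Granting these compatibilities, the covering property of the right vertical arrow and the commutativity of the diagram follow directly from \Cref{thm:intromain} applied to $\check{\Gamma}$.
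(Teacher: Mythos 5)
Your high-level plan---reduce to \Cref{thm:intromain} for $\check{\Gamma}$ via an equivalence $\cD(\Gamma)\simeq\cD(\check{\Gamma})$---is the same route the paper takes, and the existence of that equivalence (proved by matching indecomposables $P_s\otimes E\mapsto P_{(s,E)}$ via \cref{graded hom space}) is indeed the crux. However, two of the intermediate mechanisms you invoke do not hold, and they cannot be patched without essentially rederiving what the paper does.

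First, the description of the embedding $\Upsilon_{\reg}(\Gamma)\hookrightarrow\Upsilon_{\reg}(\check{\Gamma})$ as the ``fixed locus of the folding symmetry'' is not correct. The folding $f\colon\check{\Gamma}\to\Gamma$, $(s,E)\mapsto s$, is a many-to-one map, not an automorphism of $\check{\Gamma}$, and there is no finite group of diagram automorphisms whose fixed set recovers the embedded copy of $\Upsilon_{\reg}(\Gamma)$. What actually cuts out the image is the $\bbC$-linear subspace $\Hom^{\cC(\Gamma)}_\bbZ(\Lambda(\check{\Gamma}),\bbC)$ defined by the Frobenius--Perron proportionality constraints $Z(\alpha_{(s,E)})=\FPdim(E)\,Z(\alpha_{(s,\1)})$. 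For non-trivial $\cC(\Gamma)$ the coefficients $\FPdim(E)$ are irrational (e.g.\ the golden ratio in the $I_2(5)\rightsquigarrow A_4$ example of \cref{eg:pentagonunfold}), so this subspace is not preserved by, let alone the fixed locus of, any group acting via graph automorphisms on $\check{\Gamma}$. Consequently the characterization ``$\cC(\Gamma)$-equivariant $\iff$ invariant under the folding symmetry,'' which you use to deduce commutativity of the square, has no content as stated; the paper instead obtains commutativity directly from the explicit isomorphism $\cU\colon\Lambda(\Gamma)\to\Lambda(\check{\Gamma})$ induced by the equivalence, together with the description of $\cC(\Gamma)$-equivariant central charges as $K_0(\cC(\Gamma))$-module maps.

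Second, you assert ``a canonical embedding $\B(\Gamma)\hookrightarrow\B(\check{\Gamma})$'' induced by the factorization of spherical twists. The homomorphism $\sigma_s\mapsto\prod_{E}\sigma_{(s,E)}$ is the LCM-homomorphism of Crisp, but its injectivity is an \emph{open} problem, and the paper explicitly avoids relying on it (see the discussion in \cref{sec:LCM} and the Related Work section). What the proof actually requires, and proves, is injectivity of the induced map $\psi\colon\bbW(\Gamma)\to\bbW(\check{\Gamma})$, deduced from faithfulness of both geometric representations (\cref{cor:Wcontragradientfaithful}). The bottom horizontal arrow $\underline{\cU}$ is then a closed embedding because $\psi$ is injective and both $\bbW$-actions on the hyperplane complements are free---no Artin group injectivity is used. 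Finally, a minor point: the equivalence does not take a simple $\zig(\Gamma)$-module at $s$ to a direct sum of $\zig(\check{\Gamma})$-simples over $f^{-1}(s)$; it is a bijection on the indecomposable projectives $P_s\otimes E\<k\>\leftrightarrow P_{(s,E)}\<k\>$, and the matching of hom spaces in \cref{graded hom space} is exactly what forces the definition of $\check{\Gamma}$.
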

The embeddings $\underline{\cU}$ of hyperplane complements obtained above are interesting in their own right (see \cref{fig:pentagonunfold} for an example), as hyperplane complements do not generally behave well under subspace considerations.
Furthermore, these embeddings turn out to be closely related to embeddings of Salvetti complexes associated to LCM-homomorphisms \cite[\S 3]{Crisp99}, and our theorem has interesting group theoretical consequences for such homomorphisms; see Section \ref{sec:LCM} for discussion.
An in-depth study of such embeddings will appear in upcoming work with Paris.

\subsection*{Related work}
A stability condition on a triangulated category gives rise, via its mass functional, to a metric on the category (see, e.g.\ \cite{neeman_metric} for a discussion of metrics on triangulated categories).
Mapping class groups of surfaces have $K(\pi,1)$s which are constructed as moduli space of metrics on the surface, and our theorems above may be thought of as theorems in a somewhat analogous ``Teichmuller theory of Artin--Tits groups''.

Categorical actions of arbitrary Artin--Tits groups are ubiquitous in the mathematics associated to Soergel bimodules and the Hecke category, where the Artin--Tits group actions are given by Rouquier complexes \cite{rouquier_2006}, and also in the theory of categorified quantum groups initiated by Khovanov--Lauda and Rouquier \cite{KL_klralgI, KL_klralgII, rouquier_2kacmoody}.  The current work suggests that much of the mathematics of categorified quantum groups, which includes geometric structures such as Lusztig--Nakajima quiver varieties, might admit a fusion-enhanced generalisation to arbitrary Coxeter systems.

Our current approach to constructing the category $\cD(\Gamma)$ begins with a fusion category and constructs $\cD(\Gamma)$ from it.  Since the fusion category is present from the outset, this approach is particularly convenient for the study of fusion-equivariant structures.  However, our approach has the disadvantage of not making direct contact with the Hecke category, where stability conditions are somewhat less understood.  In forthcoming work with Libedinsky, we will construct our category $\cD(\Gamma)$ -- and the space of fusion-equivariant stability conditions on it -- directly from the Hecke category.  In that approach an important role is played by a categorification $\mathcal{J}_\bbW$ of Lusztig's asymptotic Hecke Algebra, whose monoidal structure is shown to be rigid and pivotal in \cite{EW_leftschetz}. Algebra objects in fusion categories  in relation to preprojective algebras also appeared earlier in \cite{MOV_quiver, cooper_phdthesis}, and more recently in \cite{GP_frobalgTL}.

Fusion-equivariant stability conditions have also appeared recently in related settings, see \cite{Heng_PhDthesis, QZ_fusion-stable, DHL_fusionstab}.  
Of note, \cite{QZ_fusion-stable} also study fusion-equivariant structures, both in relation to stability conditions and in relation to cluster-tilting theory.  In particular, they also 
prove a version of \cref{thm:intromain} in the case of finite type Coxeter systems.   In their approach, they study a space of fusion-equivariant stability conditions on a 2-Calabi--Yau category of finite type $ADE$, and show that considering fusion-equivariant structures has the effect of ``folding'' at the level of Coxeter diagrams and root systems.  As a result, our stability space constructions and theirs end up being essentially the same in finite types.  One benefit of the approach we take here is that our statements about spherical twist groups of arbitrary Coxeter type do not rely on the injectivity of LCM-homomorphisms between Artin--Tits groups (which is not known in general).

\subsection*{Outline}
A more detailed outline of the paper is as follows.  
\begin{itemize}
\item The construction of $\Upsilon_{\reg}$ for arbitrary Coxeter system is given in Section \ref{sec:coximagconehyperplane}, and it generalises Ikeda's construction for symmetric Kac--Moody types \cite{ikeda2014stability}. This space is constructed using the notion of imaginary cones for Coxeter systems introduced in \cite{Hee1993} and studied in \cite{Dyer_imaginaryreflect, DHR_imaginarycone}.  The main result in Section \ref{sec:coximagconehyperplane} is stated in \cref{cor:homotopyequivalence}.  
\item Section \ref{sec:fusion1} begins with the background on fusion categories and fusion quivers, which we then use to define zigzag and preprojective algebras.
\item The construction of fusion categories associated to Coxeter systems and the corresponding zigzag algebras are given in Section \ref{zigzagCoxeter}, which also contains the definitions of various triangulated categories of interest.
\item In Section \ref{sec: action and categorification}, we introduce the Artin--Tits group action on $\cD(\Gamma)$, and we study its decategorification in Section \ref{sec:Grothendieckandlattice}.
\item Finally, in Section \ref{sec:stability}, we introduce the space of fusion-equivariant stability conditions and prove our main \cref{thm:intromain} (= \cref{thm:maintheorem}).  Our proof strategy largely follows that of Ikeda in \cite{ikeda2014stability}, though along the way we correct some errors and fill a significant gap in loc.\ cit.\  (see e.g.\ \cref{rem:Ikeda}).  
\item Section \ref{sec:unfolding} introduces unfolding and proves \cref{thm:introunfolding} (= \cref{cor:unfolding}). Relations to LCM-homomorphisms are discussed at the end of this section.
\end{itemize}

\subsection*{Notation and conventions}
Throughout all categories considered are essentially small and linear over an algebraically closed field.  
All subcategories we consider are taken to be full.  An additive subcategory will be assumed closed under taking direct summands; in particular any additive subcategory of an abelian category is automatically idempotent complete.  By a \emph{locally finite} abelian category we mean a finite-length abelian category with finite-dimensional hom spaces; it is moreover \emph{finite} if it contains only finitely many isomorphism classes of simple objects and contains enough projectives (so that it is equivalent to the category of finitely-generated modules over a finite-dimensional algebra).
A set of representatives of isomorphism classes of simple objects in an abelian category $\cA$ will be denoted by $\Irr(\cA)$.

Given a finite abelian category $\cA$, a ($\bbZ$-)\emph{graded object} $M$ of $\cA$ is an object $M \in \cA$ together with a decomposition $M = \bigoplus_{j \in \bbZ} M_i \in \cA$, with all but finitely many $M_i = 0$.  The \emph{category of graded objects of $\cA$}, denoted by $g\cA$, is the category whose objects are graded objects of $\cA$ and whose morphisms are the grading preserving maps.
Thus $g\cA$ is a locally finite abelian category.

\subsection*{Acknowledgements}
We would like to thank Arend Bayer, Asilata Bapat, Rachael Boyd, Tom Bridgeland, Hannah Dell, Anand Deopurkar, Ben Elias, Joe Grant, Ailsa Keating, Bernhard Keller, Nico Libedinsky, Giovanni Paolini, Luis Paris, Daniel Tubbenhauer,  Michael Weymss, Geordie Williamson, Oded Yacobi, and Yu Qiu for useful conversations.  We also thank the Oberwolfach Research Institute for Mathematics and the Institute for Computational and Experimental Research in Mathematics (ICERM), where portions of this work were carried out.  A.L. acknowledges support from France Australia Mathematical Sciences and Interactions and the Australian Research Council.

\section{Root systems, imaginary cones and hyperplane complements} \label{sec:coximagconehyperplane}
This section contains basic information about root systems and imaginary cones associated to Coxeter groups.  

\begin{definition}
Let $\Gamma_0$ be a finite set.
A \emph{Coxeter matrix} is a symmetric square matrix $(m_{s,t})_{s,t \in \Gamma_0}$ with diagonal $m_{s,s} = 1$ for all $s\in \Gamma_0$ and $m_{s,t} =m_{t,s} \in \{2,3,..., \infty\}$ for $s \neq t$. 
The corresponding \emph{Coxeter graph} $\Gamma$ of a Coxeter matrix $(m_{s,t})_{s,t \in \Gamma_0}$ is the simplicial graph with set of vertices $\Gamma_0$ and each pair of vertices $s$ and $t$ with $m_{s,t} \geq 3$ is connected by an edge labelled by $m_{s,t}$.  When $m_{s,t} = 2$, the vertices $s$ and $t$ are not connected by an edge.
The set of edges of $\Gamma$ is denoted by $\Gamma_1$.
The \emph{Coxeter group} of a Coxeter graph $\Gamma$ is the group with the group presentation:
\[
\bbW(\Gamma) = \< s \in \Gamma_0  \mid s^2 = 1, (st)^{m_{s,t}} = 1\>.
\]
By convention, when $m_{s,t} = \infty$ there are no relation between $s$ and $t$.
The pair $(\bbW(\Gamma),\Gamma_0)$ is known as a \emph{Coxeter system}.
%

\noindent
The \emph{Artin--Tits group} $\B(\Gamma)W$ is the group with the group presentation:
\[
\B(\Gamma) = \< \sigma_s \text{ for each } s \in \Gamma_0  \mid \underbrace{\sigma_s \sigma_t \sigma_s ...}_{m_{s,t} \text{ times}} = \underbrace{\sigma_t \sigma_s \sigma_t ...}_{m_{s,t} \text{ times}} \>.
\]
As before, there are no relation between $\sigma_s$ and $\sigma_t$ when $m_{s,t} = \infty$.
\end{definition}
Recall that when $\bbW(\Gamma)$ is a finite group, we say that $\Gamma$ is \emph{finite-type}.
The irreducible finite Coxeter groups are classified by the Coxeter--Dynkin diagrams (A,B=C,D,E,F,G,H,I).
We will say that a Coxeter graph is \emph{symmetric Kac--Moody type} if $m_{s,t}\in \{2,3,\infty\}$ for all pairs $s\neq t\in \Gamma_0$.

\subsection{The Tits cone and imaginary cone} \label{sec:titsimaginarycone}

To each Coxeter graph $\Gamma$ associated the real vector space
\begin{equation} \label{eq:realvec}
\bbR\Lambda := \bigoplus_{s \in \Gamma_0} \bbR\{\alpha_s\}.
\end{equation}
We denote by $\bbR_{\geq 0} \Lambda := \bigoplus_{s \in \Gamma_0} \bbR_{\geq 0}\{\alpha_s\}$  the convex cone non-negatively spanned by all the $\alpha_s$.
We equip $\bbR\Lambda$ with a symmetric bilinear form $B(-,-): \bbR\Lambda \times \bbR\Lambda \ra \bbR$
\[
(\alpha_s, \alpha_t) := -2\cos(\pi/m_{s,t}),
\]
where by convention $-2\cos(\pi/\infty) := -2$.

The Coxeter group $\bbW := \bbW(\Gamma)$ acts linearly on $\bbR\Lambda$, with the generator $s \in \Gamma_0$ acting via 
\[
s\cdot v := v - B(\alpha_s, v)\alpha_s.
\]
This representation is known as the Tits representation (or the symmetric geometric representation) of $\bbW$, and it is a fundamental result in Coxeter theory that it is a faithful representation of $\bbW$. 
\begin{definition}
The set of \emph{simple roots} is given by $\{ \alpha_s \}_{s \in \Gamma_0}$, and the set of \emph{real roots} $\Phi$ is defined as the $\bbW$-orbit of the set of simple roots.
The set of \emph{positive real roots} is defined as $\Phi^+ := \Phi \cap \bbR_{\geq 0}\Lambda$ and the set of negative roots is defined as $\Phi^- := -\Phi^+$.
Note that the set of real roots $\Phi$ decomposes into $\Phi^+ \sqcup \Phi^-$.
\end{definition}

\begin{remark}
What we have called ``real roots" are sometimes simply called  ``roots" elsewhere in the Coxeter theory literature;  we use the adjective ``real'' to distinguish them from ``imaginary roots," which play an important role in Kac--Moody root systems and which are not in the $\bbW$ orbit of the simple roots.
\end{remark}

Following Dyer \cite{Dyer_imaginaryreflect}, we define the closed imaginary cone associated to $\Gamma$:
\begin{definition}
Let $K:= \{ v \in \bbR_{\geq 0}\Lambda \mid B(v,\alpha_s) \leq 0 \text{ for all } s \in \Gamma_0\}$.
The \emph{closed imaginary cone} $I$ is defined to be the closure of the $\bbW$-orbit on $K$
\[
I := \overline{\bbW\cdot K} \subseteq \bbR\Lambda.
\]
\end{definition}

\begin{proposition}[\protect{\cite{Dyer_imaginaryreflect}}] \label{prop:imagcone}
The closed imaginary cone $I$ satisfies the following properties:
\begin{enumerate}
\item $I = \{0\}$ if and only if the Coxeter group $\bbW$ is finite. \label{item:imagzeroifffinite}
\item $I \subseteq \bbR_{\geq 0}\Lambda$. \label{item:imagpositivecone}
\item When $I\neq \{0\}$, $I$ agrees with the convex hull of the limit rays of the set of rays $\{\bbR_{\geq 0}\alpha \mid \alpha \in \Phi^+ \}$. \label{item:imagconvexlimit}
\end{enumerate}
In particular, $I$ is a closed convex cone in $\bbR_{\geq 0}\Lambda$.
\end{proposition}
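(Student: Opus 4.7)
The plan is to deduce the three claims from standard properties of the Tits form together with the imaginary cone theory developed by Dyer \cite{Dyer_imaginaryreflect} and Dyer--Hohlweg--Ripoll \cite{DHR_imaginarycone}; I will follow those references for the heaviest parts and sketch the key ideas here.

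For (1), I would use the classical equivalence that $\bbW$ is finite if and only if $B$ is positive definite. If $B$ is positive definite and $v = \sum_s c_s \alpha_s \in K$ with $c_s \geq 0$, then
\[
B(v,v) \;=\; \sum_s c_s B(v, \alpha_s) \;\leq\; 0,
\]
so $v = 0$, whence $K = I = \{0\}$. Conversely, for infinite $\bbW$ one must exhibit a non-zero element of $K$: in the affine case any strictly positive generator of the radical of $B$ lies in $K$, while in the indefinite case $B$ has a negative direction meeting the interior of $\bbR_{\geq 0}\Lambda$, again producing such an element. The reducible case reduces to the irreducible one componentwise.

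For (2), since $\bbR_{\geq 0}\Lambda$ is closed, it is enough to show $\bbW \cdot K \subseteq \bbR_{\geq 0}\Lambda$. I would prove the stronger statement
\[
w \cdot v - v \;\in\; \bbR_{\geq 0}\Phi^+ \qquad \text{for all } v \in K,\ w \in \bbW,
\]
by induction on $\ell(w)$. Given a reduced factorisation $w = sw'$, one computes
\[
wv - v \;=\; (w' v - v) - B(\alpha_s, w' v)\,\alpha_s,
\]
and the coefficient $-B(\alpha_s, w' v) = -B(w'^{-1}\alpha_s, v)$ is non-negative because $w'^{-1}\alpha_s \in \Phi^+$ (by the exchange condition, since $\ell(sw') > \ell(w')$) and $B(\alpha_t, v) \leq 0$ for every simple root, by $v \in K$. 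Adding this non-negative multiple of $\alpha_s$ to the inductive contribution $w'v - v \in \bbR_{\geq 0}\Phi^+$ closes the induction.

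For (3), I would establish both inclusions between $I$ and the convex hull $E$ of limit rays of $\{\bbR_{\geq 0}\alpha : \alpha \in \Phi^+\}$. The inclusion $E \subseteq I$ comes from the observation that any accumulation direction of $\Phi^+$ can, after applying a suitable element of $\bbW$, be represented by a vector in $K$; concretely, one moves sequences of normalised positive roots into the fundamental antichamber and passes to the closure. The reverse inclusion $I \subseteq E$ is the more delicate direction: one must show that every extreme ray of $K$ is itself a limit ray of positive real roots, which is done by constructing sequences of Coxeter elements whose action on the simple roots produces normalised directions accumulating onto the boundary of $K$. This last step is the main obstacle, and is where I would rely on the detailed analysis carried out in \cite{DHR_imaginarycone}. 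The final sentence of the proposition then follows from convexity and closedness of $E$.
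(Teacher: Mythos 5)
Your proposal follows the same overall strategy as the paper's (deliberately terse) proof, which defers to Dyer for the substance: part (1) from the equivalence of finiteness of $\bbW$ with positive definiteness of $B$, part (2) from the reflection formula $s\cdot v - v = -B(\alpha_s,v)\alpha_s$, and part (3) cited directly to Dyer. Your expanded inductive argument for (2), showing $w\cdot v - v \in \bbR_{\geq 0}\Phi^+$ for $v\in K$ via the formula $wv - v = (w'v-v) - B(w'^{-1}\alpha_s,v)\alpha_s$ and the non-positivity of $B(\beta,v)$ for positive roots $\beta$, is correct and fills in what the paper leaves to the reader.

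One imprecision worth flagging in (1): your sketch of the converse direction (``$\bbW$ infinite $\implies K \neq \{0\}$'') in the indefinite case asserts that a negative direction of $B$ meeting the interior of $\bbR_{\geq 0}\Lambda$ ``produces such an element.'' This does not follow as stated, since membership in $K$ requires $B(v,\alpha_s)\leq 0$ for \emph{every} simple $s$, not merely $B(v,v)\leq 0$; a vector in $\bbR_{>0}\Lambda$ with $B(v,v)<0$ need have only one such pairing negative. The standard repair is Perron--Frobenius applied to the irreducible non-negative matrix $2I - (B(\alpha_s,\alpha_t))_{s,t}$: its strictly positive eigenvector $w$ (for the spectral radius $\rho$) satisfies $B(\alpha_s,w)=(2-\rho)w_s\leq 0$ for all $s$ exactly when $\rho\geq 2$, i.e.\ when $B$ fails to be positive definite, and this handles both the affine ($\rho=2$) and indefinite ($\rho>2$) cases at once. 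Since both you and the paper ultimately cite Dyer for these details, this is not a structural flaw in the plan, but the reasoning as written would not close that step.
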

\begin{proof}
\eqref{item:imagzeroifffinite} follows directly from the fact that the bilinear form $B(-,-)$ is positive definite if and only $\bbW$ is finite.
\eqref{item:imagpositivecone} follows from the fact that
\[
s\cdot v - v = -B(\alpha_s, v)\alpha_s
\]
for all Coxeter generators $s$ (see also proof of \cite[Proposition 3.2]{Dyer_imaginaryreflect}).
\eqref{item:imagconvexlimit} is \cite[Theorem 5.4]{Dyer_imaginaryreflect}; see also \cite[Section 12]{Dyer_imaginaryreflect}.
\end{proof}

Let $J \subset \Gamma_0$ and consider the standard parabolic subgroup $\bbW_J \subseteq \bbW$ associated to $J$.
We view $\bbR\Lambda_J := \bigoplus_{s' \in J} \bbR\alpha_{s'}$ as a subspace of $\bbR\Lambda$ with the restricted bilinear form, where the positive real roots $\Phi^+_J$ and imaginary cone $I_J$ associated to $\bbW_J$ are then subsets of $\Phi^+$ and $I$ respectively.
The following lemma is proven by Dyer (the final statement is contained in the proof of the lemma):
\begin{lemma}[{\protect{\cite[Lemma 3.3]{Dyer_imaginaryreflect}}}] \label{lem:imagcontain}
Let $J \subseteq \Gamma_0$. 
We have $I_J \subseteq I$.
Moreover, if $v \in I$ such that $v = \sum_{s' \in J \subseteq \Gamma_0} k_{s'} \alpha_{s'}$ with all $k_{s'} \neq 0$, then $v \in I_J$.
\end{lemma}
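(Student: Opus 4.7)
The plan is to treat the two assertions separately. The inclusion $I_J \subseteq I$ will follow directly from a comparison of fundamental domains $K_J \subseteq K$, while the converse support-type statement requires the limit-ray characterization of Proposition \ref{prop:imagcone}\eqref{item:imagconvexlimit} and some technical work.

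First I would establish $I_J \subseteq I$ by showing $K_J \subseteq K$. The only property of $K$ not already enforced by membership in $K_J$ is $B(v, \alpha_t) \leq 0$ for $t \in \Gamma_0 \setminus J$, and this reduces to the one-line computation
\[
B(v, \alpha_t) = \sum_{s' \in J} k_{s'} B(\alpha_{s'}, \alpha_t) = -2\sum_{s' \in J} k_{s'} \cos(\pi/m_{s',t}) \leq 0,
\]
since $s' \neq t$ forces $m_{s',t} \geq 2$ and hence $\cos(\pi/m_{s',t}) \geq 0$. Combined with $\bbW_J \subseteq \bbW$, this gives $\bbW_J \cdot K_J \subseteq \bbW \cdot K$, and taking closures delivers $I_J \subseteq I$.

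For the second statement, the case $v = 0$ is trivial, so assume $v \neq 0$; by Proposition \ref{prop:imagcone}\eqref{item:imagpositivecone} all $k_{s'}$ are then positive, and Proposition \ref{prop:imagcone}\eqref{item:imagconvexlimit} applies. My plan is to write $v = \sum_i c_i r_i$ with $c_i > 0$ and each $r_i$ a nonzero vector on a limit ray of $\{\bbR_{\geq 0}\alpha : \alpha \in \Phi^+\}$. Since every $r_i$ lies in $\bbR_{\geq 0}\Lambda$ while $v$ vanishes on $\Gamma_0 \setminus J$, the non-negativity of the decomposition forces each $r_i$ to be supported in $J$. It would then suffice to show each such $r_i$ already lies in $I_J$, after which the conclusion follows since $I_J$ is a closed convex cone.

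The hard part will be this last implication: a limit ray of positive real roots supported on $J$ is in fact a limit ray of roots in $\Phi_J^+$. My strategy would be to exploit the standard fact $\Phi^+ \cap \bbR\Lambda_J = \Phi_J^+$ together with a projective approximation argument: writing $r = \lim_n \lambda_n \alpha_n$ with $\alpha_n \in \Phi^+$ and $\lambda_n > 0$, the components of $\alpha_n$ outside $J$ must vanish projectively, which one should be able to leverage to modify the sequence into one with $\alpha_n \in \Phi_J^+$. An alternative route, closer to Dyer's original approach in \cite[Lemma 3.3]{Dyer_imaginaryreflect}, goes back to the definition $I = \overline{\bbW \cdot K}$: given $v = \lim_n w_n u_n$ with $w_n \in \bbW$ and $u_n \in K$, one successively applies reflections by generators in $\Gamma_0 \setminus J$ to simplify the pair and strip away out-of-$J$ contributions, ultimately producing an approximating sequence inside $\bbW_J \cdot K_J$ witnessing $v \in I_J$.
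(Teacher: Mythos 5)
The paper gives no proof of this lemma at all: it is cited outright from Dyer \cite[Lemma 3.3]{Dyer_imaginaryreflect}, with a parenthetical remark that the last clause is established inside Dyer's proof rather than in his statement. So there is no in-paper argument to compare against; you are attempting a fresh proof, and it should be judged on its own completeness.

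Your argument for the inclusion $I_J\subseteq I$ is correct and essentially the shortest route: showing $K_J\subseteq K$ by the one-line check that $B(v,\alpha_t)\le 0$ for $t\notin J$ (using that $B(\alpha_{s'},\alpha_t)\le 0$ for $s'\ne t$), combining with $\bbW_J\subseteq\bbW$, and taking closures. No issues there. The Carath\'eodory reduction in the second half is also fine: the limit rays meet the unit sphere in a compact set inside $\bbR_{\geq 0}\Lambda$, so $I$ consists of finite non-negative combinations of vectors on limit rays, and the support argument then forces each $r_i$ to lie in $\bbR_{\geq 0}\Lambda_J$.

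Where the proposal breaks down is exactly where you flag ``the hard part,'' and you do not in fact close it. The claim that a limit ray of $\{\bbR_{\geq 0}\alpha : \alpha\in\Phi^+\}$ supported on $J$ must lie in $I_J$ is the genuine content of Dyer's lemma, and neither of your two sketches is an argument. In the first, ``the components of $\alpha_n$ outside $J$ vanish projectively'' does not let you replace $\alpha_n$ by roots in $\Phi_J^+$: the truncation of a real root to $\bbR\Lambda_J$ is not a root, and there is no a priori reason a nearby root of the parabolic subsystem exists. In the second, ``successively applying reflections in $\Gamma_0\setminus J$ to strip away out-of-$J$ contributions'' ignores that such reflections can increase other coefficients, so you would need a length/normal-form induction (or a different characterization of $I$, such as $I=\{v\in\bbR_{\geq 0}\Lambda : \bbW\cdot v\subseteq \bbR_{\geq 0}\Lambda\}$, which would make the lemma immediate via $\bbW_J$-invariance of support but which the paper never records) to make the procedure terminate or converge. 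As written, the second assertion of the lemma is not proven.
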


Let $\Bbbk$ be either $\bbR$ or $\bbC$, where we regard $\bbC = \bbR \oplus i\bbR$ as a real vector space.
Denote
\begin{equation}\label{eqn:dualnotation}
(\bbR\Lambda)^*_\Bbbk := \Hom_\bbR(\bbR\Lambda, \Bbbk).
\end{equation}
Note that the action of $\bbW$ on $\bbR\Lambda$ induces the contragradient action on $(\bbR\Lambda)^*_\Bbbk$ with
\[
(w\cdot Z)(v) = Z(w^{-1}\cdot v), \qquad \text{for each } w\in \bbW.
\]
\begin{definition}\label{defn:titscone}
Let $C_\bbR := \{ Z \in (\bbR\Lambda)^*_\bbR \mid Z(\alpha_s) >0 \text{ for all } s \in \Gamma_0 \}$ with closure $\overline{C}_\bbR = \{ Z \in (\bbR\Lambda)^*_\bbR \mid Z(\alpha_s) \geq 0 \text{ for all } s \in \Gamma_0 \}$.
The (real) \emph{Tits cone} is defined as the $\bbW$-orbit of $\overline{C}_\bbR$
\[
T_\bbR := \bbW \cdot \overline{C}_\bbR.
\]
The \emph{interior} of the Tits cone is denoted by $T_\bbR^{\circ}$.
\end{definition}
\begin{remark}
Note that sometimes the interior $T_\bbR^\circ$ of $T_\bbR$ is also called the Tits cone. 
Here we choose to work with the convention that the orbit space is itself the Tits cone.
\end{remark}

The following statements are classical and can be found in \cite{Vinberg_discretereflection, VdL_thesis, Bourbaki_456}; see also the survey \cite{Paris_Kpi1survey}.
\begin{proposition} \label{prop:titscone}
The following statements hold:
\begin{enumerate}
\item $T_\bbR$ is a convex cone.
\item $T_\bbR^\circ$ is non-empty and $\bbW$ acts on $T_\bbR^\circ$ properly discontinuously; namely for all $Z \in T_\bbR$, there exists a open neighbourhood $U_Z$ of $Z$ such that $w \cdot U_Z \cap U_Z \neq \emptyset$ if and only if $w \cdot Z = Z$.
\item For each $Z \in T_\bbR$, its stabilisers $\bbW_Z := \{ w \in \bbW \mid w\cdot Z = Z\}$ is not $\{\id\}$ if and only if $Z \in H_{\alpha, \bbR} := \{ Z \in T_\bbR \mid Z(\alpha) = 0\}$ for some $\alpha \in \Phi^+$.
\item $Z \in T_\bbR^\circ$ if and only if its stabilisers $\bbW_Z$ is finite. In particular, $Z$ is in $\overline{C}_\bbR \cap T_\bbR^\circ$ if and only if the set $\{ s \in \Gamma_0 \mid Z(\alpha_s) = 0\}$ generates a finite parabolic subgroup of $\bbW$.
\item $T_\bbR^\circ = T_\bbR = (\bbR\Lambda)^*_\bbR$ if and only if $\bbW$ is a finite Coxeter group.
\end{enumerate}
As such, the set of hyperplanes $H_{\alpha,\bbR}$ is locally-finite in $T_\bbR$.
Moreover, the action of $\bbW$ on the (real) hyperplane complement $T_\bbR \setminus \cup_{\alpha \in \Phi^+} H_{\alpha,\bbR}$ is free and properly discontinuous, with fundamental domain given by $C_\bbR$.
\end{proposition}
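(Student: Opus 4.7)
The plan is to reduce every item to two foundational facts: Tits' lemma on stabilizers of chamber points, and the chamber tiling of the Tits cone. First I would establish Tits' lemma: for any $Z \in \overline{C}_\bbR$, the stabilizer $\bbW_Z$ equals the standard parabolic subgroup $\bbW_{J(Z)}$ generated by $J(Z) := \{s \in \Gamma_0 : Z(\alpha_s) = 0\}$. The standard proof is by induction on the length of $w \in \bbW_Z$ using the exchange/deletion condition in $\bbW$. From this, it follows immediately that the open chambers $w \cdot C_\bbR$ are pairwise disjoint, so $\overline{C}_\bbR$ is a strict fundamental domain for the $\bbW$-action on $T_\bbR$.

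For the convexity claim (1), I would proceed by induction on word length: for each reduced expression $w = s_{i_1}\cdots s_{i_k}$, show that $\bigcup_{j=0}^{k}(s_{i_1}\cdots s_{i_j})\cdot \overline{C}_\bbR$ is convex. The base case $k = 1$ reduces to the obvious statement that $\overline{C}_\bbR \cup s \cdot \overline{C}_\bbR$ is convex, since the second chamber is obtained from the first by reflection through the wall $H_{\alpha_s, \bbR}$ and the union lives on both sides of that single wall. The inductive step uses that the already-established shorter-word convex set and $\overline{C}_\bbR$ share a common facet of codimension one.

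With convexity and the chamber tiling in hand, (2)--(4) follow swiftly. Statement (3) for $Z \in \overline{C}_\bbR$ is precisely Tits' lemma (non-triviality of $w \in \bbW_Z$ forces $J(Z) \neq \varnothing$, so $Z$ lies on some simple wall), and the general case follows by $\bbW$-equivariance. For (4), the equivalence $Z \in T_\bbR^\circ \iff \bbW_Z$ is finite reduces, after conjugating into $\overline{C}_\bbR$, to the classical fact that a standard parabolic $\bbW_J$ is finite iff the restriction of $B$ to $\bbR\Lambda_J$ is positive definite. Proper discontinuity (2) is then automatic: a sufficiently small neighbourhood $U_Z$ of any $Z \in T_\bbR^\circ$ meets only the finitely many chambers containing $Z$, and hence only finitely many group elements $w$ can satisfy $w \cdot U_Z \cap U_Z \neq \emptyset$. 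For (5), if $\bbW$ is finite then $B$ is positive definite and the chambers tile all of $(\bbR\Lambda)^*_\bbR$; conversely, if $T_\bbR^\circ = (\bbR\Lambda)^*_\bbR$ then by (4) the stabiliser of $0$ — which is all of $\bbW$ — must be finite.

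The concluding assertions are consequences of (2) and (3): local finiteness of the hyperplane arrangement in $T_\bbR$ is equivalent to the local finiteness of the chamber tiling on $T_\bbR^\circ$, freeness of the action on the hyperplane complement is the contrapositive of (3), and the statement that $C_\bbR$ is a fundamental domain is a restatement of the strict-fundamental-domain property applied to the open chamber. The main obstacle is the simultaneous proof of convexity and tiling, which is the only step requiring real geometric input; the rest is formal. Given that the author explicitly labels these as classical, I would expect the paper to simply cite Vinberg, Van der Lek, and Bourbaki rather than reproduce the arguments.
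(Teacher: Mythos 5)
The paper does not actually prove this proposition; it simply cites Vinberg, Van der Lek, and Bourbaki, exactly as you anticipated in your closing sentence. So the comparison has to be with your sketch on its own merits, and there I see a genuine error in the proof of (1).

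Your proposed inductive claim is that for a reduced word $w = s_{i_1}\cdots s_{i_k}$, the union $\bigcup_{j=0}^{k}(s_{i_1}\cdots s_{i_j})\cdot \overline{C}_\bbR$ is convex. This is false in general. Take $\Gamma = A_2$ (so $\bbW = S_3$) and $w = w_0 = s_1 s_2 s_1$. In the $(x,y)$ coordinates $x = Z(\alpha_1)$, $y = Z(\alpha_2)$, one computes $\overline{C}_\bbR = \{x\geq 0,\, y\geq 0\}$, $s_1\overline{C}_\bbR = \{x\leq 0,\, x+y\geq 0\}$, $s_1s_2\overline{C}_\bbR = \{y\geq 0,\, x+y\leq 0\}$, and $w_0\overline{C}_\bbR = \{x\leq 0,\, y\leq 0\}$. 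The first three chambers together cover a closed half-plane (convex), but adding the fourth gives a $240^\circ$ sector, which is not convex. More generally, any finite Coxeter group (where the Tits cone is all of $(\bbR\Lambda)^*_\bbR$) gives a counterexample once a gallery passes ``more than half-way around.'' The failure traces to the inductive step: the union of a convex set with an adjacent chamber sharing a facet is simply not convex in general, because the convex set may already extend past the far wall of the new chamber.

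The correct classical argument (Bourbaki Ch.~V \S4, or Humphreys \S5.13) proves something weaker and more targeted: given $\lambda \in \overline{C}_\bbR$ and $\mu \in w\overline{C}_\bbR$, the specific segment $[\lambda,\mu]$ is contained in a finite union of translates of $\overline{C}_\bbR$. One argues by induction on $\ell(w)$, using the wall-crossing lemma ($\ell(sw) < \ell(w)$ implies $w^{-1}\alpha_s \in \Phi^-$, hence $\mu(\alpha_s) \leq 0$ while $\lambda(\alpha_s) \geq 0$, so the segment meets $H_{\alpha_s,\bbR}$, and one then reflects the remainder across that wall). No claim about convexity of the gallery union is made or needed. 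I would also note that your proper-discontinuity argument in (2) is slightly compressed: a small ball $U_Z$ meeting only the finitely many chambers through $Z$ gives finiteness of $\{w : wU_Z \cap U_Z \neq \emptyset\}$ by the ``$w$ permutes the chambers through $U_Z$'' counting, but the sharper conclusion $wU_Z \cap U_Z \neq \emptyset \iff wZ = Z$ as asserted in the statement requires shrinking $U_Z$ further and invoking Tits' lemma again. The rest of your sketch — Tits' stabilizer lemma, deducing (3) and (4) by conjugating into $\overline{C}_\bbR$, and (5) via positive definiteness of $B$ — is along the right lines.
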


The following relation between the Tits cone and closed imaginary cone is well-known in the setting of root systems of Kac--Moody algebras.
We provide here the corresponding variant in the general setting of Coxeter systems. 
\begin{proposition}\label{prop:titscone=positiveimagcone}
Let $Z \in (\bbR\Lambda)^*_\bbR$. 
The following conditions on $Z$ are equivalent:
\begin{enumerate}
\item $Z \in T_\bbR = \bbW\cdot \overline{C}_\bbR$; and \label{item:intits}
\item $Z(\alpha) < 0$ only for finitely many positive roots $\alpha \in \Phi^+$. \label{item:finitenegatives}
\end{enumerate}
Moreover, we have that
\begin{equation} \label{eqn:interiortitsimag}
T_\bbR^\circ = \{Z \in (\bbR\Lambda)^*_\bbR \mid Z(v) > 0 \text{ for all } v \in I \setminus \{0\} \}.
\end{equation}
\end{proposition}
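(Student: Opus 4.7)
The plan is to verify $(1) \Leftrightarrow (2)$ by the classical inversion-set argument, and then deduce the formula for $T_\bbR^\circ$ by proving both inclusions separately. For $(1) \Rightarrow (2)$, I would write $Z = w \cdot Z'$ with $Z' \in \overline{C}_\bbR$, so that $Z(\alpha) = Z'(w^{-1}\alpha)$. Since $Z'$ is non-negative on $\bbR_{\geq 0}\Lambda \supseteq \Phi^+$, the condition $Z(\alpha) < 0$ forces $w^{-1}\alpha \in \Phi^-$, placing $\alpha$ inside the inversion set of $w^{-1}$, which has cardinality $\ell(w) < \infty$. For $(2) \Rightarrow (1)$, I would induct on the finite integer $N(Z) := \lvert \{\alpha \in \Phi^+ : Z(\alpha) < 0\}\rvert$: if $N(Z) = 0$ then $Z \in \overline{C}_\bbR$; otherwise pick $s$ with $Z(\alpha_s) < 0$ and use the classical fact that $s$ permutes $\Phi^+ \setminus \{\alpha_s\}$ while swapping $\alpha_s \leftrightarrow -\alpha_s$ to obtain $N(s \cdot Z) = N(Z) - 1$.

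For the formula for $T_\bbR^\circ$, the case of $\bbW$ finite is immediate since $I = \{0\}$ and $T_\bbR^\circ = (\bbR\Lambda)^*_\bbR$ by Propositions~\ref{prop:imagcone}(1) and~\ref{prop:titscone}(5), so I will assume $\bbW$ is infinite. For the inclusion $\subseteq$, by $\bbW$-invariance of $T_\bbR^\circ$ and of $I \setminus \{0\}$, reduce to $Z \in \overline{C}_\bbR \cap T_\bbR^\circ$, so that $J := \{s : Z(\alpha_s) = 0\}$ generates a finite parabolic by Proposition~\ref{prop:titscone}(4). Given $v = \sum_s k_s \alpha_s \in I \setminus \{0\}$, let $J' := \{s : k_s > 0\}$; if $J' \subseteq J$, then Lemma~\ref{lem:imagcontain} places $v$ inside $I_{J'} \setminus \{0\}$, forcing $\bbW_{J'}$ to be infinite by Proposition~\ref{prop:imagcone}(1) and contradicting $\bbW_{J'} \leq \bbW_J < \infty$. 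Hence $J' \not\subseteq J$, and $Z(v) = \sum_{s \in J'} k_s Z(\alpha_s) > 0$.

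For the inclusion $\supseteq$, assume $Z(v) > 0$ on all of $I \setminus \{0\}$. I would first show $Z \in T_\bbR$ by contradiction: if infinitely many positive roots $\alpha_n$ satisfied $Z(\alpha_n) < 0$, I would normalise them on a unit sphere in $\bbR\Lambda$ and extract a convergent subsequence with non-zero limit $\hat{v}$; the ray $\bbR_{\geq 0}\hat{v}$ is then a limit ray of positive-root rays, so $\hat{v} \in I \setminus \{0\}$ by Proposition~\ref{prop:imagcone}(3), yet $Z(\hat v) \leq 0$ by continuity, contradicting the hypothesis. To upgrade to $Z \in T_\bbR^\circ$: if $Z \in T_\bbR \setminus T_\bbR^\circ$, apply a Weyl element to reduce to $Z \in \overline{C}_\bbR$, whereupon Proposition~\ref{prop:titscone}(4) forces $J := \{s : Z(\alpha_s) = 0\}$ to generate an infinite parabolic, and Proposition~\ref{prop:imagcone}(1) then produces $v \in I_J \setminus \{0\} \subseteq I \setminus \{0\}$ with $Z(v) = 0$, again a contradiction.

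The principal obstacle is the limit-ray step in the $\supseteq$ direction, as this is where Proposition~\ref{prop:imagcone}(3) together with compactness of the sphere really enter; the remainder is essentially direct bookkeeping with the structural results already recorded. One subtle point to be careful about throughout is that both $T_\bbR^\circ$ and $I$ are $\bbW$-stable (and $\bbR_{>0}$-stable), which is what legitimises the various reductions to $Z \in \overline{C}_\bbR$.
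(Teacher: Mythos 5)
Your proof is correct and follows essentially the same route as the paper: you establish $(1)\Leftrightarrow(2)$ via the length/inversion-set argument and the $s$-permutes-$\Phi^+\setminus\{\alpha_s\}$ induction, and you prove both inclusions for $T_\bbR^\circ$ by reducing to $\overline{C}_\bbR$ via $\bbW$-invariance, invoking the finiteness criterion of \cref{prop:titscone}(4), Dyer's containment \cref{lem:imagcontain}, and the compactness/limit-ray argument from \cref{prop:imagcone}(3). The only differences are cosmetic (e.g.\ your $\subseteq$ case-split on $J' \subseteq J$ versus the paper's direct contradiction); one small implicit step you share with the paper is that a nonempty $R_Z$ must contain a simple root, which holds since $Z(\alpha) < 0$ for $\alpha = \sum c_s\alpha_s$ with $c_s \geq 0$ forces some $Z(\alpha_s) < 0$.
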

\begin{proof}
The proof of the equivalence between conditions \eqref{item:intits} and \eqref{item:finitenegatives} is essentially the same as in the setting of Kac--Moody algebras, which we reproduce for the convenience of the reader; see for example \cite[Proposition 3.4.1]{Kleshchev_lectureLie}.
Throughout the proof, for any $Z \in (\bbR\Lambda)^*_\bbR$ set 
\[
R_Z := \{ \alpha \in \Phi^+ \mid Z(\alpha) < 0 \}.
\]

We shall first show that condition \eqref{item:finitenegatives} is invariant under the $\bbW$-action.
Suppose $Z$ satisfies condition \eqref{item:finitenegatives}, so that $R_Z$ is finite.
Given $w \in \bbW$, we have that $R_{w\cdot Z} = \{ \alpha \in \Phi^+ \mid Z( w^{-1} \cdot \alpha) < 0 \}$.
Since any $w \in \bbW$ only sends finitely many positive roots to negative (this number is equal to the length of $w$), we have that $R_{w\cdot Z}$ is finite if and only if $R_Z$ is finite, as required.

We now prove the equivalence.
Let $Z \in \overline{C}_\bbR \subseteq T_\bbR$. 
Then $R_Z$ is in fact empty and so $Z$ satisfies \eqref{item:finitenegatives}.
Since condition \eqref{item:finitenegatives} is invariant under the $\bbW$-action, this proves that \eqref{item:intits} $\implies$ \eqref{item:finitenegatives}.
Now let that $Z$ be such that $R_Z$ is finite.
If $R_Z$ is empty, then in particular, $Z(\alpha_s) \geq 0$ for all simple roots $\alpha_s$, so $Z \in \overline{C}_\bbR$.
Otherwise, $R_Z$ is non-empty and it must at least contain one of the simple root $\alpha_s$.
Since $s \cdot \Phi^+ \setminus \{\alpha_s\} = \Phi^+ \setminus \{\alpha_s\}$, we see that $|R_{s\cdot Z}| = |R_Z| - 1 < |R_Z|$.
By inducting on the size of $R_Z$, $|R_{w\cdot Z}|$ will eventually be empty for some $w \in \bbW$, and hence $w \cdot Z \in \overline{C}_\bbR$.
This shows that $Z \in \bbW\cdot \overline{C}_\bbR$ as required.

Now we proceed to prove the equality in \eqref{eqn:interiortitsimag}.
To simplify notation let us denote 
\[
T' := \{Z \in (\bbR\Lambda)^*_\bbR \mid Z(v) > 0 \text{ for all } v \in I \setminus \{0\} \}
\]
(the RHS of \eqref{eqn:interiortitsimag}).
The case where $I = \{0\}$, equivalently $\bbW$ is finite by \cref{prop:imagcone}, is trivial since the condition defining $T'$ is superfluous and \cref{prop:titscone} says that $T_\bbR^\circ$ is just the whole space $(\bbR\Lambda)^*_\bbR$.
Thus from here on we will assume that $I \neq \{0\}$.

To show the containment $\subseteq$, first note that $T'$ is closed under the action of $\bbW$, since $\bbW(I) = I$ by construction and $\bbW\cdot \{0\} = \{0\}$.
It is therefore sufficient to show that any $Z \in \overline{C}_\bbR \cap T_\bbR^\circ$ satisfies $Z(v) > 0$ for all $v \in I \setminus \{0\}$, as $\overline{C}_\bbR$ is a fundamental domain of $T_\bbR$.
So consider $Z \in \overline{C}_\bbR \cap T_\bbR^\circ$ arbitrary. 
Being in $\overline{C}_\bbR$ gives us $Z(\alpha_s) \geq 0$ for all simple roots $\alpha_s$, and so $Z(v)\geq 0$ for all $v \in I$ ($v \in \bbR_{\geq 0}\Lambda$ by \cref{prop:imagcone}).
Suppose to the contrary that there exists $v \in I \setminus \{0\}$ such that $Z(v) = 0$.
Consider $J \subseteq \Gamma_0$ so that
\[
v = \sum_{s \in J \subseteq \Gamma_0} k_s \alpha_s \in \bbR_{\geq 0}\Lambda_J \subseteq \bbR_{\geq 0}\Lambda, \qquad k_s > 0 \text{ for all } s \in J.
\]
By \cref{lem:imagcontain}, we get that $v \neq 0$ is also in the closed imaginary cone $I_J$ associated to $\bbW_J$.
In particular, $\bbW_J$ can not be finite (\cref{prop:imagcone}).
However, $Z(v) = 0$ (and $Z(\alpha_s) \geq 0$ for all $s$) implies that $Z(\alpha_s)=0$ for all $s \in J$.
By \cref{prop:titscone}, this contradicts the fact that $Z$ is also in $T_\bbR^\circ$, as all the infinitely many elements in $\bbW_J$ would be stabilisers of $Z$.
This proves that $Z \in T'$ as required.

Conversely, let $Z \in T'$.
We first show that $Z$ satisfies the condition \eqref{item:finitenegatives} of the equivalence above, which implies that $Z$ is at least in the Tits cone $T_\bbR$.
Consider the compact subset $E := \bbR_{\geq 0}\Lambda \cap \{v \in \bbR\Lambda \mid ||v||=1\}$ (here $||\cdot ||$ is the standard norm on real vector spaces).
Note that every ray $\bbR_{\geq 0}v$ of a non-zero vector $v$ in $\bbR_{\geq 0}\Lambda$ intersects $E$ at exactly one point; we shall denote this (normalised) element by $\hat{v}$.
Suppose to the contrary that we have infinitely many $\beta \in \Phi^+$ such that $Z(\beta)<0$.
Then by compactness of $E$, we have a sequence of positive roots $(\beta_j)_{j=1}^\infty$, all satisfying $Z(\beta_j)<0$, such that the sequence $(\hat{\beta}_j)_{j=1}^\infty$ in $E$ converges to some point $\hat{b} \in E$.
By \cref{prop:imagcone}, the accumulation points of the set $\{\hat{\beta} \in E \mid \beta \in \Phi^+ \}$ are all contained in $I \cap E$, so in particular $\hat{b} \in I$.
However, $Z(\beta_j) <0 \implies Z(\hat{\beta}_j) <0$, so the point $\hat{b}$ that the sequence converges to must satisfy $Z(\hat{b}) \leq 0$, which contradicts the fact that $Z \in T'$.
It follows that there are only finitely many $\alpha \in \Phi^+$ such that $Z(\alpha)<0$, so $Z$ is in the Tits cone $T_\bbR$ by the equivalence above.

It remains to show that $Z$ is moreover in the interior $T_\bbR^\circ$.
Recall that $T'$ is closed under the $\bbW$-action.
Since we showed that $Z \in T_\bbR = \bbW\cdot \overline{C}_\bbR$, we can assume without lost of generality that $Z \in \overline{C}_\bbR$.
By \cref{prop:titscone}, it is sufficient to show that the set 
\[
J = \{ s \in \Gamma_0 \mid Z(\alpha_s) = 0 \}
\]
generates a finite parabolic subgroup $\bbW_J$.
Using \cref{prop:imagcone}, this is equivalent to showing that the closed imaginary cone $I_J$ of $\bbW_J$ contains only the zero element.
Let $v$ be an arbitrary element in $I_J$.
By \cref{lem:imagcontain}, it is also an element in the closed imaginary cone $I$.
By the construction of $J$, we have that $Z(v) = \sum_{s' \in J} k_{s'} Z(\alpha_{s'}) = 0$, so it must be that $v$ is also zero; otherwise we would have found a non-zero $v \in I$ with $Z(v)=0$, contradicting the fact that $Z \in T'$.
This completes the proof of the of equality of \eqref{eqn:interiortitsimag}.
\end{proof}

\begin{remark}\label{Ikeda fix}
Let $U_{I\setminus \{0\}, \bbR} := \{Z \mid Z(v) = 0 \text{ for some } v \in I \setminus \{0\} \}$ (which is empty if $\bbW$ is finite).
Equation \eqref{eqn:interiortitsimag} also implies that the space $(\bbR\Lambda)^*_\bbR$ decomposes as follows:
\[
(\bbR\Lambda)^*_\bbR = 
	\begin{cases}
	T_\bbR^\circ, &\text{ if } \bbW \text{ is finite, equivalently } I = \{0\}; \\
	T_\bbR^\circ \sqcup 
		 U_{I\setminus \{0\}, \bbR} \sqcup 
			-T_\bbR^\circ, &\text{ if } \bbW \text{ is infinite, equivalently } I \neq \{0\}.
	\end{cases}
\]
\end{remark}

\begin{remark}
In Ikeda's work on Kac--Moody root systems \cite{ikeda2014stability}, his version of the equality in the above remark \ref{Ikeda fix} is stated incorrectly.
Namely, one should take the interior of the Tits cone and not include $\{0\}$ on the RHS of the equation in Lemma 2.12 of loc.\ cit.
\end{remark}

\subsection{A hyperplane complement from the imaginary cone}
Recall from \eqref{eqn:dualnotation} that $(\bbR\Lambda)^*_\bbC := \Hom_{\bbR}(\bbR\Lambda, \bbC)$.
We have a direct sum decomposition of real vector spaces 
$$(\bbR\Lambda)^*_\bbC = (\bbR\Lambda)^*_\bbR \oplus i(\bbR\Lambda)^*_\bbR,$$
and given $Z \in (\bbR\Lambda)^*_\bbC$, we write
\[
Z=Z_{\re} + iZ_{\im},
\]
with $Z_{\re}, Z_{\im} \in (\bbR\Lambda)^*_\bbR$.

\begin{definition}[Hyperplane complement from imaginary cone] \label{defn:hyperplanecomplement}
For each $v \in \bbR\Lambda$, let $H_v := \{ Z \in (\bbR\Lambda)^*_\bbC \mid Z(v) = 0\}$.
We define the hyperplane complements
\begin{eqnarray}
 \Upsilon := (\bbR\Lambda)^*_\bbC \setminus \cup_{v \in I \setminus \{0\}} H_v \\
 \Upsilon_{\reg} := \Upsilon \setminus \cup_{\alpha \in \Phi^+} H_\alpha.
\end{eqnarray}
Let $\bbH := \{ x \in \bbC \mid \im(x) > 0\}$ denote the strict upper half plane. 
We define the \emph{complexified chamber}
\begin{equation}\label{eqn:complexiefiedchamber}
C := \{Z \in (\bbR\Lambda)^*_\bbC \mid Z(\alpha_s) \in \bbH \cup \bbR_{<0} \text{ for all } s\in \Gamma_0 \}.
\end{equation}
\end{definition}
Note that $C \subset \Upsilon_{\reg} \subset \Upsilon$ by construction, where the first containment follows from the fact that $I \subseteq \bbR_{\geq 0}\Lambda$, as noted in \cref{prop:imagcone}.

The space $\Upsilon_{\reg}$ is closely related to, though in general distinct from, the usual the usual complexified hyperplane complement $\Omega_{\reg}$ considered in the study of Artin--Tits groups \cite{VdL_thesis}; we recall that definition now.
We remind the reader that $T_\bbR^\circ$ denotes the interior of the Tits cone from \cref{defn:titscone}.
\begin{definition}[``Usual'' hyperplane complement] \label{defn:usualhyperplanecomplement}
Denote $\Omega := \{ Z \in (\bbR\Lambda)^*_\bbC \mid Z_{\im} \in T^\circ_\bbR \}$.
For each $\alpha \in \Phi^+$, consider the complex hyperplane $H_\alpha = \{ Z \in (\bbR\Lambda)^*_\bbC \mid Z(\alpha) = 0\}$.
The hyperplane complement $\Omega_{\reg}$ associated to $\Gamma$ is defined as
\[
\Omega_{\reg} := \Omega \setminus \cup_{\alpha \in \Phi^+} H_\alpha.
\]
By the result of Van der Lek \cite{VdL_thesis}, we have that $\bbW$ acts on $\Omega_{\reg}$ freely and properly discontinuously, and moreover
\[
\pi_1(\Omega_{\reg}/\bbW) \cong \B,
\]
where $\B := \B(\Gamma)$ is the Artin--Tits group associated to $\Gamma$.
\end{definition}
The precise relationship between $\Upsilon_{\reg}$ and $\Omega_{\reg}$ depends on whether $\bbW$ is finite or infinite.
On one hand, it is clear that $\Upsilon = (\bbR\Lambda)^*_\bbC = \Omega$ when $\bbW$ is finite (equivalently when $I=\{0\}$); in particular, the hyperplane complements $\Upsilon_{\reg}$ and $\Omega_{\reg}$ agree when $\bbW$ is finite.  The goal of the remainder of this section is to establish a relationship between $\Upsilon_{\reg}$ and $\Omega_{\reg}$ when $\bbW$ is infinite.  We will show in \cref{cor:homotopyequivalence} that $\Upsilon_{\reg}$ is homotopy equivalent to $S^1 \times \Omega_{\reg}$ in these cases. 
\begin{lemma}
Both $\Upsilon$ and $\Upsilon_{\reg}$ are open subsets of $(\bbR\Lambda)^*_\bbC$.
\end{lemma}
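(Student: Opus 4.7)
The plan is to exhibit the complements of $\Upsilon$ and $\Upsilon_{\reg}$ as closed sets by reducing the (possibly infinite) index sets $I \setminus \{0\}$ and $\Phi^+$ to subsets of a single compact set, and then invoking standard continuity-compactness arguments.

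First I would fix any norm on $\bbR\Lambda$, let $S$ be the corresponding unit sphere, and set $E := \bbR_{\geq 0}\Lambda \cap S$, which is compact. Because $H_v$ depends only on the ray $\bbR_{>0} v$, the union $\bigcup_{v \in I \setminus \{0\}} H_v$ is indexed by the set $I \cap E$, which is compact since $I$ is closed and contained in $\bbR_{\geq 0}\Lambda$ by \cref{prop:imagcone}(\ref{item:imagpositivecone}). Similarly, the hyperplanes $H_\alpha$ for $\alpha \in \Phi^+$ are indexed by the set $\{\hat{\alpha} : \alpha \in \Phi^+\} \subseteq E$ of normalised positive roots.

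For openness of $\Upsilon$, I would argue that for any $Z_0 \in \Upsilon$ the continuous function $\hat{v} \mapsto |Z_0(\hat{v})|$ is strictly positive on the compact set $I \cap E$, hence attains a positive minimum $m > 0$. Since $E$ is bounded and all norms on a finite-dimensional space are equivalent, the restriction map $Z \mapsto Z|_E$ is uniformly continuous in $Z$, so every $Z$ in a sufficiently small neighbourhood of $Z_0$ still satisfies $|Z(\hat{v})| > m/2$ uniformly on $I \cap E$; this neighbourhood lies inside $\Upsilon$.

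For openness of $\Upsilon_{\reg}$, the additional ingredient is local finiteness of the arrangement $\{H_\alpha\}_{\alpha \in \Phi^+}$ near points of $\Upsilon$. Fix $Z_0 \in \Upsilon_{\reg}$ and let $m > 0$ be as in the previous paragraph. The open set $U := \{\hat{v} \in E : |Z_0(\hat{v})| > m/2\}$ contains $I \cap E$, so its complement $E \setminus U$ is compact. Crucially, by \cref{prop:imagcone}(\ref{item:imagconvexlimit}), every accumulation point of $\{\hat{\alpha} : \alpha \in \Phi^+\}$ in $E$ lies in $I \cap E \subseteq U$; consequently only finitely many normalised positive roots $\hat{\alpha}_1, \dots, \hat{\alpha}_n$ can lie in $E \setminus U$. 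For all other $\alpha \in \Phi^+$ the uniform bound $|Z(\hat{\alpha})| > m/4$ persists on a small neighbourhood of $Z_0$, while the finitely many open conditions $Z(\alpha_i) \neq 0$, each satisfied at $Z_0$, cut out a further open neighbourhood.

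The main obstacle is precisely this local finiteness step: without \cref{prop:imagcone}(\ref{item:imagconvexlimit}), one could conceivably have hyperplanes $H_{\alpha_j}$ crowding near a point of $\Upsilon_{\reg}$ in directions failing to accumulate in the imaginary cone, destroying openness. Once local finiteness is in hand, the rest is a routine continuity-compactness exercise; note also that when $\bbW$ is finite we have $I = \{0\}$ and $\Phi^+$ finite, so both statements are trivial and the argument above degenerates appropriately.
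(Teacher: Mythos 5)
Your proof is correct and relies on the same two key ingredients as the paper's: compactness of the normalised sets $I\cap S$ and $\bbR_{\geq 0}\Lambda\cap S$, and the fact from \cref{prop:imagcone}\eqref{item:imagconvexlimit} that accumulation rays of $\Phi^+$ lie in $I$. The difference is one of presentation rather than substance: the paper proves the complements are closed by a sequential limit-point argument (take $Z_j\to Z$ with $Z_j\in H_{v_j}$, normalise the $v_j$, extract a convergent subsequence by compactness, and conclude $Z$ vanishes on the limit vector, which for $\Upsilon_{\reg}$ must actually be a root since accumulation points are excluded), whereas you argue openness directly by extracting a positive uniform lower bound for $|Z_0|$ on the relevant compact set and propagating it to an operator-norm neighbourhood of $Z_0$. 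Your route has the small virtue of making the local finiteness of the arrangement $\{H_\alpha\}_{\alpha\in\Phi^+}$ near points of $\Upsilon$ an explicit intermediate step (only finitely many $\hat\alpha$ escape $U$), which the paper leaves implicit in the sequence extraction; otherwise the two arguments are equivalent.
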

\begin{proof}
The case where $I=\{0\}$ is immediate from the identification $\Upsilon = (\bbR\Lambda)^*_\bbC$ and $\Upsilon_{\reg} = \Omega_{\reg}$. So in the rest of the proof we assume $I \neq \{0\}$.

Denote the sphere of length one vectors by $S := \{ v \in \bbR\Lambda \mid ||v|| = 1\}$; here $\bbR\Lambda$ is equipped with the standard norm.
Since $(\bbR\Lambda)^*_\bbC$ is a finite-dimensional vector space, all norms induce the same topology.
For the sake of argument we shall use the operator norm: 
\[
||Z|| := \sup_{v \in S} \{ |Z(v)| \}.
\]

We prove that $\cup_{v \in I \setminus \{0\}} H_v$ is closed in $(\bbR\Lambda)^*_\bbC$ by showing that it contains all of its limit points.
Suppose $(Z_j)_{j=1}^\infty$ is a sequence in $\cup_{v \in I \setminus \{0\}} H_v$ converging to a point $Z \in (\bbR\Lambda)^*_\bbC$ under the operator norm.
By definition of the $Z_j$'s, we have for each $j$ some $v_j \in I$ such that $Z_j(v_j) = 0$
Since $I$ is a closed convex cone in $\bbR\Lambda$, the normalised set $\hat{I} := I \cap S$ is a compact set in $\bbR\Lambda$.
By linearity of $Z_j$, we may normalise $v_j$ so that $\hat{v}_j \in \hat{I}$ and still have $Z_j(\hat{v}_j) = 0$.
Since $Z$ is the point of convergence of $(Z_j)_{j=1}^\infty$, we have for each $n \in \bbN$ some $j_n$ such that $|| Z - Z_{j_n} || < 1/n$.
By definition of the operator norm we get
\begin{equation} \label{eqn:sequenceofZpoints}
|Z(\hat{v}_{j_n})| = |Z(\hat{v}_{j_n}) - Z_{j_n}(\hat{v}_{j_n})| < 1/n.
\end{equation}
Using compactness of $\hat{I}$ we obtain a subsequence of $(v_{j_n})_{n=1}^\infty$ that converges to some $\hat{v} \in \hat{I}$, and by \eqref{eqn:sequenceofZpoints} we get $Z(\hat{v}) = 0$.
This shows that $Z \in \cup_{v \in I \setminus \{0\}} H_v$, which proves that $\cup_{v \in I \setminus \{0\}} H_v$ is closed in $(\bbR\Lambda)^*_\bbC$.
We can then conclude that $\Upsilon = (\bbR\Lambda)^*_\bbC \setminus \cup_{v \in I \setminus \{0\}} H_v$ is open in $(\bbR\Lambda)^*_\bbC$.

With the above, to show that $\Upsilon_{\reg}$ is open in $(\bbR\Lambda)^*_\bbC$, it is sufficient to show that it is open in $\Upsilon$.
We do this by showing again that $\cup_{\alpha \in \Phi^+} H_\alpha \cap \Upsilon$ is closed in $\Upsilon$, and by showing that it contains all of its limit points -- in $\Upsilon$.
The general strategy is similar as before, where we consider instead the normalised set $\widehat{\bbR_{\geq 0} \Lambda} := \bbR_{\geq 0}\Lambda \cap S$, which is again compact since $\bbR_{\geq 0}\Lambda$ is also a closed convex cone.
Note that this subset contains all of the normalised positive roots $\hat{\Phi}^+ := \Phi^+ \cap S$.

Suppose $(Z_j)_{j=1}^\infty$ is a sequence in $\cup_{\alpha \in \Phi^+} H_\alpha \cap \Upsilon$ converging to a point $Z \in \Upsilon$ under the operator norm.
We have for each $j$ some $\hat{\lambda}_j \in \hat{\Phi}^+$ such that $Z_j(\hat{\lambda}_j) = 0$.
By the same argument as before we get some subsequence $(\hat{\lambda}_{j_n})_{n=1}^\infty$ satisfying
\[
|Z(\hat{\lambda}_{j_n})| = |Z(\hat{\lambda}_{j_n}) - Z_{j_n}(\hat{\lambda}_{j_n})| < 1/n.
\]
By compactness of $\widehat{\bbR_{\geq 0} \Lambda}$ we have a subsequence of $(\hat{\lambda}_{j_n})_{n=1}^\infty$ that converges to some point $\lambda \in \widehat{\bbR_{\geq 0} \Lambda}$, which moreover satisfies $Z(\hat{\lambda})=0$.
We claim that $\hat{\lambda}$ is in fact a normalised positive root.
This is because any accumulation point of $\hat{\Phi}^+$ would lie in $\hat{I}$ by \cref{prop:imagcone}; if $\hat{\lambda}$ was an accumulation point, then we would have $Z(\hat{\lambda}) = 0$ for $\hat{\lambda} \in \hat{I}$, contradicting the fact that $Z \in \Upsilon$.
It follows that $\hat{\lambda} \in \hat{\Phi}^+$, which shows that $Z \in \cup_{\alpha \in \Phi^+} H_\alpha$, as required.
\end{proof}

\begin{lemma}
Suppose $I \neq \{0\}$.
For all $Z \in \Upsilon$, $Z(I \setminus \{0\})$ is a convex set not containing $0$, and $Z(I)$ is a (real) closed convex cone in $\bbC$ that is not the whole of $\bbC$.
\end{lemma}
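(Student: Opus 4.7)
The plan is to leverage three ingredients: (i) $Z \in \Upsilon$ is equivalent to $Z(v) \neq 0$ for every $v \in I \setminus \{0\}$; (ii) $I$ is a closed convex cone by \cref{prop:imagcone}; and (iii) $I \subseteq \bbR_{\geq 0}\Lambda$, so any non-negative linear combination of nonzero vectors in $I$ remains nonzero (two nonzero vectors in the positive cone cannot cancel).

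For the first claim, $0 \notin Z(I \setminus \{0\})$ is immediate from the definition of $\Upsilon$. For convexity, given $v_1, v_2 \in I \setminus \{0\}$ and $t \in [0,1]$, the combination $tv_1 + (1-t)v_2$ lies in $I$ by convexity of $I$ and is nonzero by (iii), so it belongs to $I \setminus \{0\}$; applying $\bbR$-linearity of $Z$ yields convexity of $Z(I \setminus \{0\})$. Next, $Z(I)$ is automatically a convex cone in $\bbC$ because $I$ is a convex cone and $Z$ is $\bbR$-linear. To rule out $Z(I) = \bbC$, suppose it held: then $1, -1 \in Z(I)$ would arise as $Z(v_1)$ and $Z(v_2)$ with $v_1, v_2 \in I$, both nonzero since $Z \in \Upsilon$. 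Their sum lies in $I$ and is nonzero by (iii), yet $Z(v_1 + v_2) = 0$, contradicting $Z \in \Upsilon$.

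The main obstacle is the closedness of $Z(I)$, since continuous images of closed sets need not be closed. I would handle this by reducing to a compact cross-section. Set $\hat{I} := I \cap S$ where $S$ is the unit sphere in $\bbR\Lambda$; since $I$ is closed and $S$ is compact in the finite-dimensional space $\bbR\Lambda$, the set $\hat{I}$ is compact, and because $0 \notin \hat{I}$ together with $Z \in \Upsilon$, its image $Z(\hat{I})$ is a compact subset of $\bbC \setminus \{0\}$. Since every element of $I$ is either $0$ or a positive multiple of a unit vector in $\hat{I}$, we have $Z(I) = \bbR_{\geq 0} \cdot Z(\hat{I})$. Closedness of the ray-cone over a compact subset of $\bbC \setminus \{0\}$ then follows from a standard subsequence argument: if $\lambda_j w_j \to z$ with $\lambda_j \geq 0$ and $w_j \in Z(\hat{I})$, then $|w_j|$ is bounded above and below by the (positive) extrema of $|Z|$ on $\hat{I}$, so either $z = 0$ or $\lambda_j$ is bounded, allowing extraction of convergent subsequences $\lambda_{j_k} \to \lambda \geq 0$ and $w_{j_k} \to w \in Z(\hat{I})$ with $z = \lambda w \in \bbR_{\geq 0} \cdot Z(\hat{I})$.
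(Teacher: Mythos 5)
Your proof is correct, and it follows the same overall outline as the paper's, but you supply details that the paper glosses over. First, the paper simply asserts that ``$I\setminus\{0\}$ is still a convex set''; this is not true for arbitrary convex cones (e.g.\ a line through the origin), but holds here because $I\subseteq\bbR_{\geq 0}\Lambda$ makes $I$ pointed — exactly your ingredient (iii). Second, the paper asserts that $Z(I)$ is a closed convex cone because $Z$ is linear and $I$ is a closed convex cone, but the linear image of a closed set is not closed in general; what makes it work here is that $\ker Z \cap I = \{0\}$ (since $Z\in\Upsilon$), and your compact-cross-section argument on $\hat{I}=I\cap S$ is precisely the standard proof of this. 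Your alternative to the ``$Z(I)\neq\bbC$'' step — taking preimages $v_1,v_2$ of $1,-1$ and noting $v_1+v_2\in I\setminus\{0\}$ but $Z(v_1+v_2)=0$ — is a slightly more direct contradiction than the paper's half-plane/argument-difference phrasing, but captures the same convexity idea. In short, your write-up is a more rigorous version of the paper's proof rather than a different one.
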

\begin{proof}
Since $Z$ is $\bbR$-linear and $I$ is a closed convex cone by \cref{prop:imagcone}, it follows that $Z(I)$ is a closed convex cone.
Note that $I \setminus \{0\}$ is still a convex set, and so $Z(I \setminus \{0\})$ is also convex.
Since $Z \in \Upsilon$, by definition $ 0 \not\in Z(I \setminus \{0\})$.
It follows that, the arguments of any two complex numbers in $Z(I \setminus \{0\})$ may only differ by strictly less than $\pi$.
In particular, $Z(I) = Z(I \setminus \{0\}) \cup Z(0)$ cannot be the whole of $\bbC$.
\end{proof}
When $I \neq \{0\}$, the lemma above shows that $Z(I)$ is a proper closed convex cone in $\bbC$, in which there is a well-defined middle ray.
We define $\phi^I(Z)$ to be the common argument of the (non-zero) elements of the middle ray.
See \cref{fig:ZIcone}.

\begin{figure}
\begin{tikzpicture}
\def\R{1.5}
\begin{scope}[rotate=90]
\coordinate (a) at (0:\R);
\coordinate (b) at (120:\R);
\coordinate (p) at (60:\R);

\fill[gray!10] (0,0) circle (\R);

\fill[blue!20] (0,0) -- (\R,0) arc[start angle=0, end angle=120, radius=\R] -- cycle;
\node[above left] at (p) {$\phi^I(Z)$};

\draw[thick] (0,0) -- (a);
\draw[thick] (0,0) -- (b);
\draw[thick, dashed] (0,0) -- (p);  
\end{scope}

\begin{scope}[shift={(5,0)}]
\begin{scope}[rotate=30]
\coordinate (a) at (0:\R);
\coordinate (b) at (120:\R);
\coordinate (p) at (60:\R);

\fill[gray!10] (0,0) circle (\R);

\fill[blue!20] (0,0) -- (\R,0) arc[start angle=0, end angle=120, radius=\R] -- cycle;
\node[above] at (p) {$\phi^I(Z')$};

\draw[thick] (0,0) -- (a);
\draw[thick] (0,0) -- (b);
\draw[thick, dashed] (0,0) -- (p);  
\end{scope}
\end{scope}
\end{tikzpicture}
\caption{The images $Z(I), Z'(I) \subset \bbC$ of the imaginary cone when $I \neq \{0\}$, shaded in blue. $Z'$ (right) is normalised, whereas $Z$ (left) is not.}
\label{fig:ZIcone}
\end{figure}
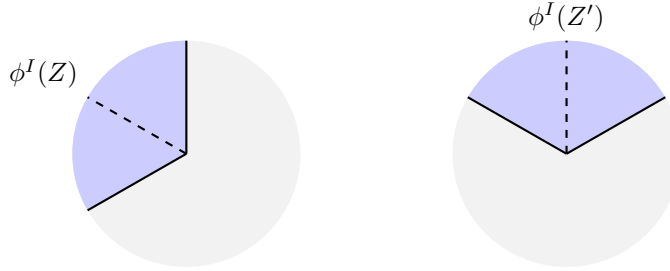

\begin{definition} \label{defn:normalisedspace}
Suppose $I \neq \{0\}$.
We define the normalised space $\Upsilon_{\reg}^N$ by
\[
\Upsilon_{\reg}^N := \{Z \in \Upsilon_{\reg} \mid \phi^I(Z) = \pi/2 \}
\]
and the normalised complexified chamber $C^N$ by
\[
C^N := C \cap \Upsilon_{\reg}^N = \{ Z \in C \mid \phi^I(Z) = \pi/2 \}.
\]
To simplify our exposition later, when $I = \{0\}$ we will set $\Upsilon_{\reg}^N := \Upsilon_{\reg}$, so that $C^N = C$ as well.
\end{definition}
Note that $S^1 = \{ e^{i\theta} \mid \theta \in [0,2\pi) \}$ acts freely on $\Upsilon_{\reg}$ via scalar multiplication.
In particular, when $I \neq \{0\}$ any $Z \in \Upsilon_{\reg}$ can be normalised uniquely into an element in $\Upsilon_{\reg}^N$ via $Z \mapsto e^{i(-\phi^I(Z)+\pi/2)}\cdot Z$.
It follows that ($I=\{0\}$ case is by convention):
\begin{equation} \label{eqn:normaliseddecomp}
\Upsilon_{\reg} \cong 
	\begin{cases}
	S^1 \times \Upsilon_{\reg}^N, &\text{ if } I \neq \{0\}; \\
	\Upsilon_{\reg}^N, &\text{ if } I=\{0\}.
	\end{cases}
\end{equation}

\begin{lemma} \label{lem:WactionN}
The actions of $S^1$ and $\bbW$ on $\Upsilon_{\reg}$ commute with each other.
Moreover, $\Upsilon_{\reg}^N$ is a $\bbW$-invariant subset of $\Upsilon_{\reg}$, and the homeomorphism in \eqref{eqn:normaliseddecomp} can be chosen to be $\bbW$-equivariant.
\end{lemma}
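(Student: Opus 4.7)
The plan is to verify the three assertions directly from the definitions; the entire argument rests on the $\bbW$-invariance of the imaginary cone, $w\cdot I=I$, which is built into the construction $I=\overline{\bbW\cdot K}$, together with $\bbR$-linearity of each $Z$.

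First, I would unwind the two action formulas: since $(w\cdot Z)(v)=Z(w^{-1}\cdot v)$ and $(e^{i\theta}\cdot Z)(v)=e^{i\theta}Z(v)$, both $e^{i\theta}\cdot(w\cdot Z)$ and $w\cdot(e^{i\theta}\cdot Z)$ send any $v\in\bbR\Lambda$ to $e^{i\theta}Z(w^{-1}\cdot v)$. This gives the commutation of the $S^1$- and $\bbW$-actions in a single line. It also shows that $\Upsilon_{\reg}$ itself is $\bbW$-stable, since $w$ permutes the hyperplanes $\{H_\alpha\}_{\alpha\in\Phi^+}$ (using $H_\alpha=H_{-\alpha}$) and $w\cdot I=I$ implies it permutes the hyperplanes $\{H_v\}_{v\in I\setminus\{0\}}$ removed to form $\Upsilon$.

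For the $\bbW$-invariance of $\Upsilon_{\reg}^N$, the case $I=\{0\}$ holds by the convention $\Upsilon_{\reg}^N=\Upsilon_{\reg}$. When $I\neq\{0\}$, the key observation is
\[
(w\cdot Z)(I)=Z(w^{-1}\cdot I)=Z(I)\subset\bbC,
\]
so the proper closed convex cones $Z(I)$ and $(w\cdot Z)(I)$ coincide, and in particular share the same middle ray; hence $\phi^I(w\cdot Z)=\phi^I(Z)$, and $w$ preserves the level set $\{\phi^I=\pi/2\}$ defining $\Upsilon_{\reg}^N$.

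Finally, for the equivariance of the splitting in \eqref{eqn:normaliseddecomp}, the homeomorphism sends $Z\in\Upsilon_{\reg}$ to $\bigl(e^{i(\phi^I(Z)-\pi/2)},\,e^{i(-\phi^I(Z)+\pi/2)}\cdot Z\bigr)\in S^1\times\Upsilon_{\reg}^N$, with $\bbW$ acting trivially on the $S^1$-factor and by the restricted action on $\Upsilon_{\reg}^N$. The $\bbW$-invariance of $\phi^I$ keeps the first coordinate unchanged under $w$, while the commutation of $S^1$- and $\bbW$-actions makes the rescaled second coordinate transform as $Z'\mapsto w\cdot Z'$, exactly as required. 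No step should present a real obstacle: the lemma is a formal consequence of $w\cdot I=I$ together with the scalar-versus-linear nature of the two actions.
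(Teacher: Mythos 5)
Your proof is correct and follows the same route as the paper's: commutativity of the two actions is a one-line computation, $\bbW\cdot I=I$ forces $\phi^I(w\cdot Z)=\phi^I(Z)$ (hence $\bbW$-invariance of $\Upsilon_{\reg}^N$), and equivariance of the splitting is then a formal consequence of the commuting actions. You simply spell out the steps the paper dispatches as "immediate," including the useful explicit identity $(w\cdot Z)(I)=Z(w^{-1}\cdot I)=Z(I)$.
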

\begin{proof}
The fact that the two actions commute is immediate from the definition.
The last two statements for the $I=\{0\}$ case are also immediate from our convention.
Now suppose $I \neq \{0\}$.
Since $\bbW\cdot I = I$, $\phi^I(Z) = \phi^I(w\cdot Z)$ for all $w \in \bbW$ and so $\Upsilon_{\reg}^N$ is closed under the action of $\bbW$.
The final statement follows directly from the commutativity of the actions.
\end{proof}

\begin{lemma} \label{lem:imaginaryinTits}
For any $Z = Z_{\re} + iZ_{\im} \in \Upsilon_{\reg}^N$, $Z_{\im} \in T_\bbR^\circ$.
\end{lemma}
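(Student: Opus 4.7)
The plan is to invoke the characterization of the interior of the Tits cone from \cref{prop:titscone=positiveimagcone}: namely, that $T_\bbR^\circ = \{Z \in (\bbR\Lambda)^*_\bbR \mid Z(v) > 0 \text{ for all } v \in I \setminus \{0\}\}$. So it suffices to show $Z_{\im}(v) > 0$ for every $v \in I \setminus \{0\}$.

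First I would dispatch the trivial case $I = \{0\}$ (i.e.\ $\bbW$ finite): by our convention $\Upsilon_{\reg}^N = \Upsilon_{\reg}$, and by \cref{prop:titscone} the interior of the Tits cone is all of $(\bbR\Lambda)^*_\bbR$, so $Z_{\im} \in T_\bbR^\circ$ automatically.

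In the substantive case $I \neq \{0\}$, I would use the preceding lemma, which says $Z(I)$ is a closed convex cone in $\bbC$ such that any two non-zero elements of $Z(I \setminus \{0\})$ have arguments differing by \emph{strictly} less than $\pi$. Combined with the fact that $Z(I)$ is a convex cone with apex at $0$, this forces $Z(I \setminus \{0\})$ to be contained in an open half-plane through the origin; specifically, all arguments lie in an interval $[\pi/2 - \theta_0, \pi/2 + \theta_0]$ with $\theta_0 < \pi/2$, centred on the middle ray of argument $\phi^I(Z) = \pi/2$ (by definition of $\Upsilon_{\reg}^N$, see \cref{defn:normalisedspace}).

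The key conclusion is then immediate: since every $v \in I \setminus \{0\}$ satisfies $Z(v) \in Z(I \setminus \{0\})$ with argument strictly in $(0, \pi)$, we get $Z_{\im}(v) = \im(Z(v)) > 0$. Hence $Z_{\im}$ lies in the right-hand side of \eqref{eqn:interiortitsimag}, so $Z_{\im} \in T_\bbR^\circ$. The only delicate step is ensuring the angular interval is genuinely open, i.e.\ that $\theta_0 < \pi/2$ strictly, which is exactly where the strict inequality from the preceding lemma (arguments differing by \emph{strictly} less than $\pi$) is essential.
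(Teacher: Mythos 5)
Your proof is correct and follows essentially the same route as the paper's: reduce to the substantive case $I \neq \{0\}$, deduce from the normalisation $\phi^I(Z)=\pi/2$ together with the preceding lemma on $Z(I)$ that $Z(I\setminus\{0\})\subset\bbH$ (so $Z_{\im}(v)>0$ for all $v\in I\setminus\{0\}$), and then invoke \eqref{eqn:interiortitsimag}. You spell out the angular-interval reasoning that the paper leaves implicit, but the core idea is identical.
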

\begin{proof}
The $I=\{0\}$ case is immediate since $T_\bbR^\circ = \bbR$.
Now suppose $I \neq \{0\}$ and let $Z \in \Upsilon_{\reg}^N$.
Then $Z(I \setminus \{0\}) \subset \bbH$, and so $Z_{\im}(v) > 0$ for all $v \in I \setminus \{0\}$.
By \cref{prop:titscone=positiveimagcone}, we see that $Z_{\im}$ is in the interior $T_\bbR^\circ$ of the Tits cone.
\end{proof}

\begin{proposition} \label{prop:WandS1actiongenerates}
For any $Z \in \Upsilon_{\reg}$, there exists $k \in S^1$ and $w \in \bbW$ such that $w \cdot k \cdot Z \in C^N$.
\end{proposition}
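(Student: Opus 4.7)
The plan is a two-stage normalization: first use the $S^1$-action to move $Z$ into $\Upsilon_{\reg}^N$, and then use the $\bbW$-action to send the result into the complexified chamber $C^N$.

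For the first stage, \eqref{eqn:normaliseddecomp} tells us that $k := e^{i(-\phi^I(Z) + \pi/2)} \in S^1$ satisfies $k \cdot Z \in \Upsilon_{\reg}^N$, and by \cref{lem:imaginaryinTits} the imaginary part $(k \cdot Z)_\im$ lies in $T_\bbR^\circ$. Since $\overline{C}_\bbR$ is a fundamental domain for the $\bbW$-action on $T_\bbR$ (\cref{prop:titscone}), we can find $w \in \bbW$ with $w \cdot (k \cdot Z)_\im \in \overline{C}_\bbR$. Setting $Z' := w \cdot k \cdot Z$, which remains in $\Upsilon_{\reg}^N$ by \cref{lem:WactionN}, each $Z'(\alpha_s)$ has non-negative imaginary part and is non-zero (since $Z' \in \Upsilon_{\reg}$). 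Letting $J := \{s \in \Gamma_0 \mid Z'_\im(\alpha_s) = 0\}$, we have $Z'(\alpha_s) \in \bbH$ for $s \notin J$ and $Z'(\alpha_s) \in \bbR \setminus \{0\}$ for $s \in J$. If $J = \emptyset$ then $Z' \in C^N$ and we are done.

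The main obstacle is the case $J \neq \emptyset$, where the real values $Z'(\alpha_s)$ for $s \in J$ may have the wrong sign, so $Z' \notin C$. The remedy is to correct by an element of the parabolic subgroup $\bbW_J$. Because $Z'_\im$ lies on each wall $H_{\alpha_s,\bbR}$ for $s \in J$, every generator $s \in J$ -- and hence all of $\bbW_J$ -- fixes $Z'_\im$ pointwise; combined with \cref{prop:titscone}(4), this forces $\bbW_J$ to be finite. The restriction $Z'|_{\bbR\Lambda_J}$ is $\bbR$-valued and, because $Z' \in \Upsilon_{\reg}$, does not vanish on any positive root of $\bbW_J$, so it is a regular element for the reflection representation of the finite Coxeter group $\bbW_J$ on $(\bbR\Lambda_J)^*_\bbR$. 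Transitivity of $\bbW_J$ on chambers then yields $w' \in \bbW_J$ with $(w' \cdot Z')(\alpha_s) < 0$ for every $s \in J$. Since $w'$ fixes $Z'_\im$, the imaginary parts on $\alpha_s$ for $s \notin J$ are unchanged, so $(w' \cdot Z')(\alpha_s) \in \bbH$ for those $s$. Altogether $(w'w) \cdot k \cdot Z \in C$, and by \cref{lem:WactionN} it remains in $\Upsilon_{\reg}^N$, giving the required element of $C^N$.
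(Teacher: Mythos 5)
Your proof is correct and follows essentially the same strategy as the paper's: normalize by $S^1$ into $\Upsilon_{\reg}^N$, move the imaginary part into $\overline{C}_\bbR$ with some $w\in\bbW$, and then correct the signs on the wall set $J$ using an element of the finite parabolic $\bbW_J$ which fixes the imaginary part. The one stylistic difference is in the last step: you observe directly that the $\bbR$-valued restriction $Z'|_{\bbR\Lambda_J}$ is a \emph{regular} vector for the finite Coxeter group $\bbW_J$ (because $\Phi_J^+\subseteq\Phi^+$ and $Z'\in\Upsilon_{\reg}$) and invoke simple transitivity on open chambers to land in the open negative chamber, whereas the paper first moves $Z'_{\re}|_{\bbR\Lambda_J}$ to the closed negative chamber and then argues separately (via $Z'\in\Upsilon_{\reg}^N$ being $\bbW$-stable) that none of the values can vanish. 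Your variant is marginally cleaner and entirely valid. One small omission: the normalizing element $k = e^{i(-\phi^I(Z)+\pi/2)}$ is only defined when $I\neq\{0\}$, since $\phi^I$ requires a nondegenerate cone. When $I=\{0\}$ (i.e.\ $\bbW$ finite) the convention in \cref{defn:normalisedspace} sets $\Upsilon_{\reg}^N = \Upsilon_{\reg}$ and $C^N = C$, and one simply takes $k=1$; the rest of your argument then goes through unchanged, which is how the paper handles that case.
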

\begin{proof}
Suppose $I \neq \{0\}$.
Any element in $Z \in \Upsilon_{\reg}$ can be normalised (uniquely) to an element $k\cdot Z \in \Upsilon_{\reg}^N$ for some (unique) $k \in S^1$.
So without lost of generality we may, and shall, assume that $Z \in \Upsilon_{\reg}^N$.
By the lemma above $Z_{\im} \in T_\bbR^\circ$, so there exists $w \in \bbW$ such that $w \cdot Z_{\im} \in \overline{C}_\bbR \cap T_\bbR^\circ$.
Set $Z' := w\cdot Z$, so that $Z'_{\im} = w\cdot Z_{\im} \in \overline{C}_\bbR \cap T_\bbR^\circ$.
In particular, we that $Z'(\alpha_s) \in \bbH \cup \bbR$ for all $s \in \Gamma_0$.
Consider the set
\[
J:= \{ s \in \Gamma_0 \mid Z'_{\im}(\alpha_s) = 0 \}.
\]
Note that $J$ generated a finite parabolic subgroup $\bbW_J$ of $\bbW$ since $Z'_{\im}\in \overline{C}_\bbR \cap T_\bbR^\circ$ (see \cref{prop:titscone}).
Moreover, we have chosen $J$ in such a way that
\[
Z'(\alpha_s) \in
	\begin{cases}
	\bbR, &\text{ if } s \in J; \\
	\bbH, &\text{ if } s \in \Gamma_0 \setminus J.
	\end{cases}
\]
We claim that we can find an element $u$ in the finite parabolic subgroup $\bbW_J \subseteq \bbW$ such that 
\[
(u\cdot Z')(\alpha_s) \in
	\begin{cases}
	\bbR_{<0}, &\text{ if } s \in J; \\
	\bbH, &\text{ if } s \in \Gamma_0 \setminus J.
	\end{cases}
\]
Consider the subspace 
\[
\bbR\Lambda_J:= \bigoplus_{t\in J \subseteq \Gamma_0} \bbR\{\alpha_t\} \subseteq \bbR\Lambda
\]
associated to the finite Coxeter group $\bbW_J$.
Take $Z'_{\re}|_{\bbR\Lambda_J} \in (\bbR\Lambda_J)^*_\bbR$ to be $Z'_{\re}$ restricted to the subspace $\bbR\Lambda_J$.
By \cref{prop:titscone}, $(\bbR\Lambda_J)^*_\bbR$ is equal to the Tits cone $T_{J,\bbR}$ associated to $\bbW_J$, so we have some (unique) element $u \in \bbW_J$ such that $(u\cdot Z'_{\re}|_{\bbR\Lambda_J})(\alpha_t) \leq 0$ for all $t \in J$; this is just saying that under the action of $u \in \bbW_J$ we send $Z'_{\re}|_{\bbR\Lambda_J}$ to an element in the negative closed fundamental chamber $-\overline{C}_{J,\bbR} := \{ Z \in (\bbR\Lambda_J)^*_\bbR \mid Z(\alpha_t) \leq 0 \text{ for all } t\in J\}$.
Now apply this element $u \in \bbW_J \subseteq \bbW$ to $Z'$.
Notice that for all $s \in \Gamma_0$, $u^{-1}\cdot \alpha_s - \alpha_s \in \bbR\Lambda_J$, so $Z'_{\im}(u^{-1}\cdot \alpha_s - \alpha_s) = 0$.
As such,
\[
(u\cdot Z'_{\im})(\alpha_s) = Z'_{\im}(u^{-1}\cdot \alpha_s) = Z'_{\im}(\alpha_s), \quad \text{for all } s \in \Gamma_0;
\]
in other words the imaginary part of $Z'$ is left unchanged.
It follows that for all simple roots $\alpha_s$ with $s \in \Gamma_0 \setminus J$, $Z'(\alpha_s)$ is still in $\bbH$.
Moreover, $(u\cdot Z')_{\re}(\alpha_t) \in \bbR_{\leq 0}$ for all $t \in J$ by the choice of $u$ (it does not matter where $Z'_{\re}(\alpha_s)$ lives for $s \in \Gamma_0 \setminus J$).
Since $Z'$ is in $\Upsilon_{\reg}^N$ and $\Upsilon_{\reg}^N$ is closed under the action of $\bbW$, $(u\cdot Z')_{\im}(\alpha_t)=0 \implies (u\cdot Z')_{\re}(\alpha_t) \neq 0$, so $(u\cdot Z')_{\re}(\alpha_t) \in \bbR_{< 0}$ for all $t \in J$ as required.

The proof for the $I=\{0\}$ case is a simpler version of the proof above, where we do not need to normalise the elements ($\Upsilon_{\reg}^N = \Upsilon_{\reg}$ and $C^N = C$ by convention) and $J$ is always a finite parabolic subgroup since $\bbW$ is finite itself (see \cref{prop:imagcone}).
\end{proof}

\begin{proposition}\label{prop:Wactionfreepropdiscont}
\begin{enumerate}[(i)]
\item The commuting actions of $S^1$ and $\bbW$ on $\Upsilon_{\reg}$ are both free and properly discontinuous.
\item If $I \neq \{0\}$, a fundamental domain of $\bbW$ is $S^1 \times C^N \subset S^1 \times \Upsilon_{\reg}^N \cong \Upsilon^{\reg}$, and a fundamental domain of $S^1 \times \bbW$ is $C^N \subset \Upsilon_{\reg}$.
\item If $I = \{0\}$,  a fundamental domain of $\bbW$ is $C (= C^N) \subset \Upsilon_{\reg} (= \Upsilon_{\reg}^N)$.
\end{enumerate}
\end{proposition}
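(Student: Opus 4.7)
The plan is to use the previously established \cref{prop:WandS1actiongenerates} for existence of orbit representatives in $C^N$, and then bootstrap the classical free/properly-discontinuous $\bbW$-action on the interior of the Tits cone (\cref{prop:titscone}) to the complexified setting, exploiting \cref{lem:imaginaryinTits} to ensure we always operate inside $T_\bbR^\circ$. Throughout, the $S^1$-action commuting with the $\bbW$-action (\cref{lem:WactionN}) and the unique normalisation $\Upsilon_{\reg} \cong S^1 \times \Upsilon_{\reg}^N$ will allow us to separate the role of $S^1$ from that of $\bbW$.

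For part (i), freeness of the $S^1$-action is immediate: if $e^{i\theta} \cdot Z = Z$ and $Z \in \Upsilon_{\reg}$, then $Z(\alpha_s) \neq 0$ for every simple root forces $e^{i\theta} = 1$; properness follows as $S^1$ is compact. For the $\bbW$-action, using the $\bbW$-equivariant decomposition $\Upsilon_{\reg} \cong S^1 \times \Upsilon_{\reg}^N$ reduces us to $Z \in \Upsilon_{\reg}^N$, where $Z_{\im} \in T_\bbR^\circ$. Suppose $w \cdot Z = Z$; then $w \cdot Z_{\im} = Z_{\im}$, so after replacing $Z$ by a $\bbW$-translate we may assume $Z_{\im} \in \overline{C}_\bbR \cap T_\bbR^\circ$ and $w$ lies in the finite parabolic stabiliser $\bbW_{J_1}$, where $J_1 = \{s \in \Gamma_0 \mid Z_{\im}(\alpha_s) = 0\}$. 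For every $\alpha \in \Phi^+_{J_1}$ we have $Z_{\im}(\alpha) = 0$, so regularity $Z(\alpha) \neq 0$ forces $Z_{\re}(\alpha) \neq 0$; hence $Z_{\re}|_{\bbR\Lambda_{J_1}}$ is a regular element for the finite Coxeter group $\bbW_{J_1}$ acting on $(\bbR\Lambda_{J_1})^*_\bbR$, and the classical theory forces $w = \id$. Proper discontinuity then follows by intersecting a neighbourhood of $Z_{\im}$ from the properly-discontinuous $\bbW$-action on $T_\bbR^\circ$ with small neighbourhoods separating $Z$ from its finitely many $\bbW_{J_1}$-translates.

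For part (ii), existence of a representative in $S^1 \times C^N$ for any $\bbW$-orbit is \cref{prop:WandS1actiongenerates} combined with the commutativity of the $S^1$ and $\bbW$ actions: if $w \cdot k \cdot Z \in C^N$, then $w \cdot Z = k^{-1} \cdot (w k Z) \in S^1 \cdot C^N$. For uniqueness, suppose $Z = k \cdot Y$ and $Z' = k' \cdot Y'$ with $Y,Y' \in C^N$ and $w \cdot Z = Z'$; commutativity gives $k \cdot (w \cdot Y) = k' \cdot Y'$ with $w \cdot Y, Y' \in \Upsilon_{\reg}^N$, and uniqueness of normalisation yields $k = k'$ and $w \cdot Y = Y'$. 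Taking imaginary parts, Lemmas \ref{lem:imaginaryinTits} plus the definition of $C$ show $Y_{\im}, Y'_{\im} \in \overline{C}_\bbR \cap T_\bbR^\circ$, which is a fundamental domain for $\bbW$ on $T_\bbR^\circ$; hence $Y_{\im} = Y'_{\im}$ and $w$ lies in the finite parabolic $\bbW_J$ with $J = \{s \mid Y_{\im}(\alpha_s) = 0\}$. The condition $Y, Y' \in C$ forces $Y_{\re}(\alpha_t), Y'_{\re}(\alpha_t) \in \bbR_{<0}$ for $t \in J$, so the restrictions $Y_{\re}|_{\bbR\Lambda_J}, Y'_{\re}|_{\bbR\Lambda_J}$ lie in the open negative chamber of the finite Coxeter group $\bbW_J$, on which $\bbW_J$ acts simply transitively; this forces $w = \id$ and $Y = Y'$. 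The statement that $C^N$ is a fundamental domain for the combined $S^1 \times \bbW$ action follows by the same argument, now additionally using that $k^{-1} \cdot Y' \in \Upsilon_{\reg}^N$ forces $k=1$ (since $\phi^I(k^{-1} Y') = \pi/2 - \arg k$).

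Part (iii) is the simpler version in which $I = \{0\}$, $\bbW$ is finite, $T_\bbR = (\bbR\Lambda)^*_\bbR$, and normalisation plays no role; every step of the argument above goes through verbatim with $C^N = C$ and the $S^1$ factor suppressed. The main obstacle in the whole proof is the uniqueness portion of (ii): the need to deduce simultaneous equality of both real and imaginary parts from a single $\bbW$-translation, which we overcome by first pinning down $Y_{\im}$ using the Tits-cone fundamental domain and then using the resulting restriction to the finite parabolic $\bbW_J$ to handle $Y_{\re}$ on the negative chamber.
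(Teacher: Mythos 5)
Your proof is correct and follows the same overall strategy as the paper's: reduce via \cref{prop:WandS1actiongenerates} to a point of $C^N$ (resp.\ $S^1\times C^N$), then transport the free and properly discontinuous $\bbW$-action on the interior of the Tits cone, using \cref{lem:imaginaryinTits} to land there. The one genuine methodological difference is in how the two arguments treat the case where $Z_{\im}$ lies on a wall of $\overline{C}_\bbR$ (so that the stabiliser of $Z_{\im}$ in $\bbW$ is a nontrivial finite parabolic $\bbW_J$). The paper's proof sidesteps this entirely by first applying an $S^1$-rotation — permissible because $S^1$ commutes with $\bbW$ and acts freely — so that $Z(\alpha_s)\in \bbH$ for every $s$, which places $Z_{\im}$ in the \emph{open} chamber $C_\bbR$ whose stabiliser is already trivial; this makes both freeness and proper discontinuity immediate from \cref{prop:titscone}. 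You instead confront the wall case head on: having localised $w$ to the finite parabolic $\bbW_J$, you observe that regularity of $Z$ forces $Z_{\re}(\alpha)\neq 0$ for all $\alpha \in \Phi^+_J$, so $Z_{\re}|_{\bbR\Lambda_J}$ is a regular vector for $\bbW_J$, whose stabiliser is trivial by standard finite Coxeter theory. Both arguments work; yours is a bit longer but has the virtue of being fully explicit about the parabolic reduction — and, in part (ii), of spelling out the strict-fundamental-domain uniqueness claim (using simple transitivity of $\bbW_J$ on chambers of the restriction), where the paper merely asserts that this ``follows immediately.''
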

\begin{proof}
The fact that the actions of $S^1$ and $\bbW$ commute with each other is immediate; so is the freeness and properly discontinuousness of the $S^1$-action.

First suppose $I\neq \{0\}$.
By the previous proposition we have that the $\bbW$-orbit of $S^1 \times C^N$ generates the whole space $\Upsilon_{\reg}$.
To show that the action of $\bbW$ is free, it is sufficient to show that no points in $S^1 \times C^N$ is fixed under the group action of $\bbW$.
Similarly, to show that the action of $\bbW$ is properly discontinuous, it is sufficient to only look at the points $Z \in S^1 \times C^N$.
With that being said, take $Z \in S^1 \times C^N \subseteq \Upsilon_{\reg}$.
By rotating $Z$ via the $S^1$-action if necessary, we may further assume that $Z(\alpha_s) \in \bbH$ for all $s \in \Gamma_0$; this is harmless since the $S^1$-action is free and it commutes with the $\bbW$-action.
This ensures that $Z_{\im}(\alpha_s) > 0$ for all the simple roots $\alpha_s$, so $Z_{\im}$ is in the open chamber $C_\bbR$.
By \cref{prop:titscone}, $C_\bbR$ is a fundamental domain of $T_{\bbR,\reg} := T_\bbR \setminus \cup_{\alpha \in \Phi^+} H_{\alpha,\bbR}$ and the action of $\bbW$ on $T_{\bbR,\reg}$ is moreover free and properly discontinuous.
It follows that the action of $\bbW$ on $\Upsilon_{\reg}$ is free since no element in $\bbW$ fixes the imaginary part $Z_{\im}$ of $Z$ other than identity element for all $Z \in S^1 \times C^N$.
Similarly, for each $Z \in S^1 \times C^N$ we can choose an open neighbourhood $U$ of $Z$ with the property that on the imaginary parts we have $w\cdot U_{\im} \cap U_{\im} \neq \emptyset$ if and only if $w = \id$.
As such, the action of $\bbW$ on $\Upsilon_{\reg}$ is also properly discontinuous.
The fact that $S^1\times C^N$ is a fundamental domain for the $\bbW$-action follows immediately.

The case $I=\{0\}$ follows from the same proof, where we do not need to normalise with $k \in S^1$ and we replace $S^1 \times C^N$ with $C$ ($=C^N$) throughout the proof.
\end{proof}

\subsection{Relating the hyperplane complements and the fundamental groups} \label{sec:relatehyperplanecomp}
To understand the space $\Upsilon_{\reg}$ and its fundamental group, we will relate its normalised part $\Upsilon_{\reg}^N$ to the more commonly studied complexified hyperplane complement $\Omega_{\reg}$ (see \cref{defn:usualhyperplanecomplement}).

\begin{theorem}
There is a $\bbW$-equivariant inclusion $\Upsilon_{\reg}^N \rightarrow \Omega_{\reg}$ and a $\bbW$-equivariant deformation retraction 
$\Omega_{\reg} \rightarrow \Upsilon_{\reg}^N$.
\end{theorem}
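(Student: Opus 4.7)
The inclusion $\Upsilon_{\reg}^N \hookrightarrow \Omega_{\reg}$ follows immediately from \cref{lem:imaginaryinTits}: every $Z \in \Upsilon_{\reg}^N$ has $Z_{\im} \in T_\bbR^\circ$, hence $Z \in \Omega$, and since $\Upsilon_{\reg}$ and $\Omega_{\reg}$ remove the same hyperplanes $H_\alpha$ for $\alpha \in \Phi^+$, this lands in $\Omega_{\reg}$. $\bbW$-equivariance is clear from the definitions. When $\bbW$ is finite, i.e.\ $I = \{0\}$, our convention gives $\Omega_{\reg} = \Upsilon_{\reg} = \Upsilon_{\reg}^N$, so the identity serves as the deformation retraction; below I assume $I \neq \{0\}$.

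For the retraction, first observe the reverse containment $\Omega_{\reg} \subseteq \Upsilon_{\reg}$: for $Z \in \Omega_{\reg}$, \cref{prop:titscone=positiveimagcone} gives $Z_{\im}(v) > 0$ for all $v \in I \setminus \{0\}$, so $Z(I \setminus \{0\}) \subset \bbH$ and $Z \in \Upsilon$. The plan is to retract $\Omega_{\reg}$ onto $\Upsilon_{\reg}^N$ by rotating each $Z$ until its imaginary-cone middle-ray angle reaches $\pi/2$: define
\[
R \from \Omega_{\reg} \times [0,1] \to (\bbR\Lambda)^*_\bbC, \qquad R_t(Z) := e^{it(\pi/2 - \phi^I(Z))} \cdot Z.
\]
At $t=0$ this is the identity; at $t=1$ the image has $\phi^I = \pi/2$, landing in $\Upsilon_{\reg}^N$. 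For $Z \in \Upsilon_{\reg}^N$ already, $\phi^I(Z) = \pi/2$ forces $R_t(Z) = Z$ for all $t$, giving the retraction property.

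The main technical obstacle is verifying that the image of $R$ stays inside $\Omega_{\reg}$. Introduce $\alpha_-(Z) := \min\{\arg Z(v) : v \in \hat I\}$ and $\alpha_+(Z) := \max\{\arg Z(v) : v \in \hat I\}$, where $\hat I := I \cap \{\|v\| = 1\}$ is compact. For $Z \in \Omega$ we have $Z(\hat I) \subset \bbH$, so using continuity of $\arg \from \bbH \to (0,\pi)$ the bounds are attained, depend continuously on $Z$, and satisfy $0 < \alpha_-(Z) \leq \alpha_+(Z) < \pi$; and because $Z(I \setminus \{0\})$ is a convex set in $\bbC$ avoiding the origin, the angular width obeys $\alpha_+ - \alpha_- < \pi$. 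A short case analysis on the sign of $\theta(Z) := \pi/2 - \phi^I(Z)$, using $\phi^I(Z) = (\alpha_- + \alpha_+)/2$ and $(\alpha_+ - \alpha_-)/2 < \pi/2$, then shows that for every $\tau$ between $0$ and $\theta(Z)$ the shifted bounds $\alpha_\pm(Z) + \tau$ still satisfy $0 < \alpha_-(Z) + \tau$ and $\alpha_+(Z) + \tau < \pi$. This keeps $e^{i\tau} Z \in \Omega$ throughout the interpolation, and since the $S^1$-action preserves each hyperplane $H_\alpha$ (as $Z(\alpha) = 0 \iff e^{i\tau}Z(\alpha) = 0$), the path stays in $\Omega_{\reg}$.

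Continuity of $R$ then reduces to continuity of $\phi^I$ on $\Omega$, which follows from the continuous dependence of $\alpha_\pm$ on $Z$ noted above. Finally, $\bbW$-equivariance of $R$ reduces to $\phi^I(w \cdot Z) = \phi^I(Z)$: this holds because $w \cdot I = I$ gives $\alpha_\pm(w \cdot Z) = \alpha_\pm(Z)$, combined with the commutativity of the $S^1$- and $\bbW$-actions from \cref{lem:WactionN}.
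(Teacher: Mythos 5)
Your proof is correct and uses the same homotopy $h_t(Z) = e^{it(\pi/2 - \phi^I(Z))} Z$ as the paper. You additionally supply a careful verification, via the angle bounds $\alpha_\pm(Z)$, that the homotopy never leaves $\Omega_{\reg}$ --- a point the paper's proof leaves implicit.
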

\begin{proof}
When $I=\{0\}$ this is trivial since $\Upsilon_{\reg}^N = \Upsilon_{\reg}$ by convention and $\Upsilon_{\reg} = \Omega_{\reg}$.

Now suppose $I \neq \{0\}$.
The fact that $\Upsilon_{\reg}^N \subseteq \Omega_{\reg}$ follows immediately from \cref{lem:imaginaryinTits}.
The $\bbW$-action on $\Upsilon_{\reg}^N$ and $\Omega_{\reg}$ are defined in exactly the same way and so the natural inclusion is indeed $\bbW$-equivariant.

For each $t \in [0,1]$, consider the map $h_t: \Omega_{\reg} \ra \Omega_{\reg}$ where
\[
h_t(Z) := e^{it(\pi/2-\phi^I(Z))}Z.
\]
By construction we have that $h_0 = \id$.
Moreover, $h_1(\Omega_{\reg}) = \Upsilon_{\reg}^N$ (note that $e^{i(-\phi^I(Z)+\pi/2)}$ is exactly the element in $S^1$ used to normalised elements in $\Upsilon_{\reg}$), and $h_1 = \id|_{\Upsilon_{\reg}^N}$.
It follows that this is a deformation retraction.

Since the $\bbW$-action commutes with $e^{it(\pi/2-\phi^I(Z))}$ for all $t$ (this is just part of the $S^1$-action), the deformation retraction is also $\bbW$-equivariant.
\end{proof}
\begin{remark} \label{rmk:hyperplanecomplementrelation}
Suppose $I \neq \{0\}$.
In fact, we have a chain of inclusions $\Upsilon_{\reg}^N \subseteq \Omega_{\reg} \subseteq \Upsilon_{\reg}$, with all inclusions being $\bbW$-equivariant.
Roughly speaking, the space $\Upsilon_{\reg}$ includes both the positive chambers and the negative chambers, consisting of two copies of $\Omega_{\reg}$ (positive side and negative side) which are ``glued together'' nicely.
The extra $S^1$-component is a loop that goes around the union of hyperplanes associated to the closed imaginary cone in the complex space.
The picture for $\Gamma$ the affine $\hat{A}_1$ type will be quite illuminating.
\end{remark}

\begin{corollary} \label{cor:homotopyequivalence}
We have the homotopy equivalences:
\[
\Upsilon_{\reg} \simeq 
	\begin{cases}
	\Omega_{\reg},  &\text{ if } \bbW \text{ is finite, equivalently } I = \{0\}; \\
	S^1 \times \Omega_{\reg}, &\text{ if } \bbW \text{ is infinite, equivalently } I \neq \{0\}.
	\end{cases}
\]
Moreover, the fundamental group of $\Upsilon_{\reg}/\bbW$ is given by
\begin{equation} \label{eqn:fundgrpArtin}
\pi_1(\Upsilon_{\reg}/\bbW) \cong 
\begin{cases}
\B(\Gamma), &\text{ if } \bbW \text{ is finite, equivalently } I = \{0\}; \\
\bbZ \times \B(\Gamma), &\text{ if } \bbW \text{ is infinite, equivalently } I \neq \{0\}.
\end{cases}
\end{equation}
\end{corollary}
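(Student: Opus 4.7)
The plan is to assemble the corollary directly from results already in place: \eqref{eqn:normaliseddecomp}, the $\bbW$-equivariant deformation retraction $\Omega_{\reg} \to \Upsilon_{\reg}^N$ of the preceding theorem, and Van der Lek's theorem. The argument splits naturally along the dichotomy $I = \{0\}$ versus $I \neq \{0\}$, but for the homotopy equivalence statement both cases follow from a single chain of identifications.

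First, I would treat the homotopy equivalences. In the finite case, $I = \{0\}$ gives $\Upsilon = (\bbR\Lambda)^*_\bbC = \Omega$, hence $\Upsilon_{\reg} = \Omega_{\reg}$ tautologically. In the infinite case, \eqref{eqn:normaliseddecomp} supplies a $\bbW$-equivariant homeomorphism $\Upsilon_{\reg} \cong S^1 \times \Upsilon_{\reg}^N$ (with the $\bbW$-action trivial on the $S^1$-factor, by \cref{lem:WactionN}), and the preceding theorem supplies a $\bbW$-equivariant deformation retraction $\Omega_{\reg} \to \Upsilon_{\reg}^N$. Stringing these together gives
\[
\Upsilon_{\reg} \cong S^1 \times \Upsilon_{\reg}^N \simeq S^1 \times \Omega_{\reg},
\]
with all maps $\bbW$-equivariant.

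For the fundamental group computation, the finite case is immediate: $\Upsilon_{\reg}/\bbW = \Omega_{\reg}/\bbW$, whose fundamental group is $\B(\Gamma)$ by Van der Lek's theorem (\cref{defn:usualhyperplanecomplement}). In the infinite case, I would quotient the equivariant equivalence above by $\bbW$. Since $\bbW$ acts freely and properly discontinuously on $\Upsilon_{\reg}$ and on $\Omega_{\reg}$ (\cref{prop:Wactionfreepropdiscont} and \cref{defn:usualhyperplanecomplement}) and the action commutes with the $S^1$-factor, both the homeomorphism and the deformation retraction descend to the quotients, giving
\[
\Upsilon_{\reg}/\bbW \;\cong\; S^1 \times (\Upsilon_{\reg}^N/\bbW) \;\simeq\; S^1 \times (\Omega_{\reg}/\bbW).
\]
Taking $\pi_1$ and applying Van der Lek then yields $\pi_1(\Upsilon_{\reg}/\bbW) \cong \bbZ \times \B(\Gamma)$, as required.

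There is no real obstacle here: the only point demanding care is that every identification in the chain must be $\bbW$-equivariant so that it survives the passage to the quotient. This equivariance was built in at each preceding stage (\cref{lem:WactionN} for the $S^1$-splitting, and the preceding theorem for the retraction), so no new input is needed beyond a careful bookkeeping of equivariance and an appeal to Van der Lek's theorem at the end.
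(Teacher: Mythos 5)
Your proposal is correct and follows essentially the same route as the paper: the finite case is the tautological identification $\Upsilon_{\reg} = \Omega_{\reg}$, and the infinite case strings together the $\bbW$-equivariant homeomorphism $\Upsilon_{\reg} \cong S^1 \times \Upsilon_{\reg}^N$, the $\bbW$-equivariant deformation retraction $\Omega_{\reg} \to \Upsilon_{\reg}^N$, the free and properly discontinuous action from \cref{prop:Wactionfreepropdiscont}, and Van der Lek's computation of $\pi_1(\Omega_{\reg}/\bbW)$. Your explicit remark that the $\bbW$-action is trivial on the $S^1$-factor (a consequence of \cref{lem:WactionN}, since $\phi^I$ is $\bbW$-invariant) is a small but welcome clarification that the paper leaves implicit.
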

\begin{proof}
The first statement is immediate; $\Upsilon_{\reg}$ is in fact equal to $\Omega_{\reg}$ when $\bbW$ is finite, whereas the case when $\bbW$ is infinite follows from the previous theorem and the description $\Upsilon_{\reg} \cong S^1 \times \Upsilon_{\reg}^N$.

For the second statement, the case when $\bbW$ is finite is exactly Van der Lek's result that $\pi_1(\Omega_{\reg}/\bbW) \cong \B(\Gamma)$ \cite{VdL_thesis}.
When $\bbW$ is infinite, the theorem above shows that the homotopy equivalence $\Upsilon_{\reg}^N \simeq \Omega_{\reg}$ is $\bbW$-equivariant.
Together with \cref{prop:Wactionfreepropdiscont}, we have that
\[
\Upsilon_{\reg}/\bbW \cong S^1 \times (\Upsilon_{\reg}^N/\bbW) \simeq S^1 \times (\Omega_{\reg}/\bbW).
\]
The rest once again follows from Van der Lek's result \cite{VdL_thesis}.
\end{proof}

%

\section{Fusion categories and fusion quivers}\label{sec:fusion1}
\subsection{Tensor and fusion categories}\label{sect: fusion}
We briefly review the notion of fusion categories.  The definitions of algebra and and module objects in such categories are standard; we use these notions freely in the sequel, referring the reader to \cite{etingof_nikshych_ostrik_2005, EGNO15} for more details.
In what follows $\Bbbk$ denotes an algebraically closed field.

\begin{definition}\label{defn: tensor fusion category}
A (strict) \emph{tensor category} $\cC$ is a $\Bbbk$-linear, finite abelian category equipped with a (strict) monoidal structure $- \otimes -$ that is bilinear on morphisms, such that every object is rigid (it has both a left and a right dual) and the monoidal unit $\1$ is simple.
If $\cC$ is moreover semisimple with finitely many isomorphism classes of simple objects, we say that $\cC$ is a \emph{fusion category}.
\end{definition}
Note that in a fusion category, left and right duals are isomorphic; in particular we have $X \cong X^{**}$ with $(X \otimes Y)^{**} \cong X^{**} \otimes Y^{**}$ (even though these isomorphisms may not collective form a pivotal structure).

A first example of a fusion category is the category of finite dimensional $\Bbbk$-vector spaces.  Other fusion categories of relevance in the current paper will be defined in \cref{subsection: TLJ}.

\begin{definition}\label{defn: fusion ring}
Let $\cC$ be a fusion category and $\Irr(\cC) =\{ S_k \}_{k=0}^n$ be the representatives of simple objects in $\cC$ with $S_0:= \1$ the monoidal unit.
The \emph{fusion ring} $K_0(\cC)$ of $\cC$ is the Grothendieck group of $\cC$ equipped with  multiplication $[S_i]\cdot[S_j] := [S_i \otimes S_j]$, which by semisimplicity, can be written as
\[
[S_k] \cdot [S_i] = \sum_{i=0}^k {}_kr_{i,j}, \quad {}_kr_{i,j} \in \mathbb{N}_0.
\]
The ring unit is the class $[S_0] = [\1]$ of the monoidal unit.
\end{definition}

The following notion of (real-valued) dimensions for fusion categories will play an important role in future sections.
\begin{definition}\label{defn:FPdim}
Let $\cC$ be a fusion category with $\Irr(\cC) =\{ S_k \}_{k=0}^n$.
For each simple $S_k$, we define $\FPdim([S_k]) \in \R$ to be the Frobenius--Perron eigenvalue (the unique largest real eigenvalue) of the non-negative matrix $({}_k r_{i,j})_{i,j \in \{0,1,...,n\}}$.
This extends to a ring homomorphism \cite[Proposition 3.3.6(1)]{EGNO15} that we denote by 
\[
\FPdim\colon K_0(\cC) \ra \R.
\]
The \emph{Frobenius--Perron dimension}, $\FPdim(Y)$, of an object $Y \in \cC$ is defined to be $\FPdim([Y])$.
\end{definition}
If we agree that the dimension of any object should be positive and dimension should respect tensor product and direct sums (multiplicative and additive respectively), then $\FPdim$ is the unique ring homomorphism satisfying such properties \cite[Proposition 3.3.6(3)]{EGNO15}.
\begin{remark}
Note that $\FPdim(Y) \geq 0$ for any object $Y \in \cC$ and is zero if and only if $Y \cong 0$.
Moreover, a numerical coincidence in relation to Coxeter theory is that $\FPdim(Y) < 2$ if and only if $\FPdim(Y) = 2\cos(\pi/m)$ for some integer $m \geq 2$.
See \cite[\S 3.3]{EGNO15} for more details.
\end{remark}

Later on, we will construct new tensor categories from old via taking the Deligne's tensor product of tensor categories.
\begin{definition}
Let $\cC$ and $\cC'$ be fusion categories.
The \emph{(Deligne's) tensor product} $\cC \boxtimes \cC'$ is the fusion category with objects and morphism spaces given as follows:
\begin{enumerate}
\item Obj$(\cC \boxtimes \cC')$ consists of finite direct sums of the formal objects $A \boxtimes A'$, with $A$ and $A'$ objects in $\cC$ and $\cC'$ respectively; 
\item $\Hom_{\cC \boxtimes \cC'} \left(\bigoplus A_i \boxtimes A_i', \bigoplus B_j \boxtimes B_j'\right) = \bigoplus_{i,j} \Hom_\cC \left( A_i, B_j \right) \otimes \Hom_{\cC'} \left( A_i', B_j' \right)$.
\end{enumerate}
The monoidal structure on $\cC \boxtimes \cC'$ is defined component-wise by
\[
(A \boxtimes A') \otimes (B \boxtimes B') := (A \otimes B) \boxtimes (A' \otimes B'),
\] 
with monoidal unit $\1_\cC \boxtimes \1_{\cC'}$ and direct sums in each $\boxtimes$-component is distributive over $\boxtimes$.
The simple objects are given by $S \boxtimes S'$ for $S$ and $S'$ simple objects in $\cC$ and $\cC'$ respectively.
The left (resp.\ right) dual of $S \boxtimes S'$ is just the left (resp.\ right) dual of each component.
\end{definition}
Given a finite set $\Gamma_0$ and with fusion category $\cC_s$ for each $s \in \Gamma_0$, we may define the fusion category $\bigboxtimes_{s \in \Gamma_0} \cC_s$  in a similar fashion.
It is easy to see that the fusion ring of $\bigboxtimes_{s \in \Gamma_0} \cC_s$ is just a tensor product of the individual component 
\[
K_0 \Big(\bigboxtimesp{}{s \in \Gamma_0} \cC_s \Big) \cong \bigotimes_{s \in \Gamma_0} K_0(\cC_s),
\]
where the tensor products of the fusion rings are taken over $\Z$.
The $\FPdim$ on $K_0\big(\bigboxtimes_{s \in \Gamma_0} \cC_s\big)$ is induced by the $\FPdim$ defined on each component $K_0(\cC_s)$, so that 
\[
\FPdim\left(\bigboxtimes_{s \in \Gamma_0} A_s \right) = \prod_{s\in \Gamma_0} \FPdim(A_s).
\]

\subsection{Graded algebras and module categories}\label{subsect: module categories}
Throughout this subsection, $\cC$ will be a tensor category with monoidal unit $\1$.

We recall the definition of algebras and their (bi)modules in tensor categories, following \cite[\S 7.8]{EGNO15}.
An \emph{algebra} in $\cC$ is an object $A$ in $\cC$ equipped with two maps:
\begin{align*}
\mu &: A \otimes A \ra A \quad \text{(multiplication)} \\
\eta &: \1 \ra A \quad \text{(unit)}
\end{align*}
satisfying the associativity and unital conditions.
Similarly, a left $A$-\emph{module} is an object $M \in \cC$ equipped with an action map 
\[
\alpha_M : A \otimes M \ra M \quad \text{(action)}
\]
satisfying the associativity condition.
A \emph{morphism} between two left $A$-modules $(M,\alpha_M)$ and $(N, \alpha_N)$ is by definition a morphism $\varphi: M \ra N$ in $\cC$ that commutes with the action maps of $M$ and $N$.
The notions of \emph{right} $A$-modules and morphisms between them are defined similarly.
Given two algebras $A$ and $B$, an $(A,B)$-bimodule $M$ is therefore an object $M$ which is both a left $A$-module and a right $B$-module, such that the left and right actions on $M$ commute; morphisms between bimodules are those which are both morphisms of left and right modules.

In our setting, we will be interested in algebras and (bi)modules equipped with a $\Z$-grading.
To this end, we say that an algebra $A$ in $\cC$ is ($\bbZ$-)\emph{graded} if $A$ has a decomposition $A = \bigoplus_{i \in \Z} A_i$ such that $\mu:A_i \otimes A_j \ra A_{i+j}$.
Note that the unital condition of an algebra dictates that $\eta: \1 \ra A_0 \subseteq A$.
Whenever the algebra $A$ considered is graded, we always require the corresponding (bi)modules to be \emph{graded} as well; i.e.\ $M = \bigoplus_{i \in \Z} M_i$ such that $\alpha: A_i \otimes M_j \ra M_{i+j}$ for all $i, j \in \Z$ -- similarly for right modules and bimodules.
In addition, morphisms between graded modules are required to be grading preserving.

We denote the category of left $A$-modules and right $A$-modules by $A\lmod$ and $\rmod A$ respectively; following our convention, when $A$ is graded the same notation will be used denote the category of graded modules (with grading preserving morphisms).
Similarly, we use $A\bimod B$ to denote the category of (graded) $(A,B)$-bimodules.
The grading shift functor on the category of graded (bi)modules will be denoted by $\<1\>$, where a graded module $M$ with decomposition $M = \bigoplus_{i \in \Z} M_i$ is sent to the graded module $M\<1\>$ with its homogeneous degree $i+1$ piece given by $(M\<1\>)_{i+1} := M_i$.

Note that the categories $A\lmod$, $\rmod A$ and $A\bimod B$ are all locally finite (in fact, finite if non-graded) abelian categories induced by the finite abelian category structure of $\cC$ underlying them.
Furthermore, the grading shift functor $\<1\>$ is exact by construction.
As such, if $(M, \alpha, \theta) \in$ $A$-$\mod$-$B$ and $(N, \theta', \alpha') \in $ $B$-$\mod$-$C$, their \emph{tensor product over $B$}, $M\otimes_B N$ is defined to be the coequaliser in $A\bimod C$:
\begin{center}
\begin{tikzcd}
M\otimes B \otimes N \ar[r, shift left=0.75ex, "\theta \otimes \id"] \ar[r, shift right=0.75ex, swap, "\id \otimes \theta'"] & M\otimes N \ar[r, "\Theta"] & M\otimes_B N;
\end{tikzcd}
\end{center}
namely $M\otimes_B N$ is the cokernel of the morphism $\theta\otimes \id - \id\otimes \theta'$.
Note that we always have $A \otimes_A M \cong M$ and $M \otimes_B B \cong M$ (with isomorphisms in their respective bimodule categories).
Moreover, if $A = \1 = C$ is the trivial algebra, so that $M$ and $N$ are simply right and left $B$-modules respectively, the tensor product $M \otimes_B N$ is the coequaliser in $\cC = \1\bimod \1$.

Let $\cC^{\otimes^\text{op}}$ denote the category $\cC$ with the order of the monoidal tensor reversed; this is \emph{not} the same as the opposite category, which is the category obtained by reversing all morphisms.
Given a $\Bbbk$-linear additive category $\cA$, we let $\cEnd(\cA)$ denote the category of linear, additive endofunctors of $\cA$.
Note that the category $\cEnd(\cA)$ is additive and monoidal, with monoidal structure given by composition.

\begin{definition}
A {\it right module category} over $\cC$ (or right $\cC$-module category) is an additive category $\cA$ together with an additive monoidal functor $\cG: \cC^{\otimes^\text{op}} \ra \cEnd(\cA)$.
When $\cA$ is abelian or triangulated, we require $\cG$ to map into the category of exact endofunctors.
When $\cA$ is equipped with an internal grading shift functor $\<1\>$ (e.g.\ $\cA = A\lmod$ for $A$ a graded algebra), we also require $\cG$ to map to endofunctors which commutes trivially with $\<1\>$.
The functor $\cG(Y)$ for $Y \in \cC$ will usually be denoted as $- \otimes Y$, and we will also say that $\cC$ \emph{acts} on $\cA$.

An endofunctor $\cF$ of a $\cC$-module category $\cA$ is said to be a \emph{$\cC$-module functor} if $\cF$ commutes with the $\cC$-action.
\end{definition}

\begin{remark}
Our stated notion of a module category for when $\cA$ is abelian is slightly weaker than what is normally considered (e.g.\ in \cite[\S 7.3]{EGNO15}).
Namely, the monoidal functor $\cG$ is also usually required to be \emph{exact} (into the category of left exact endofunctors of $\cA$, which is an abelian category), but we will not require this.
When $\cA$ is instead triangulated, the category of exact endofunctors may not even have a natural abelian category structure, thus we shall not attempt to ask for $\cG$ to be ``exact''.
\end{remark}

\begin{remark}
More generally, a $\cC$-module endofunctor $\cF$ includes the data of a natural isomorphism $\psi_{M,Y}: \cF(M\otimes Y) \xra{\cong} \cF(M)\otimes Y$ satisfying the associativity (pentagon) and unital conditions.
In this paper the natural isomorphism $\psi$ will always be the identity.
\end{remark}

The main examples of right module categories over $\cC$ in this paper will be obtained as follows.
\begin{example}
Let $A$ be a (graded) algebra in $\cC$.  
\begin{enumerate}
\item Give a left (graded) module $M$ over $A$ with action map $\alpha_M$, $M\otimes Y$ for $Y \in \cC$ is still naturally a left (graded) module over $A$ with action map $\alpha_M \otimes \id_Y$.
Moreover, one can readily check that $\cG(Y) := -\otimes Y$ is an exact functor for each $Y \in \cC$, so $A\lmod$ is an abelian right module category over $\cC$.
\item The endofunctors $- \otimes Y$ have left and right adjoints (given by tensoring with the corresponding duals of $Y$), so it follows that $- \otimes Y$ sends projective (graded) modules to projective (graded) modules.
In particular, the additive full subcategory $A\lprmod$ of projective (graded) left $A$-modules is an additive right module category over $\cC$.
\item If $\cA$ is an additive right module category over $\cC$, then the bounded homotopy category of (cochain) complexes $\Kom^b(\cA)$ is naturally a triangulated right module category over $\cC$ -- exactness of $- \otimes Y$ comes for free.
In particular, $\Kom^b(A\lprmod)$ is a triangulated right module category over $\cC$.
\end{enumerate}
\end{example}

Given a module category $\cA$ over $\cC$, let $K_0(\cA)$ denote the Grothendieck group of $\cA$; when $\cA$ is additive we take the split Grothendieck group, and when $\cA$ is abelian or triangulated we take the exact Grothendieck group. 
Then $\cG: \cC \ra \cEnd(\cA)$ induces a ring homomorphism 
\begin{align*}
K_0(\cC^{\otimes^\text{op}}) &\ra \End(K_0(\cA)) \\
[Y] &\mapsto \{ [M] \mapsto [M \otimes Y] \}, \quad Y \in \cC,
\end{align*}
which makes $K_0(\cA)$ into a right module over $K_0(\cC)$.
In particular, we will denote the right action of $K_0(\cC)$ on $K_0(\cA)$ by
\[
[M] \cdot [Y] := [M \otimes Y]
\] 
for all $Y \in \cC$ and $M \in \cA$.
It follows that any endofunctor $\cF: \cA \ra \cA$  which commutes with the $\cC$ action on $\cA$ descends to a $K_0(\cC)$-module endomorphism on $K_0(\cA)$.
When $\cA$ is equipped with an internal grading shift functor $\<1\>$, we insisted that the action of $\cC$ on $\cA$ commutes trivially with $\<1\>$.
As such, $K_0(\cA)$ is naturally a right module over $K_0(\cC)[q^{\pm 1}]$, where $[M] \cdot q := [M \<1\>]$.

\subsection{Zigzag and preprojective algebras}\label{zigzagpreproj}
In this section we define fusion quivers and the zigzag and preprojective algerbas associated to them.  Later, in Section \ref{sec:quivercoxeter}, we will associate a fusion quiver to each Coxeter system.

Let $\cC$ be a fusion category, and denote by $\mathfrak{G} = (\mathfrak{G}_0, \mathfrak{G}_1)$ a quiver with set of vertices $\mathfrak{G}_0$ and set of arrows $\mathfrak{G}_1$.
\begin{definition}
A {\it fusion quiver} over $\cC$ is a quiver $\mathfrak{G}$ together with an assignment of non-zero objects $L_e \in \cC$ for each arrow $e \in \mathfrak{G}_1$. 
\end{definition}
Recall that the double quiver $\overline{\mathfrak{G}}$ of an (ordinary) quiver $\mathfrak{G}$ is constructed by adding an arrow $e^*$ in the opposite direction for each arrow $e$ in the original quiver.
\begin{definition}
The {\it right double} fusion quiver $(\overline{\mathfrak{G}}, \overline{L}_?)$ of $(\mathfrak{G},L_?)$ is the fusion quiver whose underlying quiver is the double quiver of $\mathfrak{G}$, where the added arrows $e^*$ are labelled by the right dual $L_e^*$ of the label $L_e$ of $e$; i.e.\ $\overline{L}_{e^*} = L_e^*$.  The {\it left double} is defined similarly using left duals.  
\end{definition}
We will use the right double in the constructions below, and refer to it simply as the double fusion quiver.
The double fusion quiver comes with an involutive operation on the arrows sending $e \mapsto e^*$, with $(e^*)^* = e$ for all $e \in \mathfrak{G}_1$.
Note that on the labelling-objects $\overline{L}_e$, we have $\overline{L}_{(e^*)^*} = \overline{L}_e^{**} \cong \overline{L}_e$ for all $e \in \mathfrak{G}_1$ \cite[Proposition 4.8.1]{EGNO15}.

Given a double fusion quiver $(\overline{\mathfrak{G}}, \overline{L}_?)$, we can construct its {\it path algebra} (or {\it tensor algebra}) in the ind completion of $\cC$ as follows.
Note that the monoidal unit $\1$ of $\cC$ has a unique algebra structure.
For each $i \in \overline{\mathfrak{G}}_0$, we assign the trivial algebra $\1_i := \1$ and consider the (semisimple) product algebra 
\[
S := \bigoplus_{i \in \mathfrak{G}_0} \1.
\]
For each arrow $e: i \ra j \in \overline{\mathfrak{G}}_1$, consider the $(S,S)$-bimodules ${}_i(\overline{L}_e)_j$, whose underlying object is $\overline{L}_e$, with its natural $(S,S)$ bimodule structure.
Denote the direct sum over all arrows by 
\[
D:= \bigoplus_{e: i \ra j \in \overline{\mathfrak{G}}_1} {}_i(\overline{L}_e)_j
\]
We let $T(\overline{\mathfrak{G}}, \overline{L}_?)$ be the tensor algebra generated by $D$ over $S$:
\[
T(\overline{\mathfrak{G}}, \overline{L}_?) := S \oplus D \oplus (D \otimes_{S} D) \oplus ... ,
\]
which is an algebra in the ind completion of $\cC$.
This algebra is also naturally graded by setting $D$ to have degree $1\in\bbZ$; we refer to this grading as the {\it path length grading} on $T(\overline{\mathfrak{G}}, \overline{L}_?)$.

We will now consider two ``quadratic dual'' quotients of $T(\overline{\mathfrak{G}}, \overline{L}_?)$, which we call the zigzag algebra and the preprojective algebra.
In what follows, for each arrow $e: i \ra j$, we denote the unit map of the duality between ${}_i(\overline{L}_e)_j$ and ${}_j(\overline{L}_{e^*})_i$ by 
            \[
            \cup_e: \1 \ra {}_i(\overline{L}_e)_j \otimes_S {}_j(\overline{L}_{e^*})_i.
            \]
\begin{definition}\label{defn:generalzigzagpreproj}
    The \emph{zigzag algebra} of the double fusion quiver $(\overline{\mathfrak{G}}, \overline{L}_?)$ is the quotient of $T(\overline{\mathfrak{G}}, \overline{L}_?)$ by the two-sided ideal generated by the following objects in $D\otimes_S D$:
    \begin{itemize}
        \item For each pair of vertices $i \neq k$, the object ${}_i(\overline{L}_e)_j \otimes_S {}_j(\overline{L}_e)_k$ is in the ideal.
        \item For each vertex $i \in \overline{\mathfrak{G}}_0$, the cokernel of the map $\1 \xra{\oplus_e \cup_e} \bigoplus_{e} {}_i(\overline{L}_e)_j \otimes_S {}_j(\overline{L}_{e^*})_i$ is in the ideal.
    \end{itemize}
The \emph{preprojective algebra} is the quotient of $T(\overline{\mathfrak{G}}, \overline{L}_?)$ by the two-sided ideal generated by the images of the maps $\1 \xra{\oplus_e \cup_e} \bigoplus_{e} {}_i(\overline{L}_e)_j \otimes_S {}_j(\overline{L}_{e^*})_i$, for each vertex $i\in \mathfrak{G}_0$.
\end{definition}
Note that the zigzag and preprojective algebras both inherit the path-length grading and are moreover quadratic, in the sense that the ideals are generated in path-length degree 2.
\begin{remark}
Just as in \cite{HueKho}, we can define skew-zigzag (and skew-preprojective) algebras in $\cC$ by introducing signs in the map $\1 \xra{\oplus_e \cup_e} \bigoplus_{e} {}_i(\overline{L}_e)_j \otimes_S {}_j(\overline{L}_{e^*})_i$.  We will not use them explicitly here, so we omit the details and refer to \cite{bapat2020thurston} for a more a discussion of classical skew and ordinary zigzag algebras.
\end{remark}

\begin{remark}
When the underlying fusion category $\cC$ is the category of vector spaces, the (classical) preprojective algebra admits another construction as an endomorphism algebra  in the representation category of the quiver \cite[Theorem 2.3]{CB_preproj}.  A similar construction can be done here, using instead the \emph{internal} hom spaces in the representation category of fusion quivers (see \cite{EGNO15} for the definition of internal homs and \cite{EH_fusionquiver} for the definition of fusion quiver representations).  
\end{remark}

\section{Zigzag algebras associated to Coxeter diagrams}\label{zigzagCoxeter}
In this section, to each Coxeter diagram $\Gamma$, we will associate a fusion category $\cC(\Gamma)$ in which we will construct our zigzag algebra $\zig(\Gamma)$.
This fusion category $\cC(\Gamma)$ was also used in \cite{heng2023coxeter}.

\subsection{Temperley--Lieb--Jones category at root of unity} \label{subsection: TLJ}
We first briefly recall the fusion categories that will be used to construct $\cC(\Gamma)$.
These fusion categories are strict fusion categories known as the \emph{Temperley--Lieb--Jones category evaluated at} $q = e^{i\pi/n}$, which we denote as $\TLJ_n$.
More precisely, it is a diagrammatic category constructed as the additive completion of the category of Jones--Wenzl projectors that is semi-simplified by killing the negligible $(n-1)$th Jones--Wenzl projector.
We will only provide the minimal description of this category required for this paper, where the reader can refer to \cite{chen2014temperleylieb} and \cite{wang_2010, turaev_2016} for more details.
\begin{remark}
Readers familiar with quantum groups could also use the semi-simplified category $\overline{\rep}(U_q(\mathfrak{s}\mathfrak{l}_2))$ of $U_q(\mathfrak{s}\mathfrak{l}_2)$-tilting modules at $q = e^{i\frac{\pi}{n}}$, which is equivalent to $\TLJ_n$ as (braided) fusion categories; see \cite[Chapter XI, Section 6]{turaev_2016} and \cite[Section 5.5]{SnyTing09}.
\end{remark}

The Temperley--Lieb--Jones category $\TLJ_n$ at $q = e^{i\pi/n}$ \cite[Definition 5.4.1]{chen2014temperleylieb} is a strict, $\bbC$-linear fusion category that is generated additively and monoidally by the following $n-1$ simple objects:
\[
\Irr(\TLJ_n) = \{\Pi_0, \Pi_1, ..., \Pi_{n-2}\},
\]
with $\Pi_0$ being the monoidal unit.
The \emph{fusion rules} in $\TLJ_n$ are described as follows:
\begin{equation} \label{eq: fusion rule}
\Pi_a \otimes \Pi_b \cong 
	\begin{cases}
	\Pi_{|a-b|} \oplus \Pi_{|a-b| + 2} \oplus \cdots \oplus \Pi_{a+b}, &a+b \leq n-2; \\
	 \Pi_{|a-b|} \oplus \Pi_{|a-b| + 2} \oplus \cdots \oplus \Pi_{2n-(a+b)-4}, &a+b > n-2.
	\end{cases}
\end{equation}
Notice that no simple object appear more than once in the semi-simple decomposition above.
All the simples $\Pi_a$ are also self-dual (so are all objects in $\TLJ_n$), so we have
\[
\Hom_{\TLJ_n}(\Pi_a \otimes \Pi_b, \Pi_c) \cong \Hom_{\TLJ_n}(\Pi_c, \Pi_a \otimes \Pi_b) \cong 
\begin{cases}
\bbC, &\text{ if } \Pi_c \overset{\oplus}{\subseteq} \Pi_a \otimes \Pi_b; \\
0,  &\text{ otherwise}.
\end{cases}
\]
In particular, when $a = b$ and $c=0$, the hom spaces are generated respectively by the counit and unit of the self-duality, which we usually denote by ``caps'' and ``cups'': 
\[
\cap: \Pi_a \otimes \Pi_a \ra \Pi_0, \quad \cup: \Pi_0 \ra \Pi_a \otimes \Pi_a.
\]
The cases where $a=1,n-2$ and $n-3$ will also be used later, so we record them here:
\begin{itemize}
\item $a=1$:
\begin{equation} \label{eq: 1 b fusion rule}
\Pi_1 \otimes \Pi_b \cong \Pi_b \otimes \Pi_1 \cong
	\begin{cases}
	\Pi_1, &b=0 \\
	\Pi_{n-3}, &b=n-2\\
	\Pi_{b-1} \oplus \Pi_{b+1}, &\text{ otherwise}.
	\end{cases}
\end{equation}
\item $a = n-2$:
\begin{equation} \label{eq: z/2 fusion rule}
\Pi_{n-2} \otimes \Pi_b \cong \Pi_{n-2-b}.
\end{equation}
\item $a=n-3$: \eqref{eq: 1 b fusion rule} and \eqref{eq: z/2 fusion rule} imply that
\begin{equation} \label{eqn: n-3 b fusion rule}
\Pi_{n-3} \otimes \Pi_b \cong \Pi_{n-2} \otimes \Pi_1 \otimes \Pi_b \cong 
	\begin{cases}
	\Pi_{n-3}, &b=0 \\
	\Pi_{1}, &b=n-2\\
	\Pi_{n-2-b-1} \oplus \Pi_{n-2-b+1}, &\text{ otherwise}.
	\end{cases}
\end{equation}
\end{itemize}
Note that \eqref{eq: fusion rule} is also symmetric along $a$ and $b$, so we have $\Pi_a \otimes \Pi_b \cong \Pi_b \otimes \Pi_a$.
As such, its fusion ring $K_0(\TLJ_n)$ is a commutative ring.
Nonetheless, the category $\TLJ_n$ is not symmetric, but instead \emph{braided}; see \cite[Chapter XII, Section 6]{turaev_2016}.

Moreover, one sees that if $a$ and $b$ are both even, then their semisimple decomposition contains only even-labelled objects.
It follows that the full subcategory of $\TLJ_n$ generated the even-labelled object forms a fusion subcategory of $\TLJ_n$; we denote this subcategory by $\TLJ_n^{even}$.

\begin{example} \label{eg:TLJeven}
	\begin{itemize}[(i)]
	\item When $n = 3$, the full subcategory $\TLJ_3^{even}$ is equivalent to the category $\vec_\bbC$ of finite dimensional vector spaces over $\bbC$.
	\item When $n = 5$, the full subcategory $\TLJ_5^{even}$ is equivalent to the Fibonacci (or golden ratio) fusion category $\Fib$. This category has two simple objects $\1 = \Pi_0$ (monoidal unit) and $\Pi := \Pi_2$, where $\Pi \otimes \Pi \cong \1 \oplus \Pi$.
	\end{itemize}
\end{example}

%
The fusion ring of $\TLJ_n$ can be identified with the quotient polynomial ring
\[
K_0(\TLJ_n) \cong \Z[d]/\< \Delta_{n-1}(d) \>,
\]
where $\Delta_k(d)$ is the Chebyshev polynomial (of second kind), defined by the following recurrence relation
\[
\Delta_0(d) = 1, \quad \Delta_1(d) = d, \quad \Delta_{k+1}(d) = d \Delta_k(d) - \Delta_{k-1}(d).
\]
Note that each basis element $[\Pi_k]$ is mapped to $\Delta_k(d)$.
Under this identification, the fusion ring of the fusion subcategory $\TLJ_n^{even}$ is therefore the subring generated by the even degree polynomials.
Note also that $\FPdim(\Pi_k) = \Delta_k(2\cos(\pi/n))$ (same for $\TLJ_n^{even}$).

\subsection{Fusion categories associated to Coxeter diagrams}
\label{sec:fusioncoxeter}
Let $\Gamma$ be a Coxeter diagram.
We denote its set of vertices by $\Gamma_0$ and its set of edges by $\Gamma_1$.
The label on each edge $e=(s,t) \in \Gamma_1$ is denoted by $m_e$ and let 
\[
M(\Gamma) := \{ m_e \mid e \in \Gamma_1 \} \subset \{3, 4,...,\infty\} 
\]
denote the set of labels that appears on the edges of $\Gamma$.

For each $n \neq \infty$, let us denote
\[
\cC_n := 
	\begin{cases}
	\TLJ_n, &\text{when $n$ is even}; \\
	\TLJ^{even}_n, &\text{when $n$ is odd},	
	\end{cases}
\]
with $\TLJ_n$ and $\TLJ_n^{even}$ defined in \cref{subsection: TLJ}.
To each Coxeter diagram $\Gamma$, we associate the following fusion category:
\begin{equation}\label{eqn:fusioncoxeter}
\cC(\Gamma) := 
	\begin{cases}
	\TLJ^{even}_3 \cong \vec_\bbC, &\text{if } M(\Gamma) = \emptyset \text{ or } M(\Gamma) = \{\infty\}; \\
	\bigboxtimes_{n \in M(\Gamma) \setminus \{\infty\}} \cC_{n}, &\text{ otherwise}.
	\end{cases}
\end{equation}
Note that when $\Gamma$ is a symmetric Kac--Moody Coxeter diagram, then $M(\Gamma) \subseteq \{3, \infty\}$ or $M(\Gamma) = \emptyset$, and we have that $\cC(\Gamma) = \TLJ^{even}_3 \cong \vec_{\bbC}$.

Throughout we use $\1$ to denote the monoidal unit of $\cC(\Gamma)$.
To differentiate between the simple objects in different $\TLJ_n$, we use $\nPi_i$ to denote the simple objects in $\TLJ_n$.
Note that for each $n \in M(\Gamma)$, $\TLJ_n$ (and hence $\TLJ^{even}_n$) is naturally identified with a full, fusion subcategory of $\cC(\Gamma)$.
As such, we shall abuse notation and also use $\nPi_i$ to denote the simple object in $\cC(\Gamma)$; it has $\nPi_i$ in the $\cC_n$ component and $\nPi[j]_0$ in all other $\cC_j$ components.

With $m$ fixed, the condition $\nPi[m]_c \overset{\oplus}{\subseteq} \nPi[m]_a \otimes \nPi[m]_b$ holds in $\cC(\Gamma)$ if and only if it holds in $\cC_m$.
When so, the hom spaces between $\nPi[m]_a \otimes \nPi[m]_b$ and $\nPi[m]_c$ in $\cC(\Gamma)$ are therefore also one-dimensional.
More generally, 
The unit and counit associated to the self-duality of the objects in $\cC(\Gamma)$ will still be denoted by the cap $\cap$ and the cup $\cup$.

\begin{example} \label{eg:fusioncatforcox}
	\begin{itemize}[(i)]
	\item Let
		$\Gamma = \adjustbox{scale=0.8}{ 
	\begin{tikzcd}[every arrow/.append style = {shorten <= -.5em, shorten >= -.5em}]
		\bullet & \bullet & \bullet & \bullet
		\arrow["5", no head, from=1-1, to=1-2]
		\arrow["5", no head, from=1-2, to=1-3]
		\arrow["\infty", no head, from=1-3, to=1-4]
		\end{tikzcd}
		}$.
		Then $M(\Gamma) = \{5\}$ and $\cC(\Gamma) = \TLJ_5^{even}$ is the Fibonacci category in \cref{eg:TLJeven}.
	\item Let
		$
		\Gamma = \begin{tikzcd}[every arrow/.append style = {shorten <= -.5em, shorten >= -.5em}]
		\bullet & \bullet & \bullet
		\arrow["4", no head, from=1-1, to=1-2]
		\arrow["5", no head, from=1-2, to=1-3]
		\end{tikzcd}
		$.
		Then $M(\Gamma) = \{4,5\}$ and $\cC(\Gamma) = \TLJ_4 \boxtimes \TLJ^{even}_5$. Up to isomorphisms there are six simple objects (the two equalities are actual equalities according to the convention introduced above):
		\[
		\Irr(\cC(\Gamma)) = 
				\left\{
					\nPi[4]_0 = \nPi[5]_0 = \nPi[4]_0 \boxtimes \nPi[5]_0, 
					\nPi[4]_1, \nPi[4]_2,
					\nPi[5]_2,
					\nPi[4]_1 \boxtimes \nPi[5]_2, \nPi[4]_2 \boxtimes \nPi[5]_2
				\right\}
		\]
	\end{itemize}
\end{example}

\subsection{Fusion quivers and zigzag algebras associated to Coxeter diagrams}
\label{sec:quivercoxeter}
To each Coxeter diagram $\Gamma$, we associate a fusion quiver $\mathfrak{G}(\Gamma)$ over $\cC(\Gamma)$ defined as follows.
For each edge $e = (s,t) \in \Gamma_1$ with label $m_{s,t}$, denote
\begin{equation} \label{eqn:coxquiverlabel}
\Pi(e) :=
	\begin{cases}
	\nPi[m_{s,t}]_{(m_{s,t}-3)}, &\text{ if } m_{s,t} \neq \infty; \\
	\1 \oplus \1, &\text{ if } m_{s,t} = \infty.
	\end{cases}
\end{equation}
We modify $\Gamma$ by replacing the label $m_{s,t}$ on each edge $e = (s,t)$ with the corresponding object $\Pi_e$ defined above.
The result is a graph labeled by objects of the fusion category.  By choosing an orientation for each edge, we obtain a fusion quiver $\mathfrak{G}(\Gamma)$.
The zigzag algebra associated to $\Gamma$ is defined as the zigzag algebra of the doubled fusion quiver $\overline{\mathfrak{G}(\Gamma)}$ (see \cref{defn:generalzigzagpreproj}); it depends only on the labelled graph, not on the choice of orientation.
In fact, since all objects in $\cC(\Gamma)$ are self-dual, the pair of arrows $e$ and $e^*$ in opposite direction will always be labelled by the same object $\Pi_e$.

\begin{remark} \label{rem:fusioncatcoxeter}
Our choice of fusion category $\cC(\Gamma)$ and fusion quiver associated to a Coxeter diagram are convenient, but not canonical. 
The important point is that an edge $e = (s,t)$ with label $m_{s,t}$ is labelled by an object $\Pi(e)$ with $\FPdim(\Pi(e)) = 2\cos(\pi/m_{s,t})$.  
Moreover, if $m_{s,t}=\infty$ one can choose any object $\Pi(e)$ with $\FPdim(\Pi(e)) \geq 2$.
What we prove in the sequel will require minor modification for different choices of $\Pi(e)$ and $\cC(\Gamma)$. 
A convenient feature of the fusion category $\cC(\Gamma)$ we use is that it is equivalent to the category of finite dimensional vector spaces whenever all $m_{ij}\in \{2,3,\infty\}$.
\end{remark}
\subsection{Zigzag algebras from Coxeter diagrams} \label{subsect: zigzag alg}

In the rest of the paper we will solely work with zigzag algebras constructed from Coxeter diagrams as above, and not comment further on the preprojective algebras.   In this section we give an alternative description of the zigzag algebra by generators and relations.  

We first introduce some notation.
Given an object $Y \in \cC(\Gamma)$, we shall use $Y\<k\>$ (with $k \in \bbZ$) to denote a graded object homogeneous in degree $k$.
The point is that we will define a graded object $M$ in $\cC(\Gamma)$ as a direct sum
\[
M = \bigoplus_{i \in \Xi} M_i\<k_i\>,
\]
over some index set $\Xi$, with each $M_i \in \cC(\Gamma)$ and $k_i \in \bbZ$, so that the summand of homogeneous degree $k$ of $M$ is given by $\bigoplus_{k_i = k} M_i\<k_i\>$.
We now define some distinguished graded objects in $\cC(\Gamma)$.
In what follows, $\1$ denotes the monoidal unit of $\cC(\Gamma)$ and $\Pi(e) \in \cC(\Gamma)$ is as defined in \eqref{eqn:coxquiverlabel}.
\begin{itemize}
\item For each $s \in \Gamma_0$, we define two graded objects of $\cC(\Gamma)$ (with different degrees) as follows:
\begin{enumerate}[(i)]
\item The \emph{constant path object on vertex $s$} with degree zero:
	\[
	e_s := \1\<0\>.
	\]
\item The \emph{loop object on vertex $s$ (starts and ends at $s$)} with degree two:
	\[
	X_s := \1\<2\>.
	\]
\end{enumerate}
\item For each $e = (s,t) = (t,s) \in \Gamma_1$, we define two graded objects of $\cC(\Gamma)$ with degree one as follows:
\begin{enumerate}[(i)]
\item The \emph{path object that starts at $s$ and ends at $t$}:
	\[
	(s|t) := \Pi(e)\<1\>.
	\]
\item The \emph{path object that starts at $t$ and ends at $s$}:
	\[
	(t|s) := \Pi(e)\<1\>.
	\]
\end{enumerate}
\end{itemize}
\begin{remark}
As graded objects in $\cC(\Gamma)$, there is no difference between the objects $(s|t)$ and $(t|s)$; the purpose of having them separately is that they will behave differently under the multiplication map of the algebra that we will next define.
\end{remark}
The \emph{zigzag algebra} $\zig(\Gamma)$ associated to $\Gamma$ can be constructed explicitly as a graded algebra in $\cC(\Gamma)$ as follows.
As a graded object in $\cC(\Gamma)$, it is given by
\[
\zig(\Gamma) := \left( \bigoplus_{s \in S} e_s \oplus X_s \right) \oplus \left( \bigoplus_{e=(s,t) \in \Gamma_0} (s|t) \oplus (t|s) \right).
\]
(Note that we do not need the work in the ind completion.) 
As our notation suggests, one should think of the summands of the form $e_s, X_s, (s|t)$ and $(t|s)$ as ``paths'' (with path length gradings) and we call each of these summands a \emph{path summand} of $\zig(\Gamma)$.

The multiplication map on $\zig(\Gamma)$
\begin{equation} \label{eqn:zigzagmult}
\mu : \zig(\Gamma) \otimes \zig(\Gamma)\ra \zig(\Gamma)
\end{equation} 
will be defined as if we were concatenating paths -- multiplication is zero if the ends do not meet --, modulo some relations.  
Precisely, the multiplication map $\mu$ on $E \otimes E'$ for each pair of path summands $E$ and $E'$ in $\zig(\Gamma)$ is defined as follows:
\begin{itemize}
\item if the ending vertex of $E$ does not match with the starting vertex of $E'$, then $\mu|_{E\otimes E'}$ is the zero map;
\item if the total degree of $E\otimes E'$ is greater than or equal to 3, then $\mu|_{E\otimes E'}$ is the zero map.
\end{itemize}
What remains is to define the product of path summands $E$ and $E'$ whose end points match and which have total degree less than or equal to 2.
\begin{itemize}
\item Consider the case where both $E$ and $E'$ have degree 1, so that $E = (s|t)$ and $E' = (t|s')$. \\
If $s \neq s'$, then we define $\mu|_{E\otimes E'}$ to be the zero map.\\
Otherwise, $s = s'$ and so $E = E' = \Pi(e)\<1\>$ as graded objects in $\cC(\Gamma)$, with $e = (s,t) \in \Gamma_1$.
We define $\mu|_{E\otimes E'}: (s|t) \otimes (t|s) \ra X_s$ to be counit map of the self-duality $\cap: \Pi(e) \otimes \Pi(e) \ra \1$ of their underlying (non-graded) object $\Pi(e)$ in $\cC(\Gamma)$; note that $\mu|_{E\otimes E'}$ is indeed grading preserving.
\item We are now left with the case where at least one of $E$ or $E'$ is equal to $e_s$ for some $s \in \Gamma_0$, hence by definition of $e_s = \1$ we have
\[
E'' := E \otimes E' =
	\begin{cases}
	E, &\text{ if } E' = e_s; \\
	E', &\text{ if } E = e_s
	\end{cases}.
\]
In this case we just define $\mu|_{E\otimes E'}: E\otimes E' \ra E''$ to be the identity map of $E''$; the fact that $\mu|_{E\otimes E'}$ is grading preserving is immediate since $e_s$ is homogeneous degree zero.
\end{itemize}
The associativity of $\mu$ is a straightforward verification, since (by definition) the multiplication of three path summands is zero unless at least one of them is the constant path $e_s$.

Let $E$ be a path summand in $\zig(M)$.
Note that $\mu|_{e_s \otimes E}$ is non-zero if and only if $E$ is a path summand with starting vertex $s$.
Moreover, when $\mu|_{e_s \otimes E} : e_s \otimes E = E \ra E$ is non-zero, it must be the identity map by definition.
Similarly $\mu|_{E \otimes e_s}$ is non-zero if and only if $E$ is a path summand with ending vertex $s$, and when it is non-zero it must be the identity map.

Let us denote ${}_sP$ (resp. $P_s$) to be the direct sum of the path summands of $\zig(\Gamma)$ starting (resp. ending) with vertex $s$, so that, as graded objects of $\cC(\Gamma)$,
\[
\zig(\Gamma) = \bigoplus_{s \in \Gamma_0} {}_sP \quad \text{and} \quad 
\zig(\Gamma) = \bigoplus_{s \in \Gamma_0} P_s.
\]
It follows that $\mu|_{e_s \otimes \zig(\Gamma)}$ only maps into the summand ${}_sP$ of $\zig(\Gamma)$ and 
	\[
	\mu|_{e_s \otimes {}_tP} = 
	\begin{cases}
	\id_{{}_sP}, &\text{ if }t = s; \\
	0          , &\text{ otherwise.}
	\end{cases};
	\]
Similarly, $\mu|_{\zig(\Gamma) \otimes e_s}$ only maps into the summand $P_s$ of $\zig(\Gamma)$ and 
	\[
	\mu|_{P_t \otimes e_s} = 
	\begin{cases}
	\id_{P_s}, &\text{ if }t = s; \\
	0          , &\text{ otherwise.}
	\end{cases}.
	\]

The unit map $\eta: \1 \ra \zig(\Gamma)$ is defined by the following composition (the final morphism being the trivial inclusion):
\[
\eta: \1 \xra{ \left[\id_{\1} \right]_{s\in \Gamma_0} } \bigoplus_{s \in \Gamma_0} e_s \hookrightarrow \zig(\Gamma).
\]
The left and right unital conditions follows essentially from the path multiplication rule of $\mu$.
Namely, using the decomposition $\zig(\Gamma) = \bigoplus_{t \in \Gamma_0} {}_tP$, we have the following commutative diagram which amounts to the identity on $\bigoplus_{t \in \Gamma_0} {}_tP$ (hook arrow $\hookrightarrow$ denotes summand inclusion):
\[
\begin{tikzcd}[row sep = large, column sep = large]
\1 \otimes \bigoplus_{t \in \Gamma_0} {}_tP 
	\ar{rr}{\left[\id_{\1} \right]_{s\in \Gamma_0} \otimes \id_{\zig(\Gamma)}} &
{}
	{} &
\left( \bigoplus_{s \in \Gamma_0} e_s \right) \otimes \bigoplus_{t \in \Gamma_0} {}_tP 
	 \ar[equal]{d} 
	 \ar[hook]{r} &
\zig(\Gamma) \otimes \zig(\Gamma)
	 \ar{d}{\mu} 
	 \\
{}
	{} &
{}
	{} &
\bigoplus_{s,t \in \Gamma_0} e_s \otimes {}_tP
	\ar{r}{\left[\mu|_{e_s \otimes {}_tP } \right]_{s,t\in \Gamma_0}}  &
\bigoplus_{s \in \Gamma_0} {}_sP 
	{}
\end{tikzcd}
\]
The right unital condition is similar.

We now define a comultiplication and a counit, which makes $\zig(\Gamma)$ also a Frobenius algebra.
The $(\zig(\Gamma),\zig(\Gamma))$-bimodule morphism 
\begin{equation}\label{eqn:frobcomultcomponent}
\gamma_s : \zig(\Gamma) \ra P_s \otimes {}_sP \<-2\>
\end{equation}
is defined as the morphism from each summand $e_t$ for $t \in \Gamma_0$ as follows:
\begin{itemize}
	\item if $t=s$,
	\[
	e_s \xra{
		\begin{bsmallmatrix}
		\id_\1 \\
		\id_\1
		\end{bsmallmatrix}
		} 
			(e_s \otimes X_s \<-2\>) \oplus (X_s \otimes e_s \<-2\>) \overset{\oplus}{\subseteq} P_s \otimes {}_sP\<-2\>;
	\]
	\item if $(s,t) \in \Gamma_1$:
	\[
	e_{t} 
		\xra{
		\cup
		} 
		\left( (t | s) \otimes (s | t) \<-2\>\right) \overset{\oplus}{\subseteq} P_s \otimes {}_sP\<-2\>,
	\]
	where $\cup$ denotes the unit map of the self-duality of $\Pi(e)$ (noting that $\Pi(e) \otimes \Pi(e) = (t | s) \otimes (s | t) \<-2\>$);
	\item otherwise $e_t$ is mapped to zero.
\end{itemize}
Direct computation shows that this uniquely determines a well-defined bimodule morphism.
We then assemble these into a bimodule morphism defining the comultiplication:
\begin{equation} \label{eqn:frobcounit}
\gamma : \zig(\Gamma) \xra{[\gamma_s]_{s\in\Gamma}} \bigoplus_{s\in \Gamma_0} P_s \otimes {}_sP \<-2\> \overset{\oplus}{\subseteq} \zig(\Gamma) \otimes \zig(\Gamma) \<-2\>.
\end{equation}
The counit is a morphism
\[
\omega: \zig(\Gamma)\<-2\> \ra \1
\]
which on the summands $X_s\<-2\> = \1$ is the identity morphism and is zero on all other summands.

Exactly as in \cite{HueKho}, we have the following.
\begin{proposition}
$(\zig(\Gamma), \mu, \eta)$ as defined above defines a graded algebra in $\cC(\Gamma)$. Moreover, the structures $(\gamma, \omega)$ makes $\zig(\Gamma)$ a Frobenius algebra.
\end{proposition}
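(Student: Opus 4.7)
The proposal is to verify the algebra axioms and Frobenius axioms by reducing to a small number of case checks that exploit the path-length grading.

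First, I would verify that $(\zig(\Gamma), \mu, \eta)$ is a graded associative unital algebra. Grading preservation is built into the definition, so it suffices to check associativity and unitality. For associativity, the key observation is that by definition $\mu$ vanishes on any tensor $E \otimes E'$ of total path-length degree $\geq 3$. Therefore, in checking
\[
\mu \circ (\mu \otimes \id_{\zig(\Gamma)}) = \mu \circ (\id_{\zig(\Gamma)} \otimes \mu) \colon \zig(\Gamma)^{\otimes 3} \to \zig(\Gamma),
\]
the only nontrivial contributions come from summands $E \otimes E' \otimes E''$ of total degree $\leq 2$, and at least one of the three factors must be of the form $e_s$. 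In that case the two compositions both reduce to the multiplication of the two non-identity factors (or to the identity when two factors are idempotents), which is tautologically equal. The unit axiom was already essentially spelled out in the excerpt in the commutative diagram presenting $\mu \circ (\eta \otimes \id)$ as the identity on $\bigoplus_t {}_tP$; the right unital axiom is the symmetric argument with the decomposition $\bigoplus_s P_s$.

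Next, I would verify that $(\gamma, \omega)$ endows $\zig(\Gamma)$ with the structure of a Frobenius algebra. By construction $\gamma$ is defined as a bimodule morphism, and degree-wise $\omega$ vanishes on every path summand except the loops $X_s$. To show the Frobenius condition
\[
(\mu \otimes \id) \circ (\id \otimes \gamma) \;=\; \gamma \circ \mu \;=\; (\id \otimes \mu) \circ (\gamma \otimes \id),
\]
it suffices, by the bimodule property of $\gamma$, to evaluate both sides on the summand $\1 \otimes e_s \otimes \1 \cong e_s$ and trace through the path-summand decomposition. All three composites then reduce, via the path-concatenation rules and the definition of $\gamma_s$, to the sum over edges at $s$ of the ``zigzag identity'' for the self-duality $\Pi(e)$ in $\cC(\Gamma)$, namely the snake identity
\[
(\cap \otimes \id_{\Pi(e)}) \circ (\id_{\Pi(e)} \otimes \cup) \;=\; \id_{\Pi(e)} \;=\; (\id_{\Pi(e)} \otimes \cap) \circ (\cup \otimes \id_{\Pi(e)}),
\]
together with the obvious fact that $e_s \otimes e_s \mapsto e_s$ under $\mu$. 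Coassociativity of $\gamma$ and the counit axiom follow from an analogous finite case check, using that $\gamma$ lands in path-summand pairs of total degree exactly $2$.

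The genuine content to check is therefore the snake-identity step above, which I expect to be the main (and essentially only) obstacle. This is where the specific choice of the self-duality structure on $\Pi(e) \in \cC(\Gamma)$ enters. In $\TLJ_n$ and $\TLJ_n^{even}$, every object is self-dual and the duality units and counits are given by the standard cup and cap diagrams, which satisfy the snake identity by construction; this extends to $\cC(\Gamma)$ via the Deligne tensor product since duality data is inherited componentwise. Once this identity is in hand, the Frobenius axioms follow mechanically. The entire argument is parallel to the vector-space case treated in \cite{HueKho}, with $\Pi(e)$ and its self-duality replacing the edge-spaces of a classical double quiver; the fusion-categorical setting changes none of the bookkeeping because the multiplication is always concentrated in degrees $0,1,2$ and hence the verifications are finite.
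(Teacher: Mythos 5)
The paper gives no proof of this proposition beyond the sentence ``Exactly as in \cite{HueKho}, we have the following,'' so you are filling in an argument the authors left implicit by citation. Your overall strategy is the right one and matches the intended approach: exploit the concentration of the multiplication in path-length degrees $0,1,2$ to reduce every axiom to a finite check, and identify the snake identity for the self-duality of each $\Pi(e)$ as the single piece of genuine content. The associativity observation (any triple product with a non-zero value in $\zig$ has at least one factor of the form $e_s$) and the identification of the snake identity are exactly what the cited reference exploits.

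One conceptual point is off in the Frobenius portion and is worth repairing. The Frobenius relation
\[
(\mu \otimes \id)\circ(\id\otimes\gamma) \;=\; \gamma\circ\mu \;=\; (\id\otimes\mu)\circ(\gamma\otimes\id)
\]
is \emph{equivalent} to the statement that $\gamma$ is a morphism of $(\zig,\zig)$-bimodules. So invoking ``the bimodule property of $\gamma$'' as a tool to prove the Frobenius relation is circular: if you already had the bimodule property, the relation would be immediate. What actually requires proof -- and what the paper compresses into ``direct computation shows that this uniquely determines a well-defined bimodule morphism'' -- is that the \emph{partial} prescription of $\gamma_s$ on each summand $e_t$ extends consistently to a bimodule map. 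Concretely, if you set $\gamma(1)=\sum_t\gamma(e_t)$, you must verify that $p\cdot\gamma(1)=\gamma(1)\cdot p$ for every path summand $p$ (equivalently, that $\gamma(1)$ is a ``Casimir'' element); working this out for $p=(s|t)$ of degree $1$ is precisely where $(\cap\otimes\id)\circ(\id\otimes\cup)=\id$ is used. Once well-definedness is established, the Frobenius relation is free, and the coassociativity and counit axioms follow by elementary degree bookkeeping that does \emph{not} re-invoke the snake identity (contrary to the impression your prose gives). If you reframe your computation as the well-definedness check rather than as an independent verification of the Frobenius relation, the proposal is correct and essentially coincides with the argument the paper is pointing to.
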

The following is an explicit example of a zigzag algebra associated to a rank two Coxeter group.
The reader is encouraged to check that $\zig(\Gamma)$ agrees (via the identification $\cC(\Gamma) \cong \vec_\bbC$) with the classical definition of zigzag algebras when $\Gamma$ is a symmetric Kac--Moody diagram.
\begin{example}
Let $\Gamma = I_2(5) = 
	\begin{tikzcd}[every arrow/.append style = {shorten <= -.2em, shorten >= -.2em}, column sep = small]
	1 & 2
	\arrow["5", no head, from=1-1, to=1-2]
	\end{tikzcd}$. 
	Then 
	\begin{align*}
	\zig(\Gamma) 
		&= e_1 \oplus e_2 \oplus (1|2) \oplus (2|1) \oplus X_1 \oplus X_2 \\
		&= \Pi_0 \oplus \Pi_0 \oplus \Pi_2\<1\> \oplus \Pi_2\<1\> \oplus \Pi_0\<2\> \oplus \Pi_0\<2\>.
	\end{align*}
	The non-zero morphism in $\mu: \zig(\Gamma) \otimes \zig(\Gamma) \ra \zig(\Gamma)$ are given by the summands
	\begin{alignat*}{3}
	e_1 \otimes e_1 \xra{\id_{\Pi_0}} e_1 \qquad 
		& e_2 \otimes (2|1) \xra{\id_{\Pi_2}\<1\>} (2|1) \qquad
		&& (1|2) \otimes (2|1) \xra{\cap\<2\>} X_1 \\
	e_2 \otimes e_2 \xra{\id_{\Pi_0}} e_2 \qquad
		& (1|2) \otimes e_2 \xra{\id_{\Pi_2}\<1\>} (1|2) \qquad 		
		&& (2|1) \otimes (1|2) \xra{\cap\<2\>} X_2, \\
	{}
		& e_1 \otimes (1|2) \xra{\id_{\Pi_2}\<1\>} (1|2) \qquad
		&& {} \\
	{}
		& (2|1) \otimes e_1 \xra{\id_{\Pi_2}\<1\>} (2|1) \qquad
		&& {}
	\end{alignat*}
	where $\cap: \Pi_2 \otimes \Pi_2 \ra \Pi_0$ denotes the counit of the self-duality of $\Pi_2$.
\end{example}

\subsection{Projective graded modules over zigzag algebra}
For the rest of this section, we will fix the Coxeter graph $\Gamma$, so we shall drop the descriptor ``$\Gamma$'' in our notations: $\B:= \B(\Gamma)$, $\cC:= \cC(\Gamma)$ and $\zig:= \zig(\Gamma)$ will denote the \emph{graded} algebra equipped with the multiplication and unit map $\mu$ and $\eta$ respectively, as defined in the previous section.

Since $\zig$ is a graded algebra, by our convention all (bi)modules over $\zig$ will be graded, and the category of left (resp.\ right) graded modules over $\zig$ will simply be denoted by $\zig\lmod$ (resp.\ $\rmod \zig$). 

Recall the objects $P_s$ and ${}_sP$ defined in the previous section are naturally left and right graded modules over $\zig$ respectively.

\begin{proposition} \label{tensor sPt}
Let ${}_sP_t$ be the graded object in $\cC$ defined as the direct sum of all the path summands (grading included) in $\zig$ that start with $s$ and end with $t$.
Together with the morphism $\mu|_{{}_s P \otimes P_t}: {}_sP \otimes P_t \ra {}_sP_t$, we have that ${}_sP_t$ is the coequaliser of the following diagram of grading preserving morphisms of graded objects in $\cC$:
\[
\begin{tikzcd}[column sep=2cm]
{}_sP \otimes \zig \otimes P_t \ar[r, shift left=0.75ex, "\mu|_{{}_sP\otimes \zig} \otimes \id"] \ar[r, shift right=0.75ex, swap, "\id \otimes \mu|_{\zig\otimes P_t}"] & {}_sP\otimes P_t.
\end{tikzcd}
\]
In particular, we have ${}_sP \otimes_{\zig} P_t \cong {}_sP_t$ as graded objects in $\cC$. 
\end{proposition}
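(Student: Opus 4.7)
The plan is to exhibit $({}_sP_t, \mu|_{{}_sP \otimes P_t})$ as a \emph{split} coequaliser, exploiting the fact that the unit $\eta\colon \1 \to \zig$ factors through the semisimple subobject $\bigoplus_{u \in \Gamma_0} e_u \hookrightarrow \zig$. Morally the claim is ${}_sP \otimes_\zig P_t \cong (e_s \cdot \zig) \otimes_\zig (\zig \cdot e_t) \cong e_s \cdot \zig \cdot e_t = {}_sP_t$, and the orthogonal idempotents $e_u$ provide the splittings that promote this into a bona fide diagrammatic argument in $g\cC$.

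Set $\alpha_1 := \mu|_{{}_sP \otimes \zig} \otimes \id_{P_t}$, $\alpha_2 := \id_{{}_sP} \otimes \mu|_{\zig \otimes P_t}$, and $\beta := \mu|_{{}_sP \otimes P_t}$. One first checks that $\beta$ takes values in ${}_sP_t \subseteq \zig$, which is automatic from the path-multiplication definition of $\mu$ (a product of a path starting at $s$ with a path ending at $t$ either vanishes or lies in ${}_sP_t$); associativity of $\mu$ then yields $\beta \circ \alpha_1 = \beta \circ \alpha_2$. Next, I would construct two auxiliary morphisms
\[
\sigma\colon {}_sP_t \hookrightarrow P_t \cong \1 \otimes P_t \xrightarrow{\eta_s \otimes \id} e_s \otimes P_t \hookrightarrow {}_sP \otimes P_t,
\]
\[
\gamma\colon {}_sP \otimes P_t \cong \1 \otimes {}_sP \otimes P_t \xrightarrow{\eta_s \otimes \iota \otimes \id} e_s \otimes \zig \otimes P_t \hookrightarrow {}_sP \otimes \zig \otimes P_t,
\]
where $\eta_s\colon \1 \to e_s$ is the $s$-component of $\eta$ and $\iota\colon {}_sP \hookrightarrow \zig$ is the summand inclusion.

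The coequaliser property now follows from three identities, each a direct consequence of the left unital axiom for $\mu$ together with the path-multiplication rules: (i) $\beta \circ \sigma = \id_{{}_sP_t}$, because $e_s$ acts as the identity on path summands starting at $s$; (ii) $\alpha_1 \circ \gamma = \id_{{}_sP \otimes P_t}$, by the same reasoning applied to any path summand of ${}_sP$; and (iii) $\alpha_2 \circ \gamma = \sigma \circ \beta$, immediate from the definitions of $\sigma$ and $\gamma$. Identities (ii) and (iii) together give $\sigma \circ \beta = \id_{{}_sP \otimes P_t} - (\alpha_1 - \alpha_2) \circ \gamma$, so any morphism $f\colon {}_sP \otimes P_t \to Y$ coequalising $\alpha_1$ and $\alpha_2$ satisfies $f = f \circ \sigma \circ \beta$, exhibiting the required factorisation; uniqueness comes from (i), which makes $\beta$ a split epimorphism. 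The identification ${}_sP \otimes_\zig P_t \cong {}_sP_t$ is then just the definition of tensor product over an algebra as a coequaliser. I do not expect any serious obstacle: the whole argument is a formal unwinding of the unital and associative axioms of $\zig$ combined with the idempotent decomposition $\eta = \sum_u \eta_u$, the only mild bookkeeping being to check that each map is grading-preserving, which is immediate from the path-length grading.
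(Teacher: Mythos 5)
Your proposal is correct, but it takes a genuinely different route from the paper's proof. The paper's argument embeds the diagram
\[
{}_sP \otimes \zig \otimes P_t \rightrightarrows {}_sP \otimes P_t \rightarrow {}_sP_t
\]
into the corresponding diagram for $\zig^{\otimes 3} \rightrightarrows \zig^{\otimes 2} \rightarrow \zig$ via summand inclusions, and then imports the universal property from the known identification $\zig \otimes_\zig \zig \cong \zig$, using the summand projection $\zig \otimes \zig \twoheadrightarrow {}_sP \otimes P_t$ to push a given cofork back up. Your argument instead exhibits $(\beta, \sigma, \gamma)$ as a \emph{split coequaliser} (a Beck/contractible coequaliser) directly inside $g\cC$, which is self-contained: you never need the ambient coequaliser for $\zig$ itself. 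The splitting data comes from the idempotent $e_s = \1$ and its unit $\eta_s$, and the three identities you state do indeed follow by unwinding $\mu|_{e_s \otimes {}_uP}$ from the paper's definition (the identity when $u=s$, zero otherwise) together with associativity of $\mu$ — element notation is just shorthand, and everything can be rendered diagrammatically in the monoidal setting. One bonus of your route that the paper does not emphasise: a split coequaliser is an \emph{absolute} colimit, so ${}_sP \otimes_\zig P_t \cong {}_sP_t$ is automatically preserved by any additive functor out of $g\cC$, which is a slightly stronger conclusion and occasionally useful.
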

\begin{proof}
Throughout this proof, $\hookrightarrow$ and $\twoheadrightarrow$ will be use the denote the obvious inclusion and projection map of the summands respectively.
Let $\varphi: {}_sP \otimes P_t \ra \xi$ be a map satisfying 
\[
\varphi\circ \left( \mu|_{{}_sP \otimes \zig} \otimes \id_{P_t} \right) =
\varphi\circ \left( \id_{{}_sP} \otimes \mu|_{\zig \otimes P_t} \right).
\]
Consider the commutative diagram
\[
\begin{tikzcd}[column sep = 2.5cm, row sep = large]
{}_sP \otimes \zig \otimes P_t 
	\ar[r, shift left=0.75ex, "\mu|_{{}_sP \otimes \zig} \otimes \id_{P_t}"] 
	\ar[r, shift right=0.75ex, swap, "\id_{{}_sP} \otimes \mu|_{\zig \otimes P_t}"] 
	\ar[d, hook] & 
{}_sP \otimes P_t 
	\ar[r, "\mu|_{{}_sP \otimes P_t}"] 
	\ar[d, hook] &
{}_sP_t 
	\ar[d, hook]\\
\zig \otimes \zig \otimes \zig 
	\ar[r, shift left=0.75ex, "\mu \otimes \id_{\zig}"] 
	\ar[r, shift right=0.75ex, swap, "\id_{\zig} \otimes \mu"] & 
\zig \otimes \zig 
	\ar[r, "\mu"]
	\ar[d, two heads]
& \zig 
	\ar[d, dashed, "\exists !"] \\
{}
	{} &
{}_sP \otimes P_t
	\ar[r, "\varphi"] &
\xi
	{} 
\end{tikzcd}
\]
where the existence of the unique map follows from $\zig \otimes_{\zig} \zig \cong \zig$.
It follows that the induced map from ${}_sP_t$ to $\xi$ is indeed unique and so ${}_sP_t$ does satisfy the required universal property.
\end{proof}

\begin{remark}
Note that ${}_sP_t$ is given by
\[
{}_sP_t = 
\begin{cases}
e_s \oplus X_s, &\text{if } s=t; \\
(s|t), &\text{if } (s,t) \in \Gamma_1; \\
0; &\text{otherwise},
\end{cases}
\]
with the path summands equipped with the path length grading.
\end{remark}

\begin{definition} \label{defn: prmod}
We define $\zig\lprmod$ to be the smallest additive, full subcategory of $\zig\lmod$ containing the objects
	\[
	\{P_s\<k\> \otimes E \mid 
		s \in \Gamma_0, \
		k\in \Z, \ 
		E \text{ a simple in } \cC\}.
	\]
Similarly, $\rprmod \zig$ is the smallest additive, full subcategory of $\rmod\zig$ containing 
	\[
	\{E \otimes {}_sP\<k\> \mid 
		s \in \Gamma_0, \
		k\in \Z, \ 
		E \text{ a simple in } \cC\}.
	\]
\end{definition}
Note that by construction, $\zig\lprmod$ (resp.\ $\rprmod\zig$) is closed under the action of $\cC$ on the right (resp.\ left) given by tensoring (in $\cC)$; in other words, it is a right (resp.\ left) module category over $\cC$.
Both $\zig\lprmod$ and $\rprmod\zig$ are also closed under the grading shift functor $\<k\>$ by construction.

\begin{remark}
Although we will not need this fact, we note that $\zig\lprmod$ (resp.\ $\rprmod\zig$) is in fact the category of projective graded left (resp.\ right) $\zig$-modules.
Indeed, $\cC$ is a fusion category, so the (graded) algebra $\zig$ is a projective (graded) module over itself and moreover any projective (graded) $\zig$-module appears as a direct summand of $\zig \otimes Y$ for some $Y \in \cC$ (up to grading shift).
\end{remark}

\subsection{Adjunctions and morphisms between projective modules}\label{sec:projandadjunc}
The aim of this subsection is to fully describe the morphism spaces in $\zig\lprmod$.

Recall that category of ($\bbZ$-)graded objects of $\cC$ is denoted by $g\cC$ (see notation subsection at the end of \cref{sec:intro}).
We begin by providing some adjunctions for functors between $\zig \lprmod$ and $g\cC$ that will aid us in computing the morphism spaces in $\zig \lprmod$:
\begin{proposition} \label{biadjoint pair}
Consider the additive functors 
\[
P_s \otimes - : g\cC \ra \zig\lprmod
\]
and
\[
{}_sP \<-2\> \otimes_{\zig} - \ , {}_sP \otimes_{\zig} - : \zig\lprmod \ra g\cC.
\]
Then
\begin{enumerate}
\item  $P_s \otimes - $ is left adjoint to ${}_sP \otimes_{\zig} - $ \ , and \label{leftadj}
\item $P_s \otimes - $ is right adjoint to ${}_sP \<-2\> \otimes_{\zig} -$ \ .
\end{enumerate}
\end{proposition}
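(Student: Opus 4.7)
The plan is to verify both adjunctions on the additive generators $P_t \otimes Y$ of $\zig\lprmod$ (with $t \in \Gamma_0$ and $Y$ a simple in $\cC$) and then extend by additivity in both arguments. The key computational input is \cref{tensor sPt}, which identifies ${}_sP \otimes_\zig P_t$ with ${}_sP_t$ as a graded object of $\cC$.

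For part (1), I would exploit the standard induction--restriction adjunction $\Hom_{\zig\lmod}(\zig \otimes X, M) \cong \Hom_{g\cC}(X, M)$, where on the right $M$ is viewed as its underlying object in $g\cC$. Since $P_s$ is the image of the idempotent summand $e_s \hookrightarrow \zig$ (as a graded left $\zig$-module), restricting the above isomorphism gives
\[
\Hom_{\zig\lmod}(P_s \otimes X, M) \cong \Hom_{g\cC}(X,\, e_s \cdot M).
\]
Evaluating on generators $M = P_t \otimes Y$ and applying \cref{tensor sPt}, one has $e_s \cdot M \cong {}_sP_t \otimes Y \cong {}_sP \otimes_\zig M$; the resulting bijection is natural in $X$ and $M$, and extends to all of $\zig\lprmod$ by additivity.

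For part (2), the crucial input is the Frobenius structure $(\gamma, \omega)$ on $\zig$: the fact that the trace $\omega$ has degree $2$ translates into a natural graded isomorphism ${}_tP_s \cong ({}_sP_t)^\vee \<2\>$ in $g\cC$, where $(-)^\vee$ denotes the dual in the fusion category $\cC$. Combining part (1) with this Frobenius self-duality and the self-duality of objects in $\cC$, both sides of the claimed adjunction, evaluated at $M = P_t \otimes Y$, reduce to $\Hom_{g\cC}(Y,\, ({}_sP_t)^\vee \<2\> \otimes X)$, yielding the required natural bijection.

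The main obstacle will be making the identification ${}_tP_s \cong ({}_sP_t)^\vee \<2\>$ canonical and checking naturality. It can be verified directly by a case check (using the explicit description of ${}_sP_t$ as either $\1 \oplus \1\<2\>$, $\Pi(e)\<1\>$, or $0$), but its conceptual origin is the comultiplication $\gamma$, whose degree $-2$ is precisely what produces the grading shift $\<-2\>$ appearing in the left adjoint of part (2).
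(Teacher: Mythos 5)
Your proposal is correct, but it takes a genuinely different route from the paper. The paper constructs explicit unit and counit natural transformations for both adjunctions — the counit of (1) is the restricted multiplication $\mu|_{P_s \otimes {}_sP}: P_s \otimes {}_sP \to \zig$, the unit of (1) is $\alpha_s: \1 \to e_s \oplus X_s = {}_sP \otimes_\zig P_s$ picking out $e_s$, the counit of (2) is the Frobenius trace component $\omega_s: e_s\<-2\> \oplus X_s\<-2\> \to \1$ projecting onto $X_s\<-2\>$, and the unit of (2) is the Frobenius comultiplication component $\gamma_s: \zig \to P_s \otimes {}_sP\<-2\>$ from \eqref{eqn:frobcomultcomponent} — and then leaves verification of the triangle identities to the reader, citing the rank-two computations of \cite[Prop.\ 2.1.9]{Heng_PhDthesis}. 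You instead reduce (1) to the free–forgetful adjunction for $\zig$ together with the functorial identification $e_s \cdot M \cong {}_sP \otimes_\zig M$, and reduce (2) to (1) plus the Frobenius self-duality ${}_tP_s \cong ({}_sP_t)^\vee\<2\>$ in $g\cC$. Both approaches encode exactly the same Frobenius structure $(\gamma,\omega)$ of $\zig$, and your identification of ${}_tP_s$ with $({}_sP_t)^\vee\<2\>$ checks out case-by-case ($\1 \oplus \1\<2\>$ and $\Pi(e)\<1\>$ are each sent to themselves, up to the $\<2\>$-shift, using self-duality of $\Pi(e)$).

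The trade-off: the paper's explicit unit and counit morphisms are precisely what is needed in Section \ref{sec: action and categorification} to write down the two-term complexes $\sigma_{P_s}$ and $\sigma'_{P_s}$ defining the $\cC$-spherical twists, so the paper is in effect constructing those ingredients here. Your argument establishes the adjunctions more cleanly at the hom-set level but does not directly produce the comparison maps appearing in those cones. One small caution with your plan: "verify on generators and extend by additivity" requires first exhibiting a natural transformation to compare, not just pointwise isomorphisms. For part (1) this is unproblematic — the canonical map ${}_sP \otimes_\zig M \to M$ always factors through $e_s \cdot M$, and Proposition \ref{tensor sPt} shows it is an isomorphism on generators — but it deserves to be said explicitly, as you acknowledge in your final paragraph.
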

\begin{proof}
The counit and unit morphisms for the corresponding adjunctions above are as follow:
\begin{enumerate}
\item 
	\begin{enumerate}
	\item The counit is induced by the $(\zig,\zig)$-bimodule morphism given by the restriction of the multiplication map $\mu$ of $\zig$ to
	\[
	\mu|_{P_s \otimes {}_sP}: P_s \otimes {}_sP \ra \zig;
	\]
	\item The unit is induced by the grading preserving morphism in $\cC$ defined by
	\[
	\alpha_s : \1 \xra{
		\begin{bsmallmatrix}
		\id \\
		0
		\end{bsmallmatrix}
		}
		(e_s \oplus X_s) = {}_sP \otimes_{\zig} P_s.
	\]
	\end{enumerate}
\item
	\begin{enumerate}
	\item The counit is induced by the grading preserving morphism in $\cC$ defined by 
	(cf.\ \eqref{eqn:frobcounit}):
	\[
	\omega_s : {}_sP \otimes_{\zig} P_s \<-2\> = (e_s \<-2\> \oplus X_s \<-2\>) \xra{
		\begin{bsmallmatrix}
		0 & \id
		\end{bsmallmatrix}		 
		}
		\1.
	\]
	\item The unit is induced by the $(\zig,\zig)$-bimodule morphism 
	\[
	\gamma_s : \zig \ra P_s \otimes {}_sP \<-2\>
	\]
	from \eqref{eqn:frobcomultcomponent}.
	\end{enumerate}	
\end{enumerate}
We leave it to the reader to verify of the required unit-counit relations of the adjunctions. See \cite[Proposition 2.1.9]{Heng_PhDthesis} for some details in the rank two cases (proofs of which essentially work here too).
\end{proof}

\begin{proposition}\label{graded hom space}
Let $E_1, E_2$ be simple objects in $\cC$.
The morphism spaces between $P_s \otimes E_1 \<k_1\>$ and $P_t \otimes E_2 \<k_2\>$ in $\zig\lprmod$ are given by:
\[
\Hom_{\zig\lprmod} \left( P_s \otimes E_1\<k_1\> , P_t \otimes E_2\<k_2\> \right) \cong
\begin{cases}
\bbC, &\text{for } s=t, \ k_1 = k_2, \ E_1=E_2; \\
\bbC, &\text{for } s=t, \ k_1 - k_2 = 2, \ E_1=E_2; \\
\bbC, &\text{for } (s,t)\in \Gamma_1, \ m_{s,t} < \infty, \ k_1 - k_2 = 1, \\
	& \ E_1 \overset{\oplus}{\subseteq} \nPi[m_{s,t}]_{m_{s,t}-3} \otimes E_2; \\
\bbC^2, &\text{for } (s,t)\in \Gamma_1, \ m_{s,t} = \infty, \ k_1 - k_2 = 1, \\
	& \ E_1=E_2; \\
0, &\text{otherwise}.
\end{cases}
\]
\end{proposition}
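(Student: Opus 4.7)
The plan is to reduce the computation to hom spaces in $g\cC$ via the adjunction of \cref{biadjoint pair}, and then read off the answer from the explicit description of the graded object ${}_sP_t \in \cC$.

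First, using that $P_s \otimes -$ is left adjoint to ${}_sP \otimes_{\zig} -$, together with \cref{tensor sPt} and the fact that $E_2$ is a (non-graded) object so that tensoring with it commutes past ${}_sP \otimes_{\zig} -$ on the right, we obtain
\[
\Hom_{\zig\lprmod}\bigl( P_s \otimes E_1\<k_1\>,\, P_t \otimes E_2\<k_2\> \bigr)
\;\cong\;
\Hom_{g\cC}\bigl( E_1\<k_1\>,\, {}_sP_t \otimes E_2\<k_2\> \bigr).
\]
Since morphisms in $g\cC$ are grading preserving, only the homogeneous summands of ${}_sP_t \otimes E_2\<k_2\>$ of degree $k_1$ contribute.

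Next, I would split into the three cases dictated by the explicit description of ${}_sP_t$ recorded after \cref{tensor sPt}.
\begin{itemize}
\item If $s=t$, then ${}_sP_t = e_s \oplus X_s = \1\<0\> \oplus \1\<2\>$, so the right-hand side becomes $\Hom_\cC(E_1, E_2)$ concentrated in degrees $k_1=k_2$ and $k_1 = k_2+2$. Simplicity of $E_1, E_2$ gives a one-dimensional space iff $E_1 \cong E_2$, and $0$ otherwise.
\item If $(s,t) \in \Gamma_1$ with $m_{s,t} < \infty$, then ${}_sP_t = \nPi[m_{s,t}]_{m_{s,t}-3}\<1\>$, so the hom is nonzero only when $k_1 = k_2 + 1$, and it equals $\Hom_\cC\bigl(E_1, \nPi[m_{s,t}]_{m_{s,t}-3} \otimes E_2\bigr)$. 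By semisimplicity this is the multiplicity of $E_1$ in $\nPi[m_{s,t}]_{m_{s,t}-3} \otimes E_2$.
\item If $(s,t) \in \Gamma_1$ with $m_{s,t} = \infty$, then ${}_sP_t = (\1 \oplus \1)\<1\>$, so the hom is nonzero only when $k_1 = k_2 + 1$ and evaluates to $\Hom_\cC(E_1, E_2)^{\oplus 2}$.
\item In all remaining cases ${}_sP_t = 0$, so the hom space vanishes.
\end{itemize}

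The one non-formal point is the claim that in the second case the multiplicity is at most one, yielding $\bbC$ rather than a higher-dimensional space. The main obstacle is therefore to verify that each simple object of $\cC = \cC(\Gamma)$ appears with multiplicity at most one as a summand of $\nPi[m]_{m-3} \otimes E$ for $E$ simple. This is a component-wise statement, since $\cC(\Gamma)$ is a Deligne product of the $\cC_n$'s and the tensor product factors accordingly: on all components other than the $\cC_m$ component, we are tensoring the simple $E$ with the monoidal unit, which clearly keeps multiplicity one. In the $\cC_m$ component, this follows from the multiplicity-free form of the Temperley--Lieb--Jones fusion rule \eqref{eq: fusion rule}, and (when $m$ is odd) from the fact that $\nPi[m]_{m-3}$ and the other generators in question lie in the even subcategory $\TLJ_m^{even}$ so that the same multiplicity-free decomposition applies. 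Assembling these observations proves the claimed dimension formulae.
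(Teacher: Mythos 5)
Your proposal is correct and follows essentially the same route as the paper's own proof: reduce via the adjunction $P_s\otimes- \dashv {}_sP\otimes_{\zig}-$ and \cref{tensor sPt} to a computation of $\Hom_{g\cC}(E_1\<k_1\>, {}_sP_t\otimes E_2\<k_2\>)$, then case on ${}_sP_t$ and invoke the multiplicity-free Temperley--Lieb--Jones fusion rules. You spell out a couple of points the paper leaves implicit (that $-\otimes E_2$ commutes past the $\cC$-module functor ${}_sP\otimes_{\zig}-$, and the component-wise reason the multiplicity is at most one), but these are elaborations, not a different argument.
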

\begin{proof}
Using the adjunction between $P_s \otimes -$ and ${}_sP \otimes_{\zig} -$ from \cref{biadjoint pair}, we get that 
\[
\Hom_{\zig\lprmod} \left( P_s \otimes E_1\<k_1\> , P_t \otimes E_2\<k_2\> \right) \cong \Hom_{g\cC} \left(E_1\<k_1\> , {}_sP_t \otimes E_2 \<k_2\>\right),
\]
where ${}_sP_t \cong {}_sP \otimes_{\zig} P_t$ as shown in Proposition \ref{tensor sPt}.
Now compute $\Hom_{g\cC} \left(E_1\<k_1\> , {}_sP_t \otimes E_2 \<k_2\>\right)$ for each possible cases of $s,t \in \Gamma_0$ as follows:
\begin{enumerate}
\item When $s = t$, we have ${}_sP_t = e_s \oplus X_s = \1 \oplus (\1 \<2\>)$, hence
\[
{}_sP_t \otimes E_2\<k_2\> = E_2\<k_2\> \oplus \left(E_2\<k_2 + 2\> \right).
\]
Since all grading shifts of $E_1$ and $E_2$ are simple in $g\cC$, we must have that
\begin{align*}
\Hom_{g\cC} \left(E_1\<k_1\> , {}_sP_t \otimes E_2 \<k_2\>\right) \cong
\begin{cases}
\bbC, &\text{for } E_1=E_2, k_1 - k_2 = 0; \\
\bbC, &\text{for } E_1=E_2, k_1 - k_2 = 2; \text{ and }\\
0, &\text{otherwise}.
\end{cases}
\end{align*}
\item When $(s,t)\in \Gamma_1$, we have ${}_sP_t = (s|t)$. Set $m := m_{s,t}$.
	\begin{enumerate}
	\item If $m<\infty$, then $(s|t) = \Pi(e) = \mPi_{m-3} \<1\>$. By the fusion rule of $\TLJ_\ell$, the object $\mPi_{m-3} \otimes E_2 \in g\cC$ can contain at most one summand of $E_1$ (cf.\ \eqref{eqn: n-3 b fusion rule}), so we have
	\begin{align*}
	\Hom_{g\cC} \left(E_1 \<k_1\>, {}_sP_t \otimes E_2 \<k_2\>\right) \cong
	\begin{cases}
	\bbC, &\text{for } E_1 \overset{\oplus}{\subseteq} \mPi_{m-3} \otimes E_2, \ k_1-k_2=1; \text{ and}\\
	0, &\text{otherwise}.
	\end{cases}
	\end{align*}
	\item If $m_{s,t}=\infty$, then $(s|t) = \Pi(e) = \1 \<1\> \oplus \1 \<1\> \in g\cC$, so 
	\begin{align*}
	\Hom_{g\cC} \left(E_1\<k_1\> , {}_sP_t \otimes E_2 \<k_2\>\right) \cong
	\begin{cases}
	\bbC^2, &\text{for } E_1=E_2, \ k_1 - k_2 = 1; \text{ and}\\
	0, &\text{otherwise}.
	\end{cases}
	\end{align*}
	\end{enumerate} 
\item When $(s,t) \not\in \Gamma_1$, then ${}_sP_t = 0$ and so $\Hom_{g\cC} \left(\mPi_a , {}_sP_t \otimes \nPi_b \<k\>\right) \cong 0$.
\end{enumerate}
This concludes the proof.
\end{proof}

\begin{remark}
Note that the adjunction also says that each morphism in $\zig\lprmod$ is completely determined by its restriction to the summand $e_s$ of $P_s$ as a grading preserving morphism in $g\cC$.
\end{remark}

As a consequence of \cref{graded hom space}, we have the following:
\begin{proposition}\label{prop: indecomposable Krull-Sch}
The set of indecomposable objects (up to isomorphism) in $\zig\lprmod$ is given by
\[
\{P_s \<k\> \otimes E \mid
					s \in \Gamma_0, \
                    k \in \Z, \
					E \in \Irr(\cC).
					\}.
\]
Moreover, $\zig\lprmod$ is Krull--Schmidt, and so is $\Kom^b(\zig\lprmod)$.
\end{proposition}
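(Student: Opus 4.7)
The plan is to extract everything from the hom-space computation of \cref{graded hom space} and then apply standard Krull--Schmidt criteria. The key point is that each proposed indecomposable already has endomorphism ring exactly $\bbC$, and the subcategory $\zig\lprmod$ is both idempotent complete (by the paper's convention on additive subcategories) and has finite-dimensional hom spaces throughout, so the classical Krull--Schmidt theorem applies.

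First I would specialise \cref{graded hom space} to the endomorphism case $s_1 = s_2 = s$, $k_1 = k_2 = k$, $E_1 = E_2 = E$. Only the first branch of the formula applies: the second branch requires $k_1 - k_2 = 2 \neq 0$, and the remaining branches require $(s,t)\in \Gamma_1$, which forces $s \neq t$. Hence $\End_{\zig\lprmod}(P_s\<k\>\otimes E) \cong \bbC$, which is local, so each $P_s\<k\>\otimes E$ is indecomposable. To see pairwise non-isomorphism when $(s,k,E) \neq (s',k',E')$, I would inspect \cref{graded hom space} in both directions: any nonzero hom between two distinct generators occurs only in a strictly positive degree shift $k - k' \in \{1,2\}$, so nonzero homs cannot occur in both directions between two non-isomorphic generators and no isomorphism is possible.

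For Krull--Schmidt of $\zig\lprmod$ itself, I would note that by \cref{defn: prmod} together with the convention that additive subcategories are closed under direct summands, $\zig\lprmod$ is idempotent complete and every object is a summand of a finite direct sum of generators $P_s\<k\>\otimes E$. \cref{graded hom space} then propagates to give finite-dimensional hom spaces between arbitrary objects of $\zig\lprmod$. A $\bbC$-linear, idempotent-complete additive category with finite-dimensional hom spaces is automatically Krull--Schmidt: every endomorphism ring is then a finite-dimensional $\bbC$-algebra, hence semiperfect, so the identity refines into a complete orthogonal system of local idempotents. Combined with the classification of indecomposables above, this gives the statement for $\zig\lprmod$. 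For $\Kom^b(\zig\lprmod)$ I would invoke the standard fact that the bounded homotopy category of such a category is again Krull--Schmidt: hom-finiteness is inherited because endomorphisms of a bounded complex form a quotient of the finite-dimensional endomorphism algebra of its underlying graded object, and idempotent completeness passes to $\Kom^b$ by standard arguments.

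The main subtlety I anticipate is the $m_{s,t} = \infty$ case of \cref{graded hom space}, where $\Hom(P_s\<k\>\otimes E, P_t\<k-1\>\otimes E) \cong \bbC^2$: one should verify that these enlarged hom spaces do not accidentally yield isomorphisms. They cannot, since they all live in degree shift $+1$ and \cref{graded hom space} produces no hom in shift $-1$, so no inverse can exist. Beyond this bookkeeping the argument is essentially formal.
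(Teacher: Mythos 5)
Your proof is correct and is essentially the argument the paper leaves implicit (the paper simply introduces the proposition as a consequence of the hom-space computation). The only place worth flagging is your final step for $\Kom^b(\zig\lprmod)$: the statement that the bounded homotopy category of a hom-finite, idempotent-complete (equivalently, Krull--Schmidt) $\Bbbk$-linear additive category is again Krull--Schmidt is indeed standard, but it is not entirely trivial --- hom-finiteness passes easily (the endomorphism ring of a bounded complex is a subquotient of a finite product of hom spaces in $\zig\lprmod$), whereas idempotent completeness of $\Kom^b$ requires a genuine argument in the style of Balmer--Schlichting or Le--Chen. Citing such a reference explicitly would tighten the last sentence; otherwise the proof is complete.
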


We end this section with the following result, which shows that $\Kom^b(\zig\lprmod$) enjoys a bigraded refinement of a 2-Calabi--Yau duality.  
\begin{proposition}\label{prop: 2CY variant}
Let $X$ and $Y$ be objects in $\cD:= \Kom^b(\zig\lprmod)$.
Then we have the following functorial isomorphism
\[
\Hom_\cD (X,Y\<j\>[k]) \cong \Hom_\cD (Y,X\<-j-2\>[-k])^\vee.
\]
\end{proposition}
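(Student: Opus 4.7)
The statement is a Serre-duality formula for $\cD$ with Serre functor $\mathbb{S} = \<-2\>$, which reflects the fact (\cref{eqn:frobcounit}) that $\zig$ is a Frobenius algebra whose counit $\omega: \zig\<-2\> \to \1$ has internal degree $2$. My plan is to build an explicit pairing from the Frobenius structure, verify non-degeneracy on the generators of $\cD$, and extend via the triangulated structure.

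\textbf{Step 1: the pairing.} I set up a natural pairing
\[
\Hom_\cD(X, Y\<j\>[k]) \otimes \Hom_\cD(Y, X\<-j-2\>[-k]) \to \bbC,
\qquad (\phi, \psi) \mapsto \tr_X\bigl(\psi\<j\>[k]\circ \phi\bigr),
\]
where $\tr_X: \Hom_\cD(X, X\<-2\>) \to \bbC$ is built from $\omega$. For an indecomposable $X = P_s\otimes E$ with $E \in \Irr(\cC)$, \cref{biadjoint pair}(2) identifies $\Hom_\cD(X, X\<-2\>)$ with $\Hom_{g\cC}(E, {}_sP_s\otimes E\<-2\>)$, whose degree-zero part is the summand $X_s\<-2\>\otimes E \cong E$ contributing $\End_\cC(E) \cong \bbC$; declare $\tr_X$ to be this canonical identification. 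Extend additively to finite direct sums and by the alternating sum over chain degrees to bounded complexes; standard graded cyclicity of the trace then shows that $\tr_X$ vanishes on null-homotopic endomorphisms, so descends to the homotopy category $\cD$.

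\textbf{Step 2: non-degeneracy on indecomposable projectives.} For $X = P_s\otimes E_1$ and $Y = P_t\otimes E_2$, I verify that both sides of the claimed isomorphism have the same dimension in every case of \cref{graded hom space}. The key observation is that all objects of $\cC$ are self-dual, giving the biconditional $E_1 \overset{\oplus}{\subseteq} \Pi(e)\otimes E_2 \iff E_2 \overset{\oplus}{\subseteq} \Pi(e)\otimes E_1$, which matches the $(s,t) \in \Gamma_1$ subcases of the hom-space formula between the two directions. To upgrade equality of dimensions to perfectness of the pairing, I compose the explicit generators of the two hom-spaces produced in the proof of \cref{graded hom space} and check, using the defining relations of $\zig$ (coming from the cup/cap of the self-duality of $\Pi(e)$ in $\cC$), that the composition yields a nonzero scalar multiple of the generator of $\Hom(X, X\<-2\>)$ in Step~1.

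\textbf{Step 3: extension to arbitrary $X, Y \in \cD$.} Both $\Hom_\cD(-, Y\<j\>[k])$ and $\Hom_\cD(Y, -\<-j-2\>[-k])^\vee$ are cohomological functors on $\cD^{\mathrm{op}}$, and the pairing is natural in $X$ and compatible with distinguished triangles. Since $\cD = \Kom^b(\zig\lprmod)$ is classically generated (as a triangulated category) by the set $\{P_s \otimes E \mid s \in \Gamma_0,\ E \in \Irr(\cC)\}$ by \cref{prop: indecomposable Krull-Sch}, a five-lemma argument propagates the isomorphism from the generators to all of $X \in \cD$ for fixed $Y$ indecomposable, and a symmetric argument then extends in $Y$.

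\textbf{Main obstacle.} The subtle point lies in Step~2 for the edge cases $(s,t) \in \Gamma_1$ with $m_{s,t} < \infty$: one must confirm that composing an edge-morphism $P_s \otimes E_1 \to P_t \otimes E_2\<1\>$ (coming from an inclusion $E_1 \hookrightarrow \Pi(e)\otimes E_2$) with its ``reverse'' (coming from $E_2 \hookrightarrow \Pi(e)\otimes E_1$) produces, up to a nonzero scalar, the generator of $\Hom(P_s\otimes E_1, P_s\otimes E_1\<-2\>)$. This reduces to the compatibility between the unit/counit of self-duality in $\cC$ and the Frobenius multiplication on $\zig$, which is essentially the quadratic relation imposed in \cref{defn:generalzigzagpreproj}.
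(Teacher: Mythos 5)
Your proposal is correct and follows essentially the same strategy as the paper: define a trace map on $\Hom(P_s\otimes E, P_s\otimes E\<-2\>)$ via the Frobenius counit, extend it to a bilinear pairing on $\cD$, verify non-degeneracy on the indecomposable projective generators using \cref{graded hom space}, and propagate to all of $\cD$ by a five-lemma argument. The paper's own proof is a one-liner citing \cite[Proposition 2.2]{BDL_root} for the trace-map construction and its non-degeneracy; you have in effect unpacked that citation, correctly identifying the self-duality of the edge labels $\Pi(e)$ in $\cC$ as what makes the dimensions match across the duality and flagging the scalar-nonvanishing of the edge-morphism composition as the real computational content.
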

\begin{proof}
Define the same trace map on $\Hom_{\zig\lprmod}(P_s \otimes E, P_s \otimes E\<-2\>)$ as in \cite[Proposition 2.2]{BDL_root} and extend it to a non-degenerate (functorial) bilinear form on $\Hom_\cD (X,Y\<j\>[k]) \otimes \Hom_\cD (Y,X\<-j-2\>[-k])$, which proves the statement.
\end{proof}
\begin{remark}
Our grading shift $\<-\>$ is denoted by $\{-\}$ in \cite[Proposition 2.2]{BDL_root}).
\end{remark}

\subsection{The 2-Calabi--Yau category of a Coxeter system}\label{sec:2CYcat}
We note that the triangulated category $\Kom^b(\zig(\lprmod))$ of the previous section can be used to define a 2-Calabi--Yau (2-CY) category, which we denote by $\cD^{2CY}_\Gamma$. 
The construction is essentially the same as is done classically for symmetric Kac--Moody quivers (see e.g. \cite{BDL_root}): we regard the zigzag algebra $\zig$ as a differential graded algebra (dga) object with trivial differential; namely an algebra object with the trivial differential in the category of complexes.  
Let $\tilde{\cD}_\Gamma$ be the category whose objects are differential graded left modules which are isomorphic, as objects of $g\cC$, to a finite direct sum of simple objects of $g\cC$.  Morphisms in $\tilde{\cD}_\Gamma$ are homotopy classes of maps of differential graded modules. 
The category $\tilde{\cD}_\Gamma$ is a triangulated category and we define $\cD^{2CY}_\Gamma$ to be the smallest full, strict triangulated subcategory of $\tilde{\cD}_\Gamma$ containing the objects $P_s$:
\[
\cD^{2CY}_\Gamma := \langle P_s \mid s\in \Gamma_0 \rangle \subset \tilde{\cD}_\Gamma.
\]
In particular, the argument in \cite[Proposition 2.3]{BDL_root} shows that the category $\cD^{2CY}_\Gamma$ is 2-CY.

\section{Artin--Tits (braid) group action on the homotopy category} \label{sec: action and categorification}
We continue to fix the Coxeter graph $\Gamma$, which we omit from the notation, so that
$\B := \B(\Gamma)$, $\cC := \cC(\Gamma)$, $\zig:= \zig(\Gamma)$, etc.

In this section we define an action of the Artin--Tits group $\B$ on the triangulated module category $\Kom^b(\zig\lprmod)$ over $\cC$.
The construction extends the rank two case given in \cite{Heng_PhDthesis}, and generalises the construction for symmetric Kac--Moody types in \cite{khovanov_seidel_2001} and \cite{HueKho}.  
The generating equivalences are certain two-term complexes of bimodules over $\zig$ ($\cC$-spherical twists; see \cref{defn:fusionsphericaltwist}), so the resulting Artin--Tits group action commutes with the right $\cC$-action on $\Kom^b(\zig\lprmod)$.  The proofs that these define an action of the associated Artin--Tits group $\B$ is essentially the same as the rank two case in \cite{Heng_PhDthesis}.

\subsection{Fusion spherical twists}
Recall the categories $\zig\lprmod$ and $\rprmod\zig$ defined in \cref{defn: prmod}.
With $M \in \zig\lprmod$ and $N \in \rprmod\zig$, note that $M \otimes N$ is naturally a $(\zig, \zig)$-bimodule.

\begin{definition} \label{defn: bimodule category}
We define the additive monoidal category $\mathbb{U}:= \mathbb{U}(\Gamma)$ to be the smallest additive, monoidal subcategory of $\zig\bimod \zig$ containing the objects:
\[
\{ M \otimes N \mid 
	M \in \zig\lprmod, \ 
	N \in \rprmod\zig
	\}
			\cup \{ \zig \<k\> :  k \in \Z\}
\]
with tensor product $- \otimes_{\zig} -$ and with $\zig$ as the monoidal unit. 
\end{definition}
%
By extending the tensor product to complexes, every complex $U$ in $\Kom^b(\mathbb{U})$ induces an endofunctor on $\Kom^b(\zig\lprmod)$
\[
U \otimes_{\zig} -: \Kom^b(\zig\lprmod) \ra \Kom^b(\zig\lprmod).
\]
The following facts are immediate from the definition.
\begin{itemize}
\item $U \otimes_{\zig} -$ is an exact endofunctor, commuting trivially with the triangulated shift functor $[1]$ and the internal grading shift functor $\<1\>$.
\item The triangulated category $\Kom^b(\zig\lprmod)$ is a right module category over $\cC$ and the functor $U \otimes_{\zig} -$ commutes with the right action of $\cC$ on $\Kom^b(\zig\lprmod)$.
\end{itemize}

Similarly, complexes $X\in \Kom^b(\zig\lprmod$) and $Y \in \Kom^b(\rprmod\zig)$ also induce functors
\[
X\otimes - : \Kom^b(g\cC) \ra \Kom^b(\zig\lprmod), \quad Y \otimes_{\zig} -: \Kom^b(\zig\lprmod) \ra \Kom^b(g\cC).
\]

The following generalised notion of spherical twist is adapted from \cite{anno_logvinenko_2017}.
\begin{definition} \label{defn:fusionsphericaltwist}
Let $X\in \Kom^b(\zig\lprmod)$ and $X^\ell, X^r \in \Kom^b(\rprmod\zig)$, which we consider as functors as described above:
\[
X \otimes - : \Kom^b(g\cC) \leftrightarrows \Kom^b(\zig\lprmod): X^\ell \otimes_{\zig} -, X^r \otimes_{\zig} -.
\]
Suppose that $X^\ell \otimes_{\zig} -$ and $X^r \otimes_{\zig} -$ are left and right adjoints of $X \otimes -$ respectively.
We define:
\begin{enumerate}
\item the \emph{twist} with respect to $X$ as the complex of $(\zig,\zig)$-bimodules:
\[
\sigma_X:= \cone \left( X\otimes X^r \xrightarrow{\varepsilon} \zig[0] \right) \in \Kom^b(\mathbb{U}),
\]
with $\varepsilon$ defining the counit of the adjunction $X \otimes - \dashv X^r \otimes_{\zig} -$; and
\item the \emph{dual twist} of $X$ as the complex of $(\zig,\zig)$-bimodules:
\[
\sigma'_X := \cone \left(  \zig[0] \xrightarrow{\nu} X\otimes X^\ell  \right) \in \Kom^b(\mathbb{U}),
\]
with $\nu$ defining the unit of the adjunction $X^\ell \otimes_{\zig} - \dashv X \otimes -$.
\end{enumerate}
The twist and dual twist with respect to $X$ are both complexes in $\mathbb{U}$, which can therefore be viewed as endofunctors of $\Kom^b(\zig\lprmod)$ via tensoring over $\zig$.
We say that $\sigma_X$ is a \emph{$\cC$-spherical twist} if the twist and dual twist are mutual inverses as endofunctors.
\end{definition}

\subsection{The action and braiding relations}
For each vertex $s \in \Gamma_0$, recall that $P_s \otimes -$ have right and left adjoints ${}_s P \otimes_{\zig} -$ and ${}_s P \<-2\> \otimes_{\zig} -$ respectively (see \cref{biadjoint pair}).

Let ${}_sQ := {}_sP\<-1\>$.
We are particularly interested in the twist and dual twist with respect to the objects $P_s$:
\begin{equation} \label{eqn:sphericaltwist}
\sigma_{P_s} = 0 \ra P_s \otimes {}_sQ\<1\> \xra{\beta_i} \zig \ra 0
\end{equation}
and
\begin{equation} \label{eqn:spehricaltwistinverse}
\sigma'_{P_s} = 0 \ra \zig \xra{\gamma_s} P_i\otimes {}_sQ\<-1\> \ra 0,
\end{equation}
both with $\zig$ sitting in cohomological degree 0. 

\begin{proposition}[Inverse and braiding relations] \label{braid relation}
For each vertex $s \in \Gamma_0$, we have the following isomorphisms in $\Kom^b(\mathbb{U})$:
\[
\sigma_{P_s} \otimes_{\zig} \sigma_{P_s}' \cong \zig[0] \cong \sigma_{P_s}' \otimes_{\zig} \sigma_{P_s},
\]
In particular, $\sigma_{P_s}$ are $\cC$-spherical twists.

Moreover, for each pair of distinct vertices $s,t \in \Gamma_0$ with entry in the Coxeter matrix $m_{s,t}<\infty$, we have the following isomorphism in $\Kom^b(\mathbb{U})$:
\[
\underbrace{\cdots \otimes_{\zig} \sigma_{P_s} \otimes_{\zig} \sigma_{P_t} \otimes_{\zig} \sigma_{P_s}  }_{m_{s,t} \text{ times}}
	\cong 
	\underbrace{\cdots \otimes_{\zig} \sigma_{P_t} \otimes_{\zig} \sigma_{P_s} \otimes_{\zig} \sigma_{P_t} }_{m_{s,t} \text{ times}}.
\]
\end{proposition}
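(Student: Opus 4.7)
The plan is to establish both relations by direct calculation with the two-term bimodule complexes \eqref{eqn:sphericaltwist} and \eqref{eqn:spehricaltwistinverse}, reducing the braid relation to the rank-$2$ case treated in \cite{Heng_PhDthesis}.

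For the inverse relations $\sigma_{P_s} \otimes_{\zig} \sigma'_{P_s} \cong \zig[0]$, I would totalise the tensor product of \eqref{eqn:sphericaltwist} with \eqref{eqn:spehricaltwistinverse} and use \Cref{tensor sPt} to identify ${}_sP \otimes_{\zig} P_s \cong e_s \oplus X_s = \1 \oplus \1\<2\>$. The resulting three-term complex has $P_s \otimes {}_sQ\<1\>$ in cohomological degree $-1$, the direct sum $\zig \oplus (P_s \otimes {}_sQ\<-1\>) \oplus (P_s \otimes {}_sQ\<1\>)$ in degree $0$, and $P_s \otimes {}_sQ\<-1\>$ in degree $1$. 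The key step is to verify, via the unit-counit identities for the adjunctions in \Cref{biadjoint pair} combined with the Frobenius structure on $\zig$, that the $P_s \otimes {}_sQ\<1\>$ summand in degree $0$ cancels against the unique term in degree $-1$, and the $P_s \otimes {}_sQ\<-1\>$ summand in degree $0$ cancels against the unique term in degree $1$, leaving $\zig[0]$. The argument for $\sigma'_{P_s} \otimes_{\zig} \sigma_{P_s}$ is the mirror image.

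For the braid relations, when $m_{s,t} = 2$ so that $(s,t) \notin \Gamma_1$, the vanishing ${}_sP_t = 0$ from \Cref{tensor sPt} kills the cross-term $(P_s \otimes {}_sQ) \otimes_{\zig} (P_t \otimes {}_tQ)$, so both $\sigma_{P_s} \otimes_{\zig} \sigma_{P_t}$ and $\sigma_{P_t} \otimes_{\zig} \sigma_{P_s}$ collapse to the manifestly symmetric complex $(P_s \otimes {}_sQ\<1\>) \oplus (P_t \otimes {}_tQ\<1\>) \to \zig$. When $3 \leq m_{s,t} < \infty$, the iterated total complexes
\[
\Sigma_{s,t} := \underbrace{\cdots \otimes_{\zig} \sigma_{P_s} \otimes_{\zig} \sigma_{P_t}}_{m_{s,t}\ \text{factors}}, \qquad \Sigma_{t,s} := \underbrace{\cdots \otimes_{\zig} \sigma_{P_t} \otimes_{\zig} \sigma_{P_s}}_{m_{s,t}\ \text{factors}}
\]
have terms built exclusively from $P_s, P_t$ and their right-module counterparts tensored over $\zig$; by repeated application of \Cref{tensor sPt}, each such term involves only the local bimodules $e_s, X_s, e_t, X_t, {}_sP_t \cong \nPi[m_{s,t}]_{m_{s,t}-3}\<1\>$, and ${}_tP_s$. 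Hence $\Sigma_{s,t}$ and $\Sigma_{t,s}$ are induced from bimodules over the rank-$2$ zigzag subalgebra $\zig(\Gamma_{\{s,t\}})$ attached to the parabolic sub-diagram $I_2(m_{s,t})$, and it suffices to prove the braiding relation for $\Gamma = I_2(m)$. The hardest part is this $I_2(m)$ calculation, which is carried out in \cite{Heng_PhDthesis}: both complexes are reduced to a common minimal model by successively discarding contractible summands, using the fusion rule $\nPi[m]_1 \otimes \nPi[m]_a \cong \nPi[m]_{a-1} \oplus \nPi[m]_{a+1}$ and the self-duality of $\nPi[m]_{m-3}$. The $s \leftrightarrow t$ symmetry of this minimal model, which hinges on the pivotal structure of $\cC_m$ and on the Frobenius comultiplication \eqref{eqn:frobcounit}, then yields the desired isomorphism.
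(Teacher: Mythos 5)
Your plan is correct in outline. The inverse relations argument matches the paper's (terse) description as ``a simple direct computation'': one builds the three-term total complex, uses ${}_sP \otimes_{\zig} P_s \cong \1 \oplus \1\langle 2\rangle$ from \Cref{tensor sPt} to split the middle term, and verifies via the unit--counit identities from \Cref{biadjoint pair} that the relevant components of the differential are isomorphisms that can be Gaussian-eliminated, leaving $\zig[0]$. Your reduction to the rank-two subdiagram and the observation that $m_{s,t}=2$ is trivial because ${}_sP_t = 0$ are both sound.

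For $3 \le m_{s,t} < \infty$, however, your approach diverges from the paper's. You propose to compare the two bimodule complexes $\Sigma_{s,t}$ and $\Sigma_{t,s}$ directly in $\Kom^b(\mathbb{U})$ by reducing both to a common minimal model and exploiting a symmetry of that model. The paper takes a more indirect but lighter route: because $\cC$-spherical twists are cones of adjunctions (\Cref{defn:fusionsphericaltwist}), the bimodule complex of a twist is determined, up to shifts $[k]\langle\ell\rangle$, by the spherical object it is taken with respect to, and the twists with respect to $P_i$ and with respect to $P_i \otimes \mPi_{m-2}$ are isomorphic. It therefore suffices to prove a \emph{module-level} statement in $\Kom^b(\zig\lprmod)$: that the $(m-1)$-fold alternating product of twists applied to $P_2$ is isomorphic, up to shifts, to $P_1 \otimes \mPi_{m-2}$ (for $m$ odd) or $P_2 \otimes \mPi_{m-2}$ (for $m$ even). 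This is established by the inductive step \eqref{eqn:alternatebraid}, which tracks what a single application of $\sigma_{P_i}$ does to a two-term complex of indecomposable projectives and uses the fusion rule you anticipated. The advantage is that every intermediate object in this induction remains a two-term complex, so the bookkeeping grows only linearly in $m$, whereas a direct minimal-model reduction of the $m$-fold bimodule tensor product would have to control the full totalisation before cancellation. Both your proposal and the paper's proof ultimately cite \cite[\S 2.2]{Heng_PhDthesis}, but the computation carried out there is the module-level induction, not the bimodule minimal-model reduction you describe.
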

\begin{proof}
The proof is exactly as in the rank two case -- observe that the relations only involve (at most) two distinct vertices in the Coxeter diagram.

Spelling out some of the details, that $\sigma_{P_s}$ and $\sigma_{P_s}'$ are mutual inverses follows from a simple direct computation.
For the braiding relation, suppose $s, t$ are distinct vertices satisfying $m_{s,t} < \infty$.
For ease of notation we set $1 := s$, $2 := t$ and $m := m_{s,t}$.
Since $\cC$-spherical twists are defined via cones of adjunctions, it is sufficient to show that we have the following isomorphism in $\Kom^b(\zig\lprmod)$ up to shifts $[k]$ and $\<\ell\>$
\[
\underbrace{\cdots \otimes_{\zig} \sigma_{P_1} \otimes_{\zig} \sigma_{P_2} \otimes_{\zig} \sigma_{P_1}  }_{(m-1) \text{ times}} \otimes_{\zig} P_2 \cong 
	\begin{cases}
	P_1 \otimes \mPi_{m-2}, &\text{ if } m \text{ is odd}; \\
	P_2 \otimes \mPi_{m-2},&\text{ if } m \text{ is even}.
	\end{cases}
\]
(Noting also that the twists with respect to $P_i$ and with respect to $P_i \otimes \mPi_{m-2}$ are isomorphic regardless of the shift in cohomology and internal gradings.)
This can be done by showing that for
(the subscript $i \pm 1$ is defined according to $i$: $i\pm 1 = 1$ if $i=2$, and $i\pm 1 = 2$ if $i=1$)
\[
X := P_{i\pm 1} \otimes \mPi_a\<k\>[\ell] 
		\xra{ } 
	P_i \otimes \mPi_{a-1} \<k-1\>[\ell-1],
\]
we have
\begin{equation} \label{eqn:alternatebraid}
\sigma_{P_i} \otimes_{\zig}  X
\cong 
\begin{cases}
P_i \otimes \mPi_{a+1}\<k+1\>[\ell+1] 
	\xra{ } 
	P_{i\pm 1} \mPi_a\<k\>[\ell]; &\text{for } a\neq m-2, \\
P_{i\pm 1} \otimes \mPi_a\<k\>[\ell]; &\text{for } a=m-2.
\end{cases}
\end{equation}
For details refer to \cite[\S 2.2]{Heng_PhDthesis}.
\end{proof}
\begin{remark}
The analogous statement for zigzag algebras of fusion quivers over general fusion category $\cC$ is as follows. 
Suppose the arrow from $s$ to $t$ is labelled by $\Pi \in \cC$, so that the opposite arrow $t$ to $s$ is labelled by $\Pi^*$.
If ($\FPdim(\Pi^*) =$)$\FPdim(\Pi) < 2$, which therefore $\FPdim(\Pi) = 2\cos(\pi/m)$ for some integer $m \geq 2$, then $\sigma_{P_s}$ and $\sigma_{P_t}$ satisfy the braiding relation of length $m$.
The calculation of the analogue of \eqref{eqn:alternatebraid} will require techniques similar to that of reflection functors carried out in \cite[Lemma 5.9]{EH_fusionquiver}.
We leave the details to the interested reader.
\end{remark}

\begin{definition}\label{defn:sphericaltwistgroup}
The \emph{$\cC$-spherical twist group}, denoted by $\Br^{\ST}:= \Br^{\ST}(\Gamma)$, is defined as the subgroup of invertible complexes of $(\zig,\zig)$-bimodules generated by the $\cC$-spherical twists $\sigma_{P_s}$ for all $s \in \Gamma_0$ (composition is given by tensoring over $\zig$).
\end{definition}
Viewed as autoequivalences, the complexes in $\Br^{\ST}$ are therefore exact autoequivalences of $\Kom^b(\zig\lprmod)$ that moreover commute with the right $\cC$-action and the internal grading functor $\<1\>$.
As a consequence of \cref{braid relation}, we obtain the following theorem.
\begin{theorem} \label{weak braid action}
The group homomorphism $\B \ra \Br^{\ST}$ sending each standard generator $\sigma_s \mapsto \sigma_{P_s}$ is well-defined.
In particular, we have a (weak) $\B$-action on $\Kom^b(\zig\lprmod)$, given by exact, $\cC$-module autoequivalences that commutes with $\<1\>$.
\end{theorem}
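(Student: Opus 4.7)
The plan is to deduce the theorem essentially formally from Proposition \ref{braid relation}, which does the real work. The Artin--Tits group $\B$ has a presentation with generators $\{\sigma_s\}_{s \in \Gamma_0}$ subject only to the braid relations $\sigma_s \sigma_t \sigma_s \cdots = \sigma_t \sigma_s \sigma_t \cdots$ (each side of length $m_{s,t}$) for pairs $s \neq t$ with $m_{s,t} < \infty$. To produce a group homomorphism $\B \to \Br^{\ST}$ sending $\sigma_s \mapsto \sigma_{P_s}$, by the universal property of this presentation it suffices to verify two things: (i) each $\sigma_{P_s}$ is an invertible element of the monoidal category of complexes of $(\zig,\zig)$-bimodules, so that it lies in the group $\Br^{\ST}$; and (ii) the images $\sigma_{P_s}$ satisfy the braid relations in that group.

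First I would note that the inverse relation $\sigma_{P_s} \otimes_{\zig} \sigma'_{P_s} \cong \zig[0] \cong \sigma'_{P_s} \otimes_{\zig} \sigma_{P_s}$ from Proposition \ref{braid relation} exhibits $\sigma'_{P_s}$ as a two-sided inverse of $\sigma_{P_s}$, giving (i); in particular each $\sigma_{P_s}$ is a $\cC$-spherical twist in the sense of Definition \ref{defn:fusionsphericaltwist}, justifying the terminology. The braid relations of length $m_{s,t}$ from the same proposition give (ii). Therefore the assignment $\sigma_s \mapsto \sigma_{P_s}$ extends to a well-defined group homomorphism $\B \to \Br^{\ST}$.

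For the ``in particular'' statement, each element of $\Br^{\ST}$ is, by definition, represented by an invertible complex $U \in \Kom^b(\mathbb{U})$, and tensoring over $\zig$ with any such $U$ defines an endofunctor $U \otimes_{\zig} -$ on $\Kom^b(\zig\lprmod)$ which, as recorded immediately after Definition \ref{defn: bimodule category}, is exact, commutes with the right $\cC$-action, and commutes with the grading shift functor $\<1\>$; invertibility of $U$ in $\Kom^b(\mathbb{U})$ upgrades this to an autoequivalence. Composing with the homomorphism above yields the desired weak action of $\B$ on $\Kom^b(\zig\lprmod)$ by exact $\cC$-module autoequivalences commuting with $\<1\>$. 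The word ``weak'' is appropriate because we have only constructed a map to isomorphism classes of autoequivalences, with no coherence data specified. There is no real obstacle in the theorem itself: the nontrivial content has already been established in Proposition \ref{braid relation}, and the present result merely packages it as a group action.
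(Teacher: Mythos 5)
Your proof is correct and follows the same approach as the paper: the paper presents Theorem~\ref{weak braid action} as an immediate consequence of Proposition~\ref{braid relation}, and you have simply spelled out the standard argument — invertibility and the braid relations verified in Proposition~\ref{braid relation} give well-definedness via the presentation of $\B$, and the properties of endofunctors of the form $U\otimes_{\zig} -$ recorded after Definition~\ref{defn: bimodule category} give the ``in particular'' clause.
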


%
\section{The Coxeter group action on the Grothendieck group} \label{sec:Grothendieckandlattice}
We continue to fix the Coxeter graph $\Gamma$, which we omit from the notation.
In this section we will also let $\cD:= \Kom^b(\zig(\Gamma)\lprmod)$.

\subsection{The standard heart of linear complexes} \label{sec:linearheart}
The triangulated category $\cD$ has an internal grading shift functor $\<1\>$ coming from the grading on $\zig$; this grading is not to be confused with the cohomological degree shift functor $[1]$.  Let $\cH:= \cH(\Gamma) \subset \cD$ be the abelian category of linear complexes.  Objects of $\cH$ are complexes homotopic to one whose cochain object in cohomological degree $-k$ consists of direct sums of the objects of the form $P_s \otimes E \<k\>$. (Here $s$ and $E$ may vary, but the grading shift $k$ must match with the negative of the cohomological degree.)
This abelian category $\cH$ is the heart of a bounded $t$-structure on $\cD$, and we refer to it in what follows as the \emph{linear heart}.
$\cH$ is a locally finite abelian category, with simple objects 
\begin{equation} \label{eqn:simplesoflinearheart}
\Irr(\cH) = \{ P_s \otimes E\<k\>[k] \mid s \in \Gamma_0, k \in \Z \text{ and } E \in \Irr(\cC) \}.
\end{equation}
Indeed, the fact that these are simple in $\cH$ follows from proposition \ref{graded hom space}.

\subsection{Grothendieck group and the induced action}
The Grothendieck groups of $\cD$ and $\cH$ can be identified, as $\Z$-modules, as follows
\begin{equation} \label{eq:grothendieck}
K_0(\cD) \cong K_0(\cH)  \cong \bigoplus_{s \in \Gamma_0, E \in \Irr(\cC)} \Z[q^{\pm 1}] \cdot [P_s \otimes E],
\end{equation}
with $q$ acting on the class of objects by $q \cdot [X] = [X\<1\>]$.
Note that $\<1\>$ and $[1]$ acts differently on $K_0(\cD)$: $[X[1]] = -[X]$.

Since $\cD$ has a (right) action of $\cC$,  its Grothendieck group $K_0(\cD)$ has a (right) $K_0(\cC)$-module; since $K_0(\cC)$ is a commutative ring, we will equivalently regard $K_0(\cC)$ as acting on the left on $K_0(\cD)$.
As $K_0(\cC)$-modules, we have 
\begin{equation} \label{eq:fusiongrothendieck}
K_0(\cD) \cong K_0(\cH)  \cong \bigoplus_{s \in \Gamma_0} K_0(\cC)[q^{\pm 1}] [P_s].
\end{equation}
In particular, $K_0(\cD)$ is a free $K_0(\cC)[q^{\pm 1}]$-module of rank $ = |\Gamma_0|$.

The spherical twist group $\Br^{\ST}$ acts on $\cD$ via exact, $\cC$-module autoequivalences, and this action commutes with the internal grading shift functor $\<1\>$.  It follows that $\Br^{\ST}$ acts on the free $K_0(\cC)[q^{\pm 1}]$-module $K_0(\cD)$ via $K_0(\cC)[q^{\pm 1}]$-linear automorphisms.
The action on each basis element is described as follows.
\begin{proposition} \label{prop:STactiononGrothendieck}
The generators $\sigma_{P_s} \in \Br^{\ST}$ acts on the $K_0(\cC)[q^{\pm 1}]$-basis element $[P_t] \in K_0(\cD)$ via
\begin{equation} \label{eqn:Buraurep}
\sigma_{P_s}\cdot [P_t] = 
	\begin{cases}
	-q^2[P_t], &\text{ if } s=t; \\
	[P_t] - [\Pi(e)]q \cdot [P_s], &\text{ if } e=(s,t) \in \Gamma_1; \\
	[P_t], &\text{ otherwise},
	\end{cases}
\end{equation}
where $\Pi(e)$ is defined as in \eqref{eqn:coxquiverlabel}.
\end{proposition}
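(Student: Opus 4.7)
The proof is essentially a direct computation: I would just unpack the definition of $\sigma_{P_s}$ as a cone, apply it to $P_t$, and read off the class in the Grothendieck group using the three cases for ${}_sP_t$ already computed.

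More precisely, recall from \eqref{eqn:sphericaltwist} that $\sigma_{P_s}$ is the two-term complex $P_s\otimes {}_sQ\langle 1\rangle \xrightarrow{\beta_s} \zig$ of $(\zig,\zig)$-bimodules, and that ${}_sQ = {}_sP\langle -1\rangle$, so that ${}_sQ\langle 1\rangle = {}_sP$. The functor $\sigma_{P_s}\otimes_{\zig}-$ applied to $P_t$ therefore fits into a distinguished triangle
\[
P_s\otimes {}_sP \otimes_{\zig} P_t \;\longrightarrow\; P_t \;\longrightarrow\; \sigma_{P_s}(P_t) \;\longrightarrow\; [1],
\]
so in $K_0(\cD)$ we obtain
\[
[\sigma_{P_s}(P_t)] = [P_t] - [P_s\otimes ({}_sP\otimes_{\zig} P_t)].
\]
By \cref{tensor sPt}, ${}_sP\otimes_{\zig} P_t \cong {}_sP_t$, and the explicit description of ${}_sP_t$ given immediately after that proposition yields the three cases.

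When $s=t$ we have ${}_sP_s = e_s \oplus X_s = \1 \oplus \1\langle 2\rangle$, giving $P_s\otimes {}_sP_s \cong P_s \oplus P_s\langle 2\rangle$, and hence $[\sigma_{P_s}(P_s)] = [P_s] - [P_s] - q^2[P_s] = -q^2[P_s]$. When $e = (s,t) \in \Gamma_1$ we have ${}_sP_t = (s|t) = \Pi(e)\langle 1\rangle$, giving $[\sigma_{P_s}(P_t)] = [P_t] - [\Pi(e)]\,q\cdot [P_s]$. In all remaining cases ${}_sP_t = 0$ and so $[\sigma_{P_s}(P_t)] = [P_t]$. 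This matches \eqref{eqn:Buraurep} exactly.

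There is no real obstacle here: the only thing to verify is that the $K_0(\cC)[q^{\pm 1}]$-linearity of the action (already established from the fact that $\Br^{\ST}$ acts by $\cC$-module autoequivalences commuting with $\langle 1\rangle$) is compatible with the identification $[P_s\otimes \Pi(e)\langle 1\rangle] = [\Pi(e)]\,q\cdot [P_s]$, which is immediate from the definitions of the $K_0(\cC)$- and $q$-actions on $K_0(\cD)$ given in \eqref{eq:grothendieck} and \eqref{eq:fusiongrothendieck}.
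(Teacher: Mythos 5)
Your proof is correct and is exactly the "direct computation" the paper alludes to: the paper's own proof is the single sentence "This follows from a direct computation," and your unpacking via the cone triangle, the identification ${}_sQ\langle 1\rangle = {}_sP$, and the three cases for ${}_sP_t$ from \cref{tensor sPt} is precisely what that computation is.
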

\begin{proof}
This follows from a direct computation.
\end{proof}

\subsection{The fusion-lattice and the Coxeter group action} \label{sec:fusionlatticecox}
For what follows, it will be convenient to consider the quotient $K_0(\cD)$ by setting $q$ evaluated to $-1$ to obtain a finite rank lattice (i.e.\ a finite rank free $\bbZ$-module).
%
%
\begin{definition} \label{defn:latticeforstab}
Let $\Lambda := \Lambda(\Gamma)$ be the free $K_0(\cC)$-module (of rank = $|\Gamma_0|$) defined by
\[
\Lambda := \bigoplus_{s \in \Gamma_0} K_0(\cC) \cdot \alpha_s.
\]
We fix the surjective $K_0(\cC)$-module homomorphism (hence also a surjective group homomorphism) $\nu: K_0(\cD) \twoheadrightarrow \Lambda$ uniquely defined by $q \mapsto -1$ and $[P_s] \mapsto \alpha_s$ for each $s \in \Gamma_0$.
\end{definition}
The following is immediate from the fusion ring structure of $K_0(\cC)$:
\begin{lemma} \label{lem:C-latticeaslattice}
$\Lambda$ is a free $\bbZ$-module generated by $[E]\cdot \alpha_s$ for each $E \in \Irr(\cC)$ and $s \in \Gamma_0$.
(In particular, $\Lambda$ is a finite rank lattice.)
\end{lemma}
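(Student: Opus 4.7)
The proof plan is essentially an unpacking of definitions, so it should be short. The key input is that $\cC$ is a fusion category, which by \Cref{defn: tensor fusion category} is semisimple with only finitely many isomorphism classes of simple objects.

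First, I would observe that semisimplicity of $\cC$ together with finiteness of $\Irr(\cC)$ implies that the Grothendieck group $K_0(\cC)$ is the free abelian group on the classes $\{[E] \mid E \in \Irr(\cC)\}$. This is the standard fact about Grothendieck groups of finite-length semisimple abelian categories, and it already appears implicitly in \Cref{defn: fusion ring}, where the multiplication on $K_0(\cC)$ is described using these classes as a $\bbZ$-basis.

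Next, I would unwind the definition of $\Lambda = \bigoplus_{s \in \Gamma_0} K_0(\cC) \cdot \alpha_s$ from \Cref{defn:latticeforstab}. As a $K_0(\cC)$-module it is free of rank $|\Gamma_0|$ on the generators $\{\alpha_s\}_{s \in \Gamma_0}$. Forgetting the $K_0(\cC)$-module structure and viewing it as a $\bbZ$-module, the direct sum decomposition combined with the freeness of $K_0(\cC)$ as a $\bbZ$-module on $\Irr(\cC)$ yields
\[
\Lambda \;=\; \bigoplus_{s \in \Gamma_0} \bigoplus_{E \in \Irr(\cC)} \bbZ \cdot ([E] \cdot \alpha_s),
\]
which is precisely the claimed $\bbZ$-basis. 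Finally, since $\Gamma_0$ is finite by the definition of a Coxeter matrix and $\Irr(\cC)$ is finite by the fusion hypothesis, the total rank $|\Gamma_0| \cdot |\Irr(\cC)|$ is finite, so $\Lambda$ is a lattice of finite rank.

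There is no real obstacle in this argument; it is purely a matter of combining the finiteness assumptions on $\Gamma_0$ and $\Irr(\cC)$ with the semisimplicity packaged into the fusion axioms. The only thing worth flagging is to make explicit that the $\bbZ$-linear independence of the elements $[E] \cdot \alpha_s$ across different $s$ comes from the direct sum in the definition of $\Lambda$, while the $\bbZ$-linear independence across different $E$ for fixed $s$ comes from the freeness of $K_0(\cC)$; these two independences combine into the global basis claim.
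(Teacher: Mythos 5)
Your argument is correct and is exactly the unpacking the paper has in mind: the paper states the lemma as "immediate from the fusion ring structure of $K_0(\cC)$" and gives no further proof, so your write-up is simply making explicit the two ingredients (freeness of $K_0(\cC)$ over $\bbZ$ on $\Irr(\cC)$ by semisimplicity, and freeness of $\Lambda$ over $K_0(\cC)$ on $\{\alpha_s\}$ by definition) that the authors are taking for granted. There is no gap and no difference in approach.
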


Observe that the action of $\Br^{\ST}$ on $K_0(\cD)$ factors uniquely through the surjection $\nu$; indeed, the action of $\Br^{\ST}$ on the lattice $\Lambda$ is obtained by setting $q=-1$ in \eqref{eqn:Buraurep} (and identifying $\alpha_s$ with $[P_s]$).
In fact, this action agrees with a faithful action of $\bbW$ on $\Lambda$, which we now describe.

Recall that to each edge $e$ connecting two vertices $s$ and $t$, we associated an object $\Pi(e) \in \cC$ (cf.\ \eqref{eqn:coxquiverlabel}).
Consider the symmetric, $K_0(\cC)$-bilinear form $B_\cC(-,-): \Lambda \times \Lambda \ra K_0(\cC)$ on $\Lambda$ defined on the basis elements (over $K_0(\cC)$) by
\begin{equation} \label{eqn:fusionbilinearform}
B_\cC(\alpha_s, \alpha_t) := 
	\begin{cases}
	2\cdot[\1], &\text{ if } s = t;\\
	-[\Pi(e)], &\text{ if } e=(s,t) \text{ is an edge in } \Gamma; \\
	0, &\text{ otherwise}.
	\end{cases}
\end{equation}
For each standard generator $s \in \Gamma_0 \subseteq \bbW$, define 
\begin{equation} \label{eqn:Wactionfusionlattice}
s \cdot v = v - B_\cC(\alpha_s, v)\cdot \alpha_s, \qquad v \in \Lambda.
\end{equation}
More explicitly, we have
\begin{equation} \label{eqn:fusiongeomrep}
s\cdot \alpha_t = 
	\begin{cases}
	-\alpha_t, &\text{ if } s=t; \\
	\alpha_t + [\Pi(e)]\cdot \alpha_s, &\text{ if } e=(s,t) \in \Gamma_1; \\
	\alpha_t, &\text{ otherwise};
	\end{cases}
\end{equation}
compare this with \eqref{eqn:Buraurep}.

\begin{lemma} \label{lem:fusionTitsrep}
The assignment in \eqref{eqn:Wactionfusionlattice} defines a $K_0(\cC(\Gamma))$-linear representation of $\bbW$ on $\Lambda$.
\end{lemma}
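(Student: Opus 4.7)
The plan is to deduce this lemma from the $\B$-action constructed categorically in \cref{weak braid action}, rather than from a direct Chebyshev-polynomial computation in the fusion ring $K_0(\cC)$. The key observation is that the formulas in \eqref{eqn:fusiongeomrep} are, up to the substitution $q = -1$, precisely the formulas for the $\Br^{\ST}$-action on the Grothendieck group recorded in \cref{prop:STactiononGrothendieck}.

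First I would note that the $\Br^{\ST}$-action on $K_0(\cD)$ is $K_0(\cC)[q^{\pm 1}]$-linear, since each $\sigma_{P_s}$ commutes with the right $\cC$-action and with the internal grading shift $\<1\>$. In particular the ideal $(q+1)K_0(\cD)$ is $\Br^{\ST}$-stable, so the action descends along the surjection $\nu\colon K_0(\cD) \twoheadrightarrow \Lambda$ to a $K_0(\cC)$-linear action of $\B$ on $\Lambda$ via $\sigma_s \mapsto \sigma_{P_s}$. Specialising $q = -1$ in \eqref{eqn:Buraurep} and identifying $[P_s]$ with $\alpha_s$ shows that this induced action of $\sigma_{P_s}$ agrees verbatim with the operator $s$ defined in \eqref{eqn:fusiongeomrep}.

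Since the braid relations for the $\sigma_{P_s}$ already hold on $K_0(\cD)$ by \cref{weak braid action}, they hold a fortiori on $\Lambda$. Hence the only remaining check is that each operator $s$ squares to the identity on $\Lambda$; combined with the braid relations, this gives precisely the Coxeter presentation of $\bbW$. The square-to-identity check is a one-line direct computation using $B_\cC(\alpha_s, \alpha_s) = 2\cdot[\1]$: expanding
$s\cdot(s\cdot v) = v - B_\cC(\alpha_s, v)\alpha_s - B_\cC(\alpha_s, s\cdot v)\alpha_s$
and simplifying with the identity $B_\cC(\alpha_s, s\cdot v) = -B_\cC(\alpha_s, v)$ yields $s\cdot(s\cdot v) = v$.

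There is no real obstacle: the deep content, namely the categorical braid relations established in \cref{braid relation}, is already in hand. The present lemma simply records the consequences at the level of the Grothendieck group once the auxiliary grading parameter $q$ is set to $-1$, together with the elementary fact that reflections in the fusion bilinear form $B_\cC$ are involutions.
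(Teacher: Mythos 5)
Your proof is correct, and it takes a genuinely different route from the paper's. The paper's stated proof is a ``direct calculation'' (with a pointer to the general treatment in the fusion-quiver paper): one verifies the Coxeter relations $s^2 = 1$ and $(st)^{m_{s,t}} = 1$ directly on $\Lambda$, which for the order-$m_{s,t}$ relation requires unwinding the Chebyshev recursion in the fusion ring $K_0(\cC)$. Your proof instead observes that the operators in \eqref{eqn:fusiongeomrep} are exactly the $q = -1$ specialisation of the $\Br^{\ST}$-action on $K_0(\cD)$ recorded in \cref{prop:STactiononGrothendieck}; since that action is $K_0(\cC)[q^{\pm 1}]$-linear, it preserves $\ker\nu = (q+1)K_0(\cD)$ and hence descends to $\Lambda$, and \cref{weak braid action} already gives the braid relations categorically. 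The only thing you verify by hand is the involutivity $s^2 = 1$, which as you show is immediate from $B_\cC(\alpha_s, \alpha_s) = 2[\1]$. Combined with the braid relations, this forces the full Coxeter relations (the standard fact that $\bbW$ is the quotient of $\B$ by the normal closure of the $\sigma_s^2$), so the assignment factors through $\bbW$.

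The trade-off is the usual one between elegance and self-containment. Your route is shorter and avoids any fusion-ring combinatorics, but it consumes the categorical input of Section 4, in particular \cref{braid relation}, which is the most substantial result used. The paper's intended calculation is more laborious but entirely independent of the categorical machinery (and of any module category at all), which is why they can cite the purely linear-algebraic treatment in the fusion-quiver paper as an alternative. There is no circularity in your argument: both \cref{weak braid action} and \cref{prop:STactiononGrothendieck} are established prior to this lemma, and the observation that the $\Br^{\ST}$-action descends along $\nu$ is one the paper itself makes in the paragraph immediately preceding the lemma statement.
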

\begin{proof}
This can be done via a direct calculation.
A general proof that works with arbitrary fusion categories can also be found in \cite[Section 6.1]{EH_fusionquiver}.
\end{proof}
\begin{remark}
The lemma above can also be deduced from \cite[Section 4.3]{Dyer09} via embedding of root system, which is related to the unfolding that will be discussed later in \cref{sec:unfolding}.
\end{remark}

We now argue that the representation of $\bbW$ on $\Lambda$ is faithful.
Recall that $K_0(\cC)$, being a fusion ring, has ring homomorphism into $\bbR$ defined by assigning objects $Y \in \cC$ its Frobenius--Perron dimension $\FPdim(Y) \in \bbR_{\geq 0}$ (see \cref{defn:FPdim}):
\[
\FPdim: K_0(\cC) \ra \bbR (\subset \bbC).
\]
This equips both $\bbR$ and $\bbC$ with the structure of a $K_0(\cC)$-module.
As a consequence, we can consider the contragradient action of $\bbW$ on $\Hom_{K_0(\cC)}(\Lambda,\bbC)$ (the dual representation), defined by
\begin{equation} \label{eqn:Wcontragradientaction}
(w \cdot \underline{Z})(v) = \underline{Z}(w^{-1} \cdot v), \qquad \text{ for each } w \in \bbW.
\end{equation}
Note that faithfulness of the contragradient representation above implies faithfulness of the representation on $\Lambda$.
We claim that the representation $\Hom_{K_0(\cC)}(\Lambda,\bbC)$ is isomorphic to the associated contragradient representation (over $\bbC$) on $(\bbR\Lambda)^*_\bbC := \Hom_\bbR(\bbR\Lambda, \bbC)$ associated to the Tits representation $\bbR\Lambda$, as considered in \cref{sec:coximagconehyperplane}.
\begin{proposition} \label{prop:identifywithdualspace}
The map $\Hom_{K_0(\cC)}(\Lambda,\bbC) \ra (\bbR\Lambda)^*_\bbC$ defined by sending $\underline{Z} \in \Hom_{K_0(\cC)}(\Lambda,\bbC)$ to the unique $\underline{Z}' \in (\bbR\Lambda)^*_\bbC$ such that $\underline{Z}'(\alpha_s) := \underline{Z}(\alpha_s)$, is a $\bbW$-equivariant isomorphism.
\end{proposition}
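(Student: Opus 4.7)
The strategy is to verify the three required properties in order: the map is well-defined and a bijection, then that it intertwines the two $\bbW$-actions.

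First I would argue bijectivity. By definition, $\Lambda$ is a \emph{free} $K_0(\cC)$-module on the basis $\{\alpha_s\}_{s\in \Gamma_0}$, so any $K_0(\cC)$-linear map $\underline{Z}\colon \Lambda \to \bbC$ (with $K_0(\cC)$ acting on $\bbC$ via $\FPdim$) is determined uniquely by, and extends uniquely from, its values on the $\alpha_s$. Similarly, $\bbR\Lambda$ is a free $\bbR$-module on $\{\alpha_s\}_{s \in \Gamma_0}$ by \eqref{eq:realvec}, so any $\bbR$-linear map $\underline{Z}'\colon \bbR\Lambda \to \bbC$ is determined uniquely by, and extends uniquely from, its values on the $\alpha_s$. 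Both sides are therefore in natural bijection with $\bbC^{\Gamma_0}$ via evaluation on the simple roots, so the assignment of the proposition is a bijection, with inverse given by extending $\underline{Z}'|_{\{\alpha_s\}}$ to a $K_0(\cC)$-linear map.

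Next I would verify $\bbW$-equivariance. Since $\bbW$ is generated by $\Gamma_0$, it suffices to check that for each $s\in\Gamma_0$, each $\underline{Z}$, and each $\alpha_t$, we have
\[
\underline{Z}(s^{-1}\cdot \alpha_t) \;=\; \underline{Z}'(s^{-1}\cdot \alpha_t),
\]
where on the left side the action is that of \eqref{eqn:fusiongeomrep} and on the right it is the Tits action. Using $s^{-1}=s$ and the reflection formulas, both sides equal $\underline{Z}(\alpha_t)$ when $s\notin\{t\}$ and $(s,t)\notin \Gamma_1$, and both equal $-\underline{Z}(\alpha_t)$ when $s=t$. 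In the remaining case, $e=(s,t)\in\Gamma_1$, the left side is
\[
\underline{Z}(\alpha_t + [\Pi(e)]\cdot \alpha_s) \;=\; \underline{Z}(\alpha_t) + \FPdim([\Pi(e)])\cdot \underline{Z}(\alpha_s)
\]
by $K_0(\cC)$-linearity of $\underline{Z}$, while the right side is
\[
\underline{Z}'(\alpha_t) - B(\alpha_s,\alpha_t)\underline{Z}'(\alpha_s) \;=\; \underline{Z}(\alpha_t) + 2\cos(\pi/m_{s,t})\underline{Z}(\alpha_s).
\]
So the equality reduces to the identity $\FPdim([\Pi(e)]) = 2\cos(\pi/m_{s,t})$ (with the convention $2\cos(\pi/\infty) = 2$).

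The only substantive step is therefore the verification of this Frobenius--Perron dimension identity. For finite $m_{s,t}$, the object $\Pi(e) = \nPi[m_{s,t}]_{m_{s,t}-3}$ in $\cC_{m_{s,t}}$ has Frobenius--Perron dimension equal to the Chebyshev polynomial $\Delta_{m_{s,t}-3}(2\cos(\pi/m_{s,t}))$, and a standard computation with the recursion $\Delta_{k+1}(d) = d\,\Delta_k(d) - \Delta_{k-1}(d)$ at $d = 2\cos(\pi/m_{s,t})$ gives precisely $2\cos(\pi/m_{s,t})$. For $m_{s,t}=\infty$ we set $\Pi(e)=\1\oplus \1$ by \eqref{eqn:coxquiverlabel}, so $\FPdim([\Pi(e)])=2$, matching the convention above. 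This completes the verification on Coxeter generators; since both representations factor through the same defining relations, $\bbW$-equivariance on all of $\bbW$ follows. I do not expect any real obstacle: the proof is essentially bookkeeping, with the only input from outside the paragraph being that $\cC(\Gamma)$ was designed precisely so that $\FPdim(\Pi(e))=2\cos(\pi/m_{s,t})$ (see \cref{rem:fusioncatcoxeter}).
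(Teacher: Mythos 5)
Your proof is correct and lands on the same essential idea as the paper's: the bridge between the two $\bbW$-actions is the numerical identity $\FPdim(\Pi(e)) = 2\cos(\pi/m_{s,t})$. The paper packages this slightly differently—it introduces the auxiliary map $\widetilde{\FPdim}\colon \Lambda \to \bbR\Lambda$, $[Y]\cdot\alpha_s\mapsto \FPdim([Y])\alpha_s$, and observes that this map intertwines the $K_0(\cC)$-valued form $B_\cC$ with the real form $B$, from which equivariance follows abstractly since both $\bbW$-actions are reflections defined via the respective forms. Your version works directly on the contragradient actions and spells out the Chebyshev computation $\Delta_{m-3}(2\cos(\pi/m)) = \sin((m-2)\pi/m)/\sin(\pi/m) = 2\cos(\pi/m)$, which the paper takes as designed-in (cf.\ Remark~\ref{rem:fusioncatcoxeter}). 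Both are fine; your approach is a bit more explicit and self-contained, while the paper's phrasing via $\widetilde{\FPdim}$ makes the role of the bilinear forms more visible. Your bijectivity argument (both sides biject with $\bbC^{\Gamma_0}$ by evaluation on simple roots) is also equivalent to the paper's dimension count. No gaps.
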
 
\begin{proof}
The above linear map is a surjective map between finite dimensional vector spaces of the same dimension, so what remains is to show that the linear map is $\bbW$-equivariant.
Let $\widetilde{\FPdim}: \Lambda \ra \bbR\Lambda$ be the group homomorphism defined by $[Y]\cdot \alpha_s \mapsto \FPdim([Y])\alpha_s$ and extended $\bbZ$-linearly.
Then under this map, the $K_0(\cC)$-bilinear form $B_{\cC}(-,-)$ is identified with the $\bbR$-bilinear form $B(-,-)$.
Since the two $\bbW$-actions are defined using the respective bilinear forms and the $K_0(\cC)$-module structure on $\bbC$ is defined precisely via $\FPdim$, $\bbW$-equivariance follows.
\end{proof}

Since the Tits representation and its the corresponding contragradient representation are faithful, we obtain as an immediate consequence:
\begin{corollary} \label{cor:Wcontragradientfaithful}
The $\bbW$-action on $\Lambda$ and $\Hom_{K_0(\cC)}(\Lambda,\bbC)$ are faithful.
\end{corollary}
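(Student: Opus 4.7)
The plan is to deduce both statements from the faithfulness of the Tits representation on $\bbR\Lambda$, a classical fact recalled in \cref{sec:titsimaginarycone}. The bridge between the ``fusion lattice'' $\Lambda$ and the classical Tits representation is supplied by \cref{prop:identifywithdualspace} together with the $\bbW$-equivariant group homomorphism $\widetilde{\FPdim} : \Lambda \to \bbR\Lambda$ constructed in its proof, which sends $[Y]\cdot \alpha_s$ to $\FPdim([Y])\alpha_s$ and, in particular, sends each simple root $\alpha_s \in \Lambda$ to the simple root $\alpha_s \in \bbR\Lambda$ (since $\FPdim([\1]) = 1$).

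First I would establish faithfulness on $\Lambda$. Suppose $w \in \bbW$ acts trivially on $\Lambda$. Then $w \cdot \alpha_s = \alpha_s$ for every $s \in \Gamma_0$. Applying the $\bbW$-equivariant map $\widetilde{\FPdim}$ yields $w \cdot \alpha_s = \alpha_s$ inside $\bbR\Lambda$ for every $s$; since the simple roots form an $\bbR$-basis of $\bbR\Lambda$, $w$ acts as the identity on the entire Tits representation, and faithfulness of the latter forces $w = \id$.

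Next I would deduce faithfulness on $\Hom_{K_0(\cC)}(\Lambda, \bbC)$. By \cref{prop:identifywithdualspace} there is a $\bbW$-equivariant isomorphism $\Hom_{K_0(\cC)}(\Lambda,\bbC) \cong (\bbR\Lambda)^*_\bbC$. The contragredient of any faithful finite-dimensional representation of a group over a field is itself faithful, and base change from $\bbR$ to $\bbC$ preserves this property. Hence the $\bbW$-action on $(\bbR\Lambda)^*_\bbC$, and thus on $\Hom_{K_0(\cC)}(\Lambda,\bbC)$, is faithful.

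I expect no substantial obstacle, as the essential content -- the $\bbW$-equivariance of $\widetilde{\FPdim}$ and the identification with $(\bbR\Lambda)^*_\bbC$ -- is already packaged into \cref{prop:identifywithdualspace}. The only nuance worth flagging is that $\widetilde{\FPdim}$ is only $\bbZ$-linear and not $K_0(\cC)$-linear, since it factors through the ring character $\FPdim : K_0(\cC) \to \bbR$; this causes no trouble here because only $\bbW$-equivariance is invoked in the argument.
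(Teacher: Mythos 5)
Your proof is correct and follows essentially the same route as the paper: both reduce to the classical faithfulness of the Tits representation using the $\bbW$-equivariant identification of \cref{prop:identifywithdualspace}. The only (cosmetic) difference is that you deduce faithfulness on $\Lambda$ directly via the equivariant map $\widetilde{\FPdim}\colon \Lambda \to \bbR\Lambda$, whereas the paper first obtains faithfulness on $\Hom_{K_0(\cC)}(\Lambda,\bbC)$ and then passes to $\Lambda$ by the standard ``faithful dual implies faithful'' observation noted just before the corollary.
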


A priori, the assignment $\sigma_{P_s} \mapsto s$ need not give a well-defined group homomorphism from $\Br^{\ST}$ to $\bbW$ (we do not yet know if $\Br^{\ST} \cong \B$).
Nonetheless, their common action on $\Lambda$ combined with the fact that the action of $\bbW$ is faithful show that this is well-defined.
We record our finding for later use.
\begin{proposition} \label{prop:STactionagreesW}
The assignment $\sigma_{P_s} \mapsto s$ extends to a well-defined surjective group homomorphism $\Br^{\ST}\longrightarrow \bbW$.  
This group homomorphism, together with the surjection $\nu: K_0(\cD) \twoheadrightarrow \Lambda$, intertwine the $\Br^{\ST}$-action on $K_0(\cD)$ and the $\bbW$-action on $\Lambda$.
Dually, they also intertwine the $\Br^{\ST}$-action on $\Hom_{K_0(\cC)}(K_0(\cD), \bbC)$ and the $\bbW$-action on $\Hom_{K_0(\cC)}(\Lambda,\bbC)$.
\end{proposition}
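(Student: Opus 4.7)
The plan is to exploit the fact that the $\Br^{\ST}$-action on $K_0(\cD)$ in \eqref{eqn:Buraurep} is $K_0(\cC)[q^{\pm 1}]$-linear, so that setting $q=-1$ (which is precisely the surjection $\nu$) yields a well-defined induced action on $\Lambda$, and that this induced action coincides with the Coxeter action on $\Lambda$ described in \eqref{eqn:fusiongeomrep}. First I would compare the two formulas case by case on the generators: for $s=t$ one has $(-q^2[P_t])|_{q=-1} = -\alpha_t = s\cdot \alpha_s$; for an edge $e=(s,t)\in \Gamma_1$ one has $([P_t]-[\Pi(e)]q[P_s])|_{q=-1} = \alpha_t + [\Pi(e)]\cdot \alpha_s = s\cdot \alpha_t$; the remaining (disjoint) case is the identity on both sides. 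This verifies that each generator $\sigma_{P_s}$ of $\Br^{\ST}$ acts on $\Lambda$ via exactly the operator $s$ from the geometric representation \eqref{eqn:fusiongeomrep}.

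Next I would invoke faithfulness. The above produces a group homomorphism $\Br^{\ST} \to \Aut_{K_0(\cC)}(\Lambda)$ whose image is contained in the subgroup generated by the operators $s$, i.e.\ the image of $\bbW \to \Aut_{K_0(\cC)}(\Lambda)$. By \cref{cor:Wcontragradientfaithful} the latter map is injective, so $\bbW$ sits inside $\Aut_{K_0(\cC)}(\Lambda)$ as a distinguished subgroup. Consequently $\Br^{\ST} \to \Aut_{K_0(\cC)}(\Lambda)$ factors uniquely through $\bbW$, giving the desired group homomorphism $\Br^{\ST} \twoheadrightarrow \bbW$ with $\sigma_{P_s} \mapsto s$; surjectivity is immediate as every Coxeter generator lies in the image.

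The intertwining of the $\Br^{\ST}$- and $\bbW$-actions via $\nu$ is then tautological: $\nu$ is $K_0(\cC)[q^{\pm 1}]$-linear (with $\bbW$ acting on $\Lambda$ as the $q=-1$ specialization), and the generators $\sigma_{P_s}$ have just been shown to act as their images $s\in \bbW$. For the dual statement I would observe that the surjection $\nu$ dualizes to an injection
\[
\nu^*\from \Hom_{K_0(\cC)}(\Lambda,\bbC) \hookrightarrow \Hom_{K_0(\cC)}(K_0(\cD),\bbC),
\]
and equivariance for the contragradient actions \eqref{eqn:Wcontragradientaction} follows formally from the module-level intertwining. The only substantive input in the whole argument is \cref{cor:Wcontragradientfaithful}; everything else is a matching of formulas, so I expect no serious obstacle.
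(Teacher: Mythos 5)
Your proposal is correct and follows exactly the same line of argument the paper uses: specialize the $K_0(\cC)[q^{\pm 1}]$-linear Burau formula \eqref{eqn:Buraurep} at $q=-1$ via $\nu$, observe generator-by-generator that it coincides with the geometric representation \eqref{eqn:fusiongeomrep}, and invoke the faithfulness of the $\bbW$-action on $\Lambda$ (\cref{cor:Wcontragradientfaithful}) to conclude that $\Br^{\ST}\to \Aut_{K_0(\cC)}(\Lambda)$ factors through $\bbW$. The paper records this reasoning in the paragraph preceding the proposition rather than in a formal proof environment, but the content is the same.
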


\section{Fusion equivariant stability conditions and the covering property}\label{sec:stability}
This section contains the main theorem of this paper (\cref{thm:maintheorem}), which states that a connected component of the submanifold of fusion-equivariant stability conditions associated to the triangulated category $\Kom^b(\zig(\Gamma)\lprmod)$ is a covering space of the hyperplane complement $\Upsilon_{\reg}$ associated to the Coxeter graph $\Gamma$. 
\subsection{Fusion-equivariant stability conditions}
We begin by recalling some general results about stability conditions and their fusion-equivariant versions.

\begin{definition}\label{defn:stabwithsupport}
Let $\cD$ be a triangulated category, and fix a surjective group homomorphism $\nu: K_0(\cD) \twoheadrightarrow \Lambda$ from the Grothendieck group of $\cD$ onto a finite rank $\bbZ$-lattice. 
A Bridgeland stability condition $\sigma=(\cP,Z)$ on $\cD$ is a pair consisting of a \emph{slicing} $\cP = \{\cP(\phi)\}_{\phi\in \bbR}$ of $\cD$ and a \emph{central charge} $Z \in \Hom_{\Z}(K_0(\cD), \bbC)$.  The slicing and central charge are required to be compatible in the sense that for all $E\in \cP(\phi)$,
\[
Z(E) = m(E)e^{i\pi\phi}, \text{ with } m(E) \in \R_{>0};
\]
and are required to satisfy the \emph{support property}:
\begin{enumerate}
\item the central charge $Z$ factors through $\nu$; and
\item given a norm $|| \cdot ||$ on $\Lambda \otimes_{\bbZ} \bbR$, there exists $K >0$ such that for all objects $E\in \cP_\phi$, we have $||v(E)|| \leq K|Z(E)|$.
\end{enumerate}
\end{definition}
The objects $E\in \cP(\phi)$ are called \emph{semistable of phase $\phi$.}
We refer the reader to \cite[Appendix B]{BM_localproj} and \cite[Appendix A]{bayerSpaceStabilityConditions2016} for other formulations and features of the support property.

Since $\nu: K_0(\cD) \twoheadrightarrow \Lambda$ is surjective, for each stability condition $\sigma=(\cP,Z)$ there exists a unique $\underline{Z} \in \Hom_{\Z}(\Lambda, \bbC)$ such that $\underline{Z} \circ \nu = Z$.  We denote by $\cZ$ the (forgetful) map which takes a stability condition to the unique $\underline{Z} \in \Hom_{\Z}(\Lambda, \bbC)$ defined by the central charge:
\begin{equation} \label{eqn:stablocalhomeo}
\begin{split}
\cZ : \Stab(\cD) &\ra \Hom_{\Z}(\Lambda, \bbC) \\
(\cP, Z = \underline{Z} \circ \nu) &\mapsto \underline{Z}.
\end{split}
\end{equation}
The main theorem of Bridgeland \cite[Theorem 1.2]{bridgeland_2007} (see also \cite{Bayer_shortproof} and \cite[Appendix A]{bayerSpaceStabilityConditions2016}) says that $\cZ$ is a local homeomorphism onto its image, which provides $\Stab(\cD)$ with the structure of a complex manifold.  
The (complex) dimension of the manifold $\Stab(\cD)$ is equal the rank of the lattice $\Lambda$.

When $\cD$ is a triangulated module category for a fusion category $\cC$, the notion of a stability condition can be refined to include compatability with the $\cC$ action.   
Fusion-equivariant stability conditions, introduced in \cite{Heng_PhDthesis} and \cite{DHL_fusionstab}, are defined as follows.
\begin{definition}\label{defn: C equivariant stab}
Let $\cD$ be a triangulated (right) module category over $\cC$ and let $\sigma = (\cP, Z)$ be a stability condition on $\cD$.
A stability condition $(\cP,Z) \in \Stab(\cD)$ is said to be \emph{fusion-equivariant} over $\cC$, or \emph{$\cC$-equivariant}, if it satisfies the following:
\begin{enumerate}
\item $\cP(\phi) \otimes Y \subseteq \cP(\phi)$ for all $Y \in \cC$ and all $\phi \in \R$; and \label{item:semistabletosemistable}
\item $Z \in \Hom_{K_0(\cC)}(K_0(\cD), \bbC) \subseteq \Hom_{\bbZ}(K_0(\cD), \bbC)$, where $\bbC$ is viewed as a $K_0(\cC)$-module via the Frobenius--Perron dimension map $\FPdim: K_0(\cC) \ra \R \subset \bbC$. \label{item:fusionequivcentralcharge}
\end{enumerate}
\end{definition}
Note that if $\cC = \vec$ is the category of finite dimensional vector spaces, then all stability conditions are $\cC$-equivariant ($\cP(\phi)$ is closed under taking direct sums and $K_0(\cC) = \bbZ$).
Note also that in general the standard heart $\cP(0,1]$ of any $\cC$-equivariant stability condition is closed under the $\cC$-action, i.e.\ $\cP(0,1]$ is an abelian $\cC$-module category.

The following result is (a weaker version of) Theorem 3.15 in \cite{DHL_fusionstab}:
\begin{proposition}[\protect{\cite[Theorem 3.15]{DHL_fusionstab}}] \label{prop:Cequivariantconditionfromheart}
Let $\sigma = (\cP, Z)$ be a stability condition of a triangulated $\cC$-module category $\cD$.
Suppose its standard heart $\cP(0,1]$ is closed under the $\cC$-action.
Then $\sigma$ is $\cC$-equivariant if and only its stability function on $\cP(0,1]$ is a $K_0(\cC)$-module homomorphism.
\end{proposition}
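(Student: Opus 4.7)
The forward direction is immediate from the definition of $\cC$-equivariance: if $\sigma$ is $\cC$-equivariant, condition (2) of \cref{defn: C equivariant stab} gives $Z \in \Hom_{K_0(\cC)}(K_0(\cD), \bbC)$, and restricting to $K_0(\cP(0,1]) \subseteq K_0(\cD)$ yields the desired $K_0(\cC)$-module homomorphism. The substance of the statement lies in the converse, for which I would assume that $\cP(0,1]$ is closed under the $\cC$-action and that $Z|_{K_0(\cP(0,1])}$ is $K_0(\cC)$-linear, and verify both parts of \cref{defn: C equivariant stab}.

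To propagate $K_0(\cC)$-linearity from the heart to all of $K_0(\cD)$, the plan is to exploit Harder--Narasimhan (HN) filtrations. Every object of $\cD$ admits an HN filtration whose semistable factors lie in shifts $\cP(0,1][k]$, so the classes $\{[A][k] : A \in \cP(0,1],\ k \in \bbZ\}$ generate $K_0(\cD)$ as an abelian group. Since $-\otimes Y$ is exact and commutes with the cohomological shift $[1]$, the assumed $K_0(\cC)$-linearity on $K_0(\cP(0,1])$ extends uniquely by $\bbZ$-linearity to all of $K_0(\cD)$.

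For closure of each $\cP(\phi)$ under $-\otimes Y$, the plan is first to reduce to $\phi \in (0,1]$ using that $[1]$ is a $\cC$-module functor together with $\cP(\phi+1) = \cP(\phi)[1]$. Fixing $E \in \cP(\phi)$ and a simple $Y \in \cC$ (general $Y$ follows from semisimplicity of $\cC$), the hypothesis gives $E \otimes Y \in \cP(0,1]$, and the $K_0(\cC)$-linearity of $Z$ together with $\FPdim(Y) > 0$ gives $Z(E \otimes Y) = \FPdim(Y)\cdot Z(E)$ with argument $\pi \phi$. It therefore remains to establish that $E \otimes Y$ is semistable, which is the main obstacle.

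For semistability preservation, I would argue by contradiction: suppose the first HN factor $F \subseteq E \otimes Y$ is semistable of phase $\phi_F > \phi$. Using that the rigid structure of $\cC$ provides $-\otimes Y$ with exact biadjoint $-\otimes Y^*$, one passes from the nonzero inclusion $F \hookrightarrow E \otimes Y$ to a nonzero morphism $\psi \colon F \otimes Y^* \to E$ in $\cP(0,1]$, with $Z(F \otimes Y^*) = \FPdim(Y)\cdot Z(F)$ still of phase $\phi_F > \phi$. If $F \otimes Y^*$ were itself semistable, the standard Hom-vanishing between semistables of strictly ordered phases would force $\psi = 0$, a contradiction. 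The principal difficulty is that $F \otimes Y^*$ may fail to be semistable; to handle this, one iterates, passing to a top HN factor of $F \otimes Y^*$ (still semistable, of phase $\geq \phi_F > \phi$) and using the splitting $\1 \hookrightarrow Y^* \otimes Y$ (from semisimplicity of $\cC$) to shuttle the destabilizing data between $-\otimes Y$ and $-\otimes Y^*$ until a nonzero map from a semistable object of phase strictly greater than $\phi$ into $E$ is produced, contradicting the semistability of $E$.
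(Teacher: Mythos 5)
Your forward direction is correct, as is the extension of $K_0(\cC)$-linearity from $K_0(\cP(0,1])$ to $K_0(\cD)$ (which is really just the isomorphism $K_0(\cD)\cong K_0(\cP(0,1])$ together with the fact that the $\cC$-action intertwines the two $K_0(\cC)$-module structures). The reduction to $\phi\in(0,1]$ and $Y$ simple, and the observation that $E\otimes Y\in\cP(0,1]$ has central charge $\FPdim(Y)Z(E)$ of the correct argument, are also fine. So you have correctly isolated the content of the proposition in the claim that $E\otimes Y$ is semistable whenever $E$ is. That is where the argument breaks.

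The iteration you sketch has two genuine gaps. First, it is not shown to terminate: repeatedly taking the top HN factor of $F_k\otimes Y^{\pm}$ produces semistable objects $F_1, F_2,\dots$ with strictly increasing phases $\phi_1<\phi_2<\cdots$, all bounded above by $1$; a strictly increasing bounded sequence simply converges, and nothing in your argument forces the process to reach a stage at which $F_k\otimes Y^{\pm}$ is itself semistable. Second, the maps you construct do not chain: adjunction gives a nonzero morphism $F_{k+1}\otimes Y^{\pm}\to F_k$, but precomposing with the inclusion $F_{k+2}\hookrightarrow F_{k+1}\otimes Y^{\pm}$ of the next top HN factor can perfectly well vanish, so the ``shuttling'' you describe does not produce a nonzero map from a high-phase semistable object into $E$. (Note also that the natural contradiction at each step is against the semistability of $F_k$, not of $E$; this is fine, but it only bites if the iteration terminates.) One might hope that the support property bounds the classes $[F_k]$ and forces termination by discreteness of phases, but the central charges $|Z(F_k)|$ are not bounded along the iteration --- the top HN factor of $F_k\otimes Y^\pm$ can have $|Z|$ much larger than $|Z(F_k\otimes Y^\pm)|$ because the lower HN factors can cancel it --- so this escape route is also not available without further argument.

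In short, establishing that $-\otimes Y$ preserves each $\cP(\phi)$ requires a genuinely global control of the whole HN filtration of $E\otimes Y$ (for instance a uniform bound on the spread $\phi^+(F\otimes Y)-\phi^-(F\otimes Y)$, or a mass-type inequality, or an argument organised around torsion pairs $(\cP(\phi,1],\cP(0,\phi])$ with a suitable induction), rather than a local bootstrap on successive top HN factors. As written, the proof is incomplete.
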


We use $\Stab_{\cC}(\cD) \subseteq \Stab(\cD)$ to denote the subset of all $\cC$-equivariant stability conditions (where $\Stab_{\cC}(\cD) = \Stab(\cD)$ if $\cC=\vec$).
We have the following theorem from \cite{DHL_fusionstab}.
\begin{theorem}[\protect{\cite[Theorem A]{DHL_fusionstab}}] \label{thm:closedsubmfld}
The local homeomorphism $\cZ: \Stab(\cD) \ra \Hom_\bbZ(\Lambda, \bbC)$ restricts to a local homeomorphism from $\Stab_{\cC}(\cD)$ into a $\bbC$-linear subspace of $\Hom_\bbZ(\Lambda, \bbC)$, so that $\Stab_{\cC}(\cD)$ is a complex submanifold of $\Stab(\cD)$.
Moreover, $\Stab_{\cC}(\cD)$ is closed in $\Stab(\cD)$.
\end{theorem}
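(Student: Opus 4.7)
The plan is to exhibit the target $\bbC$-linear subspace as
\[
V_\cC := \Hom_{K_0(\cC)}(\Lambda, \bbC) \;\subseteq\; \Hom_{\bbZ}(\Lambda, \bbC),
\]
where $\bbC$ is given the $K_0(\cC)$-module structure via $\FPdim\colon K_0(\cC) \to \bbR \hookrightarrow \bbC$. The crucial observation is that $V_\cC$ is genuinely \emph{complex} (not just real) linear: the defining equations $\underline{Z}([Y]\cdot v) = \FPdim([Y])\,\underline{Z}(v)$ have coefficients in $\bbR$, so they are preserved under multiplication by $i$ on the codomain. Condition \eqref{item:fusionequivcentralcharge} in \cref{defn: C equivariant stab} immediately gives $\cZ(\Stab_\cC(\cD)) \subseteq V_\cC$.

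For the submanifold assertion, I would argue that the restriction $\cZ\big|_{\Stab_\cC(\cD)}$ is locally a homeomorphism onto an open subset of $V_\cC$. Fix $\sigma \in \Stab_\cC(\cD)$ and use Bridgeland's deformation theorem to pick a neighborhood $N$ of $\sigma$ with $\cZ|_N$ a homeomorphism onto an open $U \subseteq \Hom_\bbZ(\Lambda, \bbC)$. The key is to show, after shrinking $N$, that $\cZ^{-1}(V_\cC \cap U) \cap N = \Stab_\cC(\cD) \cap N$, i.e.\ every small deformation $\sigma'$ of $\sigma$ whose central charge remains in $V_\cC$ is automatically $\cC$-equivariant. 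The strategy is: for $\sigma'$ sufficiently close to $\sigma$, the standard heart $\cP'(0,1]$ is obtained from $\cP(0,1]$ by tilting at a torsion pair $(\cT, \cF)$ singled out by the sign of $\re(\underline{Z}')$ on the $\sigma$-semistable objects whose phases are close to $0$ and $1$. Since the $\cC$-action preserves each slice $\cP(\phi)$ by hypothesis, and since $K_0(\cC)$-linearity of $\underline{Z}'$ means the sign-change calculation commutes with tensoring by any $Y \in \cC$, the torsion pair $(\cT, \cF)$ is $\cC$-stable; therefore so is $\cP'(0,1]$. Combined with the fact that the restricted central charge is a $K_0(\cC)$-module map, \cref{prop:Cequivariantconditionfromheart} then forces $\sigma'$ to be $\cC$-equivariant.

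For closedness, suppose $\sigma_n \to \sigma$ in $\Stab(\cD)$ with each $\sigma_n \in \Stab_\cC(\cD)$. Continuity of $\cZ$ and closedness of $V_\cC$ (as a $\bbC$-linear subspace) yield the central-charge condition for $\sigma$. For the slicing condition, fix any $E \in \cD$ and any $Y \in \cC$. By $\cC$-equivariance of each $\sigma_n$, the extremal phase functions satisfy $\phi^{\pm}_{\sigma_n}(E \otimes Y) = \phi^{\pm}_{\sigma_n}(E)$. Since convergence in $\Stab(\cD)$ entails continuity of $\phi^{\pm}$, passing to the limit gives $\phi^{\pm}_{\sigma}(E \otimes Y) = \phi^{\pm}_{\sigma}(E)$. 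In particular, if $E \in \cP(\phi)$, equivalently $\phi^+_\sigma(E) = \phi^-_\sigma(E) = \phi$, then $E \otimes Y \in \cP(\phi)$ as well.

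The principal obstacle is the tilting argument in the submanifold step: one must make precise the sense in which small deformations of $\sigma$ are controlled by a torsion pair inside $\cP(0,1]$, and verify that the $K_0(\cC)$-linearity of $\underline{Z}'$ exactly forces the torsion pair to be $\cC$-stable. A more streamlined alternative would bypass the tilting and instead extract $\cC$-equivariance from a direct variational characterisation of $\sigma'$-semistability against a $K_0(\cC)$-linear central charge, but formalising this appears to require the machinery developed in \cite{DHL_fusionstab}, which is the source for the statement.
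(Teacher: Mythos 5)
This result is imported from \cite[Theorem A]{DHL_fusionstab} rather than proven in the present paper, so there is no in-paper argument to compare against; I will assess your sketch on its own merits.

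Your identification of the target subspace $V_\cC = \Hom_{K_0(\cC)}(\Lambda,\bbC)$ and the observation that it is $\bbC$-linear (because $\FPdim$ takes values in $\bbR$) are correct. Your closedness argument is also correct and clean: for nonzero $Y \in \cC$, $\cC$-equivariance of $\sigma_n$ implies the HN filtration of $E$ tensored with $Y$ gives the HN filtration of $E\otimes Y$ (using that $-\otimes Y$ is faithful and nonzero-preserving in a fusion category), so $\phi^\pm_{\sigma_n}(E\otimes Y)=\phi^\pm_{\sigma_n}(E)$, and continuity of $\phi^\pm$ in the Bridgeland metric lets you pass to the limit; the central-charge condition follows from closedness of $V_\cC$.

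The submanifold step, however, has a genuine gap which you have correctly flagged. The claim that ``for $\sigma'$ sufficiently close to $\sigma$, the standard heart $\cP'(0,1]$ is obtained from $\cP(0,1]$ by tilting at a torsion pair'' is not a standard consequence of the deformation theorem and is in fact false without further hypotheses: proximity in the Bridgeland metric only constrains $\cP'(0,1]\subseteq \cP(-\epsilon,1+\epsilon]$, which does not by itself produce a single tilt. Even if one restricted to deformations small enough that a tilt description holds, one would still have to identify the torsion pair precisely enough to propagate $\cC$-stability, and to show that the corresponding upgrade from ``$\cC$-stable heart'' plus ``$K_0(\cC)$-linear stability function'' to the full slicing condition holds — which, as you note, is exactly the content of \cref{prop:Cequivariantconditionfromheart} combined with a uniqueness argument for the Bridgeland deformation. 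What is missing is a rigorous replacement for the tilting claim: one needs either a careful wall-chamber analysis of the deformation, or an averaging/uniqueness argument showing that the unique deformation of $\sigma$ with central charge in $V_\cC$ must itself have a $\cC$-stable slicing (for instance by producing a competing $\cC$-equivariant candidate slicing and invoking uniqueness from the deformation theorem). That machinery is what \cite{DHL_fusionstab} supplies, and your sketch does not yet close that step.
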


\subsection{Statement of the main theorem}
For the rest of section \cref{sec:stability}, we will fix a Coxeter graph $\Gamma$.  
As such we once again drop the descriptor ``$\Gamma$'' to simplify our notation:
$\cC:= \cC(\Gamma)$, $\bbW := \bbW(\Gamma)$, $\B := \B(\Gamma)$ and $\Br^{\ST}:= \Br^{\ST}(\Gamma)$ etc. 

We denote $\cD:= \Kom^b(\zig(\Gamma)\lprmod)$, and fix the surjection $\nu: K_0(\cD) \ra \Lambda$ onto the finite rank lattice $\Lambda$, as in \cref{defn:latticeforstab}.
The main theorem of this paper will strengthen \cref{thm:closedsubmfld} above in the case of a distinguished connected component of $\Stab_\cC(\cD)$, which we now describe.

Firstly, recall that we have an $\bbW$-equivariant isomorphism $(\bbR\Lambda)^*_\bbC \cong \Hom_{K_0(\cC)}(\Lambda,\bbC)$ from \cref{prop:identifywithdualspace}.
As such, the hyperplane complement $\Upsilon_{\reg}$ (see \cref{defn:hyperplanecomplement}) can be viewed as a subset of $\Hom_{K_0(\cC)}(\Lambda,\bbC)$ under this isomorphism.
From now on, we will simply view $\Upsilon_{\reg} \subset \Hom_{K_0(\cC)}(\Lambda,\bbC)$.

Let $\cH$ denote the linear heart of $\cD$ (see \cref{sec:linearheart}) and recall that $\cH$ is a finite-length abelian category with finitely many ($=|\Gamma_0 \times \Irr(\cC)|$) simple objects up to linear shift $\<1\>[1]$  (see \eqref{eqn:simplesoflinearheart}).
By the definition of $\nu$, any group homomorphism $Z: K_0(\cH) \ra \bbC$ that factors through $\nu$ is completely determined by $Z(P_s\otimes E)$ for all $s \in \Gamma_0, E \in \Irr(\cC)$. 
Moreover, $Z(P_s\otimes E)$ are all in $\bbH \cup \bbR_{<0} \subset \bbC$ (the strict upper half plane union the negative reals) if and only if $Z$ induces a stability condition with standard heart $\cP(0,1] = \cH$; the Harder--Narasimhan property and support property are immediate.
It follows that the subset $\Stab(\cH)\subset \Stab(\cD)$ of all stability conditions with standard heart $\cP(0,1] = \cH$ is homeomorphic to the space $(\bbH \cup \bbR_{<0})^{|\Gamma_0 \times \Irr(\cC)|}$; hence it is non-empty and connected.

Let $\Stab_\cC(\cH)$ denote the subset of stability conditions in $\Stab(\cH)$ which are in addition $\cC$-equivariant.
Notice that the linear heart $\cH$ is closed under the action of $\cC$ -- tensoring with objects in $\cC$ does not change the internal and cohomological gradings of each cochain object.
By \cref{prop:Cequivariantconditionfromheart}, $\Stab_\cC(\cH)$ is the subset of all stability conditions in $\Stab(\cH)$ whose stability functions are moreover $K_0(\cC)$-module homomorphisms.
These stability functions are therefore completely determined by $Z(P_s) = Z(P_s \otimes \1)$, since being $K_0(\cC)$-module homomorphisms dictates that $Z(P_s \otimes E) = \FPdim(E)Z(P_s)$ for all $E \in \Irr(\cC)$.
It follows that $\cZ$ identifies $\Stab_\cC(\cH)$ with the complexified chamber $C \subset \Upsilon_{\reg}$ (see \eqref{eqn:complexiefiedchamber}).
We record this observation that we will use later.
\begin{lemma}\label{lem:linearstabfundchamber}
$\cZ$ maps $\Stab_\cC(\cH)$ homeomorphically onto the complexified chamber $C \subset \Upsilon_{\reg}$.
\end{lemma}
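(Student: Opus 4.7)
The plan is to decode both sides of the claimed bijection concretely, exhibit the inverse of $\cZ|_{\Stab_\cC(\cH)}$, and then upgrade to a homeomorphism using Bridgeland's local homeomorphism theorem together with \cref{thm:closedsubmfld}. By \cref{prop:Cequivariantconditionfromheart}, a stability condition in $\Stab_\cC(\cH)$ is determined uniquely by a $K_0(\cC)$-linear central charge $Z\from K_0(\cH)\to \bbC$ satisfying $Z(S)\in \bbH\cup\bbR_{<0}$ for every simple $S$ of $\cH$. Since the simples of $\cH$ are the objects $P_s\otimes E\<k\>[k]$ with $s\in \Gamma_0$, $E\in\Irr(\cC)$, $k\in\bbZ$ (see \eqref{eqn:simplesoflinearheart}), a first step is to observe that these simples all map to a common class in $\Lambda$ for fixed $(s,E)$: using $[X\<1\>]=q[X]$ and $[X[1]]=-[X]$ in $K_0(\cD)$, and $q\mapsto -1$ in the surjection $\nu$, one gets $\nu([P_s\otimes E\<k\>[k]])=(-q)^k[E]\cdot\alpha_s = [E]\cdot\alpha_s$ independently of $k$.

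Next, I will unpack the condition ``$Z(S)\in\bbH\cup\bbR_{<0}$'' in terms of $\underline{Z}:=\cZ(\sigma)\in \Hom_{K_0(\cC)}(\Lambda,\bbC)$. By $K_0(\cC)$-linearity and the computation above, $Z(P_s\otimes E\<k\>[k])=\FPdim(E)\,\underline{Z}(\alpha_s)$. Because $\FPdim(E)>0$ for any simple $E\in\cC$, the condition on all simples collapses to $\underline{Z}(\alpha_s)\in \bbH\cup\bbR_{<0}$ for every $s\in\Gamma_0$. Under the $\bbW$-equivariant identification $\Hom_{K_0(\cC)}(\Lambda,\bbC)\cong (\bbR\Lambda)^*_\bbC$ of \cref{prop:identifywithdualspace}, this is precisely the defining condition \eqref{eqn:complexiefiedchamber} of the complexified chamber $C$. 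Hence $\cZ$ takes $\Stab_\cC(\cH)$ into $C$.

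For surjectivity onto $C$ and injectivity, I will construct an explicit inverse: given $\underline{Z}\in C$, define a group homomorphism $Z\from K_0(\cH)\to \bbC$ by setting $Z(P_s\otimes E\<k\>[k]):=\FPdim(E)\,\underline{Z}(\alpha_s)$ on simples and extending by additivity. This is automatically $K_0(\cC)$-linear and factors through $\nu$ by construction, and it sends every simple of $\cH$ into $\bbH\cup\bbR_{<0}$. Since $\cH$ is a finite-length abelian category with finitely many simples up to the $K_0(\cC)[q^{\pm1}]$-action, the Harder--Narasimhan property is standard and the support property holds automatically over the finite-rank lattice $\Lambda$. The resulting stability condition lies in $\Stab_\cC(\cH)$ with $\cZ(\sigma)=\underline{Z}$; injectivity follows because two stability conditions sharing the same heart $\cH$ and the same stability function on $\cH$ have the same slicing.

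Finally, to promote this continuous bijection onto $C$ to a homeomorphism, I invoke Bridgeland's theorem: $\cZ\from \Stab(\cD)\to \Hom_\bbZ(\Lambda,\bbC)$ is a local homeomorphism, which by \cref{thm:closedsubmfld} restricts to a local homeomorphism $\Stab_\cC(\cD)\to \Hom_{K_0(\cC)}(\Lambda,\bbC)$. A continuous bijection that is simultaneously a local homeomorphism is a homeomorphism, finishing the argument. I do not anticipate a serious obstacle here; the only subtlety is the sign/grading-shift bookkeeping in the first paragraph, which ensures that the internal grading shift $\<1\>$ and the cohomological shift $[1]$ together act trivially on classes in $\Lambda$ so that the heart's infinitely many simples collapse to $|\Gamma_0|\cdot|\Irr(\cC)|$ many distinct classes matched precisely with coordinates of $C$.
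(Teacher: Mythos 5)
Your proof is correct and follows essentially the same route as the paper: identify the simples of $\cH$, use \cref{prop:Cequivariantconditionfromheart} to reduce to $K_0(\cC)$-linear stability functions landing in $\bbH\cup\bbR_{<0}$, observe that $\FPdim(E)>0$ collapses the condition to $\underline{Z}(\alpha_s)\in\bbH\cup\bbR_{<0}$, i.e.\ $\underline{Z}\in C$, and invoke the standard Bridgeland correspondence between stability functions on a fixed finite-length heart and stability conditions. Your added bookkeeping that $\nu$ kills the joint shift $\<1\>[1]$ (via $q\mapsto -1$ and $[X[1]]=-[X]$) is a useful clarification the paper leaves implicit.
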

In particular, $\Stab_\cC(\cH) \subset \Stab_\cC(\cD)$ is also non-empty and connected.
\begin{definition} \label{defn:distinguishedStab}
We define $\Stab_\cC^\dagger(\cD)$ to be the connected component of $\Stab_\cC(\cD)$ that contains $\Stab_\cC(\cH)$; similarly $\Stab^\dagger(\cD)$ denotes the connected component of $\Stab(\cD)$ that contains $\Stab(\cH)$.
\end{definition}
Note that by our arguments before, we have the containments 
\[
\begin{tikzcd}[column sep = small, row sep = small]
\Stab_\cC(\cH) \ar[r,phantom, "\subseteq"] \ar[d, sloped, phantom, "\subseteq"] 
	& \Stab(\cH) \ar[d, sloped, phantom, "\subseteq"]  \\
\Stab_\cC^\dagger(\cD) \ar[r,phantom, "\subseteq"] 
	& \Stab^\dagger(\cD).
\end{tikzcd}
\]

We are now ready to state the main theorem.  
Let $\cZ^\dagger$ denote the restriction of the local homeomorphism $\cZ: \Stab(\cD) \ra \Hom_\bbZ(\Lambda, \bbC)$ from \eqref{eqn:stablocalhomeo} to the connected component $\Stab^\dagger(\cD)$, and let $\cZ^\dagger_{\cC}$ denote the further restriction to the connected submanifold $\Stab^\dagger_\cC(\cD)$.  
Let $\widehat{\Br}^{\ST}\subset \mbox{Aut}(\cD)$ denote the subgroup of autoequivalences generated by $\Br^{\ST}$ and the cohomological shift $[2]$.
Note that we have a natural surjection from $\pi_1(\Upsilon_{\reg}/\bbW)$ to $\widehat{\Br}^{\ST}$ induced by the group homomorphism $\B \ra \Br^{\ST}$ in theorem \ref{weak braid action}, and when $\Gamma$ is not finite-type, by sending the generator of the $\bbZ$ factor to $[2]$ (cf.\ \eqref{eqn:fundgrpArtin}).
\begin{theorem}\label{thm:maintheorem}
The local homeomorphism 
\[
\cZ^\dagger_{\cC}: \Stab_\cC^{\dagger}(\cD) \ra \Hom_\bbZ(\Lambda, \bbC)
\] 
is a covering map onto $\Upsilon_{\reg}$.
Moreover, the action of $\pi_1(\Upsilon_{\reg}/\bbW)$ on $\Stab^\dagger_\cC(\cD)$ by deck transformations of the composite covering map
\[
\underline{\pi}: \Stab^\dagger_\cC(\cD) \xra{\cZ^\dagger_\cC} \Upsilon_{\reg} \twoheadrightarrow \Upsilon_{\reg}/\bbW
\]
factors through the action of $\widehat{\Br}^{\ST}$ on $\Stab^\dagger_\cC(\cD)$ by autoequivalences.
\end{theorem}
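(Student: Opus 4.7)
The plan is to follow the general strategy of Ikeda \cite{ikeda2014stability}, using \cref{lem:linearstabfundchamber} (which identifies $\Stab_\cC(\cH)$ homeomorphically with the fundamental chamber $C \subset \Upsilon_{\reg}$) as the basepoint and propagating via the $\widehat{\Br}^{\ST}$-action and the natural $\bbC$-action on $\Stab(\cD)$. Since $\cZ^\dagger_\cC$ is already a local homeomorphism into $\Hom_{K_0(\cC)}(\Lambda,\bbC) \cong (\bbR\Lambda)^*_\bbC$ by \cref{thm:closedsubmfld} and \cref{prop:identifywithdualspace}, the proof reduces to four claims: (a) the image of $\cZ^\dagger_\cC$ is contained in $\Upsilon_{\reg}$; (b) the image is all of $\Upsilon_{\reg}$; (c) paths in $\Upsilon_{\reg}$ lift to $\Stab_\cC^\dagger(\cD)$, making $\cZ^\dagger_\cC$ a covering map; and (d) the deck transformation action factors through $\widehat{\Br}^{\ST}$.

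For (a), I would show the central charge $\underline{Z}$ of any $\sigma \in \Stab_\cC^\dagger(\cD)$ does not vanish on $\Phi^+$ or on $I \setminus \{0\}$. For real roots, I would use \cref{prop:STactionagreesW}: any $\alpha \in \Phi^+$ equals (up to sign) $\nu([\widetilde{w}(P_s)])$ for some $\widetilde{w} \in \Br^{\ST}$, and since $\widetilde{w}(P_s)$ is a non-zero object of $\cD$ it has at least one semistable Harder--Narasimhan factor with non-zero central charge, forcing $\underline{Z}(\alpha) \neq 0$. Non-vanishing on $I \setminus \{0\}$ is more subtle: at any point of $\Stab_\cC(\cH)$ the imaginary part $\underline{Z}_{\im}$ lies in the open chamber $C_\bbR \subseteq T_\bbR^\circ$ and so is strictly positive on $I \setminus \{0\}$ by \cref{prop:titscone=positiveimagcone}. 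I would then propagate this property through the whole connected component, using that the $\Br^{\ST}$-action on $\underline{Z}_\im$ descends to the $\bbW$-action on $T_\bbR^\circ$ (which preserves the property) and the $\bbC$-action simply rotates $\underline{Z}$ without introducing a zero on $I \setminus \{0\}$.

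For surjectivity (b), I would combine \cref{lem:linearstabfundchamber} with \cref{prop:WandS1actiongenerates}: every point of $\Upsilon_{\reg}$ is a $\bbW \times S^1$-translate of a point in $C$, and both the $\bbW$-action (lifted to $\Br^{\ST}$ via \cref{prop:STactionagreesW}) and the $S^1$-rescaling (lifted to the $\bbC$-action on $\Stab(\cD)$) preserve $\Stab_\cC^\dagger(\cD)$, so the image exhausts $\Upsilon_{\reg}$. For (c), I would use the standard Bridgeland--Ikeda path-lifting scheme: a path in $\Upsilon_{\reg}$ lifts locally by the local homeomorphism property, and the only obstruction to a global lift would be a finite-time limit of stability conditions leaving the connected component -- but the non-vanishing in (a), combined with the closedness of $\Stab_\cC(\cD)$ in $\Stab(\cD)$ from \cref{thm:closedsubmfld}, rules this out. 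For (d), the equivariance of $\cZ^\dagger_\cC$ under the map $\widehat{\Br}^{\ST} \to \bbW \times \bbZ$ (given by \cref{prop:STactionagreesW} on $\Br^{\ST}$ and by $[2] \mapsto 1$ on the $\bbZ$-factor corresponding to the loop from \cref{cor:homotopyequivalence}), together with freeness and transitivity of deck actions on fibres, identifies the deck action as factoring through $\widehat{\Br}^{\ST}$.

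The main obstacle is the imaginary-cone non-vanishing step in (a), which also underlies the path-lifting in (c). This is precisely where \cite{ikeda2014stability} contains the gap flagged in \cref{rem:Ikeda}: without ensuring $\underline{Z}$ is non-zero on the full (possibly uncountable) cone $I \setminus \{0\}$, one cannot guarantee that the lifted path stays in $\Stab_\cC^\dagger(\cD)$. The equivalence \eqref{eqn:interiortitsimag} is the tool that closes this gap, converting the hard-to-verify condition ``$\underline{Z}$ non-zero on $I \setminus \{0\}$'' into the tractable condition ``$\underline{Z}_\im \in T_\bbR^\circ$'', which then propagates under the $\widehat{\Br}^{\ST}$- and $\bbC$-actions in a controlled way.
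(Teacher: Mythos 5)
Your overall strategy (Ikeda-style propagation from the fundamental chamber $\Stab_\cC(\cH)$ via the $\widehat{\Br}^{\ST}$- and $\bbC$-actions, plus \cref{lem:linearstabfundchamber}, \cref{prop:WandS1actiongenerates}, and \eqref{eqn:interiortitsimag}) matches the paper in spirit, and your emphasis on the imaginary-cone nonvanishing as the key technical obstacle is well placed. However, there are two genuine gaps and one structural divergence worth flagging.

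First, your argument in step (a) for nonvanishing on real roots does not work. You argue that because $\widetilde{w}(P_s)$ is a nonzero object, it has a semistable Harder--Narasimhan factor with nonzero central charge, hence $\underline{Z}(\alpha)\neq 0$. But $\underline{Z}(\alpha) = Z([\widetilde{w}(P_s)])$ is the \emph{sum} of the central charges of the HN factors, and a sum of nonzero complex numbers of distinct phases can certainly vanish. What the paper actually uses is that for $\tau\in\Stab_\cC(\cH)$ there exists a \emph{semistable} object of class $\alpha$ for every $\alpha\in\Phi^+$ (via the algorithm of \cite[\S 4]{BDL_root}), and the central charge of a single semistable object cannot vanish by definition. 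Once you have nonvanishing on $\Phi^+$, the imaginary-cone nonvanishing follows from the support property and convexity; your statement of that part is essentially correct but you should be explicit that this is argued for $\tau\in\Stab_\cC(\cH)$ first, then propagated.

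Second, and more seriously, your step (d) silently assumes the freeness of the $\widehat{\Br}^{\ST}$-action on $\Stab^\dagger_\cC(\cD)$ (``freeness and transitivity of deck actions on fibres''), but this is exactly the hard new input of the paper, proved as \cref{prop:faithfulactiononStab}: if $\Psi\in\Br^{\ST}$ satisfies $\Psi(\cH)\subseteq\cH$ then $\Psi\cong\id$. This is precisely the point flagged in \cref{rem:Ikeda} as a gap in Ikeda's original argument, and it requires a nontrivial analysis of linear $t$-structures on the category of bimodule complexes $\Kom^b(\mathbb{U})$. Without it, neither your deck-group identification in (d) nor the paper's quotient computation closes. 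Relatedly, your route for (c) via path-lifting is a genuinely different proof architecture than the paper's: the paper does not lift paths but instead shows the quotient $\Stab^\dagger_\cC(\cD)/\widehat{\Br}^{\ST}$ is homeomorphic to $\Upsilon_{\reg}/\bbW$ (\cref{prop:quotientisWquotient}), using the fundamental-domain description of $C^N$ together with freeness of the action, and reads off both the covering property and the deck group at once. Path-lifting can in principle give the covering property (the paper acknowledges this alternative, citing \cite{bridgelandStabilityConditionsK32008, Bayer_shortproof}), but your appeal to ``closedness of $\Stab_\cC(\cD)$ in $\Stab(\cD)$'' does not supply the completeness/properness estimate one needs to prevent a lifted path from escaping in finite time, and even if you carry it through, you still need \cref{prop:faithfulactiononStab} independently for (d). So the proposal, as written, is missing the single lemma that makes the theorem nontrivial.
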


In other words, the two a priori distinct actions of $\pi_1(\Upsilon_{\reg}/\bbW)$ on $\Stab^\dagger_\cC(\cD)$ -- one by deck transformations, the other by autoequivalences (via $\widehat{\Br}^{\ST}$) -- agree.
\begin{remark}
When $\Gamma$ is finite-type, the action of the shift $[2]$ on $\Stab^\dagger_\cC(\cD)$ agrees with the action of the ``full twist'', that is, the positive element of $\Br^{\ST}$ which generates the center.
\end{remark}

If $\Gamma$ is symmetric Kac--Moody type, we have that $\cC \cong \vec_\bbC$ by construction.
In particular, all stability conditions are automatically $\cC$-equivariant, where we have $\Stab^\dagger_\cC(\cD) = \Stab^\dagger(\cD)$ and $\cZ^\dagger_\cC = \cZ^\dagger$.
As such, \cref{thm:maintheorem} recovers the following, which is the main theorem of \cite{ikeda2014stability}.
\begin{corollary}\label{cor:symKMthm}
Suppose $\Gamma$ is symmetric Kac--Moody type.
Then $\cZ^{\dagger}: \Stab^\dagger(\cD) \ra \Hom_{\bbZ}(\Lambda, \bbC)$ is a covering map onto $\Upsilon_{\reg}$.
Moreover, the action of $\pi_1(\Upsilon_{\reg}/\bbW)$ on $\Stab^\dagger_\cC(\cD)$ by deck transformations of the composite covering map $\underline{\pi}$ factors through the action of $\widehat{\Br}^{\ST}$ on $\Stab^\dagger_\cC(\cD)$ by autoequivalences.
\end{corollary}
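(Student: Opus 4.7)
The plan is to follow the strategy of Ikeda \cite{ikeda2014stability}, suitably adapted to the fusion-equivariant context, in four steps. First, I would show that $\cZ^\dagger_\cC$ takes values in $\Upsilon_{\reg}$, i.e.\ that for any $\sigma = (\cP, Z) \in \Stab^\dagger_\cC(\cD)$, the induced $\underline{Z}$ avoids both the real-root hyperplanes $H_\alpha$ and the imaginary-cone hyperplanes $H_v$. By \cref{prop:STactionagreesW}, every positive root $\alpha$ is $\nu(X)$ for some $X$ in the $\Br^{\ST}$-orbit of a simple projective $P_s$; since such $X$ is nonzero and remains semistable (up to shift) throughout the connected component $\Stab^\dagger_\cC(\cD)$, we have $\underline{Z}(\alpha) \neq 0$. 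For the imaginary-cone hyperplanes, the description of $I$ in \cref{prop:imagcone}\eqref{item:imagconvexlimit} as the closure of the convex hull of limit rays of $\Phi^+$ lets us promote this nonvanishing to the imaginary cone via a standard support-property argument: a vanishing $\underline{Z}(v) = 0$ for $v \in I \setminus \{0\}$ would produce a sequence of positive roots whose normalised central charges tend to zero, violating the support bound.

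Second, I would establish surjectivity onto $\Upsilon_{\reg}$. By \cref{lem:linearstabfundchamber}, the image of $\Stab_\cC(\cH)$ is the complexified fundamental chamber $C$. The group $\widehat{\Br}^{\ST}$ acts on $\Stab^\dagger_\cC(\cD)$ by autoequivalences, and by \cref{prop:STactionagreesW} this action is intertwined with the $\bbW$-action on $\Hom_{K_0(\cC)}(\Lambda,\bbC)$ for the braid generators $\sigma_{P_s}$; the shift $[2]$ realises the circle loop around the imaginary-cone hyperplanes. Combined with the standard $\bbC$-action on stability conditions, \cref{prop:WandS1actiongenerates} then implies that the union of the $\widehat{\Br}^{\ST}$- and $\bbC$-translates of $C$ exhausts $\Upsilon_{\reg}$.

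The main obstacle is the third step: upgrading the local homeomorphism onto $\Upsilon_{\reg}$ to a covering map. The plan is to verify the path-lifting property, i.e.\ that every path $\gamma \colon [0,1] \to \Upsilon_{\reg}$ with a prescribed lift $\tilde{\gamma}(0)$ admits a continuous lift $\tilde{\gamma}$. The set of $t$ for which the lift extends is open by the local-homeomorphism property, and the technical heart of the theorem is proving that it is also closed --- equivalently, that if $\sigma_t$ is a family of stability conditions with central charges converging inside $\Upsilon_{\reg}$, then $\sigma_t$ itself converges in $\Stab^\dagger_\cC(\cD)$. This is where walls can in principle accumulate near the imaginary cone, and the avoidance condition captured by \cref{prop:titscone=positiveimagcone} is precisely what prevents such accumulation; combined with the support property, it yields uniform control on the masses and phases of semistable objects along $\gamma$. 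This is also the step where correcting the gap in \cite{ikeda2014stability} (see \cref{rem:Ikeda}) becomes essential, since the correct formulation of the Tits cone in terms of the imaginary cone is what underlies the argument.

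Finally, for the statement about deck transformations, once step (iii) is done, $\underline{\pi}$ is a covering map because $\bbW$ acts freely and properly discontinuously on $\Upsilon_{\reg}$ (\cref{prop:Wactionfreepropdiscont}); its deck group is thus a quotient of $\pi_1(\Upsilon_{\reg}/\bbW)$, which by \cref{cor:homotopyequivalence} is $\B$ or $\bbZ \times \B$. The braid generators $\sigma_s$ map to $\cC$-spherical twists $\sigma_{P_s}$ satisfying the braid relations (\cref{braid relation}) and intertwining with the simple reflections of $\bbW$ (\cref{prop:STactionagreesW}); in the infinite-type case, the additional generator of $\bbZ$ is realised by $[2]$, matching the $S^1$-factor in $\Upsilon_{\reg} \simeq S^1 \times \Omega_{\reg}$. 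Hence the surjection $\pi_1(\Upsilon_{\reg}/\bbW) \twoheadrightarrow \widehat{\Br}^{\ST}$ factors the deck transformation action through the autoequivalence action, completing the proof.
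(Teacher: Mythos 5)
The paper's proof of \cref{cor:symKMthm} is a one-sentence deduction from \cref{thm:maintheorem}, which has already been established. When $\Gamma$ is symmetric Kac--Moody type, $\cC(\Gamma) \cong \vec_\bbC$ by the construction in \eqref{eqn:fusioncoxeter}, so every stability condition on $\cD$ is automatically $\cC$-equivariant (both conditions in \cref{defn: C equivariant stab} are vacuous for $\cC \cong \vec$). Hence $\Stab^\dagger_\cC(\cD) = \Stab^\dagger(\cD)$ and $\cZ^\dagger_\cC = \cZ^\dagger$, and the corollary is exactly \cref{thm:maintheorem} under this identification. You have missed this reduction entirely, and instead propose to re-prove the entire content of the main theorem from scratch. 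Even if your sketch were airtight, this would be the wrong proof of the corollary: it duplicates work already done, and it conceals the actual point of the statement, namely that the fusion-equivariance constraint is invisible in symmetric Kac--Moody type.

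Within your sketch there are also substantive errors. In step 1 you assert that if $\alpha = \nu(X)$ with $X$ in the $\Br^{\ST}$-orbit of $P_s$, then $X$ ``remains semistable (up to shift) throughout the connected component $\Stab^\dagger_\cC(\cD)$''; this is false in general, as spherical objects need not be semistable for every stability condition in a connected component (this phenomenon is precisely why wall-crossing is nontrivial). The paper instead invokes the algorithm of \cite{BDL_root} to produce a semistable object of each real-root class only for stability conditions with standard heart the linear heart $\cH$, and then propagates the nonvanishing of the central charge across the whole component by a closed-and-open argument (the proof of \cref{prop:imageofZ}), using the support property and \cref{prop:imagcone}\eqref{item:imagconvexlimit} for the imaginary cone. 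In step 3 you propose to verify path-lifting directly; the paper explicitly notes it does \emph{not} take this route and instead establishes the covering property via a quotient argument: prove $\widehat{\Br}^{\ST}$ acts freely and properly discontinuously using \cref{prop:faithfulactiononStab}, identify $\Stab_\cC^\dagger(\cD)/\widehat{\Br}^{\ST}$ with $\Upsilon_{\reg}/\bbW$, and deduce the intermediate covering from the pure subgroup $\widehat{\PBr}^{\ST}$. Your path-lifting outline also stops short at exactly the delicate point, leaving ``uniform control on masses and phases'' unargued.
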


A description of the entire space $\Stab^{\dagger}(\cD)$ when $\Gamma$ is not of symmetric Kac--Moody type will be given in the next section.
More precisely, we will show that $\cZ^\dagger: \Stab^{\dagger}(\cD) \ra \Hom_\bbZ(\Lambda, \bbC)$ is also a covering onto its image, which is instead homeomorphic to a hyperplane complement associated to an unfolded Coxeter system (see \cref{thm:embeddings} and \cref{cor:unfolding}).

\subsection{Conjectures and group theoretic consequences} \label{sec:conjandconsequence}
Before giving the proof of \cref{thm:maintheorem}, we note a few reasons why this theorem is relevant for the study of Artin--Tits groups.

\begin{conjecture}\label{conj:faithfulness}
The action of the Artin--Tits group $\B$ on the triangulated category $\cD$ is faithful.
\end{conjecture}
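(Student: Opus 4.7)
The plan is to use \cref{thm:maintheorem} to convert this into a topological statement. That theorem identifies $\Stab^\dagger_\cC(\cD)$ as a cover of $\Upsilon_{\reg}/\bbW$ whose deck group is the image of $\pi_1(\Upsilon_{\reg}/\bbW)$ acting through $\widehat{\Br}^{\ST}$. Since $\pi_1(\Upsilon_{\reg}/\bbW) \cong \B$ in the finite case, and $\bbZ \times \B$ with the $\bbZ$-factor acting as $[2]$ otherwise, the kernel of the $\B$-action on $\cD$ coincides with the kernel of the deck map. Hence faithfulness is equivalent to $\Stab^\dagger_\cC(\cD)$ being the universal cover of $\Upsilon_{\reg}/\bbW$, i.e., to its simple connectedness; the entire conjecture reduces to showing $\pi_1(\Stab^\dagger_\cC(\cD)) = 1$. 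I would in fact aim for the stronger statement that $\Stab^\dagger_\cC(\cD)$ is contractible, as suggested by the authors.

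To attack contractibility, I would construct a $\Br^{\ST}$-equivariant cell structure on $\Stab^\dagger_\cC(\cD)$ whose top cells are translates $\sigma \cdot \Stab_\cC(\cH)$ of the fundamental chamber of \cref{lem:linearstabfundchamber}, with lower-dimensional cells indexed by HN-degenerations to hearts coming from tilting $\cH$ at simple objects. Each such top cell is convex (a product of upper half-planes), so the homotopy type of $\Stab^\dagger_\cC(\cD)$ is determined by the nerve of the resulting cover. The natural target is an appropriate thickening of the Salvetti complex of $(\bbW, \Gamma_0)$, as the adjacency combinatorics of chambers should mirror the Coxeter combinatorics governing $\Upsilon_{\reg}$. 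Reducing contractibility of $\Stab^\dagger_\cC(\cD)$ to that of the Salvetti complex then links the conjecture directly to the $K(\pi,1)$ conjecture for $\B$.

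In finite type, this programme can be carried out by combining Deligne's $K(\pi,1)$ theorem with known faithfulness results for spherical-twist actions on 2-Calabi--Yau categories (e.g.\ via 2-CY positivity arguments of Brav--Thomas, and in the fusion-equivariant setting the recent work \cite{QZ_fusion-stable}); the remaining work is to verify that the cell combinatorics match and that the resulting covering has the right deck group. A useful intermediate step is to exhibit a $\Br^{\ST}$-equivariant continuous \emph{section} of $\cZ^\dagger_\cC$ over contractible subsets of $\Upsilon_{\reg}$ lifting paths between chambers, using the 2-CY duality of \cref{prop: 2CY variant} to control the supports of semistable objects.

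The main obstacle is intrinsic: outside finite type, proving simple connectivity of $\Stab^\dagger_\cC(\cD)$ would essentially settle the $K(\pi,1)$ conjecture for arbitrary Artin--Tits groups, which is itself open. A more modest first target is affine type, where $K(\pi,1)$ is known by Paolini--Salvetti; there one can try to combine that theorem with the unfolding embedding of \cref{thm:introunfolding} to transfer faithfulness from a carefully chosen finite-type unfolding $\check{\Gamma}$, at the price of needing injectivity of the associated LCM-homomorphism $\B(\Gamma) \hookrightarrow \B(\check{\Gamma})$. Absent such an embedding, any direct proof of the conjecture in full generality would likely need genuinely new input about $\Stab^\dagger_\cC(\cD)$ that goes beyond the topology currently encoded by \cref{thm:maintheorem}.
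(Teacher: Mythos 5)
The statement you are asked to prove is labelled a \emph{conjecture}, and the paper proves it only when $\bbW$ is finite, explicitly leaving the general case open. Your opening reduction — that faithfulness of $\B$ on $\cD$ is equivalent, via \cref{thm:maintheorem}, to simple connectivity of $\Stab^\dagger_\cC(\cD)$ — is precisely the observation the authors make immediately after stating the conjecture, and your closing assessment of the difficulty (a direct proof outside finite type would essentially settle a $K(\pi,1)$-type statement) is honest. So far you are tracking the paper.

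Where you diverge: in finite type the paper does \emph{not} run a cell-structure or Salvetti-complex argument. It proves faithfulness first, by carrying over the Brav--Thomas strategy verbatim — every element of a finite-type Artin--Tits group is a positive braid times the inverse of a positive braid, positive braids have unique Garside normal forms, and the normal form is recoverable from the spread of $\beta(\cD)$ in the $t$-structure — and then obtains contractibility of $\Stab^\dagger_\cC(\cD)$ for free from Deligne's theorem, since the faithfulness statement identifies it as the universal cover of $\Upsilon_{\reg}$. You reverse this logic (aiming first at contractibility via an equivariant cellulation), which is genuinely harder and is not carried out; the paper's order of implications is the more economical one. Also note a slight misstatement: simple connectivity of $\Stab^\dagger_\cC(\cD)$ alone gives faithfulness, not the $K(\pi,1)$ property; the latter additionally requires contractibility.

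The concrete error is in your affine paragraph. By \cref{rem:finiteiffunfoldfinite}, $\check{\Gamma}$ is finite-type if and only if $\Gamma$ is, so there is no ``finite-type unfolding'' of an affine diagram to transfer faithfulness from (e.g.\ $\widetilde{G}_2$ unfolds to $\widetilde{E}_7 \sqcup \widetilde{D}_6$). Moreover the LCM-homomorphism implication established in Section \ref{sec:LCM} runs in the opposite direction from what you need: faithfulness of $\B(\Gamma)$ on $\cD(\Gamma)$ \emph{implies} injectivity of $\varphi_f$, whereas your plan assumes injectivity of $\varphi_f$ and would then still require faithfulness of the unfolded $\B(\check{\Gamma})$-action as an input — which is likewise not known outside finite type. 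Similarly, knowing $K(\pi,1)$ for an affine $\Omega_{\reg}/\bbW$ does not by itself give faithfulness on $\cD$; it only shows that \emph{if} $\Stab^\dagger_\cC(\cD)$ is the universal cover then it is contractible, and identifying it as the universal cover is exactly the faithfulness statement you are trying to prove. So the affine suggestion as written is circular.
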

By \cref{thm:maintheorem}, the conjecture above is equivalent to the statement that the connected component $\Stab^\dagger_\cC(\cD)$ is the universal cover of $\Upsilon_{\reg}$, i.e.\ $\Stab^\dagger_\cC(\cD)$ is simply-connected.

Proving the above conjecture would solve the word problem in the Artin--Tits group $\B$, via the following observation.
\begin{proposition}
The $\cC$-spherical twist group $\Br^{\ST}$ has a solvable word problem.
\end{proposition}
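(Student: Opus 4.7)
The plan is to give a decision algorithm by working directly with the bimodule model of $\Br^{\ST}$. By definition, every element of $\Br^{\ST}$ is represented by an invertible object of $\Kom^b(\mathbb{U})$ with composition given by $\otimes_\zig$, so a word $\omega = \sigma_{P_{s_1}}^{\epsilon_1} \cdots \sigma_{P_{s_n}}^{\epsilon_n}$ represents the identity if and only if the associated complex $\Sigma_\omega \in \Kom^b(\mathbb{U})$ is homotopy equivalent to $\zig$ placed in cohomological and internal degree $0$.

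First I would build $\Sigma_\omega$ by iterative tensoring. Each generator is one of the explicit two-term complexes \eqref{eqn:sphericaltwist} or \eqref{eqn:spehricaltwistinverse}, whose terms are graded shifts of the computable bimodules $P_s \otimes {}_sP$ and $\zig$, with differentials assembled from cups, caps and the multiplication of $\zig$. Iterating the tensor product over $\zig$ produces a bounded complex of bimodules, each term a finite direct sum of graded shifts of bimodules of the form $P_{t_0} \otimes E \otimes {}_{t_k}P$ for $E \in \cC$ obtained by fusing intermediate labels, with all differentials traceable from the original building blocks.

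Next I would minimise and compare. The full subcategory of $\zig \bimod \zig$ spanned by such bimodules has finite-dimensional $\Hom$ spaces, computable from \cref{graded hom space} and the fusion rules of $\cC$; in particular $\Kom^b(\mathbb{U})$ is Krull--Schmidt, and Gaussian elimination on invertible entries of the differential of $\Sigma_\omega$ yields a minimal representative that is unique up to isomorphism. The word $\omega$ then represents the identity if and only if this minimal complex is literally $\zig$ concentrated in degree $(0,0)$ with trivial internal grading, which is a finite check.

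The main obstacle is efficiency rather than correctness: each tensor product can roughly double the number of terms, so $\Sigma_\omega$ may have size exponential in $n$, and the $\Hom$-space calculations required for minimisation can become voluminous. For decidability this is harmless, but for a usable algorithm one would want to interleave minimisation after each tensor multiplication, or use the known inverse and braid relations of \cref{braid relation} to shorten $\omega$ before assembly.
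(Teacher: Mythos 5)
Your proposal is correct, but it takes a genuinely different route from the paper's proof. The paper reduces the problem via Proposition~\ref{prop:faithfulactiononStab}: an element $\Psi\in\Br^{\ST}$ is trivial if and only if $\Psi(P_s)\cong P_s$ for each $s\in\Gamma_0$, so one needs only to perform Gaussian elimination on the finitely many left-module complexes $\Psi(P_s)\in\Kom^b(\zig\lprmod)$, which can be built by tensoring the generator complexes one at a time against $P_s$. You instead stay in the bimodule world: since $\Br^{\ST}$ is by definition a group of isomorphism classes of invertible objects of $\Kom^b(\mathbb{U})$ with identity $\zig[0]$, a word represents the identity precisely when $\Sigma_\omega$ is homotopy equivalent to $\zig$ in degree $(0,0)$, and this is detected by passing to the unique minimal representative via Gaussian elimination in the Krull--Schmidt category $\Kom^b(\mathbb{U})$. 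Your route buys self-containedness: it bypasses Proposition~\ref{prop:faithfulactiononStab} entirely, needing only the definition of $\Br^{\ST}$ plus standard Krull--Schmidt/minimal-complex facts for bounded homotopy categories over additive categories with finite-dimensional hom-spaces. The paper's route buys smaller intermediate objects (left-module complexes rather than bimodule complexes), at the cost of first establishing the faithfulness proposition, which is the most substantial technical input of that subsection. One small inaccuracy worth flagging: you cite Proposition~\ref{graded hom space} for the hom-spaces in $\mathbb{U}$, but that statement computes hom-spaces in $\zig\lprmod$; the bimodule version requires applying the adjunctions of Proposition~\ref{biadjoint pair} on both sides (still fully explicit and computable, just not literally the cited statement).
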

\begin{proof}
We will show later in \cref{prop:faithfulactiononStab} that an autoequivalence $\Psi\in \Br^{\ST}$ is isomorphic to the identity functor if and only if $\Psi(P_s) \cong P_s$ for all $s$.  But checking whether or not $\Psi(P_s) \cong P_s$ holds in the triangulated category $\cD$ is a question of (finite) linear algebra, answered by performing Gaussian elimination on the matrix for the differential in the complex 
$\Psi(P_s)$.  Thus, given $\Psi$, we may algorithmically check whether or not $\Psi\cong \id$ by checking whether or not $\Psi(P_s) \cong P_s$ for each $s$.
\end{proof}
Conjecture \ref{conj:faithfulness} also has group theoretical consequences in relation to LCM-homomorphisms; see \cref{sec:LCM} for more details.

In fact, by analogy with other expectations in the theory of Bridgeland stability conditions, we conjecture the following $K(\pi,1)$ statement.
\begin{conjecture}\label{conj:contractibility}
The connected component $\Stab^\dagger_\cC(\cD)$ is contractible, hence $\Upsilon_{\reg}/\bbW$ and $\Upsilon_{\reg}$ are $K(\pi,1)$ spaces.
\end{conjecture}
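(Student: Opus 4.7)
The plan is to decompose the conjecture, via \cref{thm:maintheorem}, into two independent pieces: (i) simple-connectedness of $\Stab^\dagger_\cC(\cD)$, and (ii) vanishing of $\pi_n(\Stab^\dagger_\cC(\cD))$ for all $n \geq 2$. For (ii), the covering map $\cZ^\dagger_\cC$ immediately gives $\pi_n(\Stab^\dagger_\cC(\cD)) \cong \pi_n(\Upsilon_{\reg})$ for $n \geq 2$; using the homotopy equivalence $\Upsilon_{\reg} \simeq S^1 \times \Omega_{\reg}$ (or $\Omega_{\reg}$ in the finite case) from \cref{cor:homotopyequivalence}, part (ii) reduces precisely to asphericity of $\Omega_{\reg}$, i.e., to the classical $K(\pi,1)$ conjecture for Artin--Tits groups. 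For (i), \cref{thm:maintheorem} identifies the deck transformation group of $\underline{\pi}$ as the image of $\pi_1(\Upsilon_{\reg}/\bbW)$ inside $\widehat{\Br}^{\ST}$; hence $\Stab^\dagger_\cC(\cD)$ is simply connected if and only if this surjection has trivial kernel, equivalently if and only if $\widehat{\Br}^{\ST}$ acts faithfully on $\cD$. The $[2\bbZ]$-factor acts faithfully tautologically (distinct cohomological shifts of a nonzero bounded complex of projectives are non-isomorphic), so this reduces further to \cref{conj:faithfulness}.

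For part (i) the plan is categorical. Fix $\sigma_0 \in \Stab_\cC(\cH)$. Given a hypothetical nontrivial $\Psi \in \widehat{\Br}^{\ST}$, the goal is to exhibit an indecomposable $X \in \cD$ whose slicing-phase, Harder--Narasimhan type, or mass with respect to $\Psi \cdot \sigma_0$ differs from that with respect to $\sigma_0$. In finite type, the Garside normal form on $\B$ together with the centre generated by the full twist furnishes such an $X$ by an induction on word length. For general $\Gamma$ one would hope to import the cube-complex Garside-type structures of Haettel--Huang, or to argue inductively through the rank-two parabolic subcategories where faithfulness is already established in \cite{Heng_PhDthesis}. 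Either way, the bigraded 2-CY duality from \cref{prop: 2CY variant} and the description of morphisms in \cref{graded hom space} should provide enough rigidity to distinguish orbits of distinct braids via their effect on the classes $[\Psi(P_s)] \in K_0(\cD)$ together with finer invariants coming from the $\cC$-action.

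For part (ii), a purely stability-theoretic strategy is to construct a continuous, $\widehat{\Br}^{\ST}$-equivariant deformation retraction of $\Stab^\dagger_\cC(\cD)$ onto a single $\widehat{\Br}^{\ST}$-orbit --- for instance, a mass-rotation flow that continuously steers every $\sigma$ into the closed linear chamber $\overline{\Stab_\cC(\cH)}$ while preserving fibres of $\underline{\pi}$. Such a retraction would simultaneously yield simple-connectedness \emph{and} asphericity, since it would collapse $\Stab^\dagger_\cC(\cD)/\widehat{\Br}^{\ST}$ to a point and thereby exhibit $\Upsilon_{\reg}/\bbW$ as a $K(\pi,1)$. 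The hard part will be (ii): the $K(\pi,1)$ conjecture for arbitrary Coxeter systems is itself a central open problem, and outside finite, affine, and certain hyperbolic types no construction of such a flow is known. The wild behaviour of the imaginary cone $I$ in higher rank --- in particular the density of limit rays in $\partial I$ and the non-polyhedral nature of $I$ --- is exactly what obstructs extending Ikeda-style contraction arguments from symmetric Kac--Moody type to the general fusion-equivariant setting, so any honest attack on the conjecture must either solve $K(\pi,1)$ directly or develop an entirely new, categorical contraction intrinsic to $\Stab^\dagger_\cC(\cD)$.
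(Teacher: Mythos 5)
This is a \emph{conjecture} in the paper, not a theorem; the authors state explicitly in the introduction that ``the current paper does not attempt to make any progress on that conjecture.'' There is therefore no proof in the paper to compare against, and your ``proof'' is really a reduction-and-obstacles analysis rather than an argument. That said, your reduction is correct and matches the paper's own informal discussion: via \cref{thm:maintheorem}, contractibility of $\Stab^\dagger_\cC(\cD)$ decomposes into (i) simple-connectedness, which is equivalent to faithfulness of the $\widehat{\Br}^{\ST}$-action, and (ii) vanishing of higher homotopy, which by the covering map $\cZ^\dagger_\cC$ and the $\bbW$-equivariant homotopy equivalence $\Upsilon_{\reg} \simeq S^1 \times \Omega_{\reg}$ (or $\Omega_{\reg}$) is precisely asphericity of $\Omega_{\reg}$, i.e.\ the classical $K(\pi,1)$ conjecture. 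Your part (ii) thus ``proves'' contractibility by assuming the $K(\pi,1)$ statement that contractibility is supposed to imply --- this is not a defect, it is an honest observation that the two are equivalent modulo faithfulness, and it is exactly why the paper notes that contractibility and the $K(\pi,1)$ property are established in finite type by appeal to Brav--Thomas-type faithfulness and Deligne's contractibility theorem, and that one would prefer an intrinsic contraction.

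One genuine gap in part (i): you assert that faithfulness of $\widehat{\Br}^{\ST}$ decomposes into faithfulness of the $\B$-action plus the tautological faithfulness of $[2\bbZ]$. In the infinite-type case $\pi_1(\Upsilon_{\reg}/\bbW) \cong \bbZ \times \B$ maps to $\widehat{\Br}^{\ST} = \langle \Br^{\ST}, [2]\rangle$, and injectivity of each factor does not automatically give injectivity of the product: one must also rule out $[2\bbZ] \cap \Br^{\ST} \neq \{\id\}$. (Indeed, in finite type the two subgroups do overlap --- $[2]$ is the full twist --- which is consistent precisely because the $\bbZ$-factor is absent there.) This would need an argument, e.g.\ using \cref{prop:variant} together with properties of the spectral radius of elements of $\Br^{\ST}$ or of the mass growth, rather than being tautological. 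The rest of the discussion --- Garside-style arguments for (i), mass-rotation flows for (ii), and the obstruction posed by the non-polyhedral imaginary cone --- is speculative but sensibly aligned with the directions the paper itself hints at (the references \cite{QW_18,AW_22,PSZ_18,QZ_fusion-stable} cited at the end of \cref{sec:conjandconsequence}).
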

Note that conjecture \ref{conj:contractibility} implies conjecture \ref{conj:faithfulness}.

\begin{theorem} 
Conjectures \ref{conj:faithfulness} and \ref{conj:contractibility} hold when the Coxeter group $\bbW$ is finite.
\end{theorem}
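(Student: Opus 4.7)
The plan is as follows. Since Conjecture \ref{conj:contractibility} implies Conjecture \ref{conj:faithfulness}, it suffices to prove contractibility of $\Stab^\dagger_\cC(\cD)$ whenever $\bbW$ is finite. We combine three ingredients: (i) the covering map established in \cref{thm:maintheorem}; (ii) Deligne's celebrated $K(\pi,1)$ theorem for finite type Artin--Tits groups; and (iii) known faithfulness results for the $\cC$-spherical twist action in finite type.

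First I would observe that when $\bbW$ is finite, we have $\Upsilon_{\reg} = \Omega_{\reg}$ and $\pi_1(\Upsilon_{\reg}/\bbW) \cong \B$ by \cref{cor:homotopyequivalence}. Deligne's theorem then asserts that $\Omega_{\reg}/\bbW$ is a $K(\B, 1)$ space, so its universal cover is contractible. By \cref{thm:maintheorem}, the composite map $\underline{\pi} \from \Stab^\dagger_\cC(\cD) \to \Upsilon_{\reg}/\bbW$ is a covering map whose deck transformation group factors through the image of $\B \to \widehat{\Br}^{\ST}$ in $\Aut(\cD)$. Thus contractibility of $\Stab^\dagger_\cC(\cD)$ is equivalent to $\underline{\pi}$ being the universal cover, which in turn is equivalent to faithfulness of the $\B$-action on $\cD$, since the additional $[2\bbZ]$ factor becomes redundant in finite type (there the Serre functor $[2]$ lies in the image of $\B \to \Br^{\ST}$, being realized by the full twist).

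It therefore remains to establish faithfulness of the $\B(\Gamma)$-action on $\cD(\Gamma)$ for every finite Coxeter graph $\Gamma$. For simply-laced finite type ($ADE$), this is the theorem of Brav--Thomas, which applies directly because $\cC(\Gamma) \cong \vec_\bbC$ and $\cD(\Gamma)$ recovers the classical 2-Calabi--Yau category of the associated Dynkin quiver. For the remaining finite types ($B_n, F_4, H_3, H_4, I_2(m)$), I would invoke the unfolding theorem \cref{thm:introunfolding} (proved in Section \ref{sec:unfolding}): every finite-type $\Gamma$ admits a simply-laced finite-type unfolding $\check{\Gamma}$, and the associated embedding of stability spaces together with the compatibility of spherical twist actions under unfolding reduces faithfulness for $\Gamma$ to faithfulness for $\check{\Gamma}$, hence to the Brav--Thomas result. (Alternatively, the rank-two dihedral cases $I_2(m)$ admit a direct verification as in \cite{Heng_PhDthesis}.)

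The main obstacle is the reduction step in the last paragraph, namely making precise the compatibility between the categorical $\B(\Gamma)$-action on $\cD(\Gamma)$ and the $\B(\check{\Gamma})$-action on $\cD(\check{\Gamma})$ via the unfolding, so that injectivity of the latter implies injectivity of the former. This compatibility is the content of the LCM-homomorphism framework discussed in \cref{sec:LCM}; crucially, in finite type the injectivity of the relevant LCM-homomorphisms on Artin--Tits groups is known, so this reduction goes through without circularity. Combining these ingredients with Deligne's theorem yields Conjecture \ref{conj:contractibility} for finite $\bbW$, and Conjecture \ref{conj:faithfulness} as an immediate corollary.
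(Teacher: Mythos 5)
Your reduction of Conjecture \ref{conj:contractibility} to \ref{conj:faithfulness} via Deligne's theorem matches the paper exactly, but your route to faithfulness is genuinely different from the paper's, and it has a gap and a questionable dependency.

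The paper does not pass through unfolding or LCM-homomorphisms at all. Instead, it observes that the Brav--Thomas argument in type $ADE$ depends only on two features of the braid group that hold in \emph{every} finite Coxeter type: (i) every element is a positive braid times the inverse of a positive braid, and (ii) positive braids have unique greedy/Garside normal forms. One then shows directly that for $\beta$ positive, the Garside normal form of $\beta$ can be read off from the action of $\beta$ on $\cD(\Gamma)$ (via the spread in the linear $t$-structure), and this Brav--Thomas-style argument carries over verbatim. This proves faithfulness uniformly for all finite types without any group-theoretic input beyond Garside theory.

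Your proposal instead proves faithfulness for $ADE$ and then tries to bootstrap to other finite types using the equivalence $\cU: \cD(\Gamma)\cong \cD(\check\Gamma)$ together with injectivity of the LCM-homomorphism $\varphi_f : \B(\Gamma)\to\B(\check\Gamma)$. Two issues. First, as you yourself flag, you need the categorical intertwining $\cU\circ\sigma_{P_s}\circ\cU^{-1}\cong\prod_{E}\sigma_{P_{(s,E)}}$ at the level of autoequivalences of $\cD(\check\Gamma)$, not merely on $K_0$ or on the stability manifold; the paper asserts this in \cref{cor:unfolding} as part of the covering-space statement but never spells out a proof at the level of bimodule complexes, and this is not a formality since $\cU$ is only an equivalence of additive categories, not a morphism of $\cC(\Gamma)$-module categories. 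Second, and more importantly from the paper's point of view, the paper is \emph{deliberately} structured to avoid using LCM-homomorphism injectivity as an input; indeed Section \ref{sec:LCM} notes that the implication goes the other way (faithfulness of the categorical action \emph{implies} LCM-injectivity), and the introduction advertises independence from this input as a benefit of their approach. While injectivity of $\varphi_f$ is believed to hold in finite type via Crisp's work, importing it would invert the logical flow the authors intended. Your alternative gives up the uniformity and self-containment the paper achieves by adapting Brav--Thomas directly to arbitrary finite type Garside structures.
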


The proof of the above theorem follows almost verbatim from the existing arguments in simply-laced type $ADE$.  Since there are no more new ideas needed in the argument, we we omit the details in the interest of space, though we say a bit about the proof here.

A proof of conjecture \ref{conj:faithfulness} in type $ADE$ was first given by Brav--Thomas in \cite{brav_thomas_2010} (see also \cite{LQ_ADE}). There are two essential points about $ADE$ braid groups used in the Brav--Thomas argument.  The first is that every braid can be written as a positive braid (that is, a word in the positive Artin generators $\sigma_s$) times the inverse of a positive braid.  The second essential point is that positive braids have greedy/Garside normal forms which uniquely determine them.  Both of these facts hold not just in type $ADE$, but for all finite Coxeter types (and the second point holds in any Coxeter type).  As a result, conjecture \ref{conj:faithfulness} can be proven by showing the following.

\begin{proposition} Let $\beta$ be a positive element in a finite type Artin-Tits group.  Then the greedy/Garside normal form of $\beta$ can be determined from the action $\beta$ on $\cD$.  
\end{proposition}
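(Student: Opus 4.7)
The strategy is to algorithmically extract the Garside normal form $\beta = \beta_1 \beta_2 \cdots \beta_k$ one factor at a time from the endofunctor $F_\beta := \beta(-)$ on $\cD$.  It suffices to show that the initial simple factor $\beta_1 = \gcd_L(\beta, \Delta)$ -- the longest simple element left-dividing $\beta$ -- is determined by $F_\beta$: one then replaces $F_\beta$ by $\sigma_{\beta_1}^{-1} \circ F_\beta = F_{\beta_2 \cdots \beta_k}$, which has strictly smaller Garside length, and iterates.  Finiteness of $\bbW$ ensures the process terminates, and the list of simple elements extracted is by construction the greedy normal form.

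The first step is an amplitude/positivity lemma: for every positive braid $\beta$ and every simple object $S$ of the linear heart $\cH$ (see Section \ref{sec:linearheart}), the cohomology of $F_\beta(S)$ with respect to $\cH$ is concentrated in a controlled range of non-positive cohomological degrees, and the topmost non-zero cohomology carries categorical data reflecting the initial factor $\beta_1$.  The proof is by induction on word length, using the two-term presentation \eqref{eqn:sphericaltwist} of each generating twist $\sigma_{P_s}$ together with the hom computations of \cref{graded hom space}: one checks directly that each $\sigma_{P_s}$ sends the simples of $\cH$ to two-term complexes supported in degrees $\{-1, 0\}$, and that this controlled positivity is preserved under composition.

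The second step is to recover $\beta_1$ from this cohomological data.  The categorical test for ``$\sigma_t$ left-divides $\beta$'' is that $\sigma_{P_t}^{-1} \circ F_\beta$ still satisfies the amplitude bound of the first step with length decreased by $1$; equivalently, an appropriate canonical extension inside $F_\beta(P_t)$ splits.  Both formulations depend only on $F_\beta$ as a functor on $\cD$.  Assembling the test across $t \in \Gamma_0$ recovers the left descent set of $\beta$, and iterating the descent-peeling procedure -- continuing as long as the amplitude keeps dropping by one -- extracts the full simple element $\beta_1$ rather than merely its descent set.

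The main obstacle is obtaining an amplitude bound sharp enough to guarantee that the peeling terminates exactly at the boundary of $\beta_1$ and not prematurely.  This requires a careful non-cancellation analysis for the iterated cones, and is the fusion-enhanced analogue of the Koszul-type positivity of the classical zigzag algebra exploited in the simply-laced arguments of \cite{brav_thomas_2010, LQ_ADE}; the 2-Calabi--Yau duality of \cref{prop: 2CY variant} and the quadratic Frobenius structure of $\zig(\Gamma)$ are the essential ingredients.  In finite type, the adaptation is expected to be essentially parallel to loc.\ cit., with $\cC$-equivariance playing the role of a compatible linear-algebraic refinement that is transparent to the Garside combinatorics: grading shifts by $\FPdim$-scalars cannot conspire to cancel the topmost cohomology, precisely because $\FPdim$ is strictly positive on non-zero objects of $\cC$.
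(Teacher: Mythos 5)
Your proposal follows essentially the same strategy as the paper: the paper's proof simply defers to Brav--Thomas \cite{brav_thomas_2010}, whose argument recovers the greedy normal form by relating the Garside length of a positive braid to the spread (cohomological amplitude) of its image complexes in a $t$-structure and peeling off simple left-divisors one at a time. Your amplitude lemma and descent-detection test are precisely that argument, and the point you flag as the main obstacle (a sharp non-cancellation bound ensuring the peeling terminates exactly at $\beta_1$) is the same step the paper delegates to loc.\ cit.\ as carrying over to the fusion-enhanced setting.
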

The proof of this proposition, which relates the Garside length to the spread in a $t$-structure given in \cite{brav_thomas_2010} carries over directly to our setting.
An analogous theorem in the setting of the Hecke category is proven by Jensen in \cite{jensen_2016}, again for all finite types.

In finite type it is known that the universal cover of $\Upsilon_{\reg} = \Omega_{\reg}$ is contractible \cite{Deligne_72}; see also \cite{McCammond2017TheMG, Paris_Kpi1survey} and references therein.  Thus, when $\bbW$ is finite, the fact that $\Stab^\dagger_\cC(\cD)$ is the universal cover of $\Upsilon_{\reg}$ proves conjecture \ref{conj:contractibility}.
It is also interesting to prove directly that $\Stab^\dagger_\cC(\cD)$ is contractible without relying on known results about the $K(\pi,1)$ conjecture; see \cite{QW_18,AW_22,PSZ_18} for contractibility theorems in related settings.
In particular, such a proof for finite $\bbW$ is given in \cite{QZ_fusion-stable}.

\subsection{Outline of the proof of the main theorem}
The rest of this section is devoted to the proof of \cref{thm:maintheorem}.
After describing the relevant group actions on $\Stab^\dagger_\cC(\cD)$ in \cref{sec:groupactionsonstab}, 
we identify $\Upsilon_{\reg}$ with the image of $\cZ^\dagger_\cC$ in \cref{sec:imageofZ}.
Then, in \cref{sec:coveringstructures}, we show that $\Br^{\ST}$ (and trivially, $[2\bbZ]$) acts on $\Stab_\cC^\dagger(\cD)$ freely and properly discontinuously; finally, we show that the $\cZ^\dagger_\cC$ induces a homeomorphism $\Stab_\cC^\dagger(\cD)/\widehat{\Br}^{\ST}\cong \Upsilon_{\reg}/\bbW$.

This implies that $\widehat{\Br}^{\ST}$ is isomorphic to the group of deck transformations of the covering map $\Stab_\cC^\dagger(\cD) \twoheadrightarrow \Stab_\cC^\dagger(\cD)/\widehat{\Br}^{\ST} \cong \Upsilon_{\reg}/\bbW$.  The fact that $\Stab_\cC^\dagger(\cD)$ covers the intermediate space $\Upsilon_{\reg}$ follows from looking at the smaller quotient $\Stab_\cC^\dagger(\cD)/\widehat{\PBr}^{\ST}$ by the subgroup generated by the ``pure'' spherical twist subgroup and $[2]$.

\begin{remark}
An alternative method of establishing the covering property of $\cZ$ would be to follow Bridgeland's method in \cite{bridgelandStabilityConditionsK32008} and also Bayer's work \cite{Bayer_shortproof}.  Our approach is less direct, but has the virtue that it includes a proof that $\Br^{\ST}$ acts on $\Stab_\cC^\dagger(\cD)$ freely and properly discontinuously.  
\end{remark}

\subsection{Group actions on stability conditions} \label{sec:groupactionsonstab}
Several group actions on $\Stab(\cD)$ are used in what follows:
\begin{itemize}
\item The additive group $\bbC$ acts freely on $\Stab(\cD)$ via $z \cdot (\cP, Z) := (\cP', Z')$ where
\[
\cP'(\phi) := \cP(\phi+\re(z)), \qquad Z' := e^{-z(i\pi)} \cdot Z.
\]
It is immediate that this $\bbC$-action preserves each connected component of the the set of fusion-equivariant stability conditions, so $\bbC$ acts on $\Stab_\cC^\dagger(\cD)$.  The action of $a \in \bbR \subset \bbC$ in the above shifts the slicings $\cP'(\phi) = \cP(\phi+a)$ and rotates the central charge $Z' = e^{-i\pi a} \cdot Z$.  In other words, the action of $\bbR$ on stability conditions lifts the rotational $S^1$ action on central charges.

\item Each $\Psi \in \Br^{\ST}$ acts on $\Stab(\cD)$ via $\Psi \cdot (\cP, Z) := (\cP', Z')$ where
\[
\cP'(\phi) := \Psi(\cP(\phi)), \qquad Z' := \Psi \cdot Z = Z \circ \Psi^{-1}
\]
By proposition \ref{prop:STactionagreesW}, $Z'$ as defined above still factors through $\nu$ (and agrees with the corresponding action of the image of $\Psi$ in $\bbW$).
Moreover,  $\Br^{\ST}$ acts on the submanifold $\Stab_\cC(\cD)$, as $\Br^{\ST}$ acts via $\cC$-module autoequivalences.
\end{itemize}

The following lemma shows that the forgetful map $\cZ_\cC$ intertwines the $\Br^{\ST}$-action on $\Stab_\cC(\cD)$ and the $\bbW$-action on $(\bbR\Lambda)^*_\bbC$.
\begin{lemma} \label{lem:STacitononStabreducestoW}
For each $s$ and each $\tau \in \Stab_\cC(\cD)$, we have
\[
\cZ_\cC(\sigma_{P_s} \cdot \tau) = s\cdot \cZ_\cC(\tau).
\]
\end{lemma}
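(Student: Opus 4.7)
The plan is to unwind definitions and reduce the statement to \cref{prop:STactionagreesW}, which already intertwines the $\Br^{\ST}$ action on $K_0(\cD)$ with the $\bbW$-action on $\Lambda$; the present lemma is essentially the dualised version.

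Write $\tau = (\cP, Z)$ with $Z = \underline{Z} \circ \nu$ where, by $\cC$-equivariance of $\tau$ and \cref{defn: C equivariant stab}\eqref{item:fusionequivcentralcharge}, $\underline{Z} \in \Hom_{K_0(\cC)}(\Lambda, \bbC)$; under the identification of \cref{prop:identifywithdualspace} we view $\underline{Z} \in (\bbR\Lambda)^*_\bbC$. By definition of the $\Br^{\ST}$-action on $\Stab_\cC(\cD)$, the new central charge is $Z' = Z \circ \sigma_{P_s}^{-1}$, and $\cZ_\cC(\sigma_{P_s}\cdot \tau)$ is the unique $\underline{Z}' \in \Hom_{K_0(\cC)}(\Lambda,\bbC)$ such that $Z' = \underline{Z}' \circ \nu$.

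Next I would invoke \cref{prop:STactionagreesW}: the surjection $\nu\colon K_0(\cD) \twoheadrightarrow \Lambda$ intertwines the $\Br^{\ST}$-action with the $\bbW$-action through the homomorphism $\sigma_{P_s} \mapsto s$. In particular, for every $v \in K_0(\cD)$,
\[
\nu(\sigma_{P_s}^{-1}\cdot v) = s^{-1}\cdot \nu(v).
\]
Combining this with the factorisation of $Z$ yields, for all $v \in K_0(\cD)$,
\[
Z'(v) = Z(\sigma_{P_s}^{-1}\cdot v) = \underline{Z}\bigl(\nu(\sigma_{P_s}^{-1}\cdot v)\bigr) = \underline{Z}\bigl(s^{-1}\cdot \nu(v)\bigr) = (s\cdot \underline{Z})(\nu(v)),
\]
where the last equality is the definition of the contragredient $\bbW$-action \eqref{eqn:Wcontragradientaction}. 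Hence $(s\cdot \underline{Z}) \circ \nu = Z'$, and by uniqueness of the factorisation through the surjection $\nu$, $\underline{Z}' = s\cdot \underline{Z}$, which is exactly $\cZ_\cC(\sigma_{P_s}\cdot \tau) = s\cdot \cZ_\cC(\tau)$.

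There is no real obstacle here: the content is entirely in \cref{prop:STactionagreesW} (that the spherical-twist action descends to the Coxeter action on $\Lambda$) and in the $\bbW$-equivariant identification $\Hom_{K_0(\cC)}(\Lambda,\bbC) \cong (\bbR\Lambda)^*_\bbC$ of \cref{prop:identifywithdualspace}. The only thing to be careful about is signs/inverses when passing from an action on objects to its contragredient on central charges, but since $\bbW$ is generated by involutions this is automatic for the generator $s$.
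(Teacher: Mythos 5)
Your proposal is correct and follows essentially the same approach as the paper: both reduce to \cref{prop:STactionagreesW} (the intertwining of the $\Br^{\ST}$ and $\bbW$ actions via $\nu$) together with the identification of \cref{prop:identifywithdualspace}. The paper simply cites the final dual statement of \cref{prop:STactionagreesW} directly, whereas you unwind it by hand; the content is the same.
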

\begin{proof}
Let $\tau = (\cP, Z = \underline{Z}\circ \nu) \in \Stab_\cC(\cD)$ with $\underline{Z} \in \Hom_{K_0(\cC)}(\Lambda,\bbC)$.
Under the identification in \cref{prop:identifywithdualspace}, it is sufficient to show that
\[
\sigma_{P_s} \cdot \underline{Z} = s \cdot \underline{Z}.
\]
This is exactly the final statement of \cref{prop:STactionagreesW}.
\end{proof}

Recall that $\cH$ is the heart of linear complexes of $\cD$ and $\Stab_\cC(\cH)$ is the set of $\cC$-equivariant stability conditions whose standard heart is $\cH$.
The following lemma follows exactly as in \cite{bridgeland_2009, ikeda2014stability}, and we omit the proof.
\begin{lemma}\label{lem:STactiononboundary}
Let $\tau = (\cP, Z) \in \Stab_\cC(\cH)$ be a boundary point (in $\Stab^\dagger_\cC(\cD)$).
If $Z([P_s]) \in \bbR_{<0}$ for exactly one $s \in \Gamma_0$, then $\sigma^{-1}_{P_s}(\tau)$ is also a boundary point of $\Stab_\cC(\cH)$.
Similarly, if $Z([P_s]) \in \bbR_{>0}$ for exactly one $s \in \Gamma_0$, then $\sigma_{P_s}(\tau)$ is also a boundary point of $\Stab_\cC(\cH)$ .
\end{lemma}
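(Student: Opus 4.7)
The plan is to reduce the statement to a computation on the central-charge side via \cref{lem:STacitononStabreducestoW}, and then lift back to stability conditions using the local homeomorphism $\cZ_\cC$. Write $\underline{Z} := \cZ_\cC(\tau)$; by \cref{lem:STacitononStabreducestoW}, $\cZ_\cC(\sigma_{P_s}^{\pm 1}(\tau)) = s \cdot \underline{Z}$. Applying the explicit formula \eqref{eqn:fusiongeomrep} for the $\bbW$-action (via the contragradient pairing) gives $(s \cdot \underline{Z})(\alpha_s) = -\underline{Z}(\alpha_s)$, while for $t \ne s$ one has either $(s \cdot \underline{Z})(\alpha_t) = \underline{Z}(\alpha_t)$ (non-adjacent) or $(s \cdot \underline{Z})(\alpha_t) = \underline{Z}(\alpha_t) + \FPdim(\Pi(e)) \underline{Z}(\alpha_s)$ (adjacent through edge $e$). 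In the first case of the lemma, the assumption $\underline{Z}(\alpha_s) \in \bbR_{<0}$ and $\underline{Z}(\alpha_t) \in \bbH$ for $t \ne s$ forces $(s \cdot \underline{Z})(\alpha_s) \in \bbR_{>0}$, and preserves the strict positivity of the imaginary parts of all $(s \cdot \underline{Z})(\alpha_t)$ for $t \ne s$ (since we only add a negative real in the adjacent case). Thus $s \cdot \underline{Z}$ lies on the boundary face of $\overline{C}$ opposite to the one containing $\underline{Z}$.

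Since $(s \cdot \underline{Z})(\alpha_s) \in \bbR_{>0}$ is incompatible with having $P_s$ as a simple of the heart, $\sigma_{P_s}^{-1}(\tau)$ is not itself in $\Stab_\cC(\cH)$, and it remains to show $\sigma_{P_s}^{-1}(\tau) \in \overline{\Stab_\cC(\cH)}$. I plan to produce an explicit sequence in $\Stab_\cC(\cH)$ converging to $\sigma_{P_s}^{-1}(\tau)$ by perturbing into $C$: for small $t > 0$, define $\underline{Z}_t \in C$ by $\underline{Z}_t(\alpha_s) := (s \cdot \underline{Z})(\alpha_s) + it \in \bbH$ and $\underline{Z}_t(\alpha_u) := (s \cdot \underline{Z})(\alpha_u)$ for $u \ne s$. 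By \cref{lem:linearstabfundchamber}, each $\underline{Z}_t$ lifts uniquely to some $\tau_t \in \Stab_\cC(\cH)$, and the central charges converge $\cZ_\cC(\tau_t) \to s \cdot \underline{Z}$ as $t \to 0^+$.

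The main obstacle is verifying that this convergence of central charges lifts to $\tau_t \to \sigma_{P_s}^{-1}(\tau)$ in the complex manifold $\Stab^\dagger_\cC(\cD)$, noting that the hearts $\cH$ of $\tau_t$ and $\sigma_{P_s}^{-1}(\cH)$ of $\sigma_{P_s}^{-1}(\tau)$ are genuinely distinct. The key identity is $\phi_{\sigma_{P_s}^{-1}(\tau)}(X) = \phi_\tau(\sigma_{P_s}(X))$, which reduces the phase computation in $\sigma_{P_s}^{-1}(\tau)$ to one in $\tau$. Using the explicit description of $\sigma_{P_s}$ from \eqref{eqn:sphericaltwist} and the braiding formulae \eqref{eqn:alternatebraid} on the simples of $\cH$, one checks directly that the HN-phases in $\tau_t$ converge to those in $\sigma_{P_s}^{-1}(\tau)$; for instance, the simples $P_s \otimes E\<k\>[k]$ have phase approaching $0^+$ in $\tau_t$, matching their phase of exactly $0$ in $\sigma_{P_s}^{-1}(\tau)$ (computable as $\phi_\tau(\sigma_{P_s}(P_s \otimes E\<k\>[k])) = \phi_\tau(P_s \otimes E\<k+2\>[k+1]) = 0$). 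Combined with the support property and the local homeomorphism property of $\cZ_\cC$, the convergence of slicings in Bridgeland's generalized metric follows, establishing $\sigma_{P_s}^{-1}(\tau) \in \overline{\Stab_\cC(\cH)}$.

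The second case is handled symmetrically, allowing the hypothesis $\tau \in \overline{\Stab_\cC(\cH)}$ to accommodate $\underline{Z}(\alpha_s) \in \bbR_{>0}$: the same central-charge computation (with $s^{-1} = s$) shows $\cZ_\cC(\sigma_{P_s}(\tau))$ lands on the face $\{Z(\alpha_s) \in \bbR_{<0}, Z(\alpha_t) \in \bbH \text{ for } t \ne s\}$, which lies inside $\Stab_\cC(\cH)$ and is a topological boundary face of $\Stab_\cC(\cH)$ in $\Stab^\dagger_\cC(\cD)$.
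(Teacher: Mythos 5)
The paper itself omits the proof, simply citing Bridgeland and Ikeda, so your attempt is a genuinely independent reconstruction; but it differs from the argument those references give, and it contains a real gap precisely at the step you yourself flag as ``the main obstacle.''

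Your central-charge computation is correct: $\cZ_\cC(\sigma_{P_s}^{-1}(\tau))=s\cdot\underline{Z}$, and the explicit contragradient formula shows this lies on the face $\{Z(\alpha_s)\in\bbR_{>0},\ Z(\alpha_t)\in\bbH\ (t\neq s)\}$ of $\overline{C}$ and not in $C$, hence $\sigma_{P_s}^{-1}(\tau)\notin\Stab_\cC(\cH)$. The issue is the claim $\tau_t\to\sigma_{P_s}^{-1}(\tau)$. Both $\lim\tau_t$ and $\sigma_{P_s}^{-1}(\tau)$ have central charge $s\cdot\underline{Z}$, but $\cZ_\cC$ is only a local homeomorphism, and a priori nothing guarantees that $\tau_t$ lies in a neighbourhood $U$ of $\sigma_{P_s}^{-1}(\tau)$ on which $\cZ_\cC$ is injective: the fibre of $\cZ_\cC$ over $\underline{Z}_t$ can meet several sheets, and $\tau_t$ is only known to be the preimage in the sheet $\Stab_\cC(\cH)$. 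Checking that the phases of the objects $P_s\otimes E\<k\>[k]$ tend to $0$ on both sides is a necessary but far from sufficient condition for convergence of the slicings in Bridgeland's generalized metric, which is a supremum over all nonzero objects; you would, at minimum, need to handle the simples $\sigma_{P_s}^{-1}(P_t\otimes E\<k\>[k])$ for $t$ adjacent to $s$ (these are genuine two-term complexes and are not simples of $\cH$, so their behaviour under the deformation is the crux of the matter), and in fact to produce a uniform estimate. ``The support property and local homeomorphism'' do not by themselves give this; they are exactly the hypotheses of Bridgeland's deformation theorem, which produces a deformation $\sigma'_t$ of $\sigma_{P_s}^{-1}(\tau)$ with central charge $\underline{Z}_t$, but leaves open whether $\sigma'_t=\tau_t$, i.e.\ whether $\sigma'_t$ has heart $\cH$.

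The references the paper cites close this gap differently, via the tilting/wall-crossing structure: one first shows that $\sigma_{P_s}^{\pm 1}(\cH)$ is exactly the tilt of $\cH$ at the torsion pair whose torsion class is generated by the phase-$1$ simples $P_s\otimes E\<k\>[k]$ (the analogue of Bridgeland's Lemma~3.5 in \cite{bridgeland_2009}); then $\tau$, lying on the phase-$1$ wall of $\Stab_\cC(\cH)$, is also a boundary point of $\Stab_\cC$ of the tilted heart; applying $\sigma_{P_s}^{-1}$ transports this to the desired conclusion. That heart identification is exactly the step that is missing from your argument; equivalently, it is what is needed to show that the deformed stability condition $\sigma'_t$ has heart $\cH$, which would identify $\sigma'_t$ with your $\tau_t$ and complete the limit. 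I would encourage you to either carry out that tilting computation explicitly (the formulae \eqref{eqn:alternatebraid} and the short exact sequences in $\cH$ relating $\sigma'_{P_s}(P_t\otimes E\<k\>[k])$, $P_t\otimes E\<k\>[k]$, and $P_s\otimes\Pi(e)\otimes E\<k-1\>[k-1]$ are the relevant ingredients), or cite it directly. A related caveat applies to your sketch of the second half: showing that $\cZ_\cC(\sigma_{P_s}(\tau))$ lies in $C$ does not yet show that $\sigma_{P_s}(\tau)$ has heart $\cH$, which is what membership in $\Stab_\cC(\cH)$ requires.
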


It follows from this that if $\tau\in \Stab_\cC(\cH)$, then $\tau$ and $\sigma_{P_s}(\tau)$ are in the same connected component 
$\Stab_\cC^\dagger(\cD)$.  In particular, we have the following.

\begin{corollary}
The action of $\Br^{\ST}$ on $\Stab_\cC(\cD)$ preserves the connected component $\Stab_\cC^\dagger(\cD)$.  
\end{corollary}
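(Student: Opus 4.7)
The plan is to reduce the claim to showing that each generator $\sigma_{P_s}$ preserves the connected component $\Stab_\cC^\dagger(\cD)$. By \cref{defn:sphericaltwistgroup} the group $\Br^{\ST}$ is generated by the twists $\sigma_{P_s}$ for $s \in \Gamma_0$, so once this is known the full group acts by homeomorphisms fixing $\Stab_\cC^\dagger(\cD)$ setwise. Each $\sigma_{P_s}$ is an autoequivalence of $\cD$ commuting with the $\cC$-action (by \cref{weak braid action}), hence acts as a self-homeomorphism of $\Stab_\cC(\cD)$ and permutes its connected components. Therefore, to conclude that $\sigma_{P_s}(\Stab_\cC^\dagger(\cD)) = \Stab_\cC^\dagger(\cD)$, it suffices to exhibit a single point of $\Stab_\cC^\dagger(\cD) \cap \sigma_{P_s}(\Stab_\cC^\dagger(\cD))$.

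To produce such a witness, I would invoke \cref{lem:STactiononboundary}. Using the homeomorphism $\cZ \colon \Stab_\cC(\cH) \xrightarrow{\sim} C$ of \cref{lem:linearstabfundchamber}, choose $\tau \in \Stab_\cC(\cH)$ whose central charge $\underline{Z}$ satisfies $\underline{Z}(\alpha_s) \in \bbR_{<0}$ and $\underline{Z}(\alpha_t) \in \bbH$ for every $t \neq s$. This corresponds to a point on the codimension-one face of the complexified chamber $C$ cut out by $\alpha_s$, so it exists and $\tau$ is a boundary point of $\Stab_\cC(\cH)$ in $\Stab_\cC^\dagger(\cD)$ in the sense required by the lemma. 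Then \cref{lem:STactiononboundary} yields that $\sigma^{-1}_{P_s}(\tau)$ is also a boundary point of $\Stab_\cC(\cH)$ in $\Stab_\cC^\dagger(\cD)$.

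To finish, I would argue that any such boundary point lies in $\Stab_\cC^\dagger(\cD)$ itself. Since $\Stab_\cC^\dagger(\cD)$ is a connected component of $\Stab_\cC(\cD)$, it is closed in $\Stab_\cC(\cD)$; as $\Stab_\cC(\cH) \subseteq \Stab_\cC^\dagger(\cD)$, the closure of $\Stab_\cC(\cH)$ (taken in $\Stab_\cC(\cD)$) remains inside $\Stab_\cC^\dagger(\cD)$, and in particular $\sigma^{-1}_{P_s}(\tau) \in \Stab_\cC^\dagger(\cD)$. Applying $\sigma_{P_s}$ then places $\tau \in \sigma_{P_s}(\Stab_\cC^\dagger(\cD))$, and since $\tau \in \Stab_\cC(\cH) \subseteq \Stab_\cC^\dagger(\cD)$ we obtain the desired non-empty intersection. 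There is no real obstacle here: the bulk of the work is done by \cref{lem:STactiononboundary}, and the only delicate step is the justification that boundary points of $\Stab_\cC(\cH)$ (in the sense of that lemma) in fact lie in $\Stab_\cC^\dagger(\cD)$, which follows from the closedness of connected components in $\Stab_\cC(\cD)$.
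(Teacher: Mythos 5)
Your proof is correct and follows essentially the same route as the paper: the paper derives the corollary in one sentence from \cref{lem:STactiononboundary} by noting that any $\tau \in \Stab_\cC(\cH)$ lies in the same connected component as $\sigma_{P_s}(\tau)$, which is exactly the wall-crossing argument you spell out. Your careful justification that the boundary point $\sigma_{P_s}^{-1}(\tau)$ lands in $\Stab_\cC^\dagger(\cD)$ (via closedness of connected components) is a reasonable way to fill in the detail the paper leaves implicit.
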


\subsection{Identifying the image} \label{sec:imageofZ}
We identify the image of $\cZ^\dagger_\cC: \Stab^\dagger_\cC(\cD) \ra \Hom_{K_0(\cC)}(\Lambda,\bbC)$ in this subsection.
We remind the reader that we regard the hyperplane complement $\Upsilon_{\reg}$ as a subset of $\Hom_{K_0(\cC)}(\Lambda,\bbC)$ under the identification $(\bbR\Lambda)^*_\bbC \cong \Hom_{K_0(\cC)}(\Lambda,\bbC)$ from \cref{prop:identifywithdualspace}.
\begin{lemma}
The image of $\cZ^\dagger_\cC$ contains $\Upsilon_{\reg}$.
\end{lemma}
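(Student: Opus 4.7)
The plan is to reach an arbitrary $Z \in \Upsilon_{\reg}$ in the image by starting from a stability condition whose central charge lies in the fundamental chamber $C$ and then translating it via the available group actions on $\Stab^\dagger_\cC(\cD)$.

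First, I would invoke \cref{lem:linearstabfundchamber}, which says that $\cZ_\cC$ maps $\Stab_\cC(\cH)$ homeomorphically onto the complexified chamber $C \subseteq \Upsilon_{\reg}$. Hence every point of $C$ is in the image of $\cZ^\dagger_\cC$.

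Next, given any $Z \in \Upsilon_{\reg}$, I would apply \cref{prop:WandS1actiongenerates} to produce $k \in S^1$ and $w \in \bbW$ with $w \cdot k \cdot Z \in C^N \subseteq C$; equivalently, writing $Z' := w \cdot k \cdot Z \in C$, we have $Z = k^{-1} \cdot w^{-1} \cdot Z'$. By the previous step there exists $\tau_0 \in \Stab_\cC(\cH) \subseteq \Stab^\dagger_\cC(\cD)$ with $\cZ^\dagger_\cC(\tau_0) = Z'$.

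To apply $w^{-1}$ on the level of stability conditions, I would choose any lift $\Psi \in \Br^{\ST}$ of $w^{-1}$ under the surjection $\Br^{\ST} \twoheadrightarrow \bbW$ from \cref{prop:STactionagreesW}, and set $\tau_1 := \Psi \cdot \tau_0$. By \cref{lem:STacitononStabreducestoW}, $\cZ^\dagger_\cC(\tau_1) = w^{-1} \cdot Z'$, and by the corollary preceding this subsection, $\tau_1$ still lies in $\Stab^\dagger_\cC(\cD)$. To apply the scalar $k^{-1} \in S^1$, I would use the $\bbC$-action: choosing $a \in \bbR$ with $e^{-i\pi a} = k^{-1}$ and setting $\tau_2 := a \cdot \tau_1$, we get $\cZ^\dagger_\cC(\tau_2) = k^{-1} \cdot w^{-1} \cdot Z' = Z$. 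Since the $\bbC$-action preserves connected components of $\Stab_\cC(\cD)$, we have $\tau_2 \in \Stab^\dagger_\cC(\cD)$, so $Z$ lies in the image of $\cZ^\dagger_\cC$.

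There is no serious obstacle here: the proof is essentially a combination of three previously established facts (the identification of $\Stab_\cC(\cH)$ with $C$; the equivariance of $\cZ_\cC$ under $\Br^{\ST}$ and $\bbW$; and the fact that $\bbW$ together with $S^1$ generates $\Upsilon_{\reg}$ from the normalised chamber). The only small point to verify is that the rotation part is indeed realised by the $\bbR \subseteq \bbC$-action on stability conditions via $a \mapsto e^{-i\pi a}$, which surjects onto $S^1$, so the argument closes cleanly.
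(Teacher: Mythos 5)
Your proof is correct and takes essentially the same approach as the paper's: both rely on the three ingredients you name (the homeomorphic identification $\Stab_\cC(\cH) \cong C$ from \cref{lem:linearstabfundchamber}, the $\Br^{\ST}$/$\bbW$-equivariance of $\cZ^\dagger_\cC$ from \cref{lem:STacitononStabreducestoW}, and the fact that $\bbW$ and $S^1$ move $C^N$ to all of $\Upsilon_{\reg}$ from \cref{prop:WandS1actiongenerates}). The paper states the argument more tersely as a reduction to showing $C^N$ lies in the image, while you explicitly construct a preimage $\tau_2$ by lifting $w^{-1}$ and $k^{-1}$; the content is the same.
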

\begin{proof}
Recall that the map $\cZ^\dagger_\cC$ intertwines the $\Br^{\ST}$ action on $\Stab_\cC^\dagger(\cD)$ and the $\bbW$ action on $\Hom_{K_0(\cC)}(\Lambda,\bbC)$.  
Moreover, the action of $\bbR\subset \bbC$ on $\Stab_\cC^\dagger(\cD)$ reduces to the action of $S^1$ on $\Hom_{K_0(\cC)}(\Lambda,\bbC)$ via $\cZ^\dagger_\cC$. 
By \cref{prop:WandS1actiongenerates}, we only need to show that the image of $\cZ^\dagger_\cC$ contains the normalised subset $C^N \subseteq C$ of the complexified chamber (see \cref{defn:normalisedspace}).  
But we already showed in \cref{lem:linearstabfundchamber} that the set $C$ is the (homeomorphic) image of $\Stab_\cC(\cH) \subseteq \Stab^\dagger_\cC(\cD)$ under $\cZ$, hence the proof is complete.
\end{proof}

The proof of the other containment $\img(\cZ^\dagger_\cC) \subseteq  \Upsilon_{\reg}$ will require some more work.
Recall again from \cref{lem:linearstabfundchamber} that $\cZ^\dagger_\cC$ restricts to a homeomorphism $\Stab_\cC(\cH) \cong C \subseteq \Upsilon_{\reg}$.
We denote the preimage of the normalised complexified chamber $C^N \subseteq C$ under this identification by $\Stab_\cC(\cH)^N$:
\[
\Stab_\cC(\cH)^N := \{ \tau \in \Stab_\cC(\cH) \mid \cZ^\dagger_\cC(\tau) \in C^N \}.
\]
We remind the reader of our convention that $\Upsilon_{\reg}^N := \Upsilon_{\reg}$ and $C^N = C$ when $\bbW$ is finite, hence also $\Stab_\cC(\cH)^N = \Stab_\cC(\cH)$.
The proof of the following lemma is exactly as in \cite[Proposition 4.13]{ikeda2014stability} and \cite[Lemma 3.6]{bridgeland_2009}, and we omit it.
The main idea is that the walls of the chambers in $(\cZ^\dagger_{\cC})^{-1}(\Upsilon^N_{\reg})$ -- locally controlled by the hyperplanes $\cup_{\alpha \in \Phi^+} H_\alpha$ -- are locally finite, which allow us to apply a finite iteration of Lemma \ref{lem:STactiononboundary}.
\begin{lemma} \label{lem:orbitnormalisedheart}
Let $\tau \in (\cZ^\dagger_{\cC})^{-1}(\Upsilon^N_{\reg}) \subseteq \Stab^\dagger_\cC(\cD)$. 
There exists $\Psi \in \Br^{\ST}$ such that $\Psi \cdot \tau \in \Stab_\cC(\cH)^N$.
\end{lemma}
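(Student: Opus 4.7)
The plan is to adapt the wall-crossing argument of Bridgeland \cite[Lemma 3.6]{bridgeland_2009} and Ikeda \cite[Proposition 4.13]{ikeda2014stability} to the fusion-equivariant setting. First I would apply \cref{prop:WandS1actiongenerates} to $\underline{Z} := \cZ^\dagger_\cC(\tau) \in \Upsilon_{\reg}^N$: since $\underline{Z}$ is already normalised, the $S^1$-factor is trivial, so one obtains $w \in \bbW$ with $w \cdot \underline{Z} \in C^N$. I would then choose a continuous path $\gamma:[0,1] \to \Upsilon_{\reg}^N$ from $\underline{Z}$ to $w \cdot \underline{Z}$, and attempt to lift $\gamma$ through the local homeomorphism $\cZ^\dagger_\cC$ to a continuous path in $\Stab^\dagger_\cC(\cD)$ starting at $\tau$.

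The lift need not exist globally because $\Stab^\dagger_\cC(\cD)$ decomposes into open chambers on each of which the standard heart is constant, and these chambers are separated by real-codimension-one walls that project under $\cZ^\dagger_\cC$ onto the root hyperplanes $H_\alpha$ for $\alpha \in \Phi^+$. The key geometric input, which is what makes $\Upsilon_{\reg}$ (rather than the full dual space) the correct ambient space, is that $\{H_\alpha\}_{\alpha \in \Phi^+}$ is \emph{locally finite} in $\Upsilon_{\reg}$: by \cref{prop:imagcone}\eqref{item:imagconvexlimit} the normalised positive real roots can accumulate only along rays of the imaginary cone $I$, and the corresponding accumulation hyperplanes have been excised in passing from $(\bbR\Lambda)^*_\bbC$ to $\Upsilon$. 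Hence the compact image $\gamma([0,1])$ meets only finitely many of the $H_\alpha$.

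After a small perturbation I may assume $\gamma$ crosses these finitely many walls transversally and one at a time. At each crossing, \cref{lem:STactiononboundary} provides the mechanism to continue the lift: when the lift is about to exit the current chamber $\Psi_k(\Stab_\cC(\cH))$ for some $\Psi_k \in \Br^{\ST}$, the central charge of $\Psi_k(P_s)$ becomes real for exactly one $s$, and applying $\sigma_{P_s}^{\pm 1}$ on the appropriate side transports the lift into the adjacent chamber $\Psi_{k+1}(\Stab_\cC(\cH))$. After finitely many such twists, the lift terminates at a stability condition of the form $\Psi \cdot \tau \in \Stab_\cC(\cH)$ with $\cZ^\dagger_\cC(\Psi \cdot \tau) = w \cdot \underline{Z} \in C^N$, and hence $\Psi \cdot \tau \in \Stab_\cC(\cH)^N$ as required. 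The main obstacle will be the local finiteness of $\{H_\alpha\}$ inside $\Upsilon_{\reg}$; once this is secured via \cref{prop:imagcone}, the remaining wall-crossing procedure is standard, and the $\cC$-equivariance of the output is automatic since each $\sigma_{P_s}^{\pm 1}$ is a $\cC$-module autoequivalence and $\cC$-equivariant stability conditions form a closed subset by \cref{thm:closedsubmfld}.
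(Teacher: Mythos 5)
Your proposal is correct and takes essentially the same approach as the paper, which itself omits the proof and merely sketches that the walls in $(\cZ^\dagger_{\cC})^{-1}(\Upsilon^N_{\reg})$ are locally finite (because normalised positive roots accumulate only along the excised imaginary cone $I$), permitting a finite iteration of \cref{lem:STactiononboundary}. Your path-lifting/wall-crossing write-up, with the target chamber supplied by \cref{prop:WandS1actiongenerates} and the $\cC$-equivariance preserved via \cref{thm:closedsubmfld}, is exactly the intended fleshing-out of that sketch.
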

Since we can use the $\bbR\subset\bbC$ action to rescale, it follows directly from the above that all stability conditions in $(\cZ^\dagger_{\cC})^{-1}(\Upsilon_{\reg})$ are in the $\bbC\times \Br^{\ST}$-orbit of stability conditions in $\Stab_\cC(\cH)$.

We now show that $(\cZ^\dagger_{\cC})^{-1}(\Upsilon_{\reg})$ is actually the whole of $\Stab^\dagger_\cC(\cD)$; this implies that the image of $\cZ_\cC^\dagger$ is indeed $\Upsilon_{\reg}$.  Essentially, we have to show that the central charge of any $\cC$-equivariant stability condition does not send the ray spanned by any (real or imaginary) root to zero.
To do so we need the following lemma.
\begin{lemma}
Let $\tau = (\cP,Z = \underline{Z}\circ \nu) \in \Stab_\cC(\cH)$ and denote $\underline{Z}' \in (\bbR\Lambda)^*_\bbC$ its corresponding image under $\cZ^\dagger_\cC$.
Then for any positive root $\alpha \in \Phi^+$, we have $\underline{Z}'(\alpha) \neq 0$.
\end{lemma}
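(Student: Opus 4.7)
The plan is to observe that since $\tau \in \Stab_\cC(\cH)$, the objects $P_s = P_s \otimes \1\<0\>[0]$ are simples of the linear heart $\cH$ (see \eqref{eqn:simplesoflinearheart}), so their central charges satisfy $Z(P_s) \in \bbH \cup \bbR_{<0}$. Under the identification of \cref{prop:identifywithdualspace} between $\Hom_{K_0(\cC)}(\Lambda, \bbC)$ and $(\bbR\Lambda)^*_\bbC$, and using $\nu([P_s]) = \alpha_s$ from \cref{defn:latticeforstab}, we have $\underline{Z}'(\alpha_s) = Z(P_s)$ for every $s \in \Gamma_0$. The strategy, then, is to express $\underline{Z}'(\alpha)$ as a non-negative linear combination of these $Z(P_s)$ and appeal to a convexity observation.

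A positive real root $\alpha \in \Phi^+ \subseteq \bbR_{\geq 0}\Lambda$ expands as $\alpha = \sum_{s \in \Gamma_0} n_s \alpha_s$ with coefficients $n_s \in \bbR_{\geq 0}$, not all zero (as real roots are nonzero, being in the $\bbW$-orbit of the simple roots). By $\bbR$-linearity of $\underline{Z}'$ on $\bbR\Lambda$, we therefore get
\[
\underline{Z}'(\alpha) \;=\; \sum_{s \in \Gamma_0} n_s \, Z(P_s).
\]
The proof is then completed by the elementary observation that $\bbH \cup \bbR_{<0}$ is a convex cone in $\bbC$ avoiding the origin: taking imaginary parts, $\im(\underline{Z}'(\alpha)) = \sum_s n_s \im(Z(P_s)) \geq 0$, and equality forces every $s$ with $n_s > 0$ to satisfy $Z(P_s) \in \bbR_{<0}$, in which case $\underline{Z}'(\alpha)$ becomes a nontrivial non-negative combination of strictly negative reals, hence strictly negative. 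In either case, $\underline{Z}'(\alpha) \neq 0$.

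I do not expect any real obstacle here; the argument is purely a matter of correctly tracking the identifications between $K_0(\cD)$, $\Lambda$, and $(\bbR\Lambda)^*_\bbC$, combined with the positivity observation above. It is worth noting that one does not actually need the full $\cC$-equivariance of $\tau$ for this step (only that $\tau$ has standard heart $\cH$), although $K_0(\cC)$-linearity is automatic from the assumption $\tau \in \Stab_\cC(\cH)$ and will be crucial when one later packages $\underline{Z}'$ as an element of $\Hom_{K_0(\cC)}(\Lambda,\bbC) \cong (\bbR\Lambda)^*_\bbC$ rather than merely of $\Hom_\bbZ(K_0(\cD),\bbC)$.
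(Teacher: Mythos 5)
Your proof is correct, and it takes a genuinely different and more elementary route than the paper's.

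The paper's proof is categorical: it invokes the algorithm of Section 4 of \cite{BDL_root} (checked to carry over to the present fusion setting) to produce, for each positive real root $\alpha$, an explicit $\tau$-semistable object of class $\alpha$; since the central charge of a nonzero semistable object is nonzero, $\underline{Z}'(\alpha)\neq 0$. Your argument bypasses the construction of semistable objects entirely: it only uses (i) the fact, already established in \cref{lem:linearstabfundchamber}, that $\cZ^\dagger_\cC$ sends $\Stab_\cC(\cH)$ into the complexified chamber $C$, so $\underline{Z}'(\alpha_s)\in\bbH\cup\bbR_{<0}$ for every $s$; (ii) that $\Phi^+\subseteq\bbR_{\geq 0}\Lambda\setminus\{0\}$; and (iii) that $\bbH\cup\bbR_{<0}$ is a convex cone in $\bbC$ not containing $0$ (this is exactly the reason, also noted without proof right after \cref{defn:hyperplanecomplement}, that $C\subset\Upsilon_{\reg}$). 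This is strictly simpler, and it actually proves the stronger statement that $\underline{Z}'(v)\neq 0$ for \emph{every} $v\in\bbR_{\geq 0}\Lambda\setminus\{0\}$. In particular, since $I\subseteq\bbR_{\geq 0}\Lambda$ by \cref{prop:imagcone}, your argument also subsumes the subsequent lemma on the imaginary cone, making the support-property/accumulation-ray step there unnecessary. What the paper's categorical route buys is extra information (the existence of a semistable object realising each real root class), which is not needed for the lemma as stated but is of independent interest; your route buys brevity and the unification of the two lemmas into one observation. Both are valid, and your bookkeeping of the identifications $K_0(\cD)\twoheadrightarrow\Lambda$ and $\Hom_{K_0(\cC)}(\Lambda,\bbC)\cong(\bbR\Lambda)^*_\bbC$ is correct.
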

\begin{proof}
Section 4 of the paper \cite{BDL_root} (written in the generality of 2-CY categories associated to simply-laced Kac--Moody type $\Gamma$) provides an algorithm which explicitly constructs a semistable object of class $\alpha$ for each $\alpha \in \Phi^+$ a positive \emph{real} root.
One can verify that the algorithm goes over verbatim in our setting.
(We do not need -- nor do we have -- uniqueness of the stable object of a particular phase even when $\underline{Z}'$ is generic.)
It follows that $\underline{Z}'(\alpha)$ is non-zero since $\alpha$ is the class of a semistable object.
\end{proof}

The following is the corresponding result on the closed imaginary cone.
\begin{lemma}
Let $\tau = (\cP,Z= \underline{Z}\circ \nu) \in \Stab_\cC(\cH)$ and denote $\underline{Z}' \in (\bbR\Lambda)^*_\bbC$ its image under $\cZ^\dagger_\cC$.
Then for any element $v \neq 0$ in the closed imaginary cone $I$, we have $\underline{Z}'(v) \neq 0$.
\end{lemma}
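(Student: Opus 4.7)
The plan is to observe that this lemma is essentially immediate from the combination of \cref{lem:linearstabfundchamber} and \cref{prop:imagcone}\eqref{item:imagpositivecone}, together with the defining containment $C \subseteq \Upsilon$ noted after \cref{defn:hyperplanecomplement}. In contrast to the preceding lemma on real positive roots, which required the BDL algorithm to produce honest semistable objects, the imaginary cone lemma is \emph{not} about the existence of semistable objects of class $v$, but only about non-vanishing of the central charge on $v$. This makes the argument much more elementary, since we know by hypothesis that $\underline{Z}'$ lives in the ``positive'' complexified chamber.

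More precisely, first I would invoke \cref{lem:linearstabfundchamber}, which says that $\cZ^\dagger_\cC$ restricts to a homeomorphism $\Stab_\cC(\cH) \cong C$. So from the hypothesis $\tau \in \Stab_\cC(\cH)$, we immediately obtain $\underline{Z}' \in C$, i.e.\ $\underline{Z}'(\alpha_s) \in \bbH \cup \bbR_{<0}$ for every $s \in \Gamma_0$. Next I would use \cref{prop:imagcone}\eqref{item:imagpositivecone}, which tells us that $I \subseteq \bbR_{\geq 0}\Lambda$, so any non-zero $v \in I$ can be written as $v = \sum_{s \in \Gamma_0} k_s \alpha_s$ with $k_s \in \bbR_{\geq 0}$, not all zero.

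The remaining step is a one-line half-plane argument. Since $\underline{Z}'(v) = \sum_s k_s \underline{Z}'(\alpha_s)$ is a non-negative real linear combination (with at least one strictly positive coefficient) of elements in $\bbH \cup \bbR_{<0}$, examining imaginary parts gives $\Im \underline{Z}'(v) = \sum_s k_s \Im \underline{Z}'(\alpha_s) \geq 0$. If this sum is strictly positive, we are done. If it is zero, then for every $s$ with $k_s > 0$ we must have $\underline{Z}'(\alpha_s) \in \bbR_{<0}$; but then $\underline{Z}'(v)$ is a strictly positive real combination of strictly negative reals, hence strictly negative, and in particular non-zero.

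There is no significant obstacle here: the content of the lemma has already been packaged into the inclusion $C \subseteq \Upsilon$ stated just after \cref{defn:hyperplanecomplement}, whose justification is exactly the half-plane argument above. One could equivalently phrase the entire proof as: $\underline{Z}' \in C \subseteq \Upsilon = (\bbR\Lambda)^*_\bbC \setminus \bigcup_{v \in I \setminus \{0\}} H_v$, and unpacking the definition of $\Upsilon$ yields the claim.
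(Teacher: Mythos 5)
Your proof is correct, and it takes a genuinely different — and considerably more elementary — route than the paper's. The paper's proof instead uses the structure of the imaginary cone as the convex hull of accumulation rays of positive real root rays (\cref{prop:imagcone}\eqref{item:imagconvexlimit}), combined with the support property and the preceding lemma (which, via the algorithm of \cite{BDL_root}, produces a $\tau$-semistable object of class $\alpha$ for each positive real root $\alpha$); the support constant $K$ then gives a uniform lower bound $|\underline{Z}'(\hat{\alpha})| \geq \|\hat{\alpha}\|/K$ on normalised real roots, which passes to accumulation rays and then to the full cone by convexity. Your argument bypasses all of this by simply reading off $\underline{Z}' \in C$ from \cref{lem:linearstabfundchamber} and then invoking $C \subseteq \Upsilon$, whose one-line half-plane verification you spell out correctly (and which the paper already asserts after \cref{defn:hyperplanecomplement}, using exactly \cref{prop:imagcone}\eqref{item:imagpositivecone}).

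Two further observations are worth making. First, your same argument also disposes of the preceding lemma on positive real roots — that lemma, too, is an immediate consequence of $C \subseteq \Upsilon_{\reg}$ — so the contrast you draw between the two lemmas (``the real root lemma required the BDL algorithm'') is not quite accurate: the BDL construction is not \emph{required} for the stated conclusion, it is just the route the paper chooses. Second, the route the paper chooses is not pointless: producing an actual semistable object of class $\alpha$ for each $\alpha \in \Phi^+$ is strictly stronger than mere non-vanishing of $\underline{Z}'(\alpha)$, and the paper feeds this extra information into the support property in the proof of the present lemma (and implicitly into the limit-point argument of \cref{prop:imageofZ}, where one wants control over stability conditions approaching the boundary of $\Stab_\cC(\cH)$, not just over those already in it). So your proof gives a cheaper derivation of the stated lemma, but the paper's more expensive argument is laying groundwork that the trivial containment $C \subseteq \Upsilon$ by itself would not provide.
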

\begin{proof}
By \cref{prop:imagcone}, $I$ is the convex hull of accumulation rays of the rays of positive real roots.  The support property together with the previous lemma shows that any nonzero element $v$ on any of these accumulation rays
has $\underline{Z}'(v) \neq 0$.  The statement for any $v$ in the imaginary cone now follows by convexity.
\end{proof}

We can now complete the identification of the image of $\cZ^\dagger_\cC$.
\begin{proposition} \label{prop:imageofZ}
We have that $(\cZ^\dagger_{\cC})^{-1}(\Upsilon_{\reg}) = \Stab^\dagger_\cC(\cD)$.
In particular, the image of $\cZ^\dagger_\cC: \Stab^\dagger_\cC(\cD) \ra (\bbR\Lambda)^*_\bbC$ is exactly $\Upsilon_{\reg}$.
\end{proposition}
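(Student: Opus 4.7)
The plan is a connectedness argument applied to the preimage $V := (\cZ^\dagger_\cC)^{-1}(\Upsilon_{\reg}) \subseteq \Stab^\dagger_\cC(\cD)$: I will show $V$ is non-empty, open, and closed, so that $V = \Stab^\dagger_\cC(\cD)$ by connectedness of $\Stab^\dagger_\cC(\cD)$; combined with the already established containment $\Upsilon_{\reg} \subseteq \img(\cZ^\dagger_\cC)$ this yields both statements of the proposition. Non-emptiness is immediate from the two preceding lemmas, which together give $\cZ^\dagger_\cC(\Stab_\cC(\cH)) \subseteq \Upsilon_{\reg}$, hence $\Stab_\cC(\cH) \subseteq V$. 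Openness of $V$ follows from $\Upsilon_{\reg}$ being open in $\Hom_{K_0(\cC)}(\Lambda, \bbC) \cong (\bbR\Lambda)^*_\bbC$ together with continuity of $\cZ^\dagger_\cC$.

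The closedness of $V$ is the substantive step. The idea is to combine \cref{lem:orbitnormalisedheart} with the $(\bbR \times \Br^{\ST})$-equivariance of $\cZ^\dagger_\cC$ (which intertwines with the $S^1 \times \bbW$-action on $\Upsilon_{\reg}$; cf.\ \cref{lem:STacitononStabreducestoW}) in order to realise $V$ as the $(\bbR \cdot \Br^{\ST})$-orbit of $\Stab_\cC(\cH)$. Given a sequence $\tau_n \in V$ with $\tau_n \to \tau \in \Stab^\dagger_\cC(\cD)$, write each $\tau_n = \lambda_n \cdot \Psi_n \cdot \sigma_n$ with $\sigma_n \in \Stab_\cC(\cH)^N$, $\Psi_n \in \Br^{\ST}$, and $\lambda_n \in \bbR$ (after $S^1$-normalisation of the central charge). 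The proper discontinuity of the $\bbW$-action on $\Upsilon_{\reg}$ (\cref{prop:Wactionfreepropdiscont}), applied to the convergent sequence $\cZ^\dagger_\cC(\tau_n)$, forces the images of the $\Psi_n$ in $\bbW$ to stabilise along a subsequence, and the $\lambda_n$ to converge modulo $2\pi$; one thereby obtains a convergent subsequence $\sigma_n \to \sigma \in \overline{\Stab_\cC(\cH)}$.

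The main obstacle is then concluding that the limit $\sigma$ actually lies in $\Stab_\cC(\cH)$ (so that $\tau \in V$), rather than on its topological boundary. This reduces, via the support property for $\sigma$, to producing a $\sigma$-semistable object of class $\alpha$ for every positive real root $\alpha \in \Phi^+$; once that is done, the argument of the first preceding lemma gives $\cZ^\dagger_\cC(\sigma)(\alpha) \neq 0$ and the argument of the second preceding lemma extends the non-vanishing to the closed imaginary cone by convexity and the fact that $I$ is the convex hull of accumulation rays of positive real roots (\cref{prop:imagcone}). My plan for the semistable object is to extend the iterative construction of \cite{BDL_root} beyond $\Stab_\cC(\cH)$ to arbitrary stability conditions in $\Stab^\dagger_\cC(\cD)$: since that construction proceeds by applying spherical twists $\sigma_{P_s}$ along a reduced expression for an element of $\bbW$ sending a simple root to $\alpha$ -- a procedure that preserves $\Stab^\dagger_\cC(\cD)$ by \cref{weak braid action} and whose output is a complex of class $\alpha$ semistable with respect to the ambient stability condition -- the support property at $\sigma$ then forces $\cZ^\dagger_\cC(\sigma)(\alpha) \neq 0$ for every $\alpha \in \Phi^+$, giving $\cZ^\dagger_\cC(\sigma) \in \Upsilon_{\reg}$, hence $\sigma \in V$, hence $\tau \in V$, completing the closedness argument.
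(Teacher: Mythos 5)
Your overall strategy — show $V := (\cZ^\dagger_\cC)^{-1}(\Upsilon_{\reg})$ is non-empty, open, and closed in the connected space $\Stab^\dagger_\cC(\cD)$ — is the same as the paper's, and your final ingredient (extending the iterative construction of semistable objects of real-root class from \cite{BDL_root} to the whole component $\Stab^\dagger_\cC(\cD)$, then invoking the support property to force $\cZ^\dagger_\cC(\tau)(\alpha)\neq 0$ for $\alpha\in\Phi^+$ and propagating this into the imaginary cone by the accumulation-ray/convexity argument of \cref{prop:imagcone}) is exactly what the paper is doing in its two preceding lemmas. So the key idea is correct and matches.

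The problem is the intermediate subsequence step, which is both unnecessary and has a genuine gap. You want to apply \cref{prop:Wactionfreepropdiscont} to the convergent sequence $\cZ^\dagger_\cC(\tau_n)$ in order to force the images of $\Psi_n$ in $\bbW$ to stabilise and the $\sigma_n \in \Stab_\cC(\cH)^N$ to converge. But proper discontinuity of the $\bbW$-action is established only on $\Upsilon_{\reg}$ itself, not on all of $(\bbR\Lambda)^*_\bbC$; since whether the limit $\cZ^\dagger_\cC(\tau)$ lies in $\Upsilon_{\reg}$ is precisely what you are trying to prove, the proper-discontinuity argument cannot be invoked at this stage without circularity. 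In addition, $C^N$ (hence $\Stab_\cC(\cH)^N$) is not compact, so even if the group elements stabilise along a subsequence one still cannot extract a convergent subsequence of the $\sigma_n$ without further argument — the $\sigma_n$ could run off to the boundary of $C^N$ and diverge in norm.

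Happily, the detour through $\sigma\in\overline{\Stab_\cC(\cH)}$ is entirely superfluous: once you have accepted (as you do) that the BDL construction produces a $\tau$-semistable object of class $\alpha$ for every $\alpha\in\Phi^+$ and every $\tau\in\Stab^\dagger_\cC(\cD)$ — the construction is achieved by twisting via $\Br^{\ST}$ and so only uses membership in the connected component $\Stab^\dagger_\cC(\cD)$ — you can apply this directly to the limit point $\tau$. The support property at $\tau$ then gives $\cZ^\dagger_\cC(\tau)(\alpha)\neq 0$ for all $\alpha\in\Phi^+$, and the accumulation-ray argument plus convexity gives $\cZ^\dagger_\cC(\tau)(v)\neq 0$ for all $v\in I\setminus\{0\}$, hence $\cZ^\dagger_\cC(\tau)\in\Upsilon_{\reg}$ and $\tau\in V$. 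This is exactly the paper's short proof, and removing the subsequence step removes the gap.
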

\begin{proof}
Since $\Upsilon_{\reg}$ is open, $(\cZ^\dagger_{\cC})^{-1}(\Upsilon_{\reg})$ is open in $\Stab^\dagger_\cC(\cD)$.  
It is therefore sufficient to show that $(\cZ^\dagger_{\cC})^{-1}(\Upsilon_{\reg}) \subseteq \Stab^\dagger_\cC(\cD)$ contains all of its limit points.
Any limit point $Z$ that is not in $(\cZ^\dagger_{\cC})^{-1}(\Upsilon_{\reg})$ must send some point $v \in \Phi^+ \cup (I \setminus \{0\})$ to zero.
But the previous two lemmas show that this is impossible.
\end{proof}

With this identification, the following proposition is an immediate strengthening of \cref{lem:orbitnormalisedheart}, which should be compared to Proposition \ref{prop:WandS1actiongenerates}.  
\begin{proposition} \label{prop:staborbitofheart}
Let $\tau \in \Stab^\dagger_\cC(\cD)$.
Then there exists $a \in \bbR$ and $\Psi \in \Br^{\ST}$ such that $a \cdot \Psi \cdot \tau \in \Stab_\cC(\cH)^N$; if $\bbW$ is finite, $a \in \bbR$ can be choosen to be zero (note that $\Stab_\cC(\cH)^N = \Stab_\cC(\cH)$ in this case).
\end{proposition}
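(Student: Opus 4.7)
The plan is to combine the Coxeter-theoretic \cref{prop:WandS1actiongenerates} -- that $\bbW$ together with the $S^1$-scaling suffices to move any point of $\Upsilon_{\reg}$ into the normalised chamber $C^N$ -- with \cref{prop:imageofZ} (which identifies the image of $\cZ^\dagger_\cC$ with $\Upsilon_{\reg}$) and \cref{lem:orbitnormalisedheart} (which does the harder work of moving a stability condition whose central charge already lies in $\Upsilon^N_{\reg}$ into $\Stab_\cC(\cH)^N$ using only $\Br^{\ST}$). The key observation is that the $\bbR$-action on $\Stab^\dagger_\cC(\cD)$ lifts the $S^1$-scaling on central charges, and that, by \cref{lem:STacitononStabreducestoW} together with the surjection $\Br^{\ST} \twoheadrightarrow \bbW$ of \cref{prop:STactionagreesW}, the $\Br^{\ST}$-action on $\Stab^\dagger_\cC(\cD)$ lifts the $\bbW$-action on central charges. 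Given these lifts, the proposition is essentially a synthesis of the three results above, the only delicate point being the (routine) commutativity of the $\bbR$- and $\Br^{\ST}$-actions.

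Explicitly, given $\tau \in \Stab^\dagger_\cC(\cD)$, I would first apply \cref{prop:WandS1actiongenerates} to $\cZ^\dagger_\cC(\tau) \in \Upsilon_{\reg}$ to obtain $k = e^{i\theta} \in S^1$ and $w \in \bbW$ with $w \cdot k \cdot \cZ^\dagger_\cC(\tau) \in C^N$. Setting $a := -\theta/\pi \in \bbR$ (so that the $\bbR$-action rotates central charges by $e^{-i\pi a} = k$) and picking any $\Psi_0 \in \Br^{\ST}$ mapping to $w$, the central charge of $a \cdot \Psi_0 \cdot \tau$ lies in $C^N \subseteq \Upsilon^N_{\reg}$; then \cref{lem:orbitnormalisedheart} produces a further $\Psi_1 \in \Br^{\ST}$ such that $\Psi_1 \cdot a \cdot \Psi_0 \cdot \tau \in \Stab_\cC(\cH)^N$. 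A short unwinding of the definitions shows that the $\bbR$- and $\Br^{\ST}$-actions commute (on slicings, $(\Psi(\cP))(\phi + a) = \Psi(\cP(\phi + a))$; on central charges, the scalar $e^{-ia\pi}$ commutes with precomposition by $\Psi^{-1}$), so setting $\Psi := \Psi_1 \Psi_0 \in \Br^{\ST}$ we rewrite the composition as $a \cdot \Psi \cdot \tau \in \Stab_\cC(\cH)^N$, as required. When $\bbW$ is finite we have $I = \{0\}$, so by the conventions of \cref{defn:normalisedspace} $C^N = C$ and $\Upsilon^N_{\reg} = \Upsilon_{\reg}$; \cref{prop:WandS1actiongenerates} then applies with $k = 1$, forcing $a = 0$ in the argument above.
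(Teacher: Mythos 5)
Your proof is correct and takes essentially the same route as the paper, which likewise combines \cref{prop:imageofZ}, the $\bbR$-lift of the $S^1$-scaling, and \cref{lem:orbitnormalisedheart}, with commutativity of the $\bbR$- and $\Br^{\ST}$-actions closing the argument. The only minor redundancy is your intermediate passage through $C^N$ via \cref{prop:WandS1actiongenerates} and the lift $\Psi_0$ of $w$: the paper rescales only into $\Upsilon_{\reg}^N$ and lets \cref{lem:orbitnormalisedheart} supply the entire $\Br^{\ST}$-element at once.
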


\subsection{Covering structures} \label{sec:coveringstructures}
\begin{proposition} \label{prop:faithfulactiononStab}
Let $\Psi \in \Br^{\ST}$.  Suppose $\Psi(\cH) \subset \cH$, where $\cH$ is the heart of linear complexes in $\cD$.
Then $\Psi$ is isomorphic to the identity functor $\mbox{id}_{\cD}$.
\end{proposition}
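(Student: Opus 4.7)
My strategy is to reduce to the criterion ``$\Psi(P_s) \cong P_s$ for all $s$'', which suffices for $\Psi \cong \id_{\cD}$ by the biconditional alluded to in \cref{sec:conjandconsequence}.

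First, since $\Psi$ preserves $\cH$, it takes $\Stab_{\cC}(\cH)$ to itself, and \cref{lem:linearstabfundchamber} identifies the latter via $\cZ$ with the complexified chamber $C$. By \cref{prop:STactionagreesW}, the resulting action factors through the image $w \in \bbW$ of $\Psi$, so $w$ preserves $C$. For $Z \in C$ of generic imaginary part, $Z_{\im}$ lies in the open fundamental chamber $C_{\bbR}$; the disjointness of distinct $\bbW$-translates of $C_{\bbR}$ (\cref{prop:titscone}) then forces $w = 1$.

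With $w = 1$, we have $[\Psi(P_s)] = \alpha_s$ in $\Lambda$. Since $\Psi|_{\cH}$ is an exact autoequivalence of the abelian category $\cH$, it permutes simples, so $\Psi(P_s) \cong P_s \otimes E \<k_s\>[k_s]$ for some $E \in \Irr(\cC)$ and $k_s \in \bbZ$ ($\cC$-equivariance pinning down the vertex $s$). Matching $\alpha_s = [E]\cdot \alpha_s$ in $\Lambda$ through the Frobenius--Perron homomorphism forces $E = \1$. For each edge $(s, t) \in \Gamma_1$, \cref{graded hom space} supplies $F \in \Irr(\cC)$ with $\Hom_{\cD}(P_s \otimes F, P_t\<-1\>) \neq 0$; applying $\Psi$ and using that morphisms in $\Kom^b$ between single-term complexes vanish unless their cohomological degrees coincide, I obtain $k_s = k_t$. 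Hence $k_s = k$ is constant on each connected component of $\Gamma$.

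Showing $k = 0$ is the subtle step. The structural reason is that each spherical twist $\sigma_{P_s}$ shifts its own generator by $P_s \mapsto P_s\<2\>[1]$, producing a $2{:}1$ ratio of internal to cohomological shifts, whereas $\Psi(P_s) \cong P_s\<k\>[k]$ demands a $1{:}1$ ratio. I expect to make this rigorous by introducing a length-type homomorphism on $\Br^{\ST}$ that records the cumulative $\<2\>[1]$-shift arising from the spherical twists in a word, and whose value on a matched-shift element is incompatible with $k \neq 0$. Once this is in hand, isomorphisms $\eta_s \colon \Psi(P_s) \to P_s$ can be propagated via $\cC$-equivariance and commutation with $\<1\>, [1]$, and checked for naturality on the generating arrows from \cref{graded hom space}, giving the desired natural isomorphism $\Psi \cong \id_{\cD}$.
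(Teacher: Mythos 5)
Your reduction to $\Psi(P_s) \cong P_s\<k\>[k]$ with $k$ constant --- via $w = 1$ in $\bbW$, the $\Lambda$-class computation pinning down the vertex and $E = \1$, and the edge argument for $k_s = k_t$ --- is correct and is a genuinely different (and pleasant) preliminary simplification from what the paper does. But it stops exactly where the real difficulty begins, and your proposal does not resolve it. The step $k = 0$ is only sketched, and the sketched ``length-type homomorphism'' does not obviously exist: the natural candidate is the determinant of the $K_0(\cC)[q^{\pm 1}]$-linear $\Br^{\ST}$-action on $K_0(\cD)$, where $\det(\sigma_{P_s}) = -q^2$ gives $\det(\Psi) = (-q^2)^n$ for $n$ the signed word length, while $\Psi(P_s)\cong P_s\<k\>[k]$ for all $s$ gives $\det(\Psi) = (-q)^{k|\Gamma_0|}$. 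Equating $q$-degrees yields only $2n = k|\Gamma_0|$, which constrains $k$ but certainly does not force $k = 0$. Something substantially stronger is needed, and you do not supply it.

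The final step also has a gap. Appealing to the ``biconditional'' in \cref{sec:conjandconsequence} is circular: the paper presents that biconditional explicitly as a \emph{consequence} of \cref{prop:faithfulactiononStab}. The alternative you sketch --- propagating the isomorphisms $\eta_s$ to a natural isomorphism $\Psi \cong \id_{\cD}$ by checking naturality on generating morphisms --- runs into the standard obstacle that naturality does not extend across cones in a triangulated category. The paper's proof is designed precisely to sidestep both issues at once: it works at the level of bimodule complexes, realizing $\Psi$ as an object of $\Kom^b(\mathbb{U})$ with exactly one summand $A\<0\>[0]$ (all other summands being shifts of $P_i \otimes E \otimes {}_jQ$), introducing a linear $t$-structure on $\Kom^b(\mathbb{U})$, and using $t$-cohomology of $\Psi \otimes_A P_j$ to force $\Psi \cong A\<0\>[0]$ as an \emph{object}, hence as a functor. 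This bimodule analysis simultaneously proves $k = 0$ and produces the natural isomorphism; your geometric reduction, nice as it is, does not substitute for it.
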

\begin{proof}
Throughout this proof we denote $A:= \zig$.
    Each $\Psi \in \Br^{\ST}$ is, by definition, a bounded complex of $(A,A)$-bimodules.  
    The proof will follow from an explicit analysis of such complexes.  
    In particular, we write $\Psi$ as a word in the $\cC$-spherical twist generators and their inverses
    \[
    \Psi = \sigma_{P_{i_1}}^{\epsilon_1}\sigma_{P_{i_2}}^{\epsilon_2}\hdots\sigma_{P_{i_k}}^{\epsilon_k}
    \]
    so that the complex $\Psi$ is obtained by tensoring together the two-term complexes of $(A,A)$-bimodules given by:
    \[
    \sigma_{P_i} = (0 \ra P_i\otimes {}_iQ\<1\> \ra A \ra 0), \quad \sigma_{P_i}^{-1} := \sigma_{P_i}' =  (0 \ra A \ra P_i\otimes {}_iQ \<-1\> \ra 0).
    \]
    Recall that our convention is that the identity bimodule $A$ is in homological degree 0 with no internal grading shift; see \eqref{eqn:sphericaltwist} and \eqref{eqn:spehricaltwistinverse}.
    
    What is important here is that the complex $\Psi$ is homotopic to a complex which whose underlying cochain object has
    \begin{itemize}
        \item a single summand of the form $A\<0\>[0]$, and
        \item all other summands isomorphic to some shift of a projective bimodule $P_i\otimes E \otimes {}_jQ[a]\<b\>$ for some simple $E \in \Irr(\cC)$.
    \end{itemize}
    We claim that, amongst all complexes of bimodules of such form, the only one whose action preserves the category $\cH$ of linear complexes is the complex containing only the single summand $A\<0\>[0]$ (and no non-zero differential).

    To establish this claim, we will study a particular $t$-structure on the triangulated category where the bimodule complexes $\Psi$ live, and study the truncations of $\Psi$ in this $t$-structure.  Let $\mathbb{U}$ be the (idempotent complete) additive subcategory of graded bimodules over $A$, generated by the $(A,A)$ bimodules $A\<k\>$ and $P_i\otimes E \otimes {}_jQ\<k\>$, $k\in \Z$, with $E \in \Irr(\cC)$ (see Definition \ref{defn: bimodule category}).  Let $\cD(\mathbb{U}) := \Kom^b(\mathbb{U})$ denote the bounded homotopy category of the additive category $\mathbb{U}$; note that by construction $\Psi$ is an object of $\cD(\mathbb{U})$, and that tensoring (over $A$) with any object of $\cD(\mathbb{U})$ is an endofunctor of $\cD$. 

    The triangulated category $\cD(\mathbb{U})$ has a natural $t$-structure on it, whose heart $\cH(\mathbb{U})$ is the category of linear complexes of bimodules: an object of $\cH(\mathbb{U})$
    is a complex homotopic to one whose cochain objects are all direct sums of bimodules of the form $P_i\otimes E \otimes {}_jQ\<k\>[k]$ for $E \in \Irr(\cC)$ or direct sums of the form $A\<k\>[k]$.  These possible summands, regarded as single-term complexes, are the simple objects of the heart $\cH(\mathbb{U})$.  We call this the \emph{linear $t$-structure} on $\cD(\mathbb{U})$; it is similar to the $t$-structure of $\cD$ described in \S\ref{sec:linearheart}, but now for $(A,A)$-bimodules rather than left $A$-modules.
    
    Let $X\in \cD(\mathbb{U})$ be such that, after possibly replacing $X$ with a homotopic complex,  
    \begin{itemize}
    \item the cochain object of $X$ has exactly one summand of the form $A\<0\>[0]$, and no summands isomorphic to $A\<k\>[\ell]$ for $(k,\ell)\neq (0,0)$
     (the complexes $\Psi \in \Br^{\ST}$ satisfy this condition);
    \item $X\otimes_A Y \subset \cH$ for all $Y\in \cH$.
    \end{itemize}
    
    We claim that $X\cong A\<0\>[0]$ in $\cD(\mathbb{U})$.  To see this, consider the top non-zero cohomology group of $X$, where cohomology is taken with respect to the linear $t$-structure on $\cD(\mathbb{U})$.  If this top cohomology is in degree greater than $0$, then this top cohomology has a simple summand of its head of the form $P_i\otimes E \otimes {}_jQ \<k\>[\ell]$, where $k>\ell$. 
    If we consider the complex of left modules $X\otimes_A P_j$, then the term $P_i\otimes E \otimes {}_jQ \<k\>[\ell] \otimes_A P_j$ in that complex will have a nonzero summand in degree greater than $0$, and cannot Gaussian eliminate.  In other words, the existence of the term $P_i\otimes E \otimes {}_jQ \<k\>[\ell]$, where $k>\ell$, implies that the complex of left modules $X\otimes P_j\in \cD$ has $t$-cohomology (with respect to the linear $t$-structure) in degree greater than $0$, contradicting the assumption that $X \otimes_A \cH \subset \cH$.  
An analogous argument shows that the lowest non-zero cohomology of $X$ cannot be in degree $<0$. 
    
It follows that the complex of bimodules $X$ is in the heart $X\in \cH(\mathbb{U})$ of the t-structure on $\cD(\mathbb{U})$.  This means that that the complex $X$ is a linear complex of bimodules, and is therefore of the form
\[
\cdots \ra 
\oplus_{u,v} P_{u}\otimes M_{u,v} \otimes {}_{v}Q\<-1\> \ra 
A \oplus_{w,x} P_{w}\otimes M_{w,x} \otimes {}_{x}Q \ra 
\oplus_{y,z} P_{y}\otimes M_{y,z} \otimes {}_{z}Q\<1\> \ra 
\cdots ,
\]
where the term $A$ appears in cohomological degree 0 and $M_{?,?}$ are objects in the fusion category $\cC$.
Now we essentially repeat the above argument to conclude that if $X \otimes_A \cH \subset \cH$, then the only term allowed in this complex is the middle term $A\<0\>[0]$:
consider a simple summand of the head of $X$ in the abelian category of linear complexes.  If there is summand of the form $P_i \otimes E \otimes {}_jQ\<k\>[k]$, then $X\otimes P_j$ would have $t$-cohomology (with respect to the linear $t$-structure) of degree $1$, which means that $X \otimes_A -$ wouldn't preserve $\cH$.  Thus the head of $X$ is the term $A\<0\>[0]$.  Analogously, $A\<0\>[0]$ is the only possible simple quotient of $X$.  It follows that $X\cong A\<0\>[0]$, as desired.
\end{proof}

The following variant of \cref{prop:faithfulactiononStab} is not used in what follows, but it is occasionally useful so we record it here.  The proof is essentially identical to the proof above.
\begin{proposition}\label{prop:variant}
Let $\Psi\in \Br^{\ST}$.  Suppose there exists an integer $a$ such that for all $s$,
$$
\Psi(P_s) \cong P_s [a].
$$
Then $\Psi \cong [a]$.
\end{proposition}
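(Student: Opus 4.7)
My plan is to adapt the proof of \cref{prop:faithfulactiononStab} to track the cohomological shift $[a]$. I would proceed in three steps, ultimately reducing to the previous proposition.

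First, since $\Psi$ is $\cC$-equivariant and commutes with $\<1\>$, the hypothesis $\Psi(P_s)\cong P_s[a]$ propagates to $\Psi(P_s\otimes E\<k\>)\cong P_s\otimes E\<k\>[a]$ for every simple of the linear heart $\cH$ (see \eqref{eqn:simplesoflinearheart}). Because $\cH$ is closed under extensions and $\Psi$ is exact, this gives $\Psi(\cH)\subseteq \cH[a]$; equivalently, writing $X:=\Psi[-a]$ as a complex of $(\zig,\zig)$-bimodules, we have $X\otimes_{\zig}\cH\subseteq\cH$. Next, I would invoke the first half of the proof of \cref{prop:faithfulactiononStab}---the part that uses only the hypothesis $X\otimes_{\zig}\cH\subseteq\cH$ via the cohomology/Gaussian-elimination argument in the linear $t$-structure on $\cD(\mathbb{U})$---to conclude that $X\in\cH(\mathbb{U})$, i.e., $\Psi\in\cH(\mathbb{U})[a]$.

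For the final step, I would repeat the head analysis from \cref{prop:faithfulactiononStab}, this time performed inside the shifted heart $\cH(\mathbb{U})[a]$. Any simple summand of the head of $\Psi$ of the form $P_i\otimes E\otimes{}_jQ\<k\>[k+a]$ is ruled out by tensoring with $P_j$: such a term would produce $t$-cohomology of $\Psi(P_j)=\Psi\otimes_{\zig} P_j$ outside $\cH[a]$, contradicting Step~1. Consequently, the only permissible head-summands are of the form $A\<k\>[k+a]$. On the other hand, since $\Psi\in\Br^{\ST}$ is a word in $\cC$-spherical twists, it admits a representative complex whose cochain object has exactly one summand of the form $A\<0\>[0]$ and no other $A\<k\>[m]$ summands. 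Matching this rigid $A\<0\>[0]$ summand against the allowable heart-simples $A\<k\>[k+a]$ forces both $k=0$ and $a=0$. With $a=0$ in hand, the hypothesis becomes $\Psi(\cH)\subseteq\cH$, and \cref{prop:faithfulactiononStab} then yields $\Psi\cong\id=[a]$.

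The hard part will be the head analysis in the shifted heart $\cH(\mathbb{U})[a]$: one has to reconcile the ``single $A\<0\>[0]$ summand'' property of $\Psi$---which is a property of its standard representative as a word in the generators $\sigma_{P_s}^{\pm 1}$---with the minimal linear representative forced by membership in $\cH(\mathbb{U})[a]$. Once this compatibility is spelled out (the analogous argument in the unshifted setting is the content of the final paragraphs of the proof of \cref{prop:faithfulactiononStab}), the rest is a direct reduction to the previous proposition.
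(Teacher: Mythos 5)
Your overall plan---adapt the proof of \cref{prop:faithfulactiononStab} while tracking the cohomological shift $[a]$, then reduce to that proposition---matches the paper's own suggestion that the proof is ``essentially identical to the proof above,'' and your observation that $a=0$ is forced by the persistence of the $A\<0\>[0]$ summand is correct (and a worthwhile sharpening of the stated conclusion).

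However, Step~2 mischaracterizes the argument it invokes. The first half of the proof of \cref{prop:faithfulactiononStab} does \emph{not} use only the hypothesis $X\otimes_{\zig}\cH\subseteq\cH$; it also uses the hypothesis that $A\<0\>[0]$ is the unique $A$-summand of the cochain object of $X$. That is precisely what lets the paper assert that a head-summand of the top $t$-cohomology in positive degree must be of the form $P_i\otimes E\otimes{}_jQ\<k\>[\ell]$ rather than some $A\<k\>[\ell]$. For your $X=\Psi[-a]$ with $a\neq 0$, the unique $A$-summand is $A\<0\>[-a]$, sitting in cohomological degree $a$; so the summand hypothesis is violated, and (say when $a>0$) the $A$-summand can a priori land in the head of the top cohomology, where tensoring with $P_j$ produces a $P_j$-summand that the quoted argument does not obviously forbid from cancelling. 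The fix is to run the cohomology/Gaussian-elimination argument on $\Psi$ itself---which \emph{does} satisfy the summand hypothesis---replacing ``top cohomology in degree $>0$'' by ``top cohomology in degree $>-a$,'' to conclude $\Psi\in\cH(\mathbb{U})[a]$. At that point the reconciliation you flag as the ``hard part'' in Step~3 is essentially automatic: every $A$-summand of a linear complex in $\cH(\mathbb{U})[a]$ has the form $A\<k\>[k+a]$, and the surviving $A\<0\>[0]$ summand of $\Psi$ matches this only when $k=0$ and $a=0$, after which \cref{prop:faithfulactiononStab} applies.
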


Let $[2\bbZ]$ denote the subgroup of $\Aut(\cD)$ generated by the triangulated shift $[2]$.  
\begin{corollary}
The commuting actions of $[2\bbZ]$ and $\Br^{\ST}$ on $\Stab^\dagger_\cC(\cD)$ are free and properly discontinuous.
\end{corollary}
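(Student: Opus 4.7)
The plan is to treat the two commuting actions separately, establishing freeness and proper discontinuity for each. Freeness of $[2\bbZ]$ is immediate: the $\bbC$-action on $\Stab(\cD)$ described in \cref{sec:groupactionsonstab} is free, and $[2k]$ agrees with $2k \in \bbR \subset \bbC$ (both shift slicings by $2k$ and leave $Z$ unchanged since $e^{-2ki\pi}=1$). For $\Br^{\ST}$, suppose $\Psi\cdot\tau=\tau$ with $\Psi\in\Br^{\ST}$ and $\tau\in\Stab_\cC^\dagger(\cD)$. Then $\Psi$ fixes $\cZ^\dagger_\cC(\tau) \in \Upsilon_{\reg}$ (\cref{prop:imageofZ}); by \cref{prop:STactionagreesW} combined with the free $\bbW$-action on $\Upsilon_{\reg}$ (\cref{prop:Wactionfreepropdiscont}), the image $w$ of $\Psi$ in $\bbW$ is $\id$. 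Using \cref{prop:staborbitofheart}, pick $(a,\Phi)\in\bbR\times\Br^{\ST}$ with $\tau_0 := a\cdot\Phi\cdot\tau\in\Stab_\cC(\cH)^N$; since $\bbR$ and $\Br^{\ST}$ commute as actions on $\Stab$, the conjugate $\Psi_0 := \Phi\Psi\Phi^{-1}\in\Br^{\ST}$ fixes $\tau_0$, whence $\Psi_0(\cH)=\cH$. Then \cref{prop:faithfulactiononStab} yields $\Psi_0\cong\id_\cD$, so $\Psi\cong\id_\cD$.

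Proper discontinuity in both cases follows by combining the local homeomorphism property of $\cZ^\dagger_\cC$ with the properly discontinuous $\bbW$-action on $\Upsilon_{\reg}$. Fix $\tau \in \Stab_\cC^\dagger(\cD)$; using \cref{prop:Wactionfreepropdiscont}, choose a neighborhood $V \ni \cZ^\dagger_\cC(\tau)$ with $w\cdot V\cap V\neq\emptyset$ iff $w=\id$, and then a neighborhood $U\ni\tau$ on which $\cZ^\dagger_\cC$ restricts to a homeomorphism into $V$. If $[2k]\cdot U\cap U\neq\emptyset$, take $\tau'\in U$ with $[2k]\tau' \in U$; since $[2k]$ fixes central charges, injectivity on $U$ gives $[2k]\tau'=\tau'$, so $k=0$ by freeness. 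If $\Psi\cdot U\cap U\neq\emptyset$ for $\Psi\in\Br^{\ST}$, then the image $w$ of $\Psi$ in $\bbW$ satisfies $w\cdot V\cap V\neq\emptyset$, forcing $w=\id$; hence $\Psi$ acts trivially on central charges, so by injectivity on $U$, $\Psi\tau'=\tau'$, and freeness yields $\Psi\cong\id_\cD$.

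The principal obstacle is the freeness of the $\Br^{\ST}$-action, resting on the bimodule-complex analysis of \cref{prop:faithfulactiononStab}: one must rule out nontrivial spherical twist words preserving the linear heart $\cH$. Every other step is essentially bookkeeping using the compatibility (\cref{prop:STactionagreesW}) between the $\Br^{\ST}$- and $\bbW$-actions and the local homeomorphism $\cZ^\dagger_\cC$.
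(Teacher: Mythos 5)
Your proof is correct and follows essentially the same strategy as the paper's: freeness of $[2\bbZ]$ via the free $\bbC$-action, freeness of $\Br^{\ST}$ by reducing to $\Stab_\cC(\cH)$ via \cref{prop:staborbitofheart} and then invoking \cref{prop:faithfulactiononStab}, and proper discontinuity by combining the proper discontinuity of the $\bbW$-action on $\Upsilon_{\reg}$ with the fact that elements of $\Br^{\ST}$ acting trivially on central charges must be the identity. The one cosmetic difference is in the final step: the paper appeals to the Bridgeland metric (distinct stability conditions with identical central charges are at generalised distance $\geq 1$), while you use the local injectivity of $\cZ^\dagger_\cC$ on a small neighbourhood $U$; both are consequences of Bridgeland's local-homeomorphism theorem, so these are two phrasings of the same argument. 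Your preliminary observation that the image $w$ of $\Psi$ in $\bbW$ must be trivial whenever $\Psi\cdot\tau=\tau$ is correct but unused in the freeness argument (the conjugation step works without it); it would be cleaner to omit it there and reserve it for the proper-discontinuity step where it is actually load-bearing.
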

\begin{proof}
It is immediate that the $[2\bbZ]$-action is free and properly discontinuous.

From \cref{prop:faithfulactiononStab}, we see that for all $\Psi \in \Br^{\ST}$ and all $\tau \in \Stab_\cC(\cH)$, if $\Psi \cdot \tau = \tau$ then $\Psi = \id$.
It then follows from \cref{prop:staborbitofheart} that the action of $\Br^{\ST}$ on the whole $\Stab^\dagger_\cC(\cD)$ is free.
From this we can deduce that the $\Br^{\ST}$-action is also proper.
Indeed, $\Psi$ acts locally via its image in $\bbW$, whose action is free and proper; when the action of $\Psi \neq \id \in \Br^{\ST}$ is locally trivial (i.e.\ the image of $\Psi$ in $\bbW$ is the identity), we have that $\Psi\cdot \tau \neq \tau$ while both $\Psi\cdot \tau$ and $\tau$ have the same central charge, so the distance (under the Bridgeland metric) between $\Psi\cdot\tau$ and $\tau$ must be $\geq 1$.
\end{proof}

\begin{remark}\label{rem:Ikeda}
In \cite{ikeda2014stability}, Ikeda considers the action of the spherical twist group $\Br^{\ST}$ on the space of stability conditions on the derived category of modules over the preprojective algebra of a symmetric Kac--Moody quiver.  In loc.\ cit., a proof that the spherical twist group acts faithfully on $\Stab^\dagger(\cD)$ is missing (we thank Michael Weymss for pointing this out to us).  Proposition \ref{prop:faithfulactiononStab} above, transported by Koszul duality to the derived category of preprojective algebras, fills this gap in Ikeda's argument (and generalises it from symmetric Kac--Moody braid groups to the Artin--Tits groups of arbitrary Coxeter systems.)
See also Remark 3.7 in \cite{bridgeland_2009}.  
\end{remark}

Let $\widehat{\Br}^{\ST}$ denote the smallest subgroup of $\Aut(\cD)$ containing $[2]$ and $\Br^{\ST}$.
\begin{proposition} \label{prop:quotientisWquotient}
Let $\underline{\pi}$ denote the composition $\Stab_\cC^\dagger(\cD) \xra{\cZ^\dagger_\cC} \Upsilon_{\reg} \twoheadrightarrow \Upsilon_{\reg}/\bbW$.
Then $\underline{\pi}$ factors through $\Stab_\cC^\dagger(\cD)/\widehat{\Br}^{\ST}$, inducing a homeomorphism $\Stab_\cC^\dagger(\cD)/\widehat{\Br}^{\ST}\cong \Upsilon_{\reg}/\bbW$.
\end{proposition}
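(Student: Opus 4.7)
The plan is to first verify that $\underline{\pi}$ is $\widehat{\Br}^{\ST}$-invariant, so that it descends to a map
\[
\overline{\pi} \colon \Stab^\dagger_\cC(\cD)/\widehat{\Br}^{\ST} \longrightarrow \Upsilon_{\reg}/\bbW,
\]
and then to show that $\overline{\pi}$ is a bijective local homeomorphism. For invariance: the shift $[2]$ fixes central charges (since $[1]$ sends $Z \mapsto -Z$), so $\cZ^\dagger_\cC$ itself is $[2]$-invariant, while by \cref{lem:STacitononStabreducestoW} the $\Br^{\ST}$-action on $\Stab^\dagger_\cC(\cD)$ intertwines with the $\bbW$-action on $\Upsilon_{\reg}$, so the further composition with $\Upsilon_{\reg} \twoheadrightarrow \Upsilon_{\reg}/\bbW$ is $\Br^{\ST}$-invariant. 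Surjectivity of $\overline{\pi}$ is then immediate from \cref{prop:imageofZ}, which identifies the image of $\cZ^\dagger_\cC$ with $\Upsilon_{\reg}$.

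The core content is the injectivity of $\overline{\pi}$. Suppose $\underline{\pi}(\tau_1) = \underline{\pi}(\tau_2)$. Choose $w \in \bbW$ with $w \cdot \cZ^\dagger_\cC(\tau_1) = \cZ^\dagger_\cC(\tau_2)$ and lift it to $\Psi_w \in \Br^{\ST}$ via \cref{prop:STactionagreesW}; after replacing $\tau_1$ by $\Psi_w \cdot \tau_1$, we may assume $\cZ^\dagger_\cC(\tau_1) = \cZ^\dagger_\cC(\tau_2)$. Now invoke \cref{prop:staborbitofheart} for both $\tau_i$ to find $a_i \in \bbR$ and $\Psi_i \in \Br^{\ST}$ with $\mu_i := a_i \cdot \Psi_i \cdot \tau_i \in \Stab_\cC(\cH)^N$. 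Using the intertwinings (and the fact that the $\bbR$-action on central charges reduces to rotation by $e^{-i\pi a}$), one obtains
\[
\cZ^\dagger_\cC(\mu_1) = e^{-i\pi(a_1-a_2)} \cdot \overline{\Psi_1\Psi_2^{-1}} \cdot \cZ^\dagger_\cC(\mu_2),
\]
with both sides in $C^N$. The fundamental-domain statement of \cref{prop:Wactionfreepropdiscont} (for $S^1 \times \bbW$ in the infinite case; for $\bbW$ alone, with $a_i = 0$, in the finite case) together with the freeness of the relevant action then forces $a_1 - a_2 \in 2\bbZ$ and $\overline{\Psi_1\Psi_2^{-1}} = 1 \in \bbW$, as well as $\cZ^\dagger_\cC(\mu_1) = \cZ^\dagger_\cC(\mu_2)$. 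Since $\cZ^\dagger_\cC$ restricts to a homeomorphism on $\Stab_\cC(\cH)$ by \cref{lem:linearstabfundchamber}, this upgrades to $\mu_1 = \mu_2$. Recognising that the $\bbR$-action by an integer $n$ coincides with the triangulated shift $[n]$, one rewrites this equality to express $\tau_1$ as the image of $\tau_2$ under a product of an element of $\Br^{\ST}$ (lifting the trivial element of $\bbW$) and a power of $[2]$, hence by an element of $\widehat{\Br}^{\ST}$.

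To upgrade $\overline{\pi}$ from a bijection to a homeomorphism, observe that $\cZ^\dagger_\cC$ is a local homeomorphism by Bridgeland's deformation theorem (cf.\ \eqref{eqn:stablocalhomeo}), and both quotient maps $\Stab^\dagger_\cC(\cD) \twoheadrightarrow \Stab^\dagger_\cC(\cD)/\widehat{\Br}^{\ST}$ and $\Upsilon_{\reg} \twoheadrightarrow \Upsilon_{\reg}/\bbW$ are local homeomorphisms since the respective group actions are free and properly discontinuous (the corollary following \cref{prop:faithfulactiononStab}, and \cref{prop:Wactionfreepropdiscont}). Hence $\overline{\pi}$ is a bijective local homeomorphism, and therefore a homeomorphism. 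The hard part will be the injectivity step: one must reconcile three independent compatibilities -- the surjection $\Br^{\ST} \twoheadrightarrow \bbW$, the $\bbR$-action with $S^1$-rotation of central charges, and the identification of $[2]$ with $2 \in \bbR$ -- into a single element of $\widehat{\Br}^{\ST}$, and this bookkeeping essentially requires the careful fundamental-domain analysis via the imaginary cone developed in \cref{sec:coximagconehyperplane}.
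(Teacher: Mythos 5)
Your proposal is correct and follows essentially the same strategy as the paper's proof: reduce to the normalised linear-heart chamber via \cref{prop:staborbitofheart}, invoke the fundamental-domain statement of \cref{prop:Wactionfreepropdiscont} for $S^1\times\bbW$ (resp.\ $\bbW$) to force $a_1-a_2\in 2\bbZ$ and $\overline{\Psi_1\Psi_2^{-1}}=1$, and then use the homeomorphism of \cref{lem:linearstabfundchamber}. The only differences are cosmetic bookkeeping -- the paper first normalises the two stability conditions simultaneously so that $\tau_1\in\Stab_\cC(\cH)^N$ and then analyses what $a_2,\Psi_2$ must be, whereas you first lift $w$ to $\Psi_w\in\Br^{\ST}$ to equalise central charges and then normalise both independently -- plus you spell out the final ``bijective local homeomorphism implies homeomorphism'' step that the paper leaves implicit.
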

\begin{proof}
Since for all $\tau \in \Stab_\cC^\dagger(\cD)$, we have $\cZ^\dagger_\cC(\sigma_{P_s} \cdot \tau) = s \cdot \cZ^\dagger_\cC(\tau)$ for each $s\in \Gamma_0$ (by \cref{lem:STacitononStabreducestoW}), and $\cZ^\dagger_\cC([2]\cdot \tau) = \cZ^\dagger_\cC(\tau)$, it follows that $\underline{\pi}$ factors through $\Stab_\cC^\dagger(\cD)/\widehat{\Br}^{\ST}$.

First suppose $I \neq \{0\}$ (i.e.\ $\bbW$ is infinite).
Let $\tau_1, \tau_2 \in \Stab_\cC^\dagger(\cD)$ be such that $\underline{\pi}(\tau_1) = \underline{\pi}(\tau_2)$.
Using \cref{prop:staborbitofheart}, we may act by an element of $\bbR \times \Br^{\ST}$ on both $\tau_1$ and $\tau_2$ simultaneously
so that $\tau_1 \in \Stab_\cC(\cH)^N$.  So without loss of generality we assume that $\tau_1 \in \Stab_\cC(\cH)^N$ at the outset, and we need to show that $\tau_1$ and $\tau_2$ are in the same $\widehat{\Br}^{\ST}$ orbit.

Now $\tau_1 \in \Stab_\cC(\cH)^N$ implies that $\cZ^\dagger_\cC(\tau_1) \in C^N$. 
Since $\underline{\pi}(\tau_1) = \underline{\pi}(\tau_2)$ by assumption, we also have $\cZ^\dagger_\cC(\tau_1) = w \cdot \cZ^\dagger_\cC(\tau_2)$ for some $w \in \bbW$.
Applying \cref{prop:staborbitofheart} again, we have that $a_2 \cdot \Psi_2 \cdot \tau_2\in \Stab_\cC(\cH)^N$ for some $a_2 \in \bbR$ and $\Psi_2 \in \Br^{\ST}$.
By \cref{lem:STacitononStabreducestoW}, we get
\[
\cZ^\dagger_\cC(a_2 \cdot \Psi_2 \cdot \tau_2) = e^{ia_2\pi}\cdot [\Psi_2] \cdot \cZ^\dagger_\cC(\tau_2) = e^{ia_2\pi}\cdot ([\Psi_2]w^{-1}) \cdot \cZ^\dagger_\cC(\tau_1)\in C^N.
\]
(Here $[\Psi_2]$ is the image of $\Psi_2$ under $\Br^{\ST} \ra \bbW$ in \cref{prop:STactionagreesW}.)
Since $\cZ^\dagger_\cC(\tau_1)$ is already in $C^N$ and $C^N$ is a fundamental domain of the action of $S^1\times\bbW$ (see \cref{prop:Wactionfreepropdiscont}), it must be that $a_2 = 2m \in 2\cdot \bbZ$ and $[\Psi_2] = w \in \bbW$.
Hence the equation above reduces to
\[
\cZ^\dagger_\cC([2m]\cdot \Psi_2 \cdot \tau_2) = \cZ^\dagger_\cC(\tau_1)\in C^N,
\]
with $[2m]\cdot \Psi_2 \cdot \tau_2$ and $\tau_1$ both in $\Stab_\cC(\cH)^N$.
Since $\cZ^\dagger_\cC$ restricts to a homeomorphism from $\Stab_\cC(\cH)^N$ to $C^N$, it follows that $[2m]\cdot \Psi_2 \cdot \tau_2 = \tau_1$ as required.

For the case $I = \{0\}$ (i.e.\ $\bbW$ is finite), the proof is a simpler version of the above, where we do not need to normalise since $\Stab_\cC(\cH)^N = \Stab_\cC(\cH)$.
\end{proof}

As a consequence, we obtain:
\begin{corollary}
The local homeomorphism $\underline{\pi}: \Stab_\cC^\dagger(\cD) \xra{\cZ^\dagger_\cC} \Upsilon_{\reg} \twoheadrightarrow \Upsilon_{\reg}/\bbW$ is a covering map with group of deck transformations $\widehat{\Br}^{\ST}$.
\end{corollary}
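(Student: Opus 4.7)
The plan is to deduce the corollary directly from the two preceding results: the free and properly discontinuous action of $\widehat{\Br}^{\ST}$ on $\Stab_\cC^\dagger(\cD)$, and the homeomorphism $\Stab_\cC^\dagger(\cD)/\widehat{\Br}^{\ST}\cong \Upsilon_{\reg}/\bbW$ induced by $\underline{\pi}$.

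First, I would invoke the standard fact that whenever a group $G$ acts freely and properly discontinuously on a (Hausdorff) topological space $X$, the quotient map $q\colon X \twoheadrightarrow X/G$ is a (normal/Galois) covering map whose group of deck transformations is exactly $G$. Applying this to the commuting free, properly discontinuous action of $\widehat{\Br}^{\ST} = \Br^{\ST}\cdot [2\bbZ]$ on $\Stab_\cC^\dagger(\cD)$ established in the preceding corollary, the projection
\[
q\colon \Stab_\cC^\dagger(\cD) \twoheadrightarrow \Stab_\cC^\dagger(\cD)/\widehat{\Br}^{\ST}
\]
is a covering map with deck transformation group $\widehat{\Br}^{\ST}$.

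Next, by \Cref{prop:quotientisWquotient} the composite map $\underline{\pi}$ factors through $q$ and induces a homeomorphism
\[
\overline{\underline{\pi}}\colon \Stab_\cC^\dagger(\cD)/\widehat{\Br}^{\ST} \xrightarrow{\ \cong\ } \Upsilon_{\reg}/\bbW.
\]
Since $\underline{\pi} = \overline{\underline{\pi}}\circ q$ is the composition of a covering map with a homeomorphism, it is itself a covering map, and its group of deck transformations coincides with that of $q$, namely $\widehat{\Br}^{\ST}$.

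The only mild subtlety worth verifying is that the standard quotient-is-a-covering theorem applies in this generality; this only requires $\Stab_\cC^\dagger(\cD)$ to be Hausdorff, which follows from its structure as a complex manifold inherited from the local homeomorphism $\cZ^\dagger_\cC$. No further argument is needed, as all other ingredients have already been established above.
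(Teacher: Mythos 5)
Your proof is correct and follows exactly the route the paper intends: the paper itself supplies no separate argument for this corollary, simply stating "as a consequence, we obtain" after establishing the free, properly discontinuous action of $\widehat{\Br}^{\ST}$ and the homeomorphism $\Stab_\cC^\dagger(\cD)/\widehat{\Br}^{\ST}\cong \Upsilon_{\reg}/\bbW$ in \cref{prop:quotientisWquotient}. You have spelled out the standard topological deduction (quotient by a free, properly discontinuous action of a group $G$ on a Hausdorff space is a $G$-Galois cover) and correctly noted that Hausdorffness comes from the manifold structure, which is exactly what is implicitly being used.
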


Recall from \cref{prop:STactionagreesW} that there is a surjective group homomorphism $\Br^{\ST} \twoheadrightarrow \bbW$.  We denote its kernel, which we call the pure spherical twist group, by $\PBr^{\ST}$.  Let $\widehat{\PBr}^{\ST}$ denote the smallest subgroup of $\widehat{\Br}^{\ST}$ containing $[2]$ and $\PBr^{\ST}$.

\begin{proposition}
The local homeomorphism $\cZ^\dagger_\cC: \Stab_\cC^\dagger(\cD) \ra \Upsilon_{\reg}$ is a covering map, with group of deck transformations given by $\widehat{\PBr}^{\ST}$.
\end{proposition}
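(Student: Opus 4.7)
The plan is to deduce this proposition directly from the previous corollary by passing through the short exact sequence
\[
1 \longrightarrow \widehat{\PBr}^{\ST} \longrightarrow \widehat{\Br}^{\ST} \longrightarrow \bbW \longrightarrow 1
\]
(which is well-defined since $\PBr^{\ST}$ is normal in $\Br^{\ST}$ as the kernel of $\Br^{\ST} \twoheadrightarrow \bbW$, and $[2]$ is central in $\Aut(\cD)$). The first observation I would record is that $\cZ^\dagger_\cC$ is $\widehat{\PBr}^{\ST}$-invariant: by \cref{lem:STacitononStabreducestoW}, any $\Psi \in \Br^{\ST}$ acts on central charges via its image in $\bbW$, so pure spherical twists act trivially; and $[2]$ acts trivially on central charges. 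Hence $\cZ^\dagger_\cC$ descends to a continuous map $\overline{\cZ}\colon \Stab^\dagger_\cC(\cD)/\widehat{\PBr}^{\ST} \to \Upsilon_{\reg}$.

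Next I would show that $\overline{\cZ}$ is a bijection. Surjectivity is immediate from \cref{prop:imageofZ}. For injectivity, suppose $\tau_1, \tau_2 \in \Stab^\dagger_\cC(\cD)$ satisfy $\cZ^\dagger_\cC(\tau_1) = \cZ^\dagger_\cC(\tau_2)$. A fortiori $\underline{\pi}(\tau_1) = \underline{\pi}(\tau_2)$, so \cref{prop:quotientisWquotient} gives $\tau_2 = \Psi \cdot \tau_1$ for some $\Psi \in \widehat{\Br}^{\ST}$. Applying $\cZ^\dagger_\cC$ and using \cref{lem:STacitononStabreducestoW}, the image $[\Psi] \in \bbW$ fixes $\cZ^\dagger_\cC(\tau_1) \in \Upsilon_{\reg}$; but $\bbW$ acts freely on $\Upsilon_{\reg}$ by \cref{prop:Wactionfreepropdiscont}, forcing $[\Psi] = 1$ and hence $\Psi \in \widehat{\PBr}^{\ST}$.

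Finally I would upgrade this continuous bijection to a homeomorphism and conclude the covering statement. Since $\cZ^\dagger_\cC$ is a local homeomorphism, so is the quotient map $\Stab^\dagger_\cC(\cD) \to \Stab^\dagger_\cC(\cD)/\widehat{\PBr}^{\ST}$ when composed with $\overline{\cZ}$; this forces $\overline{\cZ}$ itself to be a local homeomorphism, and a continuous bijective local homeomorphism is a homeomorphism. The subgroup $\widehat{\PBr}^{\ST} \subseteq \widehat{\Br}^{\ST}$ inherits a free and properly discontinuous action on $\Stab^\dagger_\cC(\cD)$ from the previous corollary, so $\Stab^\dagger_\cC(\cD) \to \Stab^\dagger_\cC(\cD)/\widehat{\PBr}^{\ST}$ is a regular covering with deck group $\widehat{\PBr}^{\ST}$. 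Composing with the homeomorphism $\overline{\cZ}$ gives the desired covering $\cZ^\dagger_\cC\colon \Stab^\dagger_\cC(\cD) \twoheadrightarrow \Upsilon_{\reg}$ with deck group $\widehat{\PBr}^{\ST}$.

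No step presents real difficulty once the previous proposition is in hand; the argument is a formal passage through the short exact sequence above. The only conceptual point to be careful about is verifying that the injectivity argument really lands in $\widehat{\PBr}^{\ST}$ (and not in a larger subgroup), which is where the freeness of the $\bbW$-action on $\Upsilon_{\reg}$ is essential.
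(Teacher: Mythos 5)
Your proposal is correct and uses the same ingredients as the paper: the intertwining of the $\Br^{\ST}$-action with the $\bbW$-action via \cref{lem:STacitononStabreducestoW}, the identification of the image from \cref{prop:imageofZ}, the freeness of the $\bbW$-action on $\Upsilon_{\reg}$, and the quotient identification from \cref{prop:quotientisWquotient}. The paper's proof is a more condensed version of exactly this argument, so you are on the same track -- your explicit passage through the short exact sequence and the bijectivity check are just the details the paper leaves to the reader.
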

\begin{proof}
By \cref{lem:STacitononStabreducestoW}, the covering map intertwines the $\Br^{\ST}$-action on $\Stab^\dagger_{\cC}(\cD)$ and the $\bbW$-action on $\Upsilon_{\reg}$.
(Note $\Upsilon_{\reg} \subseteq (\bbR\Lambda)^*_\bbC$ is closed under the $\bbW$-action and we have identified the image of $\cZ^\dagger_\cC$ as $\Upsilon_{\reg}$ in \cref{prop:imageofZ}).
Since $\bbW$ acts freely on $\Upsilon_{\reg}$ (and $[2]$ acts on $\Upsilon_{\reg}$ trivially), it follows from \cref{prop:quotientisWquotient} that the smaller quotient $\Stab_\cC^\dagger(\cD)/\widehat{\PBr}^{\ST}$ is exactly $\Upsilon_{\reg}$.
\end{proof}

With this we have completed the proof of \cref{thm:maintheorem}.

\section{Embedding of hyperplane complements and covering spaces} \label{sec:unfolding}
This section involves a pair of related Coxeter systems, so we reintroduce the diagrams into the notation: $\bbW(\Gamma)$, $\B(\Gamma)$, $\Upsilon_{\reg}(\Gamma)$, $\cC(\Gamma)$ etc. 
We will also use $\cD(\Gamma):= \Kom^b(\zig(\Gamma)\lprmod)$.

In the previous section, \cref{thm:maintheorem} says that the submanifold $\Stab_{\cC(\Gamma)}^\dagger(\cD(\Gamma)) \subseteq \Stab^\dagger(\cD(\Gamma))$ is a covering space of the hyperplane complement $\Upsilon_{\reg}(\Gamma)$, with covering map given by the restriction of the local homeomorphism $\cZ(\Gamma): \Stab(\cD(\Gamma)) \ra \Hom_\bbZ(\Lambda(\Gamma), \bbC)$ to $\Stab_{\cC(\Gamma)}^\dagger(\cD(\Gamma))$.
Here we prove that the map $\cZ(\Gamma)$, restricted to the larger manifold $\Stab^\dagger(\cD(\Gamma))$ (see \cref{defn:distinguishedStab}), is also a covering map onto its image, which can be described as the hyperplane complement $\Upsilon_{\reg}(\check{\Gamma})$ of an ``unfolded'' Coxeter graph $\check{\Gamma}$.

\begin{definition} \label{defn:unfolding}
Let $\Gamma$ be a Coxeter graph.
We define its \emph{unfolded} Coxeter graph $\check{\Gamma}$ (with respect to the data \eqref{eqn:coxquiverlabel}) as follows:
\begin{itemize}
\item its set of vertices $\check{\Gamma}_0$ is given by $\Gamma_0 \times \Irr(\cC(\Gamma))$;
\item we have an edge (possibly labelled by $\infty$) between $(s, E)$ and $(t,E')$ if and only if $e:= (s,t)$ is an edge in $\Gamma$ and:
	\begin{enumerate}[(i)]
	\item if $m_{s,t} < \infty$, $E$ appears as a summand in $\Pi(e) \otimes E'$; otherwise
	\item if $m_{s,t} = \infty$, then $E=E'$; in this case the edge between $(s, A)$ and $(t,B)$ is also labelled by $\infty$.
	\end{enumerate}
\end{itemize}
\end{definition}
This unfolding construction agrees with the one in defined in \cite[\S 3.2]{heng2023coxeter}; in particular $\check{\Gamma}$ is finite-type if and only if $\Gamma$ is finite-type (see also \cref{rem:finiteiffunfoldfinite}).
Note that $\check{\Gamma}$ is symmetric Kac--Moody type by construction, and if $\Gamma$ is symmetric Kac--Moody type, then $\check{\Gamma} = \Gamma$.

Refer to \cref{fig:unfoldingcoxeter} for some examples of Coxeter graphs and their corresponding unfolded Coxeter graphs; for the unfolding of the finite-type Coxeter graphs, see \cite[Figure 2]{heng2023coxeter}.

\begin{figure}[ht]
\begin{center}
\begin{tabular}{||c | c ||} 
 \hline & \\ [-2ex]
 $\Gamma$ & $\check{\Gamma}$ \\ [1ex] 
 \hline\hline
 $\widetilde{G}_2 =
 \adjustbox{scale=0.8}{
\begin{tikzcd}[every arrow/.append style = {shorten <= -.5em, shorten >= -.5em}]
	\bullet & \bullet & \bullet
	\arrow["6", no head, from=1-1, to=1-2]
	\arrow[no head, from=1-2, to=1-3]
\end{tikzcd}
 }
 $ & 
 $\widetilde{E}_7 \sqcup \widetilde{D}_{6} =  
 \adjustbox{scale=0.8}{
\begin{tikzcd}[every arrow/.append style = {shorten <= -.5em, shorten >= -.5em}]
	\bullet & \bullet & \bullet \\
	\bullet & \bullet & \bullet \\
	\bullet & \bullet & \bullet \\
	\bullet & \bullet & \bullet \\
	\bullet & \bullet & \bullet
	\arrow[no head, from=5-1, to=4-2, crossing over]
	\arrow[no head, from=4-2, to=3-1, crossing over]
	\arrow[no head, from=3-1, to=2-2, crossing over]
	\arrow[no head, from=2-2, to=1-1, crossing over]
	\arrow[no head, from=1-2, to=2-1, crossing over]
	\arrow[no head, from=2-1, to=3-2, crossing over]
	\arrow[no head, from=3-2, to=4-1, crossing over]
	\arrow[no head, from=4-1, to=5-2, crossing over]
	\arrow[no head, from=5-2, to=5-3, crossing over]
	\arrow[no head, from=4-2, to=4-3, crossing over]
	\arrow[no head, from=3-2, to=3-3, crossing over]
	\arrow[no head, from=2-2, to=2-3, crossing over]
	\arrow[no head, from=1-2, to=1-3, crossing over]
\end{tikzcd}
 }$ \\ 
 \hline
$
\adjustbox{scale=0.8}{ 
\begin{tikzcd}[every arrow/.append style = {shorten <= -.5em, shorten >= -.5em}]
	\bullet & \bullet & \bullet & \bullet
	\arrow["5", no head, from=1-1, to=1-2]
	\arrow["5", no head, from=1-2, to=1-3]
	\arrow["\infty", no head, from=1-3, to=1-4]
\end{tikzcd}
}
$ & 
$
 \adjustbox{scale=0.8}{
\begin{tikzcd}[every arrow/.append style = {shorten <= -.5em, shorten >= -.5em}]
	& \bullet \\
	\bullet && \bullet & \bullet \\
	& \bullet \\
	\bullet && \bullet & \bullet
	\arrow[no head, from=4-1, to=3-2]
	\arrow[no head, from=3-2, to=2-1]
	\arrow[no head, from=2-1, to=1-2]
	\arrow[no head, from=1-2, to=2-3]
	\arrow[no head, from=2-3, to=3-2]
	\arrow[no head, from=3-2, to=4-3]
	\arrow["\infty", no head, from=2-3, to=2-4]
	\arrow["\infty", no head, from=4-3, to=4-4]
\end{tikzcd}
 }$ \\
 \hline
$
\adjustbox{scale=0.8}{
\begin{tikzcd}[every arrow/.append style = {shorten <= -.5em, shorten >= -.5em}]
	\bullet & \bullet & \bullet
	\arrow["4", no head, from=1-1, to=1-2]
	\arrow["5", no head, from=1-2, to=1-3]
\end{tikzcd}
}
$ &
$
\adjustbox{scale=0.8}{
\begin{tikzcd}[every arrow/.append style = {shorten <= -.5em, shorten >= -.5em}]
	\bullet &&&& \bullet && \bullet & \bullet & \bullet & \bullet \\
	& \bullet & \bullet & \bullet & \bullet && \bullet && \bullet \\
	\bullet &&&& \bullet && \bullet & \bullet & \bullet & \bullet
	\arrow[no head, from=2-2, to=2-3]
	\arrow[no head, from=2-2, to=1-1]
	\arrow[no head, from=2-2, to=3-1]
	\arrow[no head, from=2-3, to=2-4]
	\arrow[no head, from=2-4, to=1-5]
	\arrow[no head, from=2-4, to=2-5]
	\arrow[no head, from=2-4, to=3-5]
	\arrow[no head, from=3-7, to=2-7]
	\arrow[no head, from=2-7, to=1-7]
	\arrow[no head, from=1-7, to=1-8]
	\arrow[no head, from=1-8, to=1-9]
	\arrow[no head, from=1-9, to=2-9]
	\arrow[no head, from=2-9, to=3-9]
	\arrow[no head, from=3-7, to=3-8]
	\arrow[no head, from=3-8, to=3-9]
	\arrow[no head, from=3-9, to=3-10]
	\arrow[no head, from=1-9, to=1-10]
\end{tikzcd}
}
$ \\
\hline
\end{tabular}
\end{center}
\caption{Coxeter graphs $\Gamma$ and their corresponding unfolded Coxeter graphs $\check{\Gamma}$.}
\label{fig:unfoldingcoxeter}
\end{figure}
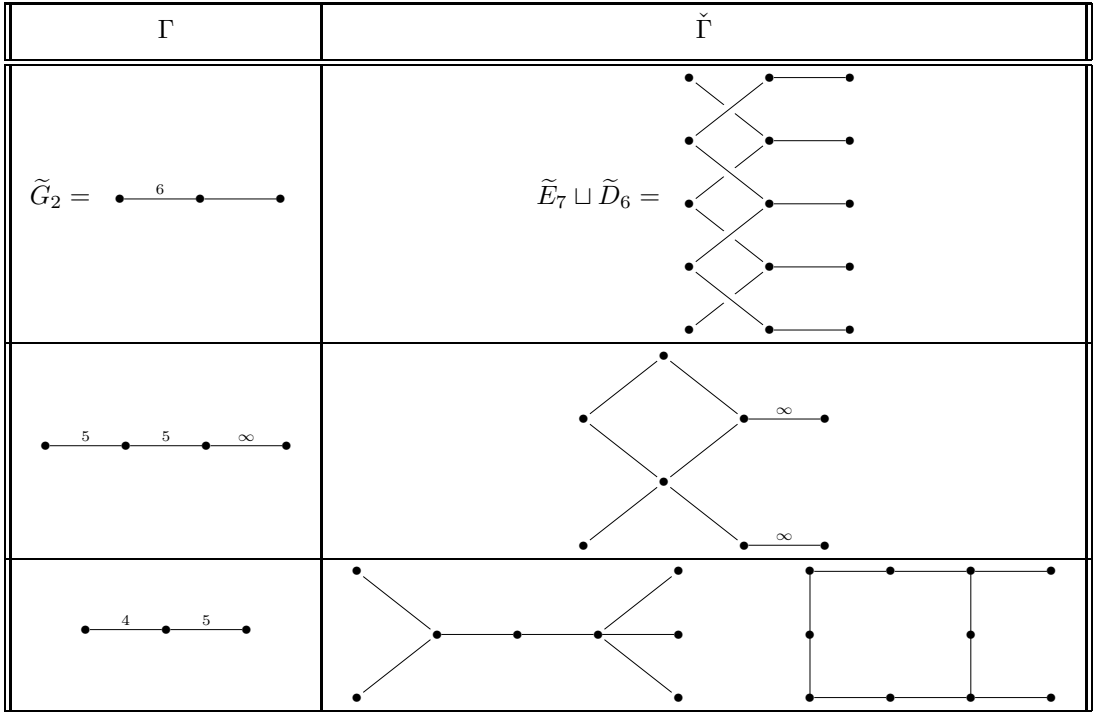

\begin{remark} \label{rem:generalunfolding}
If one associates a different fusion category (and fusion quiver) to the Coxeter diagram following the conditions set out in \cref{rem:fusioncatcoxeter}, then the unfolding needed is the one defined in \cite[Definition 3.1 and 6.2]{EH_fusionquiver}.
\end{remark}

\begin{remark}\label{rem:simplylacedunfolding}
For the edges $e = (s,t) \in \Gamma_1$ with $m_{s,t}=\infty$, one can choose an appropriate fusion category $\cC(\Gamma)$ and an object $\Pi(e) \in \cC(\Gamma)$ so that $\check{\Gamma}$ is simply-laced, with no $\infty$ labels.  The important requirements are that $\FPdim(\Pi(e)) \geq 2$, and that there are no summands of multiplicity greater than one appearing in $E \otimes \Pi(e)$ for any simple $E$ in $\cC(\Gamma)$.
One such example is given by $\rep(S_3)$ (chosen as the Deligne tensor of $\cC(\Gamma)$ associated to the label $\infty$), where $\Pi(e)$ is taken to be the standard two dimensional irreducible representation.
\end{remark}

We claim that the two categories $\zig(\Gamma)\lprmod$ and $\zig(\check{\Gamma})\lprmod$ are equivalent as additive categories, and hence their bounded homotopy categories are equivalent as triangulated categories.
We emphasise, however, they are different as \emph{module categories}: $\zig(\Gamma)\lprmod$ is a module category over $\cC(\Gamma)$, whereas $\zig(\check{\Gamma})\lprmod$ is a module category over $\cC(\check{\Gamma}) \cong \vec_\bbC$.
\begin{proposition}\label{prop:unfoldzigzagmodule}
We have an equivalence of additive categories
\[
\cU: \zig(\Gamma)\lprmod \xra{\cong} \zig(\check{\Gamma})\lprmod
\]
which is defined on indecomposable objects by $P_s \otimes E\<k\> \mapsto P_{(s,E)}\<k\>$.
In turn, this induces an equivalence of triangulated categories between the bounded homotopy categories
\[
\cU: \cD(\Gamma) \xra{\cong} \cD(\check{\Gamma}).
\]
\end{proposition}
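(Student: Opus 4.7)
The plan is to verify the equivalence at the level of indecomposable objects and morphisms between them. Both $\zig(\Gamma)\lprmod$ and $\zig(\check{\Gamma})\lprmod$ are Krull--Schmidt (Proposition \ref{prop: indecomposable Krull-Sch}), and their indecomposables are listed explicitly in that proposition. Since $\check{\Gamma}$ is symmetric Kac--Moody by construction, $\cC(\check{\Gamma}) \cong \vec_{\bbC}$, and $\Irr(\cC(\check{\Gamma}))$ contains only the monoidal unit. Thus the indecomposables of $\zig(\check{\Gamma})\lprmod$ are indexed by $\check{\Gamma}_0 \times \bbZ = \Gamma_0 \times \Irr(\cC(\Gamma)) \times \bbZ$, which is in natural bijection with the index set for indecomposables on the $\Gamma$ side. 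The first step is to declare that $\cU$ sends $P_s \otimes E\<k\>$ to $P_{(s,E)}\<k\>$ on indecomposables.

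The second step is to compare morphism spaces using Proposition \ref{graded hom space}. For the $\Gamma$ side, the non-zero $\Hom$'s between indecomposables fall into the four types listed in that proposition. For the $\check{\Gamma}$ side, the same proposition applies, and only the $s = t$ and simply-laced/$\infty$ edge cases arise (since $\check{\Gamma}$ is symmetric Kac--Moody). The identity and loop contributions match trivially (both are one-dimensional). For a finite-label edge $e = (s,t) \in \Gamma_1$, the $\Gamma$-side $\Hom$ between $P_s \otimes E_1\<k\>$ and $P_t \otimes E_2\<k-1\>$ is one-dimensional exactly when $E_1 \overset{\oplus}{\subseteq} \Pi(e) \otimes E_2$, which by Definition \ref{defn:unfolding}(i) is precisely the condition for $(s,E_1)$ and $(t,E_2)$ to be joined by an edge in $\check{\Gamma}$. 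The $\infty$-label case matches via Definition \ref{defn:unfolding}(ii), both sides yielding $\bbC^2$.

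The third step promotes this numerical matching to an actual equivalence. For this I consider the basic additive generators $P := \bigoplus_{s \in \Gamma_0,\, E \in \Irr(\cC(\Gamma))} P_s \otimes E$ and $P' := \bigoplus_{(s,E) \in \check{\Gamma}_0} P_{(s,E)}$ of the two categories, and compute their graded endomorphism algebras $A := \bigoplus_{k \in \bbZ}\Hom(P,P\<k\>)$ and $A' := \bigoplus_{k \in \bbZ}\Hom(P',P'\<k\>)$. Both are generated by vertex idempotents in degree $0$, arrow morphisms in degree $1$ (one-dimensional in the finite-label case, two-dimensional in the $\infty$ case), and loop morphisms in degree $2$. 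I exhibit an explicit graded algebra isomorphism $A \xrightarrow{\cong} A'$ matching these generators under the unfolding dictionary. Standard Morita theory for Krull--Schmidt additive categories then yields an additive equivalence $\cU : \zig(\Gamma)\lprmod \xrightarrow{\cong} \zig(\check{\Gamma})\lprmod$ sending $P_s \otimes E\<k\>$ to $P_{(s,E)}\<k\>$. Passing to bounded homotopy categories is automatic and respects all the triangulated structure.

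The main point requiring care is verifying that the relations in $A$ match those in $A'$ — in particular, the Frobenius relation expressing the loop $X_s$ as a sum over the comultiplications $\gamma_s$ in \eqref{eqn:frobcomultcomponent}. On the $\Gamma$ side this relation is dictated by the counit maps $\cap: \Pi(e) \otimes \Pi(e) \to \1$ in $\cC(\Gamma)$, which split the loop at $s$ as a sum of ``cups then caps'' indexed by edges at $s$. Under the unfolding, the summands of $\Pi(e) \otimes E$ (for $E$ the simple attached to the vertex $(s,E)$) correspond bijectively to the edges of $\check{\Gamma}$ at $(s,E)$, and the compositions $\cap \circ \cup$ evaluate to identity on each such summand. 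Thus the $\Gamma$-side Frobenius/zigzag relation translates on the nose into the classical zigzag relation for $\check{\Gamma}$, which is the only non-tautological compatibility to verify.
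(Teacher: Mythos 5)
Your proposal takes essentially the same route as the paper: both reduce the equivalence to the $\Hom$-space computation of \cref{graded hom space} together with the Krull--Schmidt classification of indecomposables in \cref{prop: indecomposable Krull-Sch}, and then upgrade the numerical agreement to an actual functor. The graded-Morita packaging in your third step is a perfectly reasonable way to formalize what the paper leaves as a ``straightforward computation''; the paper instead hints at constructing $\cU$ directly by exploiting the adjunction of \cref{biadjoint pair} (a morphism out of $P_s$ is determined by its restriction to the summand $e_s$). The one place your write-up needs care is the final paragraph. The assertion that ``the compositions $\cap\circ\cup$ evaluate to identity on each such summand'' is not correct as stated: the scalar $\cap\circ\cup\colon \1\to\Pi(e)\otimes\Pi(e)\to\1$ is the quantum dimension of $\Pi(e)$ (which equals $\FPdim(\Pi(e))$ here), and more generally the structure constant by which the two-arrow loop at $(s,E)$ through a neighbour $(t,E')$ multiplies the loop generator $X_{(s,E)}$ is a $6j$-symbol-type coefficient that is not $1$ in general. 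So the graded endomorphism algebra $A$ of your additive generator $P$ is not literally the zigzag algebra $\zig(\check\Gamma)$ in its standard presentation ``on the nose''; one must rescale the degree-$1$ generators so that all length-$2$ cycles return the \emph{same} multiple of the loop, and then check that this rescaling is globally consistent (it is, as one can extract from the Frobenius/Calabi--Yau pairing of \cref{prop: 2CY variant}). This is a fixable imprecision rather than a gap that sinks the argument, and the paper is equally terse about precisely this point.
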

\begin{proof}
Comparing the definition of the unfolded Coxeter graph $\check{\Gamma}$ with the formulae for the hom spaces in \cref{graded hom space}, it follows that the dimensions of the hom spaces between indecomposable objects agree.  Also, by \cref{prop: indecomposable Krull-Sch}, these are all of the indecomposables.
The fact that this agreement of hom spaces can be refined to a functor is a straightforward computation which we leave to the reader; the important point is that the adjunction between $P_s \otimes -$ and ${}_sP\otimes_{\zig(\Gamma)} -$ tells us that a morphism out of $P_s$ is completely determined by its restriction to the summand $e_s$ of $P_s$.
\end{proof}


The equivalence $\cU$ induces a $\bbZ$-linear isomorphism between $K_0(\cD(\Gamma))$ and $K_0(\cD(\check{\Gamma}))$, and also a $\bbZ$-linear isomorphism of their respective lattice quotients: 
\begin{align*}
\cU: \Lambda(\Gamma) &\xra{\cong} \Lambda(\check{\Gamma}) \\
[E]\cdot \alpha_s &\mapsto \alpha_{(s,E)}, \qquad \text{ for each } s \in \Gamma_0, \ E \in \Irr(\cC(\Gamma)).
\end{align*}
We will also abuse notation and use $\cU$ to denote the induced $\bbC$-linear isomorphism on the dual spaces $\cU: \Hom_\bbZ(\Lambda(\Gamma), \bbC) \xra{\cong} \Hom_\bbZ(\Lambda(\check{\Gamma}), \bbC)$.  It should hopefully be clear from the context which isomorphism $\cU$ is referring to. 
Now let
\begin{align*}
\cZ(\Gamma):& \Stab(\cD(\Gamma)) \ra \Hom_\bbZ(\Lambda(\Gamma), \bbC) \\
\cZ(\check{\Gamma}):& \Stab(\cD(\check{\Gamma})) \ra \Hom_\bbZ(\Lambda(\check{\Gamma}), \bbC)
\end{align*} 
denote the local homeomorphisms of the respective Bridgeland stability manifolds.
Let $\Stab^\dagger(\cD(\Gamma))$ and $\Stab^\dagger(\cD(\check{\Gamma}))$ denote the connected components that contain the stability conditions whose standard heart is the linear heart.
The equivalence $\cU: \cD(\Gamma) \xra{\cong} \cD(\check{\Gamma})$ induces a homeomorphism between the components $\cU: \Stab^\dagger(\cD(\Gamma)) \xra{\cong} \Stab^\dagger(\cD(\check{\Gamma}))$.
We denote by $\cZ^\dagger(\Gamma)$ and $\cZ^\dagger(\check{\Gamma})$ the restrictions of $\cZ(\Gamma)$ and $\cZ(\check{\Gamma})$ to $\Stab^\dagger(\cD(\Gamma))$ and $\Stab^\dagger(\cD(\check{\Gamma}))$, respectively.
Then there is a commutative diagram
\[
\begin{tikzcd}
\Stab^\dagger(\cD(\Gamma)) \ar[r, "\cU", "\cong"'] \ar[d, "\cZ^\dagger(\Gamma)"]
	& \Stab^\dagger(\cD(\check{\Gamma})) \ar[d, "\cZ^\dagger(\check{\Gamma})"] \\
\Hom_\bbZ(\Lambda(\Gamma), \bbC) \ar[r, "\cU", "\cong"']
	& \Hom_\bbZ(\Lambda(\check{\Gamma}), \bbC).
\end{tikzcd}
\]
Now denote the further restriction of $\cZ(\Gamma)$ to $\Stab_{\cC(\Gamma)}^\dagger(\cD(\Gamma))$ by $\cZ^\dagger_{\cC(\Gamma)}(\Gamma)$.

As before, using the isomorphisms from \cref{prop:identifywithdualspace}:
\[
\Hom_{K_0(\cC(\Gamma))}(\Lambda(\Gamma), \bbC) \cong \Hom_\bbR(\bbR\Lambda(\Gamma), \bbC), \quad 
\Hom_\bbZ(\Lambda(\check{\Gamma}), \bbC) \cong \Hom_\bbR(\bbR\Lambda(\check{\Gamma}), \bbC)
\]
we regard the hyperplane complements as sitting inside the dual spaces of $\Lambda(\Gamma)$ and $\Lambda(\check{\Gamma})$:
\[
\Upsilon_{\reg}(\Gamma) \subseteq \Hom_{K_0(\cC(\Gamma))}(\Lambda(\Gamma), \bbC), \quad
\Upsilon_{\reg}(\check{\Gamma}) \subseteq \Hom_\bbZ(\Lambda(\check{\Gamma}), \bbC).
\]
We obtain the following result.
\begin{theorem}\label{thm:embeddings}
Let $\Gamma$ be a Coxeter graph and $\check{\Gamma}$ be its unfolded Coxeter graph.
We have a commutative diagram where all the vertical maps are covering maps and all horizontal maps are closed embeddings:
\[
\begin{tikzcd}
\Stab_{\cC(\Gamma)}^\dagger(\cD(\Gamma)) \ar[r, hookrightarrow, "\subseteq"] \ar[d, twoheadrightarrow, "\cZ^\dagger_{\cC(\Gamma)}(\Gamma)"]
	& \Stab^\dagger(\cD(\Gamma)) \ar[r, "\cU", "\cong"'] \ar[d, twoheadrightarrow, "\cZ^\dagger(\Gamma)"]	
	& \Stab^\dagger(\cD(\check{\Gamma})) \ar[d, twoheadrightarrow, "\cZ^\dagger(\check{\Gamma})"]	
	\\
\Upsilon_{\reg}(\Gamma) \ar[r, hookrightarrow, "\subseteq"]
	& \img(\cZ^\dagger(\Gamma)) \ar[r, "\cU", "\cong"']
	& \Upsilon_{\reg}(\check{\Gamma}).
\end{tikzcd}
\]
\end{theorem}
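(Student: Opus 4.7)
The plan is to deduce this theorem from \cref{thm:maintheorem} applied to the unfolded Coxeter graph $\check{\Gamma}$, by transporting everything through the triangulated equivalence $\cU: \cD(\Gamma) \xra{\cong} \cD(\check{\Gamma})$ of \cref{prop:unfoldzigzagmodule}. The crucial observation is that $\check{\Gamma}$ is symmetric Kac--Moody by construction, so $\cC(\check{\Gamma}) \cong \vec_\bbC$ and every stability condition on $\cD(\check{\Gamma})$ is automatically $\cC(\check{\Gamma})$-equivariant; \cref{cor:symKMthm} (equivalently \cref{thm:maintheorem} applied to $\check{\Gamma}$) then gives that $\cZ^\dagger(\check{\Gamma}): \Stab^\dagger(\cD(\check{\Gamma})) \twoheadrightarrow \Upsilon_{\reg}(\check{\Gamma})$ is a covering.

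For the right square: $\cU$ induces a homeomorphism $\Stab^\dagger(\cD(\Gamma)) \cong \Stab^\dagger(\cD(\check{\Gamma}))$ which, by naturality of the central charge under equivalences, intertwines the two central-charge maps via the linear isomorphism $\cU$ on dual lattices. Hence the right square commutes, the top horizontal map is a closed embedding (in fact a homeomorphism), and transporting the covering structure of $\cZ^\dagger(\check{\Gamma})$ through $\cU$ gives that $\cZ^\dagger(\Gamma)$ is a covering onto $\img(\cZ^\dagger(\Gamma)) = \cU^{-1}(\Upsilon_{\reg}(\check{\Gamma}))$.

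For the left square: commutativity is tautological, since $\cZ^\dagger_{\cC(\Gamma)}(\Gamma)$ is by definition the restriction of $\cZ^\dagger(\Gamma)$ to $\Stab_{\cC(\Gamma)}^\dagger(\cD(\Gamma))$. The left vertical is a covering onto $\Upsilon_{\reg}(\Gamma)$ by \cref{thm:maintheorem} applied to $\Gamma$, the upper horizontal inclusion is a closed embedding by \cref{thm:closedsubmfld}, and the set-theoretic containment $\Upsilon_{\reg}(\Gamma) \subseteq \img(\cZ^\dagger(\Gamma))$ is automatic since $\Upsilon_{\reg}(\Gamma) = \img(\cZ^\dagger_{\cC(\Gamma)}(\Gamma))$ is the image of $\Stab_{\cC(\Gamma)}^\dagger(\cD(\Gamma)) \subseteq \Stab^\dagger(\cD(\Gamma))$ under $\cZ^\dagger(\Gamma)$.

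The only substantive remaining point is that the lower horizontal inclusion $\Upsilon_{\reg}(\Gamma) \hookrightarrow \img(\cZ^\dagger(\Gamma))$ is a closed embedding; since it is already a subspace inclusion, this amounts to showing $\Upsilon_{\reg}(\Gamma)$ is closed in $\img(\cZ^\dagger(\Gamma))$. The main idea is a general covering-theoretic principle: if $\pi: X \twoheadrightarrow B$ is a covering and $C \subseteq X$ is closed such that $\pi|_C: C \twoheadrightarrow B' := \pi(C)$ is also a covering, then $B'$ is closed in $B$. This follows by lifting --- a sequence $(y_n)$ in $B'$ converging to $y \in B$ can, after passing to a subsequence inside an evenly-covered neighbourhood of $y$, be lifted to a sequence $(x_n)$ in a single sheet converging to some $x \in X$; since $x_n \in C$ and $C$ is closed, we have $x \in C$, so $y = \pi(x) \in B'$. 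Applying this principle with $\pi = \cZ^\dagger(\Gamma)$, $C = \Stab_{\cC(\Gamma)}^\dagger(\cD(\Gamma))$ (closed by \cref{thm:closedsubmfld}), and $\pi|_C = \cZ^\dagger_{\cC(\Gamma)}(\Gamma)$ (a covering onto $\Upsilon_{\reg}(\Gamma)$ by \cref{thm:maintheorem}) completes the argument. I anticipate that the main care required in writing this up is the careful verification that $\cU$ intertwines the various structures coherently --- covering, closed-submanifold, and central-charge --- which should proceed cleanly from the naturality of Bridgeland stability under triangulated equivalences combined with \cref{prop:unfoldzigzagmodule}.
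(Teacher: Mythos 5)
Your overall strategy matches the paper's: identify $\cD(\Gamma) \cong \cD(\check{\Gamma})$ via $\cU$, apply \cref{thm:maintheorem} to both $\Gamma$ and $\check{\Gamma}$, and transport the covering structure. The handling of the squares, the tautological commutativity, and the appeal to \cref{thm:closedsubmfld} for the closed embedding of the upper-left horizontal are all fine. However, the covering-theoretic ``principle'' you invoke at the end is false, and this is a genuine gap.

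The claimed principle --- that if $\pi\colon X \twoheadrightarrow B$ is a covering, $C \subseteq X$ is closed, and $\pi|_C\colon C \twoheadrightarrow B'=\pi(C)$ is a covering, then $B'$ is closed in $B$ --- fails for infinite-sheeted covers. For instance, take $\pi\colon \bbR \to S^1$, $t \mapsto e^{2\pi i t}$, and $C = \bbZ \cup \{n + \tfrac{1}{2} - \tfrac{1}{n} : n \geq 3\}$. Then $C$ is a closed discrete subset of $\bbR$, and $\pi|_C$ is a homeomorphism of discrete spaces (hence trivially a covering), yet $\pi(C)$ accumulates at $-1 \notin \pi(C)$ and so is not closed. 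The flaw in your lifting argument is that you cannot choose lifts $x_n$ of $y_n$ that lie \emph{both} in a fixed sheet \emph{and} in $C$: the witnesses $c_n\in C$ with $\pi(c_n)=y_n$ may wander through infinitely many sheets, and the deck-translates of $c_n$ into a fixed sheet need not lie in $C$ (since $C = \Stab^\dagger_{\cC(\Gamma)}(\cD(\Gamma))$ is preserved by $\widehat{\PBr}^{\ST}(\Gamma)$, not by the full deck group $\widehat{\PBr}^{\ST}(\check{\Gamma})$). Since the pure twist groups here are infinite outside trivial cases, you cannot pass to a subsequence to fix the sheet.

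The correct route, which the paper spells out in the discussion following the theorem, is linear-algebraic rather than covering-theoretic. Under $\cU$, the subspace $\Hom_{K_0(\cC(\Gamma))}(\Lambda(\Gamma),\bbC)$ goes to the $\bbC$-linear subspace $\Hom^{\cC(\Gamma)}_\bbZ(\Lambda(\check{\Gamma}),\bbC)$ of $\Hom_\bbZ(\Lambda(\check{\Gamma}),\bbC)$, which is closed. Hence $\Upsilon_{\reg}(\check{\Gamma}) \cap \Hom^{\cC(\Gamma)}_\bbZ(\Lambda(\check{\Gamma}),\bbC)$ is closed in $\Upsilon_{\reg}(\check{\Gamma})$. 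The image of $\Upsilon_{\reg}(\Gamma)$ is precisely the connected component of this intersection meeting $\widetilde{C}(\Gamma)$ (see \eqref{eqn:intersectionimage} and the surrounding discussion), and connected components of any space are closed in it; composing the two closednesses gives that the image of $\Upsilon_{\reg}(\Gamma)$ is closed in $\Upsilon_{\reg}(\check{\Gamma}) \cong \img(\cZ^\dagger(\Gamma))$. You should replace the covering-theoretic paragraph with this argument.
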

\begin{proof}
The commutativity of the diagram follows immediately from the definitions.
The fact that $\cZ^\dagger_{\cC(\Gamma)}(\Gamma)$ and $\cZ^\dagger(\check{\Gamma})$ are covering maps onto the respective hyperplane complements follows from \cref{thm:maintheorem}.
The map $\cZ^\dagger(\Gamma)$ is therefore also a covering map with image homeomorphic to $\Upsilon_{\reg}(\check{\Gamma})$.
The embeddings of hyperplane complements is a closed embedding, and the fact that the embedding $\Stab_{\cC(\Gamma)}^\dagger(\cD(\Gamma)) \xhookrightarrow{\subseteq} \Stab^\dagger(\cD(\Gamma))$ is closed is a general consequence of \cref{thm:closedsubmfld}.
\end{proof}

To describe the embedding $\Upsilon_{\reg}(\Gamma) \hookrightarrow \Upsilon_{\reg}(\check{\Gamma})$ in more detail,
consider the following $\bbC$-linear subspace of $\Hom_\bbZ(\Lambda(\check{\Gamma}), \bbC)$:
\[
\Hom^{\cC(\Gamma)}_\bbZ(\Lambda(\check{\Gamma}),\bbC) := \{ Z \in \Hom_\bbZ(\Lambda(\check{\Gamma}), \bbC) \mid \forall (s,E) \in \check{\Gamma}_0,  Z(\alpha_{(s,E)}) = \FPdim(E)Z(\alpha_{(s,\1)}) \}.
\]
This subspace is the image of $\Hom_{K_0(\cC(\Gamma))}(\Lambda(\Gamma), \bbC) \subseteq \Hom_\bbZ(\Lambda(\Gamma), \bbC)$ under the isomorphism $\cU: \Hom_\bbZ(\Lambda(\Gamma), \bbC) \xra{\cong} \Hom_\bbZ(\Lambda(\check{\Gamma}), \bbC)$, and the image of the embedding $\Upsilon_{\reg}(\Gamma) \hookrightarrow \Upsilon_{\reg}(\check{\Gamma})$ lives in the intersection:
\begin{equation}\label{eqn:intersectionimage}
\Upsilon_{\reg}(\check{\Gamma}) \bigcap \Hom^{\cC(\Gamma)}_\bbZ(\Lambda(\check{\Gamma}),\bbC).
\end{equation}
In general this intersection is not connected; the image of the embedding is the distinguished connected component whose intersection with the complexified chamber $C(\check{\Gamma})$ is non-empty.
In other words, the unique connected component of \eqref{eqn:intersectionimage} that contains
\[
\widetilde{C}(\Gamma) := \{ Z \in \Hom_\bbZ(\Lambda(\check{\Gamma}), \bbC) \mid \forall (s,E) \in \check{\Gamma}_0,  Z(\alpha_{(s,E)}) = \FPdim(E)Z(\alpha_{(s,\1)}) \in \bbH \cup \bbR_{<0} \}
\]
is exactly the image of $\Upsilon_{\reg}(\Gamma)$.
When $\Gamma$ is finite-type, the intersection \eqref{eqn:intersectionimage} is always connected.
\begin{example}\label{eg:pentagonunfold}
Let $\Gamma = I_2(5) = 
	\begin{tikzcd}[every arrow/.append style = {shorten <= -.2em, shorten >= -.2em}]
	s & t
	\arrow["5", no head, from=1-1, to=1-2]
	\end{tikzcd}
$, so that 
\[
\check{\Gamma} = A_4 = 
	\begin{tikzcd}[every arrow/.append style = {shorten <= -.2em, shorten >= -.2em}]
	(s, \Pi_2) & (t, \Pi_2) \\
	(s, \Pi_0) & (t, \Pi_0)
	\arrow[no head, from=1-1, to=1-2]
	\arrow[no head, from=1-1, to=2-2, crossing over]
	\arrow[no head, from=2-1, to=1-2, crossing over]
	\end{tikzcd}
	=:
	\begin{tikzcd}[every arrow/.append style = {shorten <= -.2em, shorten >= -.2em}]
	3 & 2 \\
	1 & 4
	\arrow[no head, from=1-1, to=1-2]
	\arrow[no head, from=1-1, to=2-2, crossing over]
	\arrow[no head, from=2-1, to=1-2, crossing over]
	\end{tikzcd}.
\]
In this case, under the embedding $\Upsilon_{\reg}(I_2(5)) \hookrightarrow \Upsilon_{\reg}(A_4)$, the 10 hyperplanes in $\Upsilon_{\reg}(A_4)$ intersect pair-wise to give 5 hyperplanes in the subspace $\Upsilon_{\reg}(I_2(5))$, which correspond to the 5 reflections of the pentagon. 
See \cref{fig:pentagonunfold} for details.
\end{example}

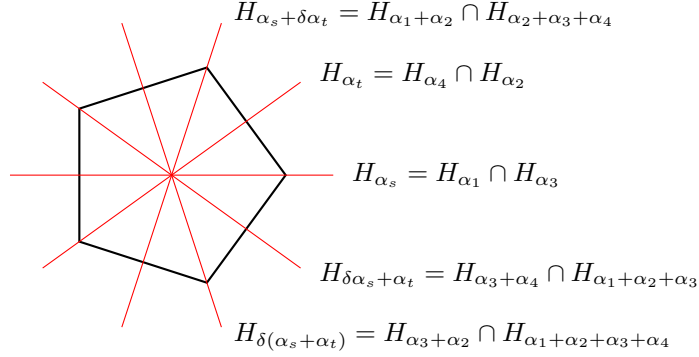
\begin{figure}
\begin{tikzpicture}
\def\R{1.5}

\def\L{1.5}

\draw[thick]
    (0:\R) -- (72:\R) -- (144:\R) -- (216:\R) -- (288:\R) -- cycle;

\foreach [count=\x] \angle in {0, 72, 144, 216, 288} {
	\node (p\x) at (\angle:{\R*\L}) {};
	\node (p'\x) at (180+\angle:{\R*\L}) {};
}

\foreach \x in {1,...,5} {
	\draw[red] (p\x) -- (p'\x);
}

\node[right] at (p1) {$H_{\alpha_s} = H_{\alpha_{1}} \cap H_{\alpha_{3}}$};
\node[right] at (p'4) {$H_{\alpha_t} = H_{\alpha_{4}} \cap H_{\alpha_{2}}$};
\node[right] at (p2) {$H_{\alpha_s + \delta\alpha_t} = H_{\alpha_{1} + \alpha_{2}} \cap H_{\alpha_{2} + \alpha_{3} + \alpha_{4}}$};
\node[right] at (p5) {$H_{\delta(\alpha_s + \alpha_t)} = H_{\alpha_{3} + \alpha_{2}} \cap H_{\alpha_{1} + \alpha_{2} + \alpha_{3} + \alpha_{4}}$};
\node[right] at (p'3) {$H_{\delta\alpha_s + \alpha_t} = H_{\alpha_{3} + \alpha_{4}} \cap H_{\alpha_{1} + \alpha_{2} + \alpha_{3}}$};

\end{tikzpicture}
\caption{The real slice of $\Upsilon_{\reg}(\Gamma) := \Upsilon_{\reg}(I_2(5))$ viewed as a subspace of $\Upsilon_{\reg}(\check{\Gamma}) := \Upsilon_{\reg}(A_4)$. In this subspace, the (removed) 10 reflection hyperplanes of type $A_4$ coincides with the 5 reflection lines of type $I_2(5)$, as mentioned in \cref{eg:pentagonunfold}.}
\label{fig:pentagonunfold}
\end{figure}

Recall that $\bbW(\Gamma)$ acts on $\Lambda(\Gamma)$ by $K_0(\cC(\Gamma))$-linear automorphisms (see \cref{lem:fusionTitsrep}), and hence by  $\bbZ$-linear automorphisms.
Via the $\bbZ$-linear isomorphism $\cU: \Lambda(\Gamma) \xra{\cong} \Lambda(\check{\Gamma})$, we have an action of $\bbW(\Gamma)$ on $\Lambda(\check{\Gamma})$ by $\bbZ$-linear automorphisms.
This action can be described as follows.
\begin{proposition}
The group homomorphism $\psi: \bbW(\Gamma) \ra \bbW(\check{\Gamma})$ defined on the generators by $s \mapsto \prod_{E \in \Irr(\cC(\Gamma))} (s,E)$ is well-defined and injective.
Moreover, $\psi$ together with the $\bbZ$-linear isomorphism $\cU: \Lambda(\Gamma) \xra{\cong} \Lambda(\check{\Gamma})$ intertwine the $\bbW(\Gamma)$-actions on $\Lambda(\Gamma)$ and
$\Hom_\bbZ(\Lambda(\Gamma), \bbC)$ with the $\bbW(\check{\Gamma})$-actions on $\Lambda(\check{\Gamma})$ and $ \Hom_\bbZ(\Lambda(\check{\Gamma}), \bbC)$, respectively.
\end{proposition}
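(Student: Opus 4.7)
\emph{Proof sketch.} The strategy is to use the intertwining property of $\cU$ together with the faithfulness of the $\bbW(\check{\Gamma})$-action on $\Lambda(\check{\Gamma})$ (which holds because $\cC(\check{\Gamma}) \cong \vec_{\bbC}$ and so \cref{cor:Wcontragradientfaithful} reduces to the usual faithful Tits representation of $\bbW(\check{\Gamma})$). Rather than verifying the Coxeter relations of $\bbW(\Gamma)$ by hand for the element $\prod_{E}(s,E) \in \bbW(\check{\Gamma})$, I will transport the whole action of $\bbW(\Gamma)$ across $\cU$ and identify it, inside $\mathrm{Aut}_{\bbZ}(\Lambda(\check{\Gamma}))$, with the image of an abstract map $\bbW(\Gamma) \to \bbW(\check{\Gamma})$; well-definedness and injectivity of $\psi$ then follow for free.

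First, transport: since $\cU:\Lambda(\Gamma) \to \Lambda(\check{\Gamma})$ is $\bbZ$-linear, pushing forward the faithful $\bbW(\Gamma)$-action (\cref{cor:Wcontragradientfaithful}) yields a faithful group homomorphism $\tilde{\psi}:\bbW(\Gamma)\hookrightarrow \mathrm{Aut}_{\bbZ}(\Lambda(\check{\Gamma}))$. I will compute $\tilde{\psi}(s)$ explicitly on the basis $\{\alpha_{(t,E)}\}$. Using $K_0(\cC(\Gamma))$-linearity of the original action together with the formula \eqref{eqn:fusiongeomrep} and $\cU([E]\cdot \alpha_t) = \alpha_{(t,E)}$, one gets $\tilde{\psi}(s)(\alpha_{(t,E)}) = -\alpha_{(s,E)}$ when $t=s$, equals $\alpha_{(t,E)}$ when $t$ is not adjacent to $s$ in $\Gamma$, and otherwise equals $\alpha_{(t,E)} + \sum_{F} n_{F}^{E}\, \alpha_{(s,F)}$, where $n_{F}^{E}$ is the multiplicity of $F$ in $\Pi(e)\otimes E$.

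Second, compare with $\prod_{F\in \Irr(\cC(\Gamma))} r_{(s,F)} \in \bbW(\check{\Gamma})$, where $r_{(s,F)}$ denotes the simple reflection acting on $\Lambda(\check{\Gamma})$ via the Tits representation of $\bbW(\check{\Gamma})$. By construction of $\check{\Gamma}$ there is never an edge between $(s,F)$ and $(s,F')$ (edges only come from edges of $\Gamma$, and $\Gamma$ has no loops), so the reflections $r_{(s,F)}$ pairwise commute and the product is unambiguous. Now $r_{(s,F)}(\alpha_{(t,E)})$ is $\alpha_{(t,E)}$ unless there is an edge between $(s,F)$ and $(t,E)$ in $\check{\Gamma}$, in which case by \cref{defn:unfolding} it is $\alpha_{(t,E)} + \alpha_{(s,F)}$ (if $m_{s,t}<\infty$, when the edge is of label $3$) or $\alpha_{(t,E)} + 2\alpha_{(s,F)}$ (if $m_{s,t}=\infty$, which forces $F=E$). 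Since the $r_{(s,F)}$ fix every $\alpha_{(s,F')}$, a short direct computation shows
\[
\Bigl(\prod_{F} r_{(s,F)}\Bigr)(\alpha_{(t,E)}) = \alpha_{(t,E)} + \sum_{F} n_{F}^{E}\, \alpha_{(s,F)}
\]
in the non-diagonal case, matching $\tilde{\psi}(s)$; the diagonal case $t=s$ matches by the same argument. The only content hidden here is the identity between the multiplicity $n_{F}^{E}$ in the fusion product and the adjacency data in $\check{\Gamma}$, which is literally how \cref{defn:unfolding} is set up (and for the $m_{s,t}=\infty$ case the factor $2$ matches because $\Pi(e)=\1\oplus\1$ gives $n_{E}^{E}=2$).

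Finally, assemble: the image of $\tilde{\psi}$ lies in the image of the Tits representation $\rho:\bbW(\check{\Gamma})\hookrightarrow \mathrm{Aut}_{\bbZ}(\Lambda(\check{\Gamma}))$, with $\tilde{\psi}(s) = \rho\bigl(\prod_{F}(s,F)\bigr)$. Because $\rho$ is injective, there is a unique group homomorphism $\psi:\bbW(\Gamma)\to \bbW(\check{\Gamma})$ with $\rho\circ \psi = \tilde{\psi}$, and necessarily $\psi(s) = \prod_{F}(s,F)$; this proves well-definedness of the formula on generators. Injectivity of $\psi$ follows from injectivity of $\tilde{\psi}$, and the intertwining property on $\Lambda(\check{\Gamma})$ (hence by duality on $\Hom_{\bbZ}(\Lambda(\check{\Gamma}),\bbC)$) is immediate from $\rho\circ\psi=\tilde{\psi}$. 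The only real obstacle is the bookkeeping in the middle step, namely matching the Chebyshev-type fusion multiplicities with the combinatorics of the unfolded diagram; once that is done, faithfulness of the Tits representation of $\bbW(\check{\Gamma})$ does all the heavy lifting.
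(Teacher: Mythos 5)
Your proof is correct and takes essentially the same route as the paper: both compute the transported $\bbW(\Gamma)$-action and the action of $\prod_E(s,E)$ on the basis of $\Lambda(\check{\Gamma})$, check they agree via the adjacency rules of $\check{\Gamma}$ and the fusion multiplicities, and then invoke faithfulness of the $\bbW(\Gamma)$- and $\bbW(\check{\Gamma})$-actions to get well-definedness and injectivity. Your more explicit framing via $\tilde\psi$, $\rho$, and $\psi = \rho^{-1}\circ\tilde\psi$ is just a cleaner packaging of the identical argument.
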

\begin{proof}
Firstly, note that the product $\prod_{E \in \Irr(\cC(\Gamma))} (s,E) \in \bbW(\check{\Gamma})$ is well-defined since the elements $(s,E)$ pair-wise commute for a fixed $s$.
Moreover, the action of $\prod_{E \in \Irr(\cC(\Gamma))} (s,E)$ on $\Lambda(\check{\Gamma})$ on each basis element $\alpha_{(t,F)}$ reduces to
\begin{equation} \label{eqn:unfoldedelement}
\prod_{E \in \Irr(\cC(\Gamma))} (s,E) \cdot \alpha_{(t,F)} = \alpha_{(t,F)} - \sum_{E \in \Irr(\cC(\Gamma))} B(\alpha_{(s,E)}, \alpha_{(t,F)})\alpha_{(s,E)},
\end{equation}
where $B$ is the standard bilinear form on $\Lambda(\check{\Gamma})$ associated to the Coxeter graph $\check{\Gamma}$ (since $\Gamma$ is symmetric Kac--Moody type, $B$ only returns integer output).
On the other hand, the action of $s$ on $\Lambda(\check{\Gamma})$ via conjugating by $\cU: \Lambda(\Gamma) \xra{\cong} \Lambda(\check{\Gamma})$ gives
\begin{equation}  \label{eqn:foldedelement}
s \cdot \alpha_{(t,F)} = 
	\begin{cases}
	-\alpha_{(t,F)}, & \text{if } s = t; \\
	\alpha_{(t,F)} - \cU([\Pi(e) \otimes F] \cdot \alpha_s), & \text{if } e:= (s,t) \in \Gamma_1; \\
	0, & \text{otherwise}.
	\end{cases}
\end{equation}
The fact that \eqref{eqn:unfoldedelement} and \eqref{eqn:foldedelement} agree follows from the definition of the unfolded Coxeter graph $\check{\Gamma}$.

Since both the actions of $\bbW(\Gamma)$ and $\bbW(\check{\Gamma})$ are faithful (by \cref{cor:Wcontragradientfaithful} for $\bbW(\Gamma)$), the homomorphism $\psi:\bbW(\Gamma) \ra \bbW(\check{\Gamma})$ defined by sending $s \mapsto \prod_{E \in \Irr(\cC(\Gamma))} (s,E)$ is indeed well-defined and injective.
\end{proof}
\begin{remark} \label{rem:finiteiffunfoldfinite}
This proposition also implies that $\bbW(\Gamma)$ is finite if $\bbW(\check{\Gamma})$ is finite; the other direction is immediate from \cite[Figure 2]{heng2023coxeter}. 
\end{remark}

The proposition above shows that $\cU: \Hom_\bbZ(\Lambda(\Gamma), \bbC) \xra{\cong} \Hom_\bbZ(\Lambda(\check{\Gamma}), \bbC)$ restricted to the corresponding hyperplane complements factors through their respective quotient:
\[
\underline{\cU}: \Upsilon_{\reg}(\Gamma)/\bbW(\Gamma) \ra \Upsilon_{\reg}(\check{\Gamma})/\bbW(\check{\Gamma})
\]
Moreover, $\underline{\cU}$ is a closed embedding, too, since $\psi$ is injective and both group actions are free.
In turn, we obtain the following relation on the fundamental groups.
\begin{proposition}
Under the isomorphism in \eqref{eqn:fundgrpArtin} of \cref{cor:homotopyequivalence}, the induced homomorphism on fundamental groups 
\[
\underline{\cU}_*: \pi_1(\Upsilon_{\reg}(\Gamma)/\bbW(\Gamma)) \ra \pi_1(\Upsilon_{\reg}(\check{\Gamma})/\bbW(\check{\Gamma}))
\]
is defined on the Artin--Tits generators by $\sigma_s \mapsto \prod_{E \in \Irr(\cC(\Gamma))} \sigma_{(s,E)}$, and sends a generator of the $\bbZ$ factor in the domain to a generator of the $\bbZ$ factor in the codomain. (When $\Gamma$ and $\check{\Gamma}$ are finite-type, there are no such $\bbZ$ factors to consider).
\end{proposition}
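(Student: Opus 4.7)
The plan is to track explicit path representatives of the generators under the closed embedding $\underline{\cU}$. First I would recall that, under \cref{cor:homotopyequivalence}, the Artin generator $\sigma_s \in \B(\Gamma) \leq \pi_1(\Upsilon_{\reg}(\Gamma)/\bbW(\Gamma))$ is the class of a standard half-twist loop: starting from a base point $Z_0 \in C(\Gamma)$, take a path in $\Upsilon_{\reg}(\Gamma)$ from $Z_0$ to $s \cdot Z_0$ that crosses $H_{\alpha_s}$ by a half-turn into the imaginary direction, and descend to a loop in the $\bbW(\Gamma)$-quotient. When $\bbW(\Gamma)$ is infinite, the extra $\bbZ$-factor is generated by the $S^1$-loop $t \mapsto e^{i\pi t} Z_0$ coming from the decomposition $\Upsilon_{\reg}(\Gamma) \simeq S^1 \times \Upsilon_{\reg}(\Gamma)^N$ of \eqref{eqn:normaliseddecomp}.

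For the Artin generators, the previous proposition tells us that $\underline{\cU}$ sends $Z_0 \in C(\Gamma)$ into $C(\check{\Gamma})$, sends $s \cdot Z_0$ to $\psi(s)\cdot \underline{\cU}(Z_0) = \prod_E (s,E)\cdot \underline{\cU}(Z_0)$, and intertwines the two $\bbW$-actions. The key geometric observation I would exploit is that on the image of $\underline{\cU}$, the distinct ambient hyperplanes $\{H_{\alpha_{(s,E)}}\}_{E \in \Irr(\cC(\Gamma))}$ for fixed $s$ all restrict to the same subspace-hyperplane, because the relation $Z(\alpha_{(s,E)}) = \FPdim(E) Z(\alpha_{(s,\1)})$ on the image forces them to vanish simultaneously. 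Hence $\underline{\cU} \circ \gamma_s$ executes a single half-turn inside this subspace which, viewed from the ambient $\Upsilon_{\reg}(\check{\Gamma})$, amounts to a simultaneous half-turn around every $H_{\alpha_{(s,E)}}$.

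Next I would invoke a standard perturbation within $\Upsilon_{\reg}(\check{\Gamma})$ to deform this simultaneous half-turn into successive individual half-turns around the $H_{\alpha_{(s,E)}}$. Each elementary crossing represents the Artin generator $\sigma_{(s,E)} \in \B(\check{\Gamma})$, and since the vertices $(s,E)$ and $(s,E')$ are non-adjacent in $\check{\Gamma}$ for $E \neq E'$ (by the definition of unfolding), the corresponding Artin generators pairwise commute, so the product $\prod_E \sigma_{(s,E)}$ is unambiguous. This yields $\underline{\cU}_*(\sigma_s) = \prod_E \sigma_{(s,E)}$. For the $\bbZ$ factor, the $\bbC$-linearity of $\cU$ means $\underline{\cU}$ commutes with the scaling $S^1$-action, so the image of the $S^1$-generator is the analogous $S^1$-loop in $\Upsilon_{\reg}(\check{\Gamma})$, which generates the corresponding $\bbZ$ factor.

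The hard part will be the perturbation step: justifying rigorously that a simultaneous half-turn around a commuting family of hyperplanes is homotopic, within the ambient hyperplane complement, to the ordered product of individual half-turns. As a consistency check I would verify the statement on deck transformations using \cref{thm:maintheorem}: a direct calculation on the simple objects $P_{(t,E')}$, using \cref{graded hom space} and the $\cC(\Gamma)$-equivariance of $\sigma_{P_s}$, shows that conjugating $\sigma_{P_s}$ through the equivalence $\cU: \cD(\Gamma) \xra{\cong} \cD(\check{\Gamma})$ of \cref{prop:unfoldzigzagmodule} yields the autoequivalence $\prod_E \sigma_{P_{(s,E)}}$ of $\cD(\check{\Gamma})$. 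This at least determines $\underline{\cU}_*(\sigma_s)$ modulo the (potentially non-trivial) kernel of the surjection $\pi_1(\Upsilon_{\reg}(\check{\Gamma})/\bbW(\check{\Gamma})) \twoheadrightarrow \widehat{\Br}^{\ST}(\check{\Gamma})$, and the topological perturbation argument is what removes that ambiguity.
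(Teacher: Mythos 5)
Your proposal is correct and takes essentially the same approach as the paper's: the paper's proof is a one-paragraph sketch asserting (and leaving to the reader to verify) that the image of a loop around $H_{\alpha_s}$ winds once around each of the pairwise-orthogonal hyperplanes $H_{\alpha_{(s,E)}}$, and your observations --- that these hyperplanes coincide on $\img(\underline{\cU})$ because $\FPdim(E)>0$, that the $\sigma_{(s,E)}$ commute since the $(s,E)$ are pairwise non-adjacent in $\check{\Gamma}$, and that $\bbC$-linearity of $\cU$ handles the $\bbZ$-factor --- fill in the details the paper omits. The perturbation step you flag as the technical core is precisely the point the paper also elides; it follows from the standard normal-crossing picture around the transverse intersection $\bigcap_E H_{\alpha_{(s,E)}}$, where a half-turn through the intersection decomposes as a product of elementary half-turns (each in the positive direction, again because $\FPdim(E)>0$), and your deck-transformation consistency check, while not in the paper, is a sound cross-verification of the result modulo the kernel.
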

\begin{proof}
The statement on the generator of $\bbZ$ is immediate; the main thing to check is that this homomorphism sends the Artin--Tits generator $\sigma_s$ to the product $\prod_{E \in \Irr(\cC(\Gamma))} \sigma_{(s,E)}$. This is probably best checked independently by the reader, who should convince themselves that the image of a loop in the central charge space which winds around the hyperplane corresponding to $s$ gets sent to a loop which winds around each of the (pairwise orthogonal) hyperplanes corresponding to $\{(s,E)\}_{E\in \Irr(\cC(\Gamma))}$ exactly once -- the order of which does not matter since all of them produce homotopy equivalent loops.
\end{proof}


\begin{corollary}\label{cor:unfolding}
We have a commutative diagram, where the vertical maps are covering maps and horizontal maps are closed embeddings:
\[
\begin{tikzcd}
\Stab_{\cC(\Gamma)}^\dagger(\cD(\Gamma)) \ar[r, hook, "\subseteq"] \ar[d, two heads]
	& \Stab^\dagger(\cD(\Gamma)) \ar[d, two heads] \ar[r, phantom, "\cong"]
	&[-25pt] \Stab^\dagger(\cD(\check{\Gamma})) 
	\\
\Upsilon_{\reg}(\Gamma)/\bbW(\Gamma) \ar[r, hook, "\underline{\cU}"]
	& \Upsilon_{\reg}(\check{\Gamma})/\bbW(\check{\Gamma}).
\end{tikzcd}
\]
Moreover, the fundamental groups act by the corresponding fusion-spherical twists groups (together with the triangulated shift $[2]$), and the embedding of covering spaces intertwines the respective actions.
\end{corollary}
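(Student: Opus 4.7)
The plan is to assemble the preceding results rather than prove anything substantially new: the corollary is essentially a repackaging of \cref{thm:embeddings}, the proposition immediately preceding it, and \cref{thm:maintheorem} (together with \cref{cor:symKMthm}, since $\check{\Gamma}$ is symmetric Kac--Moody). First, the top row of the diagram and the two original vertical covering maps to the hyperplane complements $\Upsilon_{\reg}(\Gamma)$ and $\Upsilon_{\reg}(\check{\Gamma})$ are already contained in \cref{thm:embeddings}. I then want to descend the horizontal embedding to an embedding of Coxeter quotients $\underline{\cU}$ and further factor the vertical maps through $\Upsilon_{\reg}/\bbW$.

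The second step is to construct $\underline{\cU}$ and verify its claimed properties. The preceding proposition supplies an injective homomorphism $\psi:\bbW(\Gamma)\hookrightarrow\bbW(\check{\Gamma})$ and shows that $\cU$ is $\bbW$-equivariant along $\psi$; hence $\cU$ sends $\bbW(\Gamma)$-orbits into $\bbW(\check{\Gamma})$-orbits and descends to a continuous map $\underline{\cU}$ on quotients. Injectivity will follow because the $\bbW(\check{\Gamma})$-action on $\Upsilon_{\reg}(\check{\Gamma})$ is free and the intersection of a $\bbW(\check{\Gamma})$-orbit with $\cU(\Upsilon_{\reg}(\Gamma))$ coincides with a $\psi(\bbW(\Gamma))$-orbit by equivariance. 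Closedness of the image will follow by combining closedness of $\cU(\Upsilon_{\reg}(\Gamma))$ in $\Upsilon_{\reg}(\check{\Gamma})$ (from \cref{thm:embeddings}) with the fact that both quotient maps are open and proper Hausdorff quotients by the free, properly discontinuous Coxeter actions (\cref{prop:titscone}).

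The third step is to factor the vertical covering maps through the quotients by $\bbW$. This is direct: \cref{thm:maintheorem} applied to $\Gamma$ gives the composite covering $\Stab_{\cC(\Gamma)}^\dagger(\cD(\Gamma))\twoheadrightarrow\Upsilon_{\reg}(\Gamma)/\bbW(\Gamma)$ with deck transformation action of $\pi_1(\Upsilon_{\reg}(\Gamma)/\bbW(\Gamma))$ factoring through $\widehat{\Br}^{\ST}(\Gamma)$, and \cref{cor:symKMthm} applied to $\check{\Gamma}$ gives the analogous composite for $\check{\Gamma}$. Commutativity of the enlarged diagram follows from commutativity in \cref{thm:embeddings} together with naturality of the quotient maps.

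For the final assertion, about intertwining of fusion-spherical-twist actions, the proposition preceding the corollary already computes $\underline{\cU}_*(\sigma_s)=\prod_{E\in\Irr(\cC(\Gamma))}\sigma_{(s,E)}$ and sends the generator of $\bbZ$ in $\pi_1(\Upsilon_{\reg}(\Gamma)/\bbW(\Gamma))$ to the generator of the analogous $\bbZ$ factor for $\check{\Gamma}$. Translating through the deck transformation identifications, the only point left to check is that under the equivalence $\cU:\cD(\Gamma)\xra{\cong}\cD(\check{\Gamma})$ of \cref{prop:unfoldzigzagmodule} the autoequivalence $\sigma_{P_s}$ corresponds to $\prod_E\sigma_{P_{(s,E)}}$, and that $[2]$ corresponds to $[2]$; the latter is automatic, and the former is the main (mild) obstacle. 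I expect to verify it by noting that both autoequivalences lift the same element of $\bbW(\check{\Gamma})$ (by the action on $K_0$ computed via $\cU$ and the formulas \eqref{eqn:unfoldedelement}--\eqref{eqn:foldedelement}), then applying \cref{prop:faithfulactiononStab} to conclude they are isomorphic. Alternatively, one can verify the identification $\cU\sigma_{P_s}\cU^{-1}\cong\prod_E\sigma_{P_{(s,E)}}$ directly, using that $\cU$ decomposes $P_s\otimes E$ into $P_{(s,E)}$ and that the pairwise-orthogonality of the vertices $\{(s,E)\}_E$ in $\check{\Gamma}$ (ensured by the unfolding construction) makes the twists $\sigma_{P_{(s,E)}}$ commute.
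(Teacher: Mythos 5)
Your assembly of the preceding results is exactly what the paper intends; the corollary carries no separate proof because it is a direct consequence of \cref{thm:embeddings}, the two propositions on $\psi$ and $\underline{\cU}_*$ immediately preceding it, and \cref{thm:maintheorem} (with \cref{cor:symKMthm} applied to $\check{\Gamma}$). Your outline identifies all of these pieces correctly.

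The last paragraph, however, contains a genuine gap. Your first suggested route --- showing that $\cU\sigma_{P_s}\cU^{-1}$ and $\prod_E\sigma_{P_{(s,E)}}$ induce the same element of $\bbW(\check{\Gamma})$ on $K_0$ and then invoking \cref{prop:faithfulactiononStab} --- does not work. That proposition concludes $\Psi\cong\id$ from the hypothesis that $\Psi\in\Br^{\ST}$ \emph{preserves the linear heart} $\cH$; acting trivially on $K_0$ is far weaker. Indeed the whole pure subgroup $\PBr^{\ST}=\ker(\Br^{\ST}\to\bbW)$ has trivial image in $\bbW$ yet contains many non-identity functors, so inverting this inference would collapse $\Br^{\ST}$ onto $\bbW$. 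Fortunately, this verification is not actually needed for the corollary as stated: once \cref{thm:embeddings} gives the commuting square of covering maps and \cref{thm:maintheorem} (resp.\ \cref{cor:symKMthm}) identifies each as a Galois covering with deck group $\widehat{\Br}^{\ST}(\Gamma)$ (resp.\ $\widehat{\Br}^{\ST}(\check{\Gamma})$), the intertwining is automatic from covering-space theory. Concretely, $\underline{\cU}_*$ carries the image of $\pi_1$ of the left cover into that of the right cover, hence descends to a homomorphism of deck groups, and uniqueness of path-lifting forces the embedding to be equivariant for it; the proposition computing $\underline{\cU}_*$ then pins down this homomorphism on generators. Your second, direct route --- using the pairwise-orthogonal vertices $\{(s,E)\}_E$ and \cref{prop:unfoldzigzagmodule} to compute $\cU\sigma_{P_s}\cU^{-1}\cong\prod_E\sigma_{P_{(s,E)}}$ --- is sound and proves something stronger, but it is more than the corollary requires.

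A smaller remark: your injectivity argument for $\underline{\cU}$ ("the intersection of a $\bbW(\check{\Gamma})$-orbit with $\cU(\Upsilon_{\reg}(\Gamma))$ coincides with a $\psi(\bbW(\Gamma))$-orbit by equivariance") overstates what equivariance alone gives. Equivariance shows that $\psi(\bbW(\Gamma))$-orbits are \emph{contained in} $\bbW(\check{\Gamma})$-orbits; it does not immediately yield the converse containment on $\cU(\Upsilon_{\reg}(\Gamma))$. To close this one should argue via the induced chamber decomposition of $\cU(\Upsilon_{\reg}(\Gamma))$ (cf.\ \cref{fig:pentagonunfold}) and simple transitivity of $\bbW(\check{\Gamma})$ on its own chambers, so that any $w$ sending one point of $\cU(\Upsilon_{\reg}(\Gamma))$ to another already lies in $\psi(\bbW(\Gamma))$. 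The paper's own one-line justification is equally terse, so this is a minor point, but "equivariance" by itself is not the whole argument.
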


\subsection{LCM-homomorphisms}\label{sec:LCM}
The map $f: \check{\Gamma} \ra \Gamma$ defined on the vertices by $(s, E) \xmapsto{f} s$ is an (LCM-)folding in the sense of \cite[Definition 4.1]{Crisp99}.
More precisely, for pair of distinct $s, t \in \Gamma$ with $m_{s,t} < \infty$, the full subgraph containing $f^{-1}(s)$ and $f^{-1}(t)$ is a disjoint union of type $A_{m-1}$ Coxeter graphs with $m := m_{s,t}$.
The LCM-homomorphism $\varphi_f: \B(\Gamma) \ra \B(\check{\Gamma})$ associated to $f$ is the homomorphism defined by sending $\sigma_s \mapsto \prod_{s' \in f^{-1}(s)} \sigma_{s'}$; see \cite[\S 1]{Crisp99}.  These were studied in \cite{Crisp99} and have played important roles in the proof of the Tits conjecture \cite{CrispParis_2001} and the proof that the Artin--Tits monoids inject into their groups \cite{Paris_monoid_inject}.

\begin{remark}
If one makes different choices for the original fusion category $\cC(\Gamma)$ (as in \cref{rem:fusioncatcoxeter}, see also \cref{rem:generalunfolding}), the map $f: \check{\Gamma}\ra \Gamma$ above is still a folding.  The full subgraph containing $f^{-1}(s)$ and $f^{-1}(t)$ will now potentially be a disjoint union of different (not necessarily type $A$) Coxeter graphs, but these components will still have the same Coxeter number $m = m_{s,t}$ \cite[Corollary 5.31]{EH_fusionquiver}.  Additionally, as in \cref{rem:simplylacedunfolding}, we can choose $\cC(\Gamma)$ so that $\check{\Gamma}$ is simply--laced.
\end{remark}

Corollary \ref{cor:unfolding} can be viewed as a strengthening of \cite[Theorem 3.4]{Crisp99} for particular foldings.
Namely, it not only gives a geometric realisation of the LCM homomorphism $\varphi_f = \underline{\cU}_*$, but also shows that faithfulness of the $\B(\Gamma)$-action on $\cD(\Gamma)$ implies the injectivity of $\varphi_f$.  This injectivity is open in general, see \cite[Remark pg.\ 16]{CrispParis_2001} and \cite[Question pg.\ 4]{Crisp99}.
In particular, if one shows that the Artin--Tits group $\B(\Gamma)$ acts faithfully on its associated category $\cD(\Gamma)$, it follows that $\B(\Gamma)$ can be embedded into an Artin--Tits group of simply-laced type.

\printbibliography[]

\end{document}